\DeclareTextSymbol{\degre}{OT1}{23}
\def	\N	{	\ensuremath	{\mathbb	{N}	}	}
\def	\Q	{	\ensuremath	{\mathbb	{Q}	}	}
\def	\Z	{	\ensuremath	{\mathbb	{Z}	}	}
\def	\R	{	\ensuremath	{\mathbb	{R}	}	}
\def	\C	{	\ensuremath	{\mathbb	{C}	}	}
\newcommand	{\seq}[1]	{	\ensuremath	{	\underline{\mathbf{#1}\!}\,
										}
						}
\newcommand	{\crochet}[2]	{	\ensuremath	{	[\![
														\, #1 \, ; \, #2 \,
												]\!]
											}
							}
\def \p	{	\ensuremath	{\bullet}	}
\def \sh	{	\text{sh}	}
\def \ch	{	\text{ch}	}
\def \re	{	\ensuremath	{	\Re e \,}	}
\def \im	{	\ensuremath	{	\Im m \,}	}
\def \Symmetrality	{Symmetr\textbf{\textit {\underline {a}}}lity }
\def \Symmetrelity	{Symmetr\textbf{\textit {\underline {e}}}lity }
\def \Symmetrility	{Symmetr\textbf{\textit {\underline {i}}}lity }
\def \symmetrality	{symmetr\textbf{\textit {\underline {a}}}lity }
\def \symmetrelity	{symmetr\textbf{\textit {\underline {e}}}lity }
\def \symmetral	{symmetr\textbf{\textit {\underline {a}}}l }
\def \symmetrel	{symmetr\textbf{\textit {\underline {e}}}l }
\def \symmetril	{symmetr\textbf{\textit {\underline {i}}}l }
\gdef\stuffle{\;%
  \setlength{\unitlength}{0.0125cm}%
  \begin{picture}(20,10)(220,580) 
  \thinlines 
  \put(220,592){\line( 0,-1){ 10}} 
  \put(220,582){\line( 1, 0){ 20}} 
  \put(240,582){\line( 0, 1){ 10}} 
  \put(230,592){\line( 0,-1){ 10}} 
  \put(225,587){\line( 1, 0){ 10}} 
  \end{picture}\; 
}
\newtheorem{Definition}{Definition}
\newtheorem{Theorem}{Theorem}
\newtheorem*{Theorem*}{Theorem}
\newtheorem{Property}{Property}
\newtheorem{Lemma}{Lemma}
\newtheorem{Conjecture}{Conjecture}
\newtheorem{Corollary}{Corollary }
\newproof{Proof}{Proof}
\begin{document}

\begin{frontmatter}

\title{The Algebra of Multitangent Functions}
\author{Olivier Bouillot}
\ead{olivier.bouillot@math.u-psud.fr}
\ead[url]{http://www.math.u-psud.fr/$\sim$bouillot/}
\address{	D\'epartement de Math\'ematiques	\\
			B\^atiment $425$					\\
			Facult\'e des Sciences d'Orsay		\\
			Universit\'e Paris-Sud $11$			\\
			F-91405 Orsay Cedex
		}

\begin{abstract}
	\indent
	Multizeta values are numbers appearing in many different contexts. Unfortunately, their
	arithmetics remains mostly out of reach.
	
	In this article, we define a functional analogue of the algebra of multizetas values, namely the algebra of multitangent functions, which are $1$-periodic
	functions defined by a process formally similar to multizeta values.
	
	We introduce here the fundamental notions of reduction into monotangent functions, projection onto multitangent functions and that of trifactorisation,
	giving a way of writing a multitangent function in terms of Hurwitz multizeta functions. This explains why the multitangent algebra
	is a functional analogue of the algebra of multizeta values. We then discuss the most important algebraic and analytic properties of these functions and their
	consequences on multizeta values, as well as their regularization in the divergent case.
	
	Each property of multitangents has a pendant on the side of multizeta values. This allows us to propose new conjectures, which have been checked up to the
	weight $18$.
\end{abstract}

\begin{keyword}
	Multizetas values \sep Hurwitz multizeta values \sep Eisenstein series \sep Multitangents functions \sep Reduction into monotangents \sep
	Quasi-symmetric functions \sep Mould calculus.
	
	\MSC[2010]$11$M$32$ \sep $11$M$35$ \sep $11$M$36$ \sep $05$M$05$ \sep $33$E$20$.
\end{keyword}

\end{frontmatter}

\tableofcontents

\section	{Introduction}

\subsection{The Riemann zeta function at positive integers}
An interesting problem, but still unsolved and probably out of reach today, is to determine the polynomial relations over $\Q$ between the numbers $\zeta(2)$ , $\zeta(3)$ , $\zeta(4)$ , $\cdots$ , where the Riemann zeta function $\zeta$ can be defined by the convergent series
$$\zeta (s) = \displaystyle	{	\sum	_{n = 1}
										^{+ \infty}
										\frac	{1}	{n^s}
							}
$$ in the domain $\re s > 1$~.

Thanks to Euler, we know the classical formula for all even integers $s$:
$$\zeta (s) = \displaystyle	{	\frac	{(2 \pi)^s}	{2}
					\frac	{|B_s|}	{s!}
				} \ ,
$$
where the $B_s$'s are the Bernoulli numbers.
From this, one can see that $\Q[\zeta(2) , \zeta(4) , \zeta(6) , \cdots] = \Q[\pi^2]$~. Now, Lindemann's theorem on the transcendence of $\pi$ concludes the discussion for $s$ even, as the last ring is of transcendence degree $1$~.

Euler failed to give such a formula for $\zeta(3)$. Actually, the situation is quite more complicated concerning the values of the Riemann zeta function at odd integers. Essentially, nothing is known about their arithmetics. One had to wait the end of the twentieth century to see the first results:
\begin{enumerate}
	\item In $1979$, Roger Ap\'ery proved that $\zeta(3) \not \in \Q$ (see \cite{Apery})~;
	\item In $2000$, Tanguy Rivoal proved there are infinitely many numbers in the list $\zeta(3)$ , $\zeta(5)$ , $\zeta(7)$ , $\cdots$ which are irrational numbers (see \cite{Rivoal})~;
	\item in $2004$, Wadim Zudilin showed that there is at least one number in the list $\zeta(5)$ , $\cdots$ , $\zeta(11)$ which is irrational (see \cite{Zudilin2})~.
\end{enumerate}

One conjectures that each number $\zeta (s)$ , $s \geq 2$, is a transcendental number. To be more precise, the following conjecture is expected to hold:
\begin{Conjecture}	\label{main diophantian conjecture}
	The numbers $\pi$ , $\zeta(3)$ , $\zeta(5)$ , $\zeta(7)$ , $\cdots$ are algebraically independent over $\Q$~.
\end{Conjecture}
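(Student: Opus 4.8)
The plan is to lift the numbers $\pi,\zeta(3),\zeta(5),\dots$ to \emph{motivic periods}, to establish algebraic independence in that richer setting where the question becomes structural rather than transcendental, and then to descend back to $\C$ via the Grothendieck period conjecture. I must say at the outset that this route is conditional: the conjecture is, to date, entirely open, so far beyond current transcendence theory that even the irrationality of $\zeta(5)$ is unknown. What follows is therefore the one precise strategy I would pursue, with the decisive step isolated as an explicit and presently insurmountable hypothesis.

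First I would work inside the Tannakian category $\mathrm{MT}(\Z)$ of mixed Tate motives over $\Z$. Its motivic Galois group is a semidirect product $\mathbb{G}_m \ltimes U$, where $U$ is prounipotent and, by the theorem of Deligne and Goncharov, the graded Lie algebra $\mathrm{Lie}(U)$ is \emph{free} on one generator $\sigma_{2k+1}$ in each odd degree $-3,-5,-7,\dots$. Dually, the Hopf algebra $\mathcal{H}$ of motivic multizeta values is, non-canonically, a polynomial algebra, with an isomorphism $\mathcal{H} \cong \Q[f_2] \otimes \Q\langle f_3,f_5,f_7,\dots\rangle$, the second factor being a shuffle algebra on generators in the odd degrees, hence polynomial by Radford's theorem on Lyndon words. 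The essential further input is Brown's theorem, asserting that the motivic multizetas generate all periods of $\mathrm{MT}(\Z)$ and that the $\zeta^{\mathfrak m}(n_1,\dots,n_r)$ with $n_i\in\{2,3\}$ form a basis.

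Combining freeness with Brown's generation result yields the motivic analogue of the statement to be proved: each single motivic zeta value $\zeta^{\mathfrak m}(2k+1)$ projects nontrivially onto the primitive dual to $\sigma_{2k+1}$, so the family $\zeta^{\mathfrak m}(2),\zeta^{\mathfrak m}(3),\zeta^{\mathfrak m}(5),\dots$ furnishes a system of polynomial generators of $\mathcal{H}$ and is, in particular, algebraically independent over $\Q$. Since $\zeta^{\mathfrak m}(2)$ is a rational multiple of $\bigl((2\pi i)^{\mathfrak m}\bigr)^2$, and the period realisation sends $\zeta^{\mathfrak m}(n)\mapsto\zeta(n)$ and the motivic Lefschetz period to $2\pi i$, the entire problem is thereby reduced to transporting this algebraic independence faithfully across the period map.

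The final step would invoke the Grothendieck period conjecture for $\mathrm{MT}(\Z)$: the period homomorphism from the algebra of motivic periods to $\C$ is injective, equivalently its transcendence degree equals the dimension of the motivic Galois torsor. Granting it, algebraic independence of $\zeta^{\mathfrak m}(2),\zeta^{\mathfrak m}(3),\zeta^{\mathfrak m}(5),\dots$ descends verbatim to that of $\pi^2,\zeta(3),\zeta(5),\dots$, and hence of $\pi,\zeta(3),\zeta(5),\dots$, which is exactly the asserted conclusion. This descent is where all the difficulty is concentrated and is the obstacle I do not expect to overcome: the period conjecture is unproven even for a single elliptic curve, and no method is known that could yield unconditional transcendence of this strength. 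The motivic machinery above removes every \emph{structural} ambiguity and reduces the conjecture to one clean injectivity statement, but that statement is itself of a depth equal to the problem, so the plan establishes the result only conditionally.
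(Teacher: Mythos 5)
The statement you were asked to prove is not a theorem of the paper at all: it is Conjecture~\ref{main diophantian conjecture}, which the paper states as a famous open problem (motivating the introduction of multizeta values) and for which it offers no proof. There is therefore no argument in the paper to compare yours against, and no unconditional proof is known to exist; as the paper itself records, even the irrationality of individual odd zeta values beyond $\zeta(3)$ is open (Rivoal, Zudilin). Your proposal is the standard and essentially correct \emph{conditional} reduction: Deligne--Goncharov freeness of $\mathrm{Lie}(U)$ on generators in odd negative degrees, Brown's theorem identifying the Hopf algebra of motivic multizetas with $\Q[f_2]\otimes \Q\langle f_3,f_5,\dots\rangle$, algebraic independence of the motivic lifts, and descent to $\C$ via the Grothendieck period conjecture for $\mathrm{MT}(\Z)$. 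You are candid that the last step is an open conjecture of depth equal to the problem itself, and that is precisely the gap: your argument proves the implication ``period conjecture $\Rightarrow$ Conjecture~\ref{main diophantian conjecture}'' but not the conjecture, so it cannot count as a proof of the statement.

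Two smaller points on the motivic portion. First, your claim that $\zeta^{\mathfrak m}(2),\zeta^{\mathfrak m}(3),\zeta^{\mathfrak m}(5),\dots$ ``furnish a system of polynomial generators of $\mathcal{H}$'' overstates the situation: these elements constitute only part of a free generating set ($\mathcal{H}$ has many further generators, e.g.\ in weight $8$ one needs a generator beyond products of single zetas, consistent with the dimensions $d_n$ satisfying $d_{n+3}=d_{n+1}+d_n$ cited in the paper). What your argument actually needs --- and what is true --- is the weaker fact that each $\zeta^{\mathfrak m}(2k+1)$ is nonzero in the space of indecomposables and maps to the Lyndon letter $f_{2k+1}$ under a suitable choice of Brown's isomorphism, while $\zeta^{\mathfrak m}(2)$ maps to $f_2$; single letters are Lyndon words, hence extendable to a polynomial basis, hence algebraically independent. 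Second, the passage from independence of $\pi^2,\zeta(3),\zeta(5),\dots$ to independence of $\pi,\zeta(3),\zeta(5),\dots$ deserves a sentence: since $\pi$ is algebraic over $\Q(\pi^2)$, the two fields have equal transcendence degree over $\Q$ after adjoining any finite set of odd zetas, so the conclusion does follow. With those repairs your write-up is a clean account of the state of the art, but it remains what you say it is: a conditional reduction, not a proof.
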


\subsection	{The multizeta values}

The notion of multizeta value has been introduced in order to study questions related to this conjecture. Multizeta values are a multidimensional generalization of the values of the Riemann zeta function $\zeta$ at positive integers, defined by:
\begin{equation}	\label{defMZV}
	\displaystyle	{	\mathcal{Z}e^{s_1, \cdots , s_r}
						=
						\sum	_{0 < n_r < \cdots < n_1}
								\frac	{1}
										{{n_1}^{s_1} \cdots {n_r}^{s_r}}
					} \ ,
\end{equation}
for any $r \geq 1$ and $(s_1, \cdots, s_r) \in (\N^*)^r$ with $s_1 \geq 2$.

The weight of the multizeta values $\mathcal{Z}e^{s_1, \cdots, s_r}$ is $s_1 + \cdots + s_r$~.

\bigskip

Their first introduction dates back to the year $1775$ when Euler studied in his famous article \cite{Euler} the case of length $2$. In this work, he proved numerous remarkable relations between these numbers, like $\mathcal{Z}e^{2,1} = \mathcal{Z}e^3$ or more generally:
$$	\forall p \in \N^*\ ,\ 
	\sum	_{k = 1}
		^{p - 1}
		\mathcal{Z}e^{p + 1 - k, k}
	=
	\mathcal{Z}e^{p + 1}\ .	
$$

Although they sporadically appeared in the mathematics as well as in the physics literature, 
we can say that they were forgotten during the XIX${}^{\text{th}}$ century and during most of
the XX${}^{\text{th}}$ century. In the last $70$'s, these numbers have been reintroduced by
Jean Ecalle in holomorphic dynamics under the name ``moule zeta\"ique'' (See ). He used them as auxiliary coefficients in order to construct
some geometrical and analytical objects, such as solutions of differential equations with specific
dynamical properties. Mathematicians have been definitely convinced of the interest of these numbers
by their many apparitions in different contexts during the late $80$'s. Finally, these numbers began
to be studied for themselves.

Today, multizeta values arise in many different areas like in:
\begin{enumerate}
	\item Number theory (search for relations between multizeta values, in order to study the hypothetical algebraic independence of values of Riemann's zeta function ; arithmetical dimorphy) : see \cite{Ecalle3}, \cite{Waldschmidt}, \cite{Zudilin} for example.
	\item Quantum groups, knot theory or mathematical physics (with the Drinfeld associator which has multizeta values as coefficients):
			see \cite{Broadhurst}, \cite{Broadhurst-Kreimer}, \cite{Ihara3} ou \cite{Kreimer}.
	\item Resurgence theory and analytical invariants (in many cases, these invariants are expressed in term of series of multizetas values) : see \cite{Bouillot} and \cite{Bouillot-Ecalle}.
	\item the study of Feynman diagrams : see \cite{Broadhurst}, \cite{Broadhurst-Kreimer} or \cite{Kreimer}.
	\item the study of $\mathbb{P}^1 - \{0 ; 1 ; \infty\}$ (through the Grothendieck-Ihara program):
			see \cite{Ihara1}, \cite{Ihara-Matsumoto}, \cite {Matsumoto} for example.
	\item the study of the ``absolute Galois group'': see \cite{Ihara3} for example.
\end{enumerate}

In regard of Conjecture $\ref{main diophantian conjecture}$, one of the most important questions is
the understanding of the relations between multizeta numbers. There are numerous relations between
these numbers, coming in particular from their representation as iterated series \eqref{defMZV} or as iterated
integrals. Indeed, it is now a well-known fact that multizeta values have a representation as iterated integrals
which can be seen in the following way.

If we consider the $1$-differential forms
$$\omega_0 = \displaystyle	{	\frac	{dt}	{t}	} \text{ and } \omega_1 = \displaystyle	{	\frac	{dt}	{1 - t}	} \ ,$$
the iterated integral \label{iterated integral representation}
\begin{equation}	\label{defMZViteratedIntegral}
	\displaystyle	{	\mathcal{W}a^{\alpha_1, \cdots \hspace{-0.01cm},\hspace{0.02cm} \alpha_r}
						=
						\int	_{0 < t_1 < \cdots < t_r < 1}
								\omega_{\alpha_1} \cdots \omega_{\alpha_r}
					}
\end{equation}
is well defined when $(\alpha_1 ; \cdots ; \alpha_r) \in \{0 ; 1\}^r$ satisfies $\alpha_1 = 1$
and $\alpha_r = 0$~.

It is easy to see that there is a relation between the ``functions'' $\mathcal{Z}e^\p$ and $\mathcal{W}a^\p$: 
$$	\mathcal{Z}e^{s_1, \cdots \hspace{-0.01cm},\hspace{0.02cm} s_r}
	=
	\mathcal{W}a^{1, 0^{[s_r - 1]}, \cdots \hspace{-0.01cm},\hspace{0.02cm}  1, 0^{[s_1 - 1]}} \  ,
$$
for all $r \geq 1$ and $(s_1 ; \cdots ; s_r) \in (\N^*)^r$ such that $s_1  \geq 2$~.

It is clear that the $\mathcal{W}a^{\alpha_1 , \cdots , \alpha_r}$'s span an algebra, while the $\mathcal{Z}e^{s_1, \cdots , s_r}$'s also
span an algebra. Among others, the product of two elements of one of these two algebras (which are usually called a quadratic relation)
is a particularly important relation. These two types of relations (one for each algebra) allow us to express a product
of two multizeta values as a $\Q$-linear combination of multizeta values in two different ways. One conjectures that these
two families (up to a regularization process) span all the other relations between these numbers
(see \cite{Waldschmidt} or \cite{Zudilin}). This conjecture, out of reach today, would in particular
show the absence of relations between multizeta values of different weights, and so the transcendence
of the numbers $\zeta(s)$ , $s \geq 2$~.

\subsection	{On multitangent functions}

In this article, we will present an algebra of functions, the algebra of multitangent functions,
which is in a certain sense a good analogue of the algebra of the multizeta values. 
Let us first mention two ideas underlying the definition of multitangent functions.
\\

First, the essential ideas leading to the explicit computation of $\zeta(2n)$ , 
where $n \in \N$, is a symmetrization of the set of summation, that is to say a transformation that
allows us to transform a sum over $\N$ into a sum over $\Z$~. By the same idea, we are able to compute numerous sums of the form
$	\displaystyle	{	\sum	_{m \in \N^*}
								\frac	{\omega^{m r}}	{m^r}
					}
$ , where $\omega$ is a root of unity.

Consequently, it is a natural idea to try to symmetrize the summation simplex of multizeta values.
\\

Next, some well-known ideas are interesting to stress out. One knows that working with numbers imposes a certain rigidity, while working with functions, which will be evaluated afterwards to a particular point, gives more flexibility. One also knows that working with periodic functions gives us access to a whole panel of methods.
\\

The simplest suggestion of a functional model of multizeta values is to consider the Hurwitz multizeta functions:
$$	z \longmapsto \mathcal{H}e_+^{s_1, \cdots , s_r} (z)
	 =
	\displaystyle	{	\sum	_{0 < n_r < \cdots < n_1}
								\frac	{1}
										{(n_1 + z)^{s_1} \cdots (n_r + z)^{s_r}}
					} \ ,
$$
for any $r \geq 1$ and $(s_1, \cdots, s_r) \in (\N^*)^r$ with $s_1 \geq 2$.
The advantage of these functions is to have a very simple link with the multizetas values:
$$	\mathcal{H}e_+^{s_1, \cdots , s_r} (0) = \mathcal{Z}e^{s_1, \cdots , s_r}   \ ,
$$
where $r \geq 1$ and $(s_1, \cdots, s_r) \in (\N^*)^r$ with $s_1 \geq 2$. 
\\

For the sequel of this article, we also define the functions
$$	z \longmapsto \mathcal{H}e_-^{s_1, \cdots , s_r} (z)
	 =
	\displaystyle	{	\sum	_{- \infty < n_r < \cdots < n_1 < 0}
								\frac	{1}
										{(n_1 + z)^{s_1} \cdots (n_r + z)^{s_r}}
					} \ ,
$$
for any $r \geq 1$ and $(s_1, \cdots, s_r) \in (\N^*)^r$ with $s_r \geq 2$.
\\

Unfortunately, this choice does not seem to be the best one, according to the previous remarks: these functions are not periodic and the summation set is not symmetric... So, we are led to modify the model by considering the functions:
\begin{equation}	\label{defMTGF}
	\displaystyle	{	z \longmapsto	\mathcal{T}e^{s_1 , \cdots , s_r} (z)
										=
										\sum	_{-\infty < n_r < \cdots < n_1 < +\infty}
												\frac	{1}
														{(n_1 + z)^{s_1} \cdots (n_r + z)^{s_r}}
					} \ ,
\end{equation}
for all $r \geq 1$ and $(s_1, \cdots, s_r) \in (\N^*)^r$ with $s_1 \geq 2$ and $s_r \geq 2$.

Obviously, these are  $1$-periodic functions and the set of summation is a symmetric set. Nevertheless, what is gained on one side is obviously lost on the other one: in spite of similar expressions, the link with multizeta values is not so clear. However, this link does exist and is actually stronger than the one with  Hurwitz multizeta functions (see \textsection \ref{reduction} and \textsection \ref{reduction2}) .

We are going to refer to these functions as ``\textit{multitangent functions}''. The prefix ``multi'' characterizes the summation set in more than one variable; the suffix ``tangent'' comes from the link between Einsenstein series and the cotangent function. A more representative name would have been ``multiple cotangent functions'' or ``multicotangent functions'', but we preferred to simplify it by forgetting the syllable ``co'', which does not alter its quintessence.

\bigskip

To the best of our knowledge, this family of functions had never been studied from the point of view of special functions, even if it is an interesting and completely natural mathematical object. There are, actually, three good reasons to study such a family of functions, in an algebraic as well as in an analytical way:
\begin{enumerate}
	\item The multitangent functions seem to have appeared for the first time in resurgence theory and holomorphic dynamics, in a book of Jean Ecalle (see \cite{Ecalle1}, vol. $2$ as well as \cite{Bouillot}, or the survey \cite{Bouillot-Ecalle}). Consequently, these functions have some direct applications.
	\item The multitangent functions are deeply linked to multizeta values, at least because of an evident formal similarity. In a naive approach, we can raise  the same questions as for multizeta values, but this time for multitangent functions.
	\item The multitangent functions are a multidimensional generalization of the Eisenstein series, which have been used by Eisenstein to develop his theory of trigonometric functions in his famous article of $1847$ (see \cite{Eisenstein} or \cite{Weil} for a modern approach). So, interesting facts may emerge from this generalization.
\end{enumerate}

\subsection	{Eisenstein series}

The series considered by Eisenstein are defined for all $z \in \C - \Z$ by:
$$	\displaystyle	{	\varepsilon_k (z) 
						 =
						\sum	_{m \in \Z}
								\frac	{1}	{(z + m)^k}
						\ ,
					}
$$
where $k \in \N^*$ and the Eisenstein summation process defined as the summation over $\N^*$ of the terms of index $m$ and $-m$ is used for $k = 1$ (see \cite{Weil}):
$$	\displaystyle	{	\varepsilon_1(z)	=	\sum	_{m \in \N^*}
														\left (
																\frac	{1}	{(z + m)^k}
																+
																\frac	{1}	{(z - m)^k}
														\right )
											=	\frac	{\pi}	{\tan (\pi z)} \ .
					}
$$

As Eisenstein himself said, ``the fundamental properties of these simply-periodic functions reveal themselves through consideration of a single identity'' (see \cite{Eisenstein}):
$$	\frac	{1}	{p^2 q^2}
	=
	\frac	{1}	{(p + q)^2}
	\left (
			\frac	{1}	{p^2}
			+
			\frac	{1}	{q^2}
	\right )
	+
	\frac	{2}	{(p + q)^3}
	\left (
			\frac	{1}	{p}
			+
			\frac	{1}	{q}
	\right )
	 .
$$

From this, he would obtain some identities, which are non trivial at a first sight, between these series.
About the ingenuity and the virtuosity of Eisenstein, Andr\'e Weil compared his work with one of the most
difficult works, even today, of the last period of creation of Beethoven: the Diabelli variations.
It is a work of art based from the most harmless theme which is and which, during the variations
following one another, will generate a prodigious and extremely rich musical universe which is full of
delicacy, but also at the same time full of pianos and compositional virtuosity. The parallel to show
the beauty of the results obtained by Eisenstein is crystal clear.

In his ``variations'', Eisenstein obtained, in particular, the following relations:
\begin{eqnarray}
	{\varepsilon_2}^2 (z)			&=&	\varepsilon_4 (z) + 4\zeta(2) \varepsilon_2 (z) \ .					\label{symetrelite eisenstein}
	\\
	\varepsilon_3 (z)\phantom{3}	&=&	\varepsilon_1 (z) \varepsilon_2 (z) \ .								\label{symetrelite eisenstein divergente 1}
	\\
	3\varepsilon_4 (z)				&=&	{\varepsilon_2 (z)}^2 + 2 \varepsilon_1 (z) \varepsilon_3 (z) \ .	\label{symetrelite eisenstein divergente 2}
\end{eqnarray}

Eisenstein also proved that each of his series is in fact a polynomial with real coefficients in $\varepsilon_1$~. In our study of the algebraic relations between multitangent functions, we will find another proof of the relations $(\ref{symetrelite eisenstein})$ , $(\ref{symetrelite eisenstein divergente 1})$ and $(\ref{symetrelite eisenstein divergente 2})$~. These are particular cases of more general relations: the relation (\ref{symetrelite eisenstein}) is a mix of what we will call the relations of \symmetrelity (see \S \ref{reminder on symmetrel mould} or Appendix \ref{AppendixSymmetrelity}) and of the reduction of multitangent function into monotangent functions (see \S \ref{reduction}), while relations (\ref{symetrelite eisenstein divergente 1}) and (\ref{symetrelite eisenstein divergente 2}) are the archetype of relations of \symmetrelity for divergent multitangent functions (see \S \ref{prolongement des multitangentes au cas divergent}).

Let us mention that although Weil preferred in \cite{Weil} the notation $\varepsilon_k$ in honour of
Eisenstein. From now on, we will systematically use the notation $\mathcal{T}e^s$ coming from multitangent
functions. Also, in connection with the name ``multitangent functions'', we shall name them ``monotangent
functions'' in order to mean that the sequence is of length one.

\subsection{Results proved in this article}

From the three fundamental reasons evoked before, we have initiated a complete study of multitangent functions. It uses intensively the mould notations and mould calculus developped by Jean Ecalle. When it will be necessary, the reader will be given explanations, otherwise he may referred himself to Appendix \ref{Elements de calcul moulien debut}.

The first definition used in mould calculus is what a sequence is. It is simply a list of element of a set. For example, $\text{seq}(\N^*)$ will denote in the sequel of this article the set of sequences of positive integers:
$$	\text{seq}(\N^*)
	=
	\displaystyle	{	\{ \emptyset \}
						\cup
						\bigcup	_{r \in \N^*}
								\{
									(s_1 ; \cdots ; s_r) \in (\N^*)^r
								\}
						\ .
					}
$$

We will also consider three subsets of $\text{seq}(\N^*)$:
$$	\begin{array}{lll}
		\mathcal{S}_b^\star	&=&	\{ (s_1 ; \cdots ; s_r) \in \text{seq}(\N^*) \,; s_1 \geq 2\} \ .
		\vspace{0.2cm}	\\
		\mathcal{S}_e^\star	&=&	\{ (s_1 ; \cdots ; s_r) \in \text{seq}(\N^*) \,; s_r \geq 2\} \ .
		\vspace{0.2cm}	\\
		\mathcal{S}_{b,e}^\star &=& \{ (s_1 ; \cdots ; s_r) \in \text{seq}(\N^*) \,; s_1 \geq 2 \text{ and } s_r \geq 2 \} \ .
	\end{array}
$$

The first important properties of multitangent functions (see \S \ref{definition des multitangentes et premieres proprietes} and \S \ref{algebraic properties}) are:
\begin{Property}	\label{property_intro}
	\begin{enumerate}
		\item	The mould $\mathcal{T}e^\p$ of multitangent function is a \symmetrel mould, that is, for all sequences $\seq{\pmb{\alpha}}$ and
				$\seq{\pmb{\beta}}$ in $\mathcal{S}^{\star}_{b,e}$, we have
				$$	\displaystyle	{	\mathcal{T}e^{\seq{\pmb{\alpha}}} (z) \mathcal{T}e^{\seq{\pmb{\beta}}} (z)
										=
										\sum	_{	\gamma
													\in
													sh\text{\textbf{\underline{\textit{e}}}}(\seq{\pmb{\alpha}}, \seq{\pmb{\beta}})
												}
												\mathcal{T}e^{\seq{\pmb{\gamma}}} (z)
									} \text { , for all } z \in \C - \Z\ ,
				$$
				where the set $sh\text{\textbf{\underline{\textit{e}}}}(\seq{\pmb{\alpha}}, \seq{\pmb{\beta}})$ is a finite subset of $\mathcal{S}^{\star}_{b,e}$.
		\item	There are many $\Q$-linear relations between of multitangent functions.
	\end{enumerate}
\end{Property}

In order to explain the terminology used in this property, let us mention here that a mould is a function with a variable number of variables, a \symmetral or \symmetrel mould is subject to a constraint which give a lot of equations to be satisfied. For example, a \symmetral mould $Ma^\p$ will satisfy
$$	\begin{array}{lll}
			Ma^{x} Ma^{y}
			&=&
			Ma^{x,y} + Ma^{y,x} \ ,
			\vspace{0.1cm}
			\\
			Ma^{x,y} Ma^{y}
			&=&
			Ma^{y, x, y} + 2Ma^{x, y, y} \ ,
	\end{array}
$$
while a \symmetrel mould will satisfy
$$	\begin{array}{lll}
			Me^{x} Me^{y}
			&=&
			Me^{x,y} + Me^{y,x} + Me^{x + y} ,
			\vspace{0.1cm}
			\\
			Me^{x,y} Me^{y}
			&=&
			Me^{y, x, y} + 2Me^{x, y, y} + Me^{x + y, y} + Me^{x, 2y} \ .
	\end{array}
$$
For instance, $\mathcal{W}a^\p$ (defined in \eqref{defMZViteratedIntegral}) is a \symmetral mould, while $\mathcal{Z}e^\p$ (defined in \eqref{defMZV}) is a \symmetrel one. The reader shall consult Appendix \ref{Elements de calcul moulien debut}, especially sections \ref{AppendixDefinition}, \ref{AppendixSymmetrality} and \ref{AppendixSymmetrelity} for the formal definitions of the notions of mould and of \symmetrality/\symmetrelity\!\!.
\\

In one word, the first point of the property \ref{property_intro} allows us to find more than one half of all the known algebraic relations between multizeta values (the relation of \symmetrelity and a few of double-shuffle relations), while the second point allows us to find conjecturally exactly the other algebraic relations between multizeta values (the relation of \symmetrality and the other double-shuffle relations).
\\

We will also see that each multitangent function has a simple expression in terms of multizeta values and monotangent functions. We will also determine that a sort of converse of this property is true: the algebra of multitangent functions is a module over the algebra of multizeta values. The first property is called the ``\textit{reduction into monotangent functions}'' (see \S \ref{reduction}), while the second property is called ``\textit{projection onto multitangent functions}'' (see \S \ref{projection}).

\begin{Theorem}	\textit{(Reduction into monotangent functions)}	\\
	\label{Theorem-Reduction_Intro}
	For all sequences $\seq{s} = (s_1 ; \cdots ; s_r) \in \text{seq} (\N^*)$, there exists an explicit family
	$	(z^{\seq{s}}_k)_{k \in \crochet{0}{M}}
		\in
		\left (
				Vect_\Q (\mathcal{Z}e^{\seq{\sigma}})_{\seq{\sigma} \in \mathcal{S}_b^{\star}}
		\right )
		^{M + 1}
	$, with
	$M = \displaystyle	{\max	_{i \in \crochet {1}{r}} s_i	}$, such that:
	$$	\mathcal{T}e^{\seq{s}} (z)
		=
		z^{\seq{s}}_0
		+
		\displaystyle	{	\sum	_{k = 1}
									^M
									z^{\seq{s}}_k \mathcal{T}e^k (z)
						}\text{ , where }z \in \C - \Z\ .
	$$
	Moreover, if $\seq{s} \in \mathcal{S}^\star_{b,e}$, then $z^{\seq{s}}_0 = z^{\seq{s}}_1 = 0$ .
\end{Theorem}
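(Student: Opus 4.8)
The plan is to reduce a multitangent to monotangents by performing a partial fraction decomposition of the summand in the variable $z$, and then summing the resulting simple poles over the summation simplex. First I would fix a configuration $n_r < \cdots < n_1$ of distinct integers and decompose the proper rational function
$$	f(z) = \prod_{k=1}^r \frac{1}{(n_k + z)^{s_k}}	$$
into partial fractions. Since the poles $-n_k$ are distinct, of orders $s_k$, one obtains
$$	f(z) = \sum_{i=1}^r \sum_{j=1}^{s_i} \frac{c_{i,j}(n_1, \ldots, n_r)}{(n_i + z)^j}, \qquad c_{i,j} = \frac{1}{(s_i-j)!}\left[\frac{d^{s_i-j}}{dz^{s_i-j}}\prod_{k\neq i}\frac{1}{(n_k+z)^{s_k}}\right]_{z=-n_i}.	$$
The key observation is that each $c_{i,j}$ depends only on the differences $n_k - n_i$, hence is invariant under the global translation $n_k \mapsto n_k + 1$; this is what will let the sum over the pole variable rebuild a monotangent. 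Since the pole orders occurring are bounded by $\max_i s_i = M$, this already accounts for the range of the summation index $k$ in the statement.

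Second, I would substitute this decomposition into \eqref{defMTGF} and interchange the order of summation (legitimate when $\seq{s} \in \mathcal{S}_{b,e}^\star$, by absolute convergence). Collecting the terms attached to a pole $(n_i+z)^{-j}$, I would fix $n_i = m$ and sum $c_{i,j}$ over the remaining variables, which split into an upper block $n_1 > \cdots > n_{i-1} > m$ and a lower block $m > n_{i+1} > \cdots > n_r$. Because $c_{i,j}$ is a $\Q$-combination of products $\prod_{k\neq i}(n_k - m)^{-(s_k + a_k)}$ with integers $a_k \geq 0$, summing over each block produces a multizeta value: the upper block yields one whose first exponent is $s_1 + a_1 \geq s_1$, the lower block a reversed one whose first exponent is $s_r + a_r \geq s_r$. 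Translation invariance makes the total coefficient independent of $m$, so summing $(m+z)^{-j}$ over $m \in \Z$ reconstructs $\mathcal{T}e^j(z)$ with a coefficient $z_j^{\seq{s}} \in Vect_{\Q}(\mathcal{Z}e^{\seq{\sigma}})_{\seq{\sigma} \in \mathcal{S}_b^\star}$. When $s_1 = 1$ or $s_r = 1$ the corresponding block only produces a divergent multizeta, and one instead inserts its regularization, still a polynomial in convergent $\mathcal{S}_b^\star$ multizetas; this is what extends the formula to every $\seq{s} \in \text{seq}(\N^*)$, possibly with a nonzero $z_0^{\seq{s}}$ coming from the Eisenstein-summed $\mathcal{T}e^1$ term.

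Third, for the ``moreover'' part I would show $z_0^{\seq{s}} = z_1^{\seq{s}} = 0$ when $\seq{s} \in \mathcal{S}_{b,e}^\star$. The vanishing of the constant is immediate: $f$ is a proper rational function, so its partial fraction expansion has no polynomial part and no constant can be produced. For $z_1^{\seq{s}}$, note that $c_{i,1}$ is exactly the residue $\mathrm{Res}_{z=-n_i} f$, so that $z_1^{\seq{s}}$ equals the sum of $\mathrm{Res}_{z=0} f_S$ over all $r$-element subsets $S \subset \Z$ containing $0$, where $f_S$ is the summand for the configuration obtained by listing $S$ in decreasing order with exponents $s_1, \ldots, s_r$. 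Grouping these subsets into translation classes, each class consists of the $r$ translates of a fixed configuration that carry one of its elements to $0$; by translation invariance their contributions are precisely the residues of a single $f$ at all of its poles, whose sum vanishes because $\sum_k s_k \geq 2$. Summing over classes gives $z_1^{\seq{s}} = 0$.

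The main obstacle I expect is analytic rather than algebraic: justifying the interchanges of summation and the regroupings above. For $\seq{s} \in \mathcal{S}_{b,e}^\star$ everything converges absolutely, but $\mathcal{T}e^1$ is only Eisenstein-summable, so the vanishing $z_1^{\seq{s}} = 0$ must be established before, or simultaneously with, any rearrangement that would otherwise be merely conditionally convergent. In the divergent cases $s_1 = 1$ or $s_r = 1$ the summand is not summable at all, and the genuine difficulty is to set up a regularization compatible with the partial fraction identity, so that the reduction formula persists and its coefficients remain $\Q$-linear combinations of convergent multizeta values.
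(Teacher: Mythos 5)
Your convergent-case plan is the paper's own proof (\S\ref{reduction}): the same partial fraction expansion in $z$, the same interchange of summations justified by absolute convergence, the same splitting of the simplex into an upper and a lower block producing a multizeta value and a reversed one, and the same use of translation invariance of the coefficients to rebuild monotangents. The one step where you genuinely depart from the paper is the vanishing of the $\mathcal{T}e^1$-coefficient. The paper gets it either a posteriori from the exponentially flat character of convergent multitangents (\S\ref{caractere exponentiellement plat}, via Schwarz's lemma, hence from a fact proved only later), or from the \symmetrality relations of multizeta values through an iterated-integral, word-pairing argument (\S\ref{absence de composante TE1}). Your argument --- group the $r$-subsets of $\Z$ containing $0$ into translation classes of $r$ elements, note that each class contributes the sum of \emph{all} residues of a single proper rational function, which vanishes since $||\seq{s}|| \geq 2$, and justify the regrouping by the absolute convergence that $s_1, s_r \geq 2$ provides --- is correct, more elementary, and self-contained; it is in effect the ``multidimensional Eisenstein process'' that the paper invokes but never writes out. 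It buys you a proof of the ``moreover'' clause that does not lean on the analytic sections of the paper.

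The genuine gap is the divergent half of the statement, and it is not a detail: the theorem is asserted for every $\seq{s} \in \text{seq}(\N^*)$, whereas for $s_1 = 1$ or $s_r = 1$ the series \eqref{defMTGF} diverges, so $\mathcal{T}e^{\seq{s}}$ must first be \emph{constructed} before any identity about it can be proved. Your proposal does not construct it: ``one instead inserts its regularization'', together with your closing admission that the difficulty is ``to set up a regularization compatible with the partial fraction identity'', defers exactly what has to be established. In the paper this occupies all of Section \ref{prolongement des multitangentes au cas divergent}: the two sources of divergence (at $-\infty$ and $+\infty$) cannot be regularized jointly, so they are separated by the trifactorization $\mathcal{T}e^\p = \mathcal{H}e_+^\p \times \mathcal{C}e^\p \times \mathcal{H}e_-^\p$ (Lemma \ref{factorisation moulienne}), each Hurwitz factor is extended to $\text{seq}(\N^*)$ as a \symmetrel mould by Ecalle's extension lemma (Lemma \ref{extension lemma}), and the reduction is then re-derived at the level of generating series with flexion markers (Theorem \ref{reduction en monotangente de Tig}), yielding Theorem \ref{reduction en monotangente 2}.

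That derivation produces a corrective constant $\delta^{\seq{s}}$, which is precisely the $z_0^{\seq{s}}$ of the statement: it is nonzero exactly for $\seq{s} = 1^{[r]}$ with $r$ even, where it equals $(i\pi)^r/r!$. Your suggestion that a nonzero $z_0^{\seq{s}}$ would come ``from the Eisenstein-summed $\mathcal{T}e^1$ term'', with divergent multizetas simply replaced by their regularized values inside the convergent-case computation, cannot be turned into a proof: the paper shows (end of \S\ref{Second expression of Tig}) that \emph{no} choice of regularization of $\mathcal{H}e_+^1$ and $\mathcal{H}e_-^1$ makes this constant disappear --- comparing constant terms of the generating series would force $1 = 0$ --- so $z_0^{\seq{s}}$ is an unavoidable structural correction arising from the joint regularization of the two ends of the simplex, not an artifact of a summation convention, and it is invisible to any purely multizeta-level regularization.
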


Let us notice that this theorem states that the multitangent $\mathcal{T}e^{1,2}$ for instance admit an expression
but is not yet defined since $(1,2) \not \in \mathcal{S}_{b,e}^\star$. This will be done in \S \ref{prolongement des multitangentes au cas divergent}.

From an algebraic point of view, let us define some algebras more or less related to the first point of the property $\ref{property_intro}$:
$$	\begin{array}{@{}lll}
		\mathcal{M}ZV_{CV} = \text{Vect}_{\Q}	\left (
														\mathcal{Z}e^{\seq{s}}
												\right )
												_{	\seq{s} \in \mathcal{S}^\star_b	}
		&
		\hspace{-1.5cm}\text{and}
		&
		\mathcal{M}ZV_{CV,p} = \text{Vect}_{\Q}	\left (
														\mathcal{Z}e^{\seq{s}}
												\right )
												_{	\seq{s} \in \mathcal{S}^\star_b
													\atop
													||\seq{s}|| = p
												}
		\ ,
		\\
		\\
		\mathcal{H}MZF_{CV,+} = \text{Vect}_{\Q}	\left (
															\mathcal{H}e_+^{\seq{s}}
													\right )
													_{	\seq{s} \in \mathcal{S}^\star_b	}
		&
		\hspace{-1.5cm}\text{and}
		&
		\mathcal{H}MZF_{CV,+,p} = \text{Vect}_{\Q}	\left (
															\mathcal{H}e_+^{\seq{s}}
													\right )
													_{	\seq{s} \in \mathcal{S}^\star_b
														\atop
														||\seq{s}|| = p
													}
		\ ,
		\\
		\\
		\mathcal{H}MZF_{CV,-} = \text{Vect}_{\Q}	\left (
															\mathcal{H}e_-^{\seq{s}}
													\right )
													_{	\seq{s} \in \mathcal{S}^\star_e	}
		&
		\hspace{-1.5cm}\text{and}
		&
		\mathcal{H}MZF_{CV,-,p} = \text{Vect}_{\Q}	\left (
															\mathcal{H}e_-^{\seq{s}}
													\right )
													_{	\seq{s} \in \mathcal{S}^\star_e
														\atop
														||\seq{s}|| = p
													}
		\ ,
		\\
		\\
		\mathcal{M}TGF_{CV} = \text{Vect}_{\Q}	\left (
														\mathcal{T}e^{\seq{s}}
												\right )
												_{	\seq{s} \in \mathcal{S}^\star_{b,e}	}
		&
		\hspace{-1.4cm}\text{and}
		&
		\mathcal{M}TGF_{CV,p} = \text{Vect}_{\Q}	\left (
															\mathcal{T}e^{\seq{s}}
													\right )
													_{	\seq{s} \in \mathcal{S}^\star_{b,e}
														\atop
														||\seq{s}|| = p
													}
		\ ,
		\\
		\\
		\mathcal{H}MZV_{CV,\pm} = \text{Vect}_{\Q}	\left (
															\mathcal{H}e_+^{\seq{s}^1} \mathcal{H}e_-^{\seq{s}^2}
													\right )
													_{	\seq{s}^1 \in \mathcal{S}^\star_{b}
														\atop
														\seq{s}^2 \in \mathcal{S}^\star_{e}
													} \ ,
	\end{array}
$$
where $p \in \N$, the weight of a sequence $\seq{s} = (s_1, \cdots , s_r) \in \N^\star$ is defined by: $$||\seq{s}|| = s_1 + \cdots + s_r \ .$$

Using this notation, we can state the following:

\begin{Theorem} \textit{(Projection onto multitangent functions)}	\\
	The following conjectural statements are equivalent:
	\begin{enumerate}
		\item For all $p \geq 2$ ,	$	\displaystyle	{	\mathcal{M}TGF_{CV , p} = \bigoplus	_{k = 2}
																								^{p}
																								\mathcal{M}ZV_{CV , p - k} \cdot \mathcal{T}e^k
														} \ .
									$
		\item $\mathcal{M}TGF_{CV}$ is a $\mathcal{M}ZV_{CV}$-module.
		\item For all sequences $\seq{\pmb{\sigma}} \in \mathcal{S}_e^\star$ ,
				$\mathcal{Z}e^{\seq{\pmb{\sigma}}} \mathcal{T}e^2 \in \mathcal{M}TGF_{CV,||\seq{\pmb{\sigma}}|| + 2}$ .
	\end{enumerate}
\end{Theorem}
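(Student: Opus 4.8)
The plan is to read the three assertions as successively more economical descriptions of one and the same phenomenon — the membership of the products $\mathcal{Z}e^{\seq{\sigma}}\mathcal{T}e^k$ in the multitangent algebra — and to move between them using the reduction theorem (Theorem \ref{Theorem-Reduction_Intro}), the \symmetrel (stuffle) algebra structure of multizetas recorded in Property \ref{property_intro}, and the derivation $\partial=\frac{d}{dz}$. First I would extract from the reduction theorem the inclusion that holds unconditionally: applied to $\seq{s}\in\mathcal{S}^\star_{b,e}$ of weight $p$ it gives $\mathcal{T}e^{\seq{s}}=\sum_{k=2}^{p}z^{\seq{s}}_k\,\mathcal{T}e^k$ with $z^{\seq{s}}_k\in\mathcal{M}ZV_{CV}$, and homogeneity in the weight forces $z^{\seq{s}}_k\in\mathcal{M}ZV_{CV,p-k}$. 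Hence $\mathcal{M}TGF_{CV,p}\subseteq\bigoplus_{k=2}^{p}\mathcal{M}ZV_{CV,p-k}\cdot\mathcal{T}e^k$ for free, the sum being direct because the monotangents $(\mathcal{T}e^k)_{k\geq 2}$ have poles of pairwise distinct orders at the integers and are therefore $\C$-linearly independent. So statement (1) is precisely the reverse inclusion, equivalently the assertion $(\ast)$: $\mathcal{Z}e^{\seq{\sigma}}\mathcal{T}e^k\in\mathcal{M}TGF_{CV}$ for all $\seq{\sigma}\in\mathcal{S}_b^\star$ and all $k\geq 2$.

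Next I would prove (1) $\Leftrightarrow$ (2). That (2) implies (1) is immediate on taking $\seq{s}=(k)$, since $(k)\in\mathcal{S}^\star_{b,e}$ and $\mathcal{T}e^{(k)}=\mathcal{T}e^k$. Conversely, any element of $\mathcal{M}TGF_{CV}$ is, by the reduction theorem, an $\mathcal{M}ZV_{CV}$-combination of monotangents; multiplying by $\mathcal{Z}e^{\seq{\sigma}}$ and using that $\mathcal{M}ZV_{CV}$ is a $\Q$-algebra (the \symmetrel product, which is weight-graded) rewrites the product as an $\mathcal{M}ZV_{CV}$-combination of the $\mathcal{Z}e^{\seq{\tau}}\mathcal{T}e^k$, each of which lies in $\mathcal{M}TGF_{CV}$ under $(\ast)$. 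Thus (1), (2) and $(\ast)$ are equivalent, and the whole theorem reduces to linking (3) to $(\ast)$.

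The role of (3) is to cut $(\ast)$ down to its generators, and the engine for this is the derivation $\partial$. One has $\partial\mathcal{T}e^k=-k\,\mathcal{T}e^{k+1}$; moreover $\partial$ maps $\mathcal{M}TGF_{CV}$ into itself, since differentiating $\mathcal{T}e^{\seq{s}}$ only raises some $s_i$ by one and hence keeps the index in $\mathcal{S}^\star_{b,e}$; and $\partial$ annihilates the constants $\mathcal{Z}e^{\seq{\sigma}}$. Consequently $\mathcal{Z}e^{\seq{\sigma}}\mathcal{T}e^2\in\mathcal{M}TGF_{CV}$ forces, after applying $\partial$ repeatedly and dividing by the nonzero integers that appear, $\mathcal{Z}e^{\seq{\sigma}}\mathcal{T}e^k\in\mathcal{M}TGF_{CV}$ for every $k\geq 2$. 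This collapses $(\ast)$ to the single case $k=2$, which is exactly what (3) treats; and the direction (2) $\Rightarrow$ (3) is clear for the convergent sequences $\seq{\sigma}\in\mathcal{S}^\star_{b,e}\subseteq\mathcal{S}_b^\star$, where $\mathcal{Z}e^{\seq{\sigma}}\in\mathcal{M}ZV_{CV}$.

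I expect the genuine obstacle to be the mismatch between the index set $\mathcal{S}_e^\star$ appearing in (3) and the set $\mathcal{S}_b^\star$ appearing in $(\ast)$. Reconciling them forces one to make sense of $\mathcal{Z}e^{\seq{\sigma}}$ for $\seq{\sigma}\in\mathcal{S}_e^\star$ with $\sigma_1=1$, i.e. to invoke a regularization of divergent multizetas, and to exploit the reflection symmetry $\mathcal{T}e^{\seq{s}}(-z)=(-1)^{\|\seq{s}\|}\mathcal{T}e^{\overleftarrow{\seq{s}}}(z)$, which stabilises $\mathcal{M}TGF_{CV}$ under $z\mapsto -z$ and interchanges the conditions $s_1\geq 2$ and $s_r\geq 2$. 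The delicate step is to show that the regularized family $(\mathcal{Z}e^{\seq{\sigma}})_{\seq{\sigma}\in\mathcal{S}_e^\star}$ spans the same space $\mathcal{M}ZV_{CV}$ as $(\mathcal{Z}e^{\seq{\sigma}})_{\seq{\sigma}\in\mathcal{S}_b^\star}$, so that $(\ast)$ for $\mathcal{S}_e^\star$ — which the derivation argument deduces from (3) — transfers to $\mathcal{S}_b^\star$ and closes the loop (3) $\Rightarrow$ (1); symmetrically, this is also what upgrades (2) $\Rightarrow$ (3) from the convergent case to the full set $\mathcal{S}_e^\star$. This is precisely the point where the regularization theory of the divergent case (\S \ref{prolongement des multitangentes au cas divergent}) must be brought to bear.
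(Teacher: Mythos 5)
Your proposal is correct and follows essentially the same route as the paper: the unconditional inclusion $\mathcal{M}TGF_{CV,p} \subseteq \bigoplus_{k=2}^{p}\mathcal{M}ZV_{CV,p-k}\cdot\mathcal{T}e^k$ from the reduction into monotangent functions plus the linear independence of monotangents (Corollary \ref{structure of the algebra MTGF, version 1}), the equivalence of (1) and (2) by linearizing products of multizetas via the \symmetrelity of $\mathcal{Z}e^\p$ (Property \ref{prop-proj-1}), and the promotion of (3) to the full module statement by repeated differentiation, using $\partial\,\mathcal{T}e^k = -k\,\mathcal{T}e^{k+1}$ and the stability of $\mathcal{M}TGF_{CV}$ under $\partial$ (Property \ref{prop-proj-2}).

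One remark on your final paragraph: the mismatch you single out as ``the genuine obstacle'' --- $\mathcal{S}_e^\star$ in statement (3) versus $\mathcal{S}_b^\star$ in your assertion $(\ast)$ --- is a typo in the introduction of the paper, not a real difficulty. In the body of the paper the corresponding conjecture (Conjecture \ref{new conjecture on projection}) is stated for $\seq{\pmb{\sigma}} \in \mathcal{S}_b^\star$, which is the natural index set since $\mathcal{Z}e^{\seq{\pmb{\sigma}}}$ converges exactly when $\sigma_1 \geq 2$; consequently the paper's proof never needs the regularization of divergent multizetas nor the reflection symmetry $\mathcal{T}e^{\seq{s}}(-z) = (-1)^{||\seq{s}||}\mathcal{T}e^{\overset{\leftarrow}{\seq{s}}}(z)$. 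Your proposed detour would in fact go through (with the regularization $\mathcal{Z}e^1 = 0$, the shift of initial ones expresses every divergent multizeta as a $\Q$-linear combination of convergent ones of the same weight, so the two families span the same space), but it is machinery the intended statement does not call for.
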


We will see that the duality reduction/projection is a very important process (see \S \ref{algebraic properties}). In one sentence, we can sum up all the study by saying:
\begin{center}
	\textit	{	``the algebra of multitangent functions is a functional analogue of the algebra of multizeta values: each result on multizeta values has a
				translation in the algebra of multitangent functions, and conversely.''
			}
\end{center}
We can also sum up this study by the following diagram:

$$	\xymatrix	{		\mathcal{M}ZV_{CV}	\ar@{.>}@<4pt>[ddd]	^{\text{projection}}	&&&		\mathcal{H}MZF_{+,CV}	\ar[lll]	_{\text{evaluation at 0}}
																														\ar@{^(->}[ddd]
						\\
						\\
						\\
						\mathcal{M}TGF_{CV}	\ar[uuu]			^{reduction}
											\ar@{^(->}[rrr]		^{\text{trifactorization}}	&&&		\mathcal{H}MZF_{\pm, CV}
				}
$$

In this diagram, which will be constructed throughout the article as an evolutive one,
the trifactorization is an explicit expression of each multitangent function in terms of
Hurwitz multizeta functions. Using it, we will be able to regularize divergent multitangent
functions (see \S \ref{prolongement des multitangentes au cas divergent}), that is to say
multitangent functions depending on a sequence
$\seq{s} \in \text{seq} (\N^*) - \mathcal{S}_{b,e}^\star$ , as $\mathcal{T}e^{1,2}$ for instance. This explains that we allow
such sequences in Theorem~$\ref{Theorem-Reduction_Intro}$~.
\\

We will also give some analytical properties of the multitangent functions (see \S \ref{analytiques}),
such as their Fourier expansion or their upper bound on the half-plane, which would be useful for direct
applications. Finally, we will perform some explicit computations (see \S \ref{calculs explicite})
to obtain:
\begin{Property} \label{Property2Intro}
	Let $n \in \N^*$ and $k \in \N$.	\\
	Let us also set $E$ the floor function and define for $(k ; n) \in \N \times \N^*$ the functions $t_{k,n}$ by:
	$$	\forall x \in \R \ , \ 
		t_{k,n} (x)
		=
		\left \{
				\begin{array}{ll}
					\cos^{(n - 1)}(x)	&	\text{ , if }k \text{ is odd.}	\\
					\sin^{(n - 1)}(x)	&	\text{ , if }k \text{ is even.}	\\
				\end{array}
		\right.
	$$
	Then, we consider the moulds $sg^\p$ , $e^\p$ and $s^\p$ , with values in $\C$ and defined over the alphabet
	$\Omega = \{ 1 ; -1 \}$:
	$$	\begin{array}{l@{\hspace{1.5cm}}l@{\hspace{1.5cm}}l}
			sg^{\seq{\varepsilon}} =	\displaystyle	{	\prod	_{k = 1}
																	^n
																	\varepsilon_k
														}  ,
			&
			s^{\seq{\varepsilon}} =	\displaystyle	{	\sum	_{k = 1}
																^n
																\varepsilon_k
													}  ,
			&
			e^{\seq{\varepsilon}} = \displaystyle	{	\sum	_{k = 1}
																^n
																\varepsilon_k e^{(2k - 1) \frac{i \pi}{n}}
													}  .
		\end{array}
	$$
	Then, for all $z \in \C - \Z$ , we have:
	$$	\mathcal{T}e^{n^{[k]}}(z)
		=
		\displaystyle	{	\frac	{(-1)^{n - 1 + E ( \frac{kn + 1}{2} )} \pi^{kn}}
									{(kn)! (2 \sin (\pi z))^n}
							\sum	_{	\seq{\varepsilon}
										=
										(\varepsilon_1 ; \cdots ; \varepsilon_n) \in \Omega^n
									}
									sg^{\seq{\varepsilon}}
									(e^{\seq{\varepsilon}})^{kn}
									t_{kn,n} (s^{\seq{\varepsilon}} \pi z) \ ,
						}
	$$
	where $n^{[k]}$ represents the sequence consisting of $k$ repetitions of $n$.
\end{Property}

 \section*{Acknowledgement}
 The author would like to thank Jean Ecalle who has introduced to him multitangent functions as a family of special functions appearing in holomorphic dynamics and also to thank the referees for their valuable remarks. Without them, the paper would have been more difficult to read and two proofs would have been longer.
\section	{Definition of the multitangent functions and their first properties}
\label{definition des multitangentes et premieres proprietes}

Let us begin with a general lemma which immediately shows, if a certain condition holds, that
a mould defined as an iterated sum of holomorphic functions is a \symmetrel mould with values in the
algebra of holomorphic functions. This will give us the analytical definition of multitangent
functions, but this will also be useful to deal with the Hurwitz multizeta functions in the
sequel. In the case of multizeta values, it gives the well-known convergence criterion.

As a consequence of this lemma, we will obtain four elementary, but fundamental, properties of multitangent functions.
First of all, let us explain in details what a \symmetrel mould is.

\subsection	{Definition of a \symmetrel mould}
\label{reminder on symmetrel mould}
Let $(\Omega , \cdot)$ be an alphabet with a semi-group structure. The stuffle product of two words $P = p_1 \cdots p_r$ and $Q = q_1 \cdots q_s$ constructed over the alphabet $\Omega$ is denoted by $\stuffle$ and defined recursively by:
$$	\left \{
		\begin{array}{@{}l@{}}
			P \stuffle \varepsilon
			=
			\varepsilon \stuffle P
			=
			P \ ,
			\\
			P \stuffle Q
			=
			p_1 \big( p_2 \cdots p_r \stuffle Q \big)
			+
			q_1 \big( P \stuffle q_2 \cdots q_s \big)
			+
			(p_1 \cdot q_1) \big ( p_2 \cdots p_r \stuffle q_2 \cdots q_s \big )
			\ ,
		\end{array}
	\right.
$$
where $\varepsilon$ is the empty word. As an example, in $\text{seq}(\N)$, if $P = 1 \cdot 2$ and $Q = 3$, we have:
$P \stuffle Q = 1 \cdot 2 \cdot 3 + 1 \cdot 3 \cdot 2 + 3 \cdot 1 \cdot 2 + 1 \cdot 5 + 4 \cdot 2~.$
\\

Let us remind that the recursive definition of the stuffle product may also be:
$$	\left \{
		\begin{array}{@{}lll}
			P \stuffle \varepsilon
			&=&
			\varepsilon \stuffle P
			=
			P \ ,
			\\
			\\
			P \stuffle Q
			&=&
			\big( p_1 \cdots p_{r-1} \stuffle Q \big) p_r
			+
			\big( P \stuffle q_1 \cdots q_{s - 1} \big) q_s
			\\
			&&
			+
			\big ( p_1 \cdots p_{r - 1} \stuffle q_1 \cdots q_{s - 1} \big ) (p_r \cdot q_s)
			\ ,
		\end{array}
	\right.
$$

The multiset $sh\text{\textbf{\underline{\textit{e}}}}(\seq{\pmb{\alpha}} ; \seq{\pmb{\beta}})$, where $\seq{\pmb{\alpha}}$ and $\seq{\pmb{\beta}}$ are sequences in $\text{seq} (\Omega)$, is defined to be the set of all monomials that appear in the non-commutative polynomial $\seq{\pmb{\alpha}} \stuffle \seq{\pmb{\beta}}$, counted with their multiplicity.
\\

When $\Omega$ is an alphabet, which is also an additive semigroup and $\mathbb{A}$ an algebra, we define a \symmetrel mould $Me^\p$ to be a mould of
$\mathcal{M}_{\mathbb{A}}^\p (\Omega)$ which satisfies for all $(\seq{\pmb{\alpha}} ; \seq{\pmb{\beta}}) \in \big (\text{seq} (\Omega) \big )^2$:

$$	\left \{
			\begin{array}{l}
					\displaystyle	{	Me^{\seq{\pmb{\alpha}}}
										Me^{\seq{\pmb{\beta}}}
										=
										\sum	_{	\seq{\pmb{\gamma}}
													\in
													sh\text{\textbf{\underline{\textit{e}}}} (\seq{\pmb{\alpha}} \,  ; \,  \seq{\pmb{\beta}})
												}
												\hspace{-0.5cm}'  \hspace{0.2cm}
												Me^{\seq{\pmb{\gamma}}}
									} \ .
					\\
					Me^\emptyset = 1 \ .
			\end{array}
	\right.
$$

Here, the sum	$	\displaystyle	{	\hspace{-0.3cm}
										\sum	_{	\seq{\pmb{\gamma}}
													\in
													sh\text{\textbf{\underline{\textit{e}}}} (\seq{\pmb{\alpha}} \,  ;   \, \seq{\pmb{\beta}})
												}
												 \hspace{-0.5cm}'  \hspace{0.2cm}
												Me^{\underline{\pmb{\gamma}}}
									}
				$ is a shorthand for
$	\displaystyle	{	\sum	_{\seq{\pmb{\gamma}} \in \text{seq} (\Omega)}
								\text{mult}	\binom{\seq{\pmb{\alpha}} \,;\, \seq{\pmb{\beta}}}{\seq{\pmb{\gamma}}}
								Me^{\seq{\pmb{\gamma}}}
					}
$, where $	\text{mult}	\binom{\seq{\pmb{\alpha}} ; \seq{\pmb{\beta}}}{\seq{\pmb{\gamma}}}$ is the coefficient of the monomial $\seq{\pmb{\gamma}}$ in the product $\seq{\pmb{\alpha}} \stuffle \seq{\pmb{\beta}}$ and is equal to 
$\langle \seq{\pmb{\alpha}} \stuffle \seq{\pmb{\beta}} | \seq{\pmb{\gamma}} \rangle$~.
From now on, we also omit the prime:

$$	\displaystyle	{	Me^{\seq{\pmb{\alpha}}} Me^{\seq{\pmb{\beta}}}
						=
						\sum	_{\seq{\pmb{\gamma}} \in \text{seq} (\Omega)}
								\langle
										\seq{\pmb{\alpha}} \stuffle \seq{\pmb{\beta}} | \seq{\pmb{\gamma}}
								\rangle
								Me^{\seq{\pmb{\gamma}}}
						=
						\sum	_{\seq{\pmb{\gamma}} \in sh\text{\textbf{\underline{\textit{e}}}}(\seq{\pmb{\alpha}} , \seq{\pmb{\beta}})}
								Me^{\seq{\pmb{\gamma}}} \ .
					}
$$

For a mould, being \symmetrel imposes a strong rigidity. For example, if $(x ; y) \in \Omega^2$ and $Me^\p$ denote a \symmetrel mould, then we have necessarily:
$$	\begin{array}{lll}
			Me^{x} Me^{y}
			&=&
			Me^{x,y} + Me^{y,x} + Me^{x + y} .
			\vspace{0.1cm}
			\\
			Me^{x,y} Me^{y}
			&=&
			Me^{y, x, y} + 2Me^{x, y, y} + Me^{x + y, y} + Me^{x, 2y} \ .
	\end{array}
$$

The reader may refer to Appendix \ref{Elements de calcul moulien debut} for a summary of mould calculus.

\subsection	{A lemma on \symmetrel moulds}

This is a first version of this lemma, for classical sums, that is to say when the summation index varies from $N$ to $+ \infty$, when $N \in \N$:

\begin{Lemma}
	\label{definition_des_moules_symetrEls1}
	\textit{(Definition of symmetrel moulds, version $1$.)}
	\\
	Let $\mathcal{U}$ be an connected open subset of $\C$ on which a complex logarithm is well-defined, $(f_n)_{n \in \N}$ a sequence of non-vanishing
	holomorphic functions on $\mathcal{U}$ and $N \in \N$~.	\\
	We assume that for all compact subsets $K$ of $\mathcal{U}$, 
	$$	|| f_n ||_{\infty, K}
		\underset {n \longrightarrow + \infty} {=}
		\mathcal{O}	\left (
							\displaystyle	{	\frac	{1}	{n}	}
					\right ) \ .
	$$

	Then, for all sequences $\seq{s} \in \text{seq} (\C) - \{\emptyset\}$, of length $r$, satisfying
	\begin{equation}	\label{caracterisation}
		\left\{
			\begin{array}{l}
				\re (s_1) > 1 ,	\\
				\hspace{1cm} \vdots		\\
				\re (s_1 + \cdots + s_r) > r ,	\\
			\end{array}
		\right.
	\end{equation}
	we have:
	\begin{enumerate}
		\item	The function
			$	\begin{array}[t]{llll}
					Fe^{\seq{s}}_N :	&	\mathcal{U}	&	\longrightarrow	&	\C	\\
									&	z		&	\longmapsto		&	\displaystyle	{	\sum	_{N < n_r < \cdots < n_1 < + \infty}
																									(f_{n_1} (z))^{s_1} \cdots (f_{n_r} (z))^{s_r}
																						}
				\end{array}
			$ is well defined on $\mathcal{U}$.
		\item $Fe_N^{\seq{s}}$ is holomorphic on $\mathcal{U}$ and for all $z \in \mathcal{U}$:
			$$	\left (
						Fe_N^{\seq{s}}
				\right )
				' (z)
				=
				\displaystyle	{	\sum	_{N < n_r < \cdots < n_1 < + \infty}
											\left (
													\prod	_{i=1}
															^r
															(f_{n_i})^{s_i}
											\right )
											'
								} .
			$$
	\end{enumerate}

	Moreover, if we set $Fe_N^\emptyset = 1$, then $Fe_N^\p$ is a symmetrel mould defined on the set of sequences
	$\seq{s} \in \text{seq} (\C)$ satisfying (\ref{caracterisation}), with values in
	$\mathcal{H}(\mathcal{U})$~.
\end{Lemma}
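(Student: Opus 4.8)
The plan is to deduce all three assertions from a single normal-convergence estimate on compact subsets of $\mathcal{U}$, after which the holomorphy is routine and the symmetrel identity becomes pure bookkeeping. Fix a compact $K \subset \mathcal{U}$. Since a branch of the logarithm is available on $\mathcal{U}$ and each $f_n$ is non-vanishing, the power $(f_n)^{s}$ is the holomorphic function $\exp(s \log f_n)$, and $|(f_n(z))^{s}| = |f_n(z)|^{\re(s)} e^{-\im(s)\arg f_n(z)}$. As $f_n \to 0$ uniformly on $K$ while remaining non-vanishing, the imaginary part of the chosen logarithm stays bounded on $K$, so together with $\|f_n\|_{\infty,K} = \mathcal{O}(1/n)$ one obtains a uniform bound of the shape $|(f_n(z))^{s_i}| \le C_{K}\, n^{-\re(s_i)}$ for $z \in K$ and $n$ large. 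Everything then reduces to controlling $\sum \prod_i n_i^{-\re(s_i)}$ over the simplex $N < n_r < \cdots < n_1$.

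First I would prove assertion (1) — the locally normal convergence defining $Fe_N^{\seq{s}}$ — by an induction that tracks the growth of the truncated inner sums. Writing $\sigma_i = \re(s_i)$ and $A_k(m) = \sum_{N < n_r < \cdots < n_k \le m} \prod_{i=k}^r n_i^{-\sigma_i}$, the recursion $A_k(m) = \sum_{N<n\le m} n^{-\sigma_k} A_{k+1}(n-1)$ together with the elementary estimate $\sum_{N<n\le m} n^{-\sigma} = \mathcal{O}(m^{\max(0,\,1-\sigma)})$ gives $A_k(m) = \mathcal{O}(m^{\beta_k})$ with $\beta_k = \max(0, \beta_{k+1} + 1 - \sigma_k)$ and $\beta_{r+1}=0$. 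Unrolling yields $\beta_k = \max_{k \le j \le r+1}\big[(j-k) - (\sigma_k + \cdots + \sigma_{j-1})\big]$. The outermost sum $\sum_{n_1>N} n_1^{-\sigma_1}A_2(n_1-1)$ converges as soon as $\beta_2 < \sigma_1 - 1$, and a direct check shows this last inequality is equivalent to the whole system $\re(s_1 + \cdots + s_k) > k$ for $k = 1, \ldots, r$, i.e. exactly condition (\ref{caracterisation}). I expect this growth-bookkeeping step to be the main obstacle: it is where the precise form of (\ref{caracterisation}) is consumed, where the mixed signs of the $\sigma_i$ have to be reconciled, and where one must dispose of the borderline logarithmic cases, which are harmless precisely because the inequalities are strict.

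Assertion (2) is then automatic. Normal convergence on every compact $K$ exhibits $Fe_N^{\seq{s}}$ as a locally uniform limit of holomorphic functions, so by the Weierstrass convergence theorem it is holomorphic and the series may be differentiated term by term; the differentiated series converges locally uniformly by the Cauchy estimates, which yields the stated formula for $\big(Fe_N^{\seq{s}}\big)'$.

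Finally, for the symmetrel property I would use the absolute convergence just established to rearrange the product freely. By Fubini, $Fe_N^{\seq{\alpha}}(z)\,Fe_N^{\seq{\beta}}(z)$ is the unconditionally convergent double sum of $\prod_i (f_{n_i})^{\alpha_i}\prod_j (f_{m_j})^{\beta_j}$ over strictly decreasing tuples $\vec{n},\vec{m}$ in $(N,+\infty)$. I would then partition this index set according to the order type of the merged family $\{n_i\}\cup\{m_j\}$: the $n$'s and $m$'s are interleaved, with some entries allowed to coincide, and at each coincidence $n_i = m_j$ the two factors combine as $(f)^{\alpha_i}(f)^{\beta_j} = (f)^{\alpha_i+\beta_j}$ — this uses non-vanishing and is exactly why the additive semigroup law appears, producing a stuffle rather than a shuffle. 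Summing over all tuples of a fixed type $\seq{\gamma}$ reconstitutes precisely $Fe_N^{\seq{\gamma}}(z)$, and the types, counted with multiplicity, form exactly the multiset $sh\text{\textbf{\underline{\textit{e}}}}(\seq{\alpha} , \seq{\beta})$; indeed the recursive definition of the stuffle product is nothing but the enumeration of these merge patterns by their largest element. Together with the normalization $Fe_N^\emptyset = 1$, this gives $Fe_N^{\seq{\alpha}}Fe_N^{\seq{\beta}} = \sum_{\seq{\gamma} \in sh\text{\textbf{\underline{\textit{e}}}}(\seq{\alpha} , \seq{\beta})} Fe_N^{\seq{\gamma}}$, that is, $Fe_N^\p$ is \symmetrel.
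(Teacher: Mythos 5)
Your proposal is correct, but it follows a genuinely different route from the paper's on both of the substantive points. For points $1$ and $2$ the two treatments agree on the skeleton (uniform bound $|f_n(z)^{s}|\le C_K\,n^{-\re s}$ on compacta, normal convergence, Weierstrass), but the paper then simply majorizes the iterated sum by $C\,\mathcal{Z}e^{\re s_1, \cdots , \re s_r}$ and defers to the ``well-known'' convergence criterion for multizeta values with real exponents, whereas you prove that criterion from scratch: your recursion $A_k(m)=\mathcal{O}(m^{\beta_k})$ with $\beta_k=\max(0,\beta_{k+1}+1-\sigma_k)$, $\sigma_i=\re s_i$, unrolls correctly, and the inequality $\beta_2<\sigma_1-1$ is indeed equivalent to condition (\ref{caracterisation}); the strictness of those inequalities does dispose of the logarithmic borderline terms. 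For the \symmetrelity the paper argues by induction on $l(\seq{s}^1)+l(\seq{s}^2)$, using the right-end decomposition $Fe_N^{\seq{s}}=\sum_{p>N}(f_p)^{s_r}Fe_p^{\seq{s}^{<r}}$ so that the three terms of the recursive stuffle definition appear mechanically, while you expand the product by Fubini and partition the pairs of decreasing tuples according to their merge pattern, coincidences $n_i=m_j$ producing the contracted letters $\alpha_i+\beta_j$ --- this is exactly the Hoffman-style computation that the paper cites but deliberately replaces by an induction. Your route is more transparent and yields a bonus: each $Fe_N^{\seq{\gamma}}$, $\seq{\gamma}\in sh\text{\textbf{\underline{\textit{e}}}}(\seq{\alpha};\seq{\beta})$, appears as an absolutely convergent sub-sum of the product, so the stability of (\ref{caracterisation}) under stuffling, which the paper has to remark on separately after the lemma, comes for free; its cost is the combinatorial fact that merge patterns, counted with multiplicity, are exactly the monomials of $\seq{\alpha}\stuffle\seq{\beta}$, which you only gesture at (``enumeration by the largest element''), but that gesture is precisely the induction the paper writes out, so nothing essential is missing. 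One caveat, shared with the paper rather than specific to you: for complex $s$ the definition of $(f_n)^{s}$ and the bound on $e^{-\im(s)\arg f_n(z)}$ require an argument choice that is uniformly bounded in $n$ on $K$; the paper decrees this by taking $\theta_n(z)\in\,]-\pi;\pi]$ pointwise, whereas your justification --- that $f_n\to 0$ uniformly forces the imaginary part of $\log f_n$ to stay bounded --- is not valid as stated (smallness of $|f_n|$ says nothing about winding of the argument), though what you actually use is no more than what the paper itself assumes.
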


Let us also notice that the last set, the set of sequences $\seq{s} \in \text{seq} (\C)$
satisfying (\ref{caracterisation}), is stable by stuffling
as can easily be seen by an induction, and 
$$	\seq{s} \in sh\text{\textbf{\underline{\textit{e}}}}(\seq{s}^1 ; \seq{s}^2)
	\Longrightarrow
	\left \{
			\begin{array}{l}
				l(\seq{s}) \leq l(\seq{s}_1) + l(\seq{s}_2) \ .	
				\vspace{0.1cm}
				\\
				||\seq{s}|| = ||\seq{s}^1|| + ||\seq{s}^2|| \ .
			\end{array}
	\right.
$$

The interest of this lemma is to give in one result an absolute convergence criterion for the iterated sum
as well as the \symmetrel character. So, from now on, each time we will consider a mould which
satisfies the hypothesis of this lemma and its second version, we will call it a \symmetrel mould
without further explanations.

In the following proof, we will just indicate the reason of the conditions imposed to obtain absolute
convergence of the series and the holomorphy of $Fe_N^{\seq{s}}$. Nevertheless, we will prove in detail
the \symmetrelity of $Fe_N^{\seq{s}}$ even if it is also elementary and a direct consequence of a
computation made by Michael Hoffman (see \cite{Hoffmann}, page $485$)~, since it is not so clear
for a lot of people it is the same one.

\begin{Proof}
	Points $1$ and $2$ can be proved simultaneously because the series defining $Fe_N^{\seq{s}}$
	is normally convergent on every compact subset of $\mathcal{U}$~. Thus, the classical theorem of
	Weierstrass for limit of sequences of holomorphic functions concludes the proof. Actually,
	if $K$ is a compact subset of $\mathcal{U}$~, there exists $M_K > 0$ such that for all $n \in \N$:
	$$	||f_n||_{\infty , K} \leq \displaystyle	{	\frac	{M_K}	{n + 1}	} \ .
	$$
	Besides, for $z \in K$, we can write $f_n (z) = r_n (z) e^{i \theta_n(z)}$ with 
	$r_n (z) > 0$ and $\theta_n (z) \in \rbrack - \pi ; \pi \rbrack$~. Thus:
	$	\left |
				f_n (z) ^i
		\right |
		= e^{- \theta_n (z)}
		\in \lbrack 0 ; e^\pi \rbrack
	$~.	\\
	In particular, for $s \in \C$, we obtain:	$	\displaystyle	{	\left |
																				f_n (z) ^s
																		\right |
																		\leq
																		\frac	{M_K{}^{\re s} e^{\pi \im s}}
																				{(n + 1)^{\re s}}
																	}
												$~.
	Therefore, there exists a constant $C > 0$ satisfying:
	$$	\begin{array}{@{}lll}
					\displaystyle	{	\sum	_{N < n_r < \cdots < n_1}
												\hspace{-0.2cm}
												||f_{n_1}^{s_1} \cdots f_{n_r}^{s_r}||_{\infty , K}
									}
					&\leq&
					\hspace{-0.2cm}
					\displaystyle	{	\sum	_{N < n_r < \cdots < n_1}
												\frac	{C}	{(n_1 + 1)^{\re s_1} \cdots (n_r + 1)^{\re s_r}}
									}
					\\
					\\
					&\leq&
					C \mathcal{Z}e^{\re s_1 , \cdots , \re s_r}
					<
					+ \infty \ .
				\end{array}
	$$

	We will prove the \symmetrelity of the mould $Fe_N^\p (z)$ for all $N \in \N$ by an induction process.
	To be precise, we will show the equality
	$$	\displaystyle	{	Fe_N^{\seq{s}^1} (z) Fe_N^{\seq{s}^2} (z)
							=
							\sum	_{\seq{\pmb{\gamma}} \in sh\text{\textbf{\underline{\textit{e}}}} (\seq{s}^1 ; \seq{s}^2)}
									Fe_N^{\underline{\gamma}} (z)
						} \ ,
	$$ with sequences $\seq{s}^1$ and $\seq{s}^2$ of $\text{seq} (\C)$ satisfying (\ref{caracterisation})~. The induction is over the integer
	$l(\seq{s}^1) + l(\seq{s}^2)$ .
	\\
	\\
	\hspace{0.15cm}
	Before starting\footnotemark, let us observe that, if $\seq{s} \in \text{seq} (\C)$ satisfies
	(\ref{caracterisation}), then, by definition of $Fe_N^\p$, we have:

	\footnotetext	{	Let us remind that if $\seq{s} = (s_1 , \cdots , s_r)$, the notation $\seq{s}^{\leq k}$ refers to the sequence $(s_1 , \cdots , s_k)$
						of the first $k$ terms of $\seq{s}$, while $\seq{s}^{< k}$ refers to the empty sequence when $k = 1$ or the sequence of the first
						$(k - 1)$ terms of $\seq{s}$ if $k \geq 2$~.	\\
						For this notation, see the annex on mould calculus.
					}
	$$	Fe_N^{\seq{s}}
		=
		\displaystyle	{	\sum	_{p > N}
									(f_p)^{s_r}
									Fe_p^{\seq{s}^{< r}}
						} \text{ , by setting } p = n_r \ .
	$$
		
	\noindent
	\underline{Anchor step:} Let $(\seq{u} ; \seq{v}) \in \left ( \text{seq}(\C) \right )^2$ satisfying (\ref{caracterisation}) and
	$l(\seq{u}) = l(\seq{v}) = 1$.
	\\
	\\
	Writing $\seq{u} = (u)$ and $\seq{v} = (v)$, we successively have, for $N \in \N$:
	$$	\begin{array}{@{}lll}
			Fe_N^{\seq{u}} Fe_N^{\seq{v}}
			&=&
			\displaystyle	{	\left (
										\sum	_{p > N}
												(f_p)^{u}
								\right )
								\left (
										\sum	_{q > N}
												(f_q)^{v}
								\right )
							}
			\\
			\vspace{-0.1cm}	\\
			&=&
			\displaystyle	{	\sum	_{p > q > N}
										(f_p)^{u}  (f_q)^{v}
								+
								\sum	_{p = q > N}
										(f_p)^{u}  (f_q)^{v}
								+
								\sum	_{q > p > N}
										(f_p)^{u}  (f_q)^{v}
							}
			\\
			\vspace{-0.1cm}	\\
			&=&
			\displaystyle	{	\sum	_{q > N}
										(f_q)^{v}	 Fe_q^{u} (z)
								+
								Fe_N^{u + v} (z)
								+
								\sum	_{p > N}
										(f_p)^{u}	 Fe_p^{v} (z)
							}
			\\
			\vspace{-0.1cm}	\\
			&=&
			Fe_N^{u , v} + Fe_N^{u + v} + Fe_N^{v , u}
			=   
			\displaystyle	{	\sum	_{\seq{w} \in sh\text{\textbf{\underline{\textit{e}}}} (\seq{u} ; \seq{v})}
										Fe_N^{\seq{w}} \ .
							}
		\end{array}
	$$

	\noindent
	\underline{Induction step:}	Let us suppose that there exists a positive integer $N \geq 2$ such that the result is proved for all sequences $\seq{u}$ and
 	$\seq{v}$ of $\text{seq} (\C)$	satisfying (\ref{caracterisation}) and $l(\seq{u}) + l(\seq{v}) = N$~.
	\\
	\\
	In the same way as for length $1$ and by the use of the induction hypothesis, if $\seq{u}$ and $\seq{v}$ are of length $k$ and $l$ respectively, we successively have:	\\
	\\
	$	\begin{array}{@{}ll@{}l}
			Fe_N^{\seq{u}} Fe_N^{\seq{v}}
			&=&
			\begin{array}[t]{l}
				\displaystyle	{	\hspace{-0.1cm}
									\sum	_{p > q > N}
											\hspace{-0.1cm}
											(f_p)^{u_k} 
											(f_q)^{v_l} 
											Fe_p^{{\seq{u}}^{{\leq k - 1}}}
											\hspace{-0.05cm}
											Fe_q^{{\seq{v}}^{{\leq l - 1}}}
									\hspace{-0.1cm}
									+
									\hspace{-0.3cm}
									\sum	_{n = p = q > N}
											\hspace{-0.15cm}
											(f_n)^{u_k + v_l}
											Fe_n^{{\seq{u}}^{{\leq k - 1}}}
											\hspace{-0.05cm}
											Fe_n^{{\seq{v}}^{{\leq l - 1}}}
								}
				\\
				\\
				\displaystyle	{	+
									\sum	_{q > p > N}
											(f_p)^{u_k} 
											(f_q)^{v_l} 
											Fe_p^{{\seq{u}}^{{\leq k - 1}}}
											Fe_q^{{\seq{v}}^{{\leq l - 1}}}
								}
			\end{array}
			\\
			\\
			&=&
			\begin{array}[t]{l}
				\displaystyle	{	\sum	_{q > N}
											(f_q)^{v_l} 
											Fe_q^{{\seq{v}}^{{\leq l - 1}}}
											Fe_q^{\seq{u}}
									+
									\sum	_{n > N}
											(f_n)^{u_k + v_l} 
											Fe_n^{{\seq{u}}^{{\leq k - 1}}} 
											Fe_n^{{\seq{v}}^{{\leq l - 1}}}
								}
				\\
				\\
				\displaystyle	{	+
									\sum	_{p > N}
											(f_p)^{u_k} 
											Fe_p^{{\seq{u}}^{{\leq k - 1}}}
											Fe_p^{\seq{v}}
								}
			\end{array}
			\\
			\\
			&=&
			\begin{array}[t]{l}
				\displaystyle	{	\hspace{-0.65cm}
									\sum	_{\seq{w} \in sh\text{\textbf{\underline{\textit{e}}}} (\seq{u}  ;  {\seq{v}}^{\leq l - 1})}
											\hspace{-0.2cm}
											\left (
												\displaystyle	{	\sum	_{q > N}
																			(f_q)^{v_l}  Fe_q^{\seq{w}}
																}
											\right )
									+
									\hspace{-0.5cm}
									\sum	_{\seq{w} \in sh\text{\textbf{\underline{\textit{e}}}} ({\seq{u}}^{\leq k - 1}  ;   {\seq{v}}^{\leq l - 1})}
											\hspace{-0.2cm}
											\left (
												\displaystyle	{	\sum	_{n > N}
																			(f_n)^{u_k + v_l}  Fe_n^{\seq{w}}
																}
											\right )
								}
				\\ 
				\\
				\displaystyle	{	+
									\hspace{-0.65cm}
									\sum	_{\seq{w} \in sh\text{\textbf{\underline{\textit{e}}}} ({\seq{u}}^{\leq k - 1}  ;  \seq{v})}
											\hspace{-0.2cm}
											\left (
												\displaystyle	{	\sum	_{p > N}
																			(f_p)^{u_k}  Fe_p^{\seq{w}}
																}
											\right )
								}
			\end{array}
		\end{array}
	$
	$	\begin{array}{@{}ll@{}l}
			\phantom{Fe_N^{\seq{u}} Fe_N^{\seq{v}}}
			&=&
			\begin{array}[t]{l}
				\displaystyle	{	\hspace{-0.5cm}
									\sum	_{\seq{w} \in sh\text{\textbf{\underline{\textit{e}}}} (\seq{u}  ;  {\seq{v}}^{\leq l - 1}) \cdot v_l}
											\hspace{-0.1cm}
											Fe_N^{\seq{w}}
									+
									\hspace{-0.5cm}
									\sum	_{\seq{w} \in sh\text{\textbf{\underline{\textit{e}}}} ({\seq{u}}^{\leq k - 1}  ;  {\seq{v}}^{\leq l - 1}) \cdot (u_k + v_l)}
											\hspace{-0.1cm}
											Fe_N^{\seq{w}}
									+
									\hspace{-0.5cm}
									\sum	_{\seq{w} \in sh\text{\textbf{\underline{\textit{e}}}} ({\seq{u}}^{\leq k - 1}  ;  \seq{v}) \cdot u_k}
											\hspace{-0.1cm}
											Fe_N^{\seq{w}}
								}
			\end{array}
			\\
			\\
			&=&
			\displaystyle	{	\sum	_{\seq{w} \in sh\text{\textbf{\underline{\textit{e}}}} (\seq{u}   ;   \seq{v})}
										Fe_N^{\seq{w}}
							} \ .
		\end{array}
	$
	\\
	\\
	Thus, by induction, for all sequences $\seq{s}^1$ and $\seq{s}^2$ of $\text{seq} (\C)$ satisfying (\ref{caracterisation}),
	we have:
	$$	Fe_N^{\seq{s}^1} Fe_N^{\seq{s}^2}
		=
		\displaystyle	{	\sum	_{\seq{\pmb{\gamma}} \in sh\text{\textbf{\underline{\textit{e}}}} (\seq{s}^1   ;   \seq{s}^2)}
									Fe_N^{\seq{\pmb{\gamma}}}
						} \text{ where } N \in \N \ ,
	$$
	which means that, for all $z \in \mathcal{U}$, the mould $Fe_N^\p (z)$ is a \symmetrel one.
	\qed
\end{Proof}

We obtain, as a corollary, the second version of this lemma, but for sums over all integers:

\begin{Lemma}
	\label{definition_des_moules_symetrEls2}
	\textit{(Definition of symmetrel moulds, version $2$.)}
	\\
	Let $\mathcal{U}$ be an connected open subset of $\C$ on which a complex logarithm is well-defined, $(f_n)_{n \in \N}$ a sequence of non-vanishing
	holomorphic functions on $\mathcal{U}$~.	\\
	We assume that for all compact subsets $K$ of $\mathcal{U}$, 
	$$	|| f_n ||_{\infty, K}
		\underset {n \longrightarrow \pm \infty} {=}
		\mathcal{O}	\left (
							\displaystyle	{	\frac	{1}	{|n|}	}
					\right ) \ .
	$$

	\begin{enumerate}
		\item	Then, for all sequences $\seq{s} \in \text{seq} (\C) - \{\emptyset\}$, of length $r$, satisfying
				\begin{equation}	\label{caracterisation2}
					\forall k \in \crochet{1}{r}  ,  
					\left\{
						\begin{array}{l}
							\re (s_1 + \cdots + s_k) > k ,			
							\vspace{0.1cm}
							\\
							\re (s_r + \cdots + s_{r - k + 1}) > k ,	\\
						\end{array}
					\right.
				\end{equation}
		\item[]	the function
				$	\begin{array}[t]{llll}
						Fe^{\seq{s}} :	&	\mathcal{U}	&	\longrightarrow	&	\C	\\
										&	z			&	\longmapsto		&	\displaystyle	{	\sum	_{- \infty < n_r < \cdots < n_1 < + \infty}
																											\big (f_{n_1} (z) \big )^{s_1}
																											\cdots
																											\big (f_{n_r} (z) \big )^{s_r}
																								}
					\end{array}
				$
				is well defined on $\mathcal{U}$, holomorphic on $\mathcal{U}$ and satisfy:
				$$	\forall z \in \mathcal{U}  , 
					\left (
							Fe^{\seq{s}}
					\right )
					' (z)
					=
					\displaystyle	{	\sum	_{- \infty < n_r < \cdots < n_1 < + \infty}
												\left (
														\prod	_{i=1}
																^r
																(f_{n_i} (z))^{s_i}
												\right )
												'
									}~.
				$$
		\item[2.]		Moreover, if we set $Fe^\emptyset = 1$, then $Fe^\p$ is a symmetrel mould defined on the set of sequences
						$\seq{s} \in \text{seq} (\C)$ satisfying (\ref{caracterisation2}), with values in $\mathcal{H}(\mathcal{U})$~.
	\end{enumerate}
\end{Lemma}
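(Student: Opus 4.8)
The plan is to deduce Lemma~\ref{definition_des_moules_symetrEls2} from Lemma~\ref{definition_des_moules_symetrEls1} by cutting the bi-infinite summation simplex at the origin. In a configuration $-\infty < n_r < \cdots < n_1 < +\infty$ the values decrease as the subscript grows, so there is a unique $k \in \crochet{0}{r}$ for which $n_1 > \cdots > n_k > 0 \geq n_{k+1} > \cdots > n_r$; the positive indices form a prefix and the non-positive ones a suffix. Grouping the (absolutely convergent) terms according to this $k$ yields, writing $\seq{a} = (s_1, \cdots, s_k)$ and $\seq{b} = (s_{k+1}, \cdots, s_r)$,
\[ Fe^{\seq{s}} = \sum_{k = 0}^{r} Fe_0^{\seq{a}}\, G^{\seq{b}}, \]
where $Fe_0^\bullet$ is the one-sided mould of Lemma~\ref{definition_des_moules_symetrEls1} with $N = 0$ (sum over $0 < n < +\infty$) and $G^{\seq{t}}(z) = \sum_{n_l < \cdots < n_1 \leq 0} (f_{n_1}(z))^{t_1} \cdots (f_{n_l}(z))^{t_l}$ is the companion mould summing over non-positive indices.

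First I would establish convergence, holomorphy and term-by-term differentiation (point~1). Each positive factor $Fe_0^{\seq{a}}$ is covered verbatim by Lemma~\ref{definition_des_moules_symetrEls1}, its absolute convergence requiring exactly $\re(s_1 + \cdots + s_j) > j$ for $j \leq k$, i.e. the first family of inequalities in (\ref{caracterisation2}). For the companion factor I would put $g_n = f_{-n}$: the change of variable $n \mapsto -n$ together with a reversal of the word turns $G^{\seq{t}}$ into a one-sided sum of the kind treated by Lemma~\ref{definition_des_moules_symetrEls1}, now for the family $(g_n)_n$ and the reversed sequence $(t_l, \cdots, t_1)$. The hypothesis $\|f_n\|_{\infty, K} = \mathcal{O}(1/|n|)$ as $n \to \pm\infty$ gives $\|g_n\|_{\infty, K} = \mathcal{O}(1/n)$ as $n \to +\infty$, and convergence now asks for $\re(s_r + \cdots + s_{r - j + 1}) > j$, i.e. the second family in (\ref{caracterisation2}); letting $k$ run from $0$ to $r$ shows that the two families of (\ref{caracterisation2}) are precisely what is needed for all the factors. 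Normal convergence on every compact of $\mathcal{U}$ for each factor, hence for the finite product and the finite sum over $k$, then gives holomorphy and term-by-term differentiation by Weierstrass's theorem, exactly as in the proof of Lemma~\ref{definition_des_moules_symetrEls1}. (The companion sum starts at the boundary index $n = 0$ rather than $n = 1$; adjoining this single value at the bottom of the chain affects neither absolute convergence nor holomorphy, so Lemma~\ref{definition_des_moules_symetrEls1} still applies, with the cutoff shifted by one.)

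For the \symmetrelity (point~2), I would read the displayed identity as a mould equality $Fe^\bullet = Fe_0^\bullet \times G^\bullet$, where $\times$ denotes mould multiplication $(A \times B)^{\seq{w}} = \sum_{\seq{w} = \seq{u} \cdot \seq{v}} A^{\seq{u}} B^{\seq{v}}$; indeed $\seq{s} = \seq{a} \cdot \seq{b}$ runs over all deconcatenations of $\seq{s}$ as $k$ varies. The mould $Fe_0^\bullet$ is \symmetrel by Lemma~\ref{definition_des_moules_symetrEls1}. The companion mould $G^\bullet$ is \symmetrel as well: via $g_n = f_{-n}$ and word reversal it is carried to a mould of the form of Lemma~\ref{definition_des_moules_symetrEls1}, and reversal of words is an automorphism of the stuffle product — this is exactly the equivalence of the two recursive definitions of $\stuffle$ recalled in \S\ref{reminder on symmetrel mould} — so it preserves the defining relations of \symmetrelity\!. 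It then remains to invoke that the mould product of two \symmetrel moulds is \symmetrel, the \symmetrel moulds being the characters of the quasi-shuffle bialgebra and hence a group under $\times$ (see Appendix~\ref{Elements de calcul moulien debut}). Therefore $Fe^\bullet = Fe_0^\bullet \times G^\bullet$ is \symmetrel.

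I expect the genuine obstacle to lie in this last step: justifying cleanly that the reflected, non-positive-index mould $G^\bullet$ is \symmetrel and that mould multiplication preserves \symmetrelity\!. The reflection-invariance of the stuffle product and the group structure of \symmetrel moulds under mould multiplication are the two facts doing the real work. Should one prefer to avoid this structural input, the \symmetrelity of $Fe^\bullet$ can instead be proved directly by the very induction on $l(\seq{s}^1) + l(\seq{s}^2)$ used for Lemma~\ref{definition_des_moules_symetrEls1}: peel off the smallest index $n_r = p$, now ranging over all of $\Z$, through $Fe^{\seq{s}} = \sum_{p \in \Z} (f_p)^{s_r} Fe_p^{\seq{s}^{< r}}$, and split the resulting double sum over $(p, q)$ into the three regions $p > q$, $p = q$, $p < q$; the computation is longer but entirely parallel to the anchor and induction steps already carried out.
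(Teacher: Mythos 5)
Your proposal is correct and takes essentially the same route as the paper: the paper proves the trifactorization $Fe^\p = Fe_+^\p \times Ce^\p \times Fe_-^\p$ (your companion mould $G^\bullet$ is exactly $Ce^\p \times Fe_-^\p$, the $n=0$ term being split off into the elementary mould $Ce^\p$ rather than bundled with the negative indices), applies version $1$ to each factor via the same reflection $\widetilde{f}_n = f_{-n}$ and word reversal, and concludes by the stability of \symmetrelity under mould multiplication. The only cosmetic difference is that isolating the index $n=0$ makes both one-sided factors literal instances of Lemma \ref{definition_des_moules_symetrEls1}, which dispenses with your ``cutoff shifted by one'' remark.
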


\begin{Proof}
	This lemma uses a mould factorization, and consequently uses mould calculus. The reader can refer to Appendix \ref{Elements de calcul moulien debut},
	\S \ref{AppendixDefinition}, \S \ref{AppendixOperations} and \ref{AppendixSymmetrelity}.
	
	\bigskip
	
	The lemma which gives the definition of symmetrel moulds, version $1$, has several consequences.
	
	\bigskip

	\noindent
	First, the mould $Fe^\p$ can be factorised:
	\begin{equation}	\label{trifactorisation generale}
		Fe^\p (z) = Fe_+^\p (z) \times Ce^\p (z) \times Fe_-^\p (z) \ ,
	\end{equation}
	where, for all $\seq{s} \in \text{seq} (\C)$ satisfying (\ref{caracterisation2}), of length $r$
	the functions $Fe_+^{\seq{s}}$ , $Ce^{\seq{s}}$ and $Fe_-^{\seq{s}}$ are defined on $\mathcal{U}$ by:
	$$	\begin{array}{lll}
			Fe_+^{\seq{s}} (z)
			&=&
			\left \{
					\begin{array}{cl}
						1
						&
						\text{, if } r = 0	\ .
						\\
						\displaystyle	{	\sum	_{0 < n_r < \cdots < n_1 < + \infty}
													\displaystyle	{	\prod	_{i = 1}
																				^r
																				\left (
																						f_{n_i} (z)
																				\right )
																				^{s_i}
																	}
										}
						&
						\text{, otherwise.}
					\end{array}
			\right.
			\vspace{0.2cm}
			\\
			Ce^{\seq{s}} (z)
			&=&
			\left \{
					\begin{array}{cl}
						1								&	\text{, if } r = 0	\ .	\\
						\left (f_0 (z) \right )^{s_1}	&	\text{, if } r = 1	\ .	\\
						0								&	\text{, otherwise.}
					\end{array}
			\right.
			\vspace{0.1cm}
			\\
			Fe_-^{\seq{s}} (z)
			&=&
			\left \{
					\begin{array}{cl}
						1
						&
						\text{, if } r = 0	\ .
						\\
						\displaystyle	{	\sum	_{- \infty < n_r < \cdots < n_1 < 0}
													\displaystyle	{	\prod	_{i = 1}
																				^r
																				\left (
																						f_{n_i} (z)
																				\right )
																				^{s_i}
																	}
										}
						&
						\text{, otherwise.}
					\end{array}
			\right.
		\end{array}
	$$
	
	One can notice that $Fe_+^\p = Fe_0^\p$ and $Fe_-^\p = \widetilde{Fe_0}^{\overset {\leftarrow} {\p}}$ correspond to the sequence of functions $\widetilde{f}_n = f_{-n}$ and a reverse sequence of positive integers denoted by $\overset {\leftarrow} {\p}$~.
	\\
	
	To prove \eqref{trifactorisation generale}, let us set
	$	Fe_{+0}^{\seq{s}} (z)
		=
		\hspace{-0.3cm}
		\displaystyle	{	\sum	_{0 \leq n_r < \cdots < n_1 < + \infty}
									 \displaystyle	{	\prod	_{i = k}
																^{l(\seq{s})}
																\left (
																		f_{n_i} (z)
																\right )
																^{s_i}
													}
						}
	$, where $z \in \mathcal{U}$ and $\seq{s} \in \text{seq} (\C)$ satisfy (\ref{caracterisation2})~.
	In the definition of $Fe_{+0}^\p (z)$, we obtain by isolating the summation index $n_r$ when it is equal to $0$:
	$$	\begin{array}{@{}lll}
			Fe_{+0}^{\seq{s}} (z)
			&=&
			\displaystyle	{	\hspace{-0.4cm}
								\sum	_{0 = n_r < n_{r - 1} < \cdots < n_1 < + \infty}
										 \displaystyle	{	\prod	_{i = k}
																	^{l(\seq{s})}
																	\left (
																			f_{n_i} (z)
																	\right )
																	^{s_i}
														}
								+
								\hspace{-0.4cm}
								\sum	_{0 < n_r < n_{r - 1} < \cdots < n_1 < + \infty}
										 \displaystyle	{	\prod	_{i = k}
																	^{l(\seq{s})}
																	\left (
																			f_{n_i} (z)
																	\right )
																	^{s_i}
														}
							}
			\\
			\\
			&=&
			\displaystyle	{	\left ( f_0 (z) \right )^{s_r}
								Fe_+^{\seq{s}^{\leq r - 1}} (z)
								+
								Fe_+^{\seq{s}} (z)
							}
			=
			\displaystyle	{	\left (
										Fe_+^{\p} (z)
										\times
										Ce^{\p} (z)
								\right )
								^{\seq{s}}
							} .
		\end{array}
	$$

	In the same way, we show that $Fe^{\p} (z) = Fe_{+0}^{\p} (z) \times Fe_-^{\p} (z)$, which implies the trifactorisation (\ref{trifactorisation generale})~.	\\

	As an example, this trifactorization give us:
	
	$$	\begin{array}{lll}
			Fe^{\omega_1}(z)			&=&	Fe_+^{\omega_1} (z) + Ce^{\omega_1} (z) + Fe_-^{\omega_1} (z)
			\ .
			\\
			\\
			Fe^{\omega_1, \omega_2} (z)	&=& Fe_+^{\omega_1, \omega_2} (z) + Fe_+^{\omega_1} (z) Ce^{\omega_2} (z) + Fe_+^{\omega_1} (z) Fe_-^{\omega_2} (z)
			\\
										&&	+ Ce^{\omega_1} (z) Fe_-^{\omega_2} (z) + Fe_-^{\omega_1, \omega_2} (z)
			\ .
		\end{array}
	$$
	\noindent
	Then, since $\seq{s} \in \text{seq}(\C)$ satisfies (\ref{caracterisation2}), $\seq{s}$ and $\overset {\leftarrow} {\seq{s}}$ satisfy (\ref{caracterisation})~. 
	The lemma of definition of symmetrel moulds, version $1$, shows us that the functions $Fe_+^{\seq{s}^{\leq k}}$ and $Fe_-^{\seq{s}^{\geq k}}$ are well defined
	and holomorphic on $\mathcal{U}$ and that their derivatives can be computed by a term by term process.

	Thus, $Fe^{\seq{s}}$ is well defined and holomorphic on $\mathcal{U}$, with a derivative which is the summation of the summand derivatives.
	\\
	\\
	Finally, according to the first version of this lemma, $Fe_+^\p$ and $Fe_-^\p$ are \symmetrel moulds, as well as $Ce^\p$~. Since the mould product of
	\symmetrel moulds defines a \symmetrel mould, we deduce that $Fe^\p$ is a \symmetrel mould for
	all $z \in \mathcal{U}$.
	\qed
\end{Proof}
	 
\subsection	{Application: definition of multitangent functions}

Let us consider $\mathcal{U} = \C - \Z$ and for $n \in \Z$, the functions
$$	\begin{array}[t]{llll}
		f_n :	&	\mathcal{U}	&	\longrightarrow	&	\C	\\
				&	z			&	\longmapsto		&	\displaystyle	{	\frac	{1}{n + z}	} \ .
	\end{array}
$$

\noindent
It is clear that, for all compact subsets $K$ of $\C - \Z$,
$$	||f_n||_{\infty, K}
	\underset {n \longrightarrow \pm \infty} {=}
	\mathcal{O} \left (  \displaystyle	{ \frac {1} {|n|} } \right ) \ .
$$
The lemma of definition of symmetrel moulds, version $2$, allows us to define a \symmetrel mould, denoted $\mathcal{T}e^\p$, defined by:
$$	\begin{array}[t]{lcll} \label{definition te}
		\mathcal{T}e^{\seq{s}} :	&	\C - \Z	&	\longrightarrow	&	\C
		\\
									&	z			&	\longmapsto	&	\displaystyle	{	\sum	_{- \infty < n_r < \cdots < n_1 < + \infty}
																								\frac	{1}
																											{(n_1 + z)^{s_1} \cdots (n_r + z)^{s_r}}
																						}  .
	\end{array}
$$

This mould, which will be called \textit{the mould of multitangent functions}, is defined, a priori, for all sequences
$$	\seq{s} \in \mathcal{S}^\star_{b,e} =	\big \{
													\seq{s} \in \text{seq}(\N^*)
													;
													s_1 \geq 2 \text{ and } s_{l(\seq{s})} \geq 2
											\big \}
$$
and takes its values in the algebra of holomorphic functions defined on $\C - \Z$~.
														
\subsection	{First properties of multitangent functions}

As a consequence of Lemma $\ref{definition_des_moules_symetrEls2}$ with a simple change of variables in the summations, we have:

\begin{Property}	\label{firstProperties}
	\begin{enumerate}
		\item The function $\mathcal{T}e^{\underline{s}}$ is well-defined on $\C - \Z$ for any
			  sequence $\seq{s} \in \mathcal{S}^\star_{b,e}$~.
		\item The function $\mathcal{T}e^{\underline{s}}$ is holomorphic on $\C - \Z$ for any
		      sequence $\seq{s} \in \mathcal{S}^\star_{b,e}$, it is a uniformly convergent
		      series on every compact subset of $\C - \Z$~ and satisfies, for all
		      $\seq{s} \in \mathcal{S}^\star_{b,e}$ and all $z \in \C - \Z$:
			\vspace{-0.2cm}
			 $$	\displaystyle	{	\frac	{\partial \mathcal{T}e^{\seq{s}}}
								{\partial z}
								(z)
							=
							- \sum	_{i = 1}
								^{l(\seq{s})}
								s_i
								\mathcal{T}e	^{	s_1 , \cdots , s_{i-1},
											s_i + 1,
											s_{i+1} , \cdots , s_{l(\seq{s})}
										} (z)
						} \ .
			$$
			\vspace{-0.4cm}
		\item For any sequence $\seq{s} \in \mathcal{S}^\star_{b,e}$ and all $z \in \C - \Z$
		      we have:
			$$	\mathcal{T}e^{\seq{s}} (-z)
				=
				(-1)^{||\seq{s}||} \mathcal{T}e^{ \overset {\leftarrow} {\seq{s}} } (z) \ .
			$$
		\item For all $z \in \C - \Z$ , $\mathcal{T}e^\p (z)$ is \symmetrel, that is,
		      for any sequence 
			$(\seq{\pmb{\alpha}} ; \seq{\pmb{\beta}}) \in (\mathcal{S}^\star_{b,e})^2$:
		\item[]	$$	\displaystyle	{	\mathcal{T}e^{\seq{\pmb{\alpha}}} (z)  \mathcal{T}e^{\seq{\pmb{\beta}}} (z)
							=
							\sum	_{	\seq{\pmb{\gamma}}
									\in
									sh\text{\textbf{\underline{\textit{e}}}}(\seq{\pmb{\alpha}} ;  \seq{\pmb{\beta}})
								}
								\mathcal{T}e^{\seq{\pmb{\gamma}}} (z)
						} \ .
			$$
	\end{enumerate}
\end{Property}

We will speak respectively of the \textit{differentiation property} \label{differentiability property} and the \textit{parity property} \label{parity property} to refer to the formula of the second point and that of the third point. Let us also notice that if
$(\seq{\pmb{\alpha}} ; \seq{\pmb{\beta}}) \in (\mathcal{S}^\star_{b,e})^2$, then
$sh\text{\textbf{\underline{\textit{e}}}}(\seq{\pmb{\alpha}} ;  \seq{\pmb{\beta}}) \subset \mathcal{S}_{b,e}^\star$~.
\section	{Reduction into monotangent functions}
\label{reduction}

The aim of this section is to show a non-trivial link between multitangent functions and multizeta values.
More precisely, we will show that all (convergent) multitangent functions can be expressed in terms of
multizeta values and monotangent functions\footnotemark. In order to do this, we will perform a partial
fraction expansion (in the variable $z$) and sum after a reorganisation of the terms.

\footnotetext	{	Let us recall that a monotangent function is a multitangent function of length $1$~.	}

Let us notice that this idea had already been mentioned by Jean Ecalle (cf \cite{Ecalle1}, p. $429$)~.

\subsection	{A partial fraction expansion}
\label{section-3-1}
Let us fix a positive integer $r$, a family of positive integers $\seq{s} = (s_i)_{1 \leq i \leq r}$
and a family of complex numbers $\seq{a} = (a_i)_{1 \leq i \leq r}$, where the $a_i$'s are pairwise
distinct. Let us also consider the rational fraction defined by
$$	F_{\seq{a}, \seq{s}} (z)
	=
	\displaystyle	{	\frac	{1}
					{	(z + a_1)^{s_1}
						\cdots
						(z + a_r)^{s_r}
					}
			} \ .
$$

We know that the partial fraction expansion of $F_{\seq{a}, \seq{s}} (z)$ can be written in the following way:
$$	F_{\seq{a}, \seq{s}} (z)
	=
	\displaystyle	{	\sum	_{i = 1}
					^r
					\sum	_{k = 0}
							^{s_i - 1}
							\displaystyle	{	\frac	{1}	{k!}
												\frac	{	\left (
																	F_{	{\seq{a}}^{\leq i - 1}
																		\cdot
																		{\seq{a}}^{\geq i + 1}
																		,
																		{\seq{s}}^{\leq i - 1}
																		\cdot
																		{\seq{s}}^{\geq i + 1}
																	}
														\right )
														^{(k)} (-a_i)
													}
													{(z + a_i)^{s_i - k}}
											}
			} \ ,
$$
where the sequences ${\seq{a}}^{\leq i - 1} \cdot {\seq{a}}^{\geq i + 1}$ and
${\seq{s}}^{\leq i - 1} \cdot {\seq{s}}^{\geq i + 1}$ denote respectively				\linebreak
$(a_1 , \cdots , a_{i - 1} , a_{i + 1} , \cdots , a_r)$ and
$(s_1 , \cdots , s_{i - 1} , s_{i + 1} , \cdots , s_r)$~.

An easy computation shows that, for all $k \in \N$, we have:

$$	\displaystyle	{	\frac	{(-1)^k}	{k!}
						F_{{\seq{a}}, {\seq{s}}}^{(k)} (z)
						=
						\sum	_{	k_1, \cdots, k_r \geq 0
									\atop
									k_1 + \cdots + k_r = k
								}
								\frac	{	\binom{s_1 + k_1 - 1}{s_1 - 1}
											\cdots
											\binom{s_r + k_r - 1}{s_r - 1}
										}
										{	(z + a_1)^{s_1 + k_1} \cdots (z + a_r)^{s_r + k_r}
										}
					} \ .
$$

Let us now introduce some notations:

$$	\begin{array}{l}
		\varepsilon_i^{\seq{s} , \seq{k}}
		=
		(-1)^{s_1 + \cdots + s_{i - 1} + s_{i + 1} + \cdots + s_r + k_1 + \cdots + k_{i - 1} + k_{i + 1} + \cdots + k_r}
		\ ,
		\\
		^i D_{{\seq{k}}}^{{\seq{s}}} ({\seq{a}})
		=
		\displaystyle	{	\left (
									\prod	_{l = 1}
											^{i - 1}
											(a_i - a_l)^{s_l + k_l}
							\right )
							\left (
									\prod	_{l = i + 1}
											^{r}
											(a_l - a_i)^{s_l + k_l}
							\right )
						} \ ,
		\\
		^i B_{{\seq{k}}}^{{\seq{s}}}
		=
		\displaystyle	{	\left (
									\prod	_{l = 1}
											^{i - 1}
											(-1)^{k_l}
							\right )
							\left (
									\prod	_{l = i + 1}
											^r
											(-1)^{s_l}
							\right )
							\left (
									\prod	_{	l = 1
												\atop
												l \neq i
											}
											^r
											\left (
													\begin{array}{@{}c@{}}
															s_l + k_l - 1	\\
															s_l - 1
													\end{array}
											\right )
							\right )
						} \ ,
	\end{array}
$$
\noindent
where the sequence $\seq{k}$ has the same length as $\seq{a}$ and an $\text{i}^{\text{th}}$ index which does
not intervene.

So, we finally have the following partial fraction expansion:
\begin{equation}	\label{partial fraction expansion}
	\displaystyle	{	F_{{\seq{a}},{\seq{s}}} (z)
						=
						\sum	_{i = 1}
								^r
								\sum	_{k = 0}
										^{s_i - 1}
										\displaystyle	{	\sum	_{	k_1 , \cdots , \widehat{k_i} , \cdots , k_r	\geq 0
																		\atop
																		k_1 + \cdots + \widehat{k_i} + \cdots + k_r	= k
																	}
																	\frac	{\varepsilon_i^{\seq{s} , \seq{k}}}
																			{(z + a_i)^{s_i - k}}
																	\frac	{{}^i B_{{\seq{k}}}^{{\seq{s}}}}
																			{^i D_{{\seq{k}}}^{{\seq{s}}} ({\seq{a}})}
														}
					} \ .
\end{equation}

The notation $\widehat{k_i}$ in the third summation means that the non negative integer $k_i$ does not
appear.

\subsection	{Expression of a multitangent function in terms of multizeta values and monotangent functions}

Plugging $(\ref{partial fraction expansion})$ in the definition of a multitangent function, we can
exchange the multiple summation (from the definition of a multitangent) with the finite summation
(from the partial fraction expansion), because of the absolute convergence, and then sum by decomposing
the multiple summation into three terms. The following are successively equal to
$\mathcal{T}e^{\seq{s}} (z)$, if $\seq{s} \in \mathcal{S}^\star_{b,e}$:\\

\noindent
$	\begin{array}{@{}l}
		\hspace{-0.2cm}
		\Bigg (
			\displaystyle	{	\sum	_{i = 1}
										^r
										\sum	_{k = 0}
												^{s_i - 1}
												\displaystyle	{	\sum	_{	k_1 , \cdots , \widehat{k_i} , \cdots , k_r	\geq 0
																				\atop
																				k_1 + \cdots + \widehat{k_i} + \cdots + k_r	= k
																			}

																			\sum	_{- \infty < n_r < \cdots < n_1 < + \infty}
																}
							}
		\Bigg )
		\Big (
			\displaystyle	{	\frac	{\varepsilon_i^{\seq{s} , \seq{k}}}
										{(z + n_i)^{s_i - k}}
								\frac	{{}^i B_{{\seq{k}}}^{{\seq{s}}}}
										{^i D_{{\seq{k}}}^{{\seq{s}}} ({\seq{n}})}
							}
		\Big )
		\\
		\hspace{-0.15cm}
		=
		\hspace{-0.15cm}
		\Bigg (
			\displaystyle	{	\sum	_{i = 1}
										^r
										\sum	_{k = 0}
												^{s_i - 1}
												\hspace{-0.05cm}
												\displaystyle	{	\sum	_{	k_1 , \cdots , \widehat{k_i} , \cdots , k_r	\geq 0
																				\atop
																				k_1 + \cdots + \widehat{k_i} + \cdots + k_r	= k
																			}
																			\hspace{-0.05cm}
																			\sum	_{n_i \in \Z}
																					\displaystyle	{	\sum	_{	(n_1 ; \cdots ; n_{i - 1}) \in \Z^{i - 1}
																													\atop
																													n_i < n_{i - 1} < \cdots < n_1
																												}
																												\hspace{-0.05cm}
																												\sum	_{	(n_{i + 1} ; \cdots ; n_r) \in \Z^{r - i}
																															\atop
																															n_r < \cdots < n_{i + 1} < n_i
																														}
																									}
																}
						}
		\Bigg )
		\Big (
			\displaystyle	{	\frac	{\varepsilon_i^{\seq{s} , \seq{k}}}
										{(z + n_i)^{s_i - k}}
								\frac	{{}^i B_{{\seq{k}}}^{{\seq{s}}}}
										{^i D_{{\seq{k}}}^{{\seq{s}}} ({\seq{n}})}
							}
		\Big )
		\\
		\hspace{-0.15cm}
		=
		\hspace{-0.15cm}
		\begin{array}[t]{l@{}l}
			\Bigg (
				\displaystyle	{	\sum	_{i = 1}
											^r
											\sum	_{k = 0}
													^{s_i - 1}
													\displaystyle	{	\sum	_{	k_1 , \cdots , \widehat{k_i} , \cdots , k_r	\geq 0
																					\atop
																					k_1 + \cdots + \widehat{k_i} + \cdots + k_r	= k
																				}
																	}
								}
			\Bigg )
			&
			\hspace{-0.1cm}
			\left (
				\displaystyle	{	{}^i B_{{\seq{k}}}^{{\seq{s}}}
									\*
									\sum	_{n_i \in \Z}
											\Bigg (
													\displaystyle	{	\frac	{\varepsilon_i^{\seq{s} , \seq{k}}}
																				{(z + n_i)^{s_i - k}}
																		\bigg [
																				\sum	_{	(n_{i + 1} ; \cdots ; n_r) \in \Z^{r - i}
																							\atop
																							- \infty < n_r < \cdots < n_{i + 1} < n_i
																						}
																						\hspace{-0.2cm}
																						\frac	{1}
																							{^i D_{{\seq{k}}^{\geq i}}^{{\seq{s}}^{\geq i}} ({\seq{n}^{\geq i}})}
																		\bigg ]
																	}
								}
			\right.
			\\
			&
			\hspace{3.2cm}
			\left.
				\displaystyle	{				\*
												\bigg [
													\sum	_{	(n_1 ; \cdots ; n_{i - 1}) \in \Z^{i - 1}
																\atop
																n_i < n_{i - 1} < \cdots < n_1 < + \infty
														}
														\frac	{1}
															{^i D_{{\seq{k}}^{\leq i}}^{{\seq{s}}^{\leq i}} ({\seq{n}^{\leq i}})}
												\bigg ]
											\Bigg )
								}
			\right ) \ .
		\end{array}
	\end{array}
$

Since we have:
$$	\begin{array}{lll}
		\displaystyle	{	\sum	_{	(n_{i + 1} ; \cdots ; n_r) \in \Z^{r - i}
							\atop
							- \infty < n_r < \cdots < n_{i + 1} < n_i
						}
						\frac	{1}
							{^i D_{{\seq{k}}^{\geq i}}^{{\seq{s}}^{\geq i}} ({\seq{n}^{\geq i}})}
				}
		&=&
		\displaystyle	{	\sum	_{	(n_{i + 1} ; \cdots ; n_r) \in \Z^{r - i}
							\atop
							- \infty < n_r < \cdots < n_{i + 1} < n_i
						}
						\prod	_{l = i + 1}
							^r
							\displaystyle	{	\frac	{1}	{(n_l - n_i)^{s_l + k_l}}	}
				}
	\end{array}
$$

\vspace{-1cm}
$$	\begin{array}{lll}
		&=&
		\displaystyle	{	\sum	_{- \infty < n_r < \cdots < n_{i + 1} < 0}
						\ \ 
						\prod	_{l = i + 1}
							^r
							\displaystyle	{	\frac	{1}	{n_l^{s_l + k_l}}	}
				}
		\\
		\\
		&=&
		(-1)^{||\seq{s}^{> i}|| + ||\seq{k}^{> i}||} \mathcal{Z}e^{s_r + k_r, \cdots, s_{i + 1} + k_{i + 1}}
		\ ,
		\\
		\\
		\displaystyle	{	\sum	_{	(n_1 ; \cdots ; n_{i - 1}) \in \Z^{i - 1}
							\atop
							n_i < n_{i - 1} < \cdots < n_1 < + \infty
						}
						\frac	{1}
							{^i D_{{\seq{k}}^{\leq i}}^{{\seq{s}}^{\leq i}} ({\seq{n}^{\leq i}})}
				}
		&=&
		(-1)^{||\seq{s}^{< i}|| + ||\seq{k}^{< i}||} \mathcal{Z}e^{s_1 + k_1, \cdots, s_{i - 1} + k_{i - 1}}
		\ ,
	\end{array}
$$
where $\seq{s}^{< i}$ and $\seq{s}^{> i}$ denote the sequences $(s_1, \cdots, s_{i - 1})$ and
$(s_{i + 1} , \cdots , s_r)$ and $||\seq{s}|| = s_1 + \cdots + s_r$~, we can conclude that:
$$	 \begin{array}{@{}l@{}l@{}l}
		\mathcal{T}e^{\seq{s}} (z)
		&\,=\,&
		\displaystyle	{	\sum	_{i = 1}
						^r
						\sum	_{k = 0}
							^{s_i - 1}
							\displaystyle	{	\hspace{-0.2cm}
										\sum	_{	k_1 , \cdots , \widehat{k_i} , \cdots , k_r	\geq 0
												\atop
												k_1 + \cdots + \widehat{k_i} + \cdots + k_r	= k
											}
											\hspace{-0.2cm}
											\sum	_{n_i \in \Z}
												{}^i B_{{\seq{k}}}^{{\seq{s}}}
												\displaystyle	{	\frac	{	\mathcal{Z}e^{s_r + k_r, \cdots, s_{i + 1} + k_{i + 1}}
																	\mathcal{Z}e^{s_1 + k_1, \cdots, s_{i - 1} + k_{i - 1}}
																}
																{(z + n_i)^{s_i - k}}
														}
									}
				}
		\\
		\\
		&\,=\,&
		\displaystyle	{	\sum	_{i = 1}
						^r
						\sum	_{k = 0}
							^{s_i - 1}
							\displaystyle	{	\hspace{-0.2cm}
										\sum	_{	k_1 , \cdots , \widehat{k_i} , \cdots , k_r	\geq 0
												\atop
												k_1 + \cdots + \widehat{k_i} + \cdots + k_r	= k
											}
											\hspace{-0.6cm}
											{}^i B_{{\seq{k}}}^{{\seq{s}}}
											\displaystyle	{	\mathcal{Z}e^{s_r + k_r, \cdots, s_{i + 1} + k_{i + 1}}
														\mathcal{Z}e^{s_1 + k_1, \cdots, s_{i - 1} + k_{i - 1}}
														\mathcal{T}e^{s_i - k}
														\ .
													}
									}
				}
	 \end{array}
$$

\noindent
Consequently, we have the following relation:
$$	\mathcal{T}e^{\seq{s}} (z)
	=
	\displaystyle	{	\sum	_{i = 1}
								^r
								\sum	_{k = 0}
										^{s_i - 1}
										\mathcal{Z}_{i,k}^{\seq{s}}
										\mathcal{T}e^{s_i - k} (z)
					} \ ,
$$
where
\begin{equation}	\label{Ziks}
	\displaystyle	{	\mathcal{Z}_{i,k}^{\seq{s}}	=	\sum	_{	k_1 , \cdots , \widehat{k_i} , \cdots , k_r	\geq 0
																	\atop
																	k_1 + \cdots + \widehat{k_i} + \cdots + k_r		= k
																}
																{}^i B_{{\seq{k}}}^{{\seq{s}}}
																\mathcal{Z}e^{s_r + k_r, \cdots , s_{i + 1} + k_{i + 1}}
																\mathcal{Z}e^{s_1 + k_1, \cdots , s_{i - 1} + k_{i - 1}}
					} \ .
\end{equation}

From $s_1 , s_r \geq 2$, we can observe that only convergent multizeta values appear in this computation. Moreover, this gives also an expression of $\mathcal{T}e^{\seq{s}}$ in terms of integer coefficients (since ${}^i B_{{\seq{k}}}^{{\seq{s}}} \in \Z$) and multizeta values, since the product of two multizeta values can be expressed as a linear combination of multizeta values from the \symmetrelity of $\mathcal{Z}e^\p$.
\\

The divergent monotangent
$\mathcal{T}e^1 : z \longmapsto \displaystyle	{	\frac	{\pi}	{\tan(\pi z)}	}$
seems to appear in this relation. Nevertheless, the $\mathcal{T}e^1$ coefficient is necessarily
null. Indeed, it is not difficult to see (see \S \ref{caractere exponentiellement plat}) that all
(convergent) multitangent function tends exponentially to $0$ when $\im z \longrightarrow + \infty$~.
\label{annonce caractere exp plat} Let us notice that this computation looks like if some divergent sums
appear, but it is not the case if we perform this computation using a multidimensionnal Eisenstein
proccess generalizing the one described in \cite{Weil} for a simple sum. Consequently, we obtain:

\begin{Theorem}
	\label{reduction en monotangente}
	\textit{(Reduction into monotangent functions, version $1$)}	\\
	For all sequence $\seq{s} \in \mathcal{S}^\star_{b,e}$, we have for all $z \in \C - \Z$:
	$$	\mathcal{T}e^{\seq{s}} (z)
		=
		\displaystyle	{	\sum	_{i = 1}
									^r
									\sum	_{k = 2}
											^{s_i}
											\mathcal{Z}_{i,s_i - k}^{\seq{s}}
											\mathcal{T}e^k (z)
						}
		=
		\displaystyle	{	\sum	_{k = 2}
									^{\text{max}(s_1, \cdots, s_r)}
									\Big (
											\sum	_{	i \in \crochet{1}{r}
														\atop
														s_i \geq k
													}
													\mathcal{Z}_{i,s_i - k}^{\seq{s}}
									\Big )
									\mathcal{T}e^k (z)
						} \ .
	$$
\end{Theorem}

\subsection	{Tables of convergent multitangent functions}

With a suitable computer algebra software, we can easily generate a table of multitangent functions up to a fixed weight.
Different tables can be computed:
\begin{enumerate}
	\item those given by the previous theorem ;
	\item those obtained from the first ones, as soon as we have downloaded a table of values
		of the multizeta values expressed in terms of special multizeta values
		(see \cite{table de MZV} for this purpose)~;
	\item those obtained from the first ones, as soon as we have downloaded a table of numerical values of the multizeta values (see \cite {BBBL} or \cite{Crandall} for this purpose)~;
	\item those obtained from the first ones, after a linearization of products of multizeta values (the choice of linearization by \symmetrelity is more natural in this context than using the symmetr\textbf{\textit {\underline {a}}}lity)~.
\end{enumerate}

Table \ref{table1} of appendix \ref{Tables}, contains some examples of such tables. Some boxes in it are empty, which means the expression is the same as in the previous column.
Let us immediately remark that there are a lot of $\Q$-linear relations between multitangents and none of them is trivial. Here are two of them which are easy to state, but the second one is still quite mysterious:
\begin{equation}
	\mathcal{T}e^{2, 1, 2}								=	0	\ .
\end{equation}

\begin{equation}
	3 \mathcal{T}e^{2, 2, 2} + 2 \mathcal{T}e^{3, 3}	=	0	\ .
\end{equation}

We will study this in detail in Section \ref{linear relations between mtgfs}.

\input {table1.sty}

\subsection	{Linear independence of monotangent functions}

Now, we will give a fundamental lemma which will be used here and there repeatedly in this article.
It will be a little incursion in the world of the arithmetic of multitangent functions, a quite obscure
world.

\begin{Lemma}	\label {linear independence of monotangent functions}
	The monotangent functions are $\C$-linearly independent.
\end{Lemma}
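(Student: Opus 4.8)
The plan is to exploit the fact that distinct monotangent functions have poles of distinct orders at the integers, so that a vanishing linear combination is forced to have a vanishing principal part at a single pole. Concretely, I would start from a finite relation $\sum_{k=1}^{N} \lambda_k \mathcal{T}e^k(z) = 0$, valid for all $z \in \C - \Z$, with $\lambda_k \in \C$, and let $N$ be the largest index for which $\lambda_N \neq 0$ (if no such index exists, the relation is already trivial). The goal is to derive a contradiction, thereby forcing all $\lambda_k$ to vanish.

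First I would localise the analysis at the point $z = 0$. For $k \geq 2$ one has $\mathcal{T}e^k(z) = \sum_{n \in \Z} (n+z)^{-k}$, and isolating the single term $n = 0$ gives $\mathcal{T}e^k(z) = z^{-k} + g_k(z)$, where $g_k(z) = \sum_{n \neq 0} (n+z)^{-k}$. The remaining series converges normally on a neighbourhood of $0$, the summation index staying away from $0$, so $g_k$ is holomorphic at $0$; hence the principal part of $\mathcal{T}e^k$ at $0$ is exactly $z^{-k}$, a pole of order precisely $k$. For $k = 1$ the function $\mathcal{T}e^1 = \pi/\tan(\pi z)$, obtained through the Eisenstein summation process, still admits the Laurent expansion $\mathcal{T}e^1(z) = z^{-1} + O(z)$ near $0$, so its principal part is the simple pole $z^{-1}$ as well, and the same statement applies uniformly for all $k \geq 1$.

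Next I would compare principal parts. Since all the $g_k$ are holomorphic at $0$, the principal part of the left-hand side $\sum_{k=1}^{N} \lambda_k \mathcal{T}e^k$ at $z = 0$ equals $\sum_{k=1}^{N} \lambda_k z^{-k}$. The relation forces this principal part to vanish identically, and the Laurent monomials $z^{-1}, z^{-2}, \ldots, z^{-N}$ are manifestly $\C$-linearly independent. Extracting the coefficient of $z^{-N}$ then yields $\lambda_N = 0$, contradicting the choice of $N$. Consequently every $\lambda_k$ is zero, which is exactly the desired $\C$-linear independence.

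There is no serious obstacle in this argument; the only points demanding a little care are the holomorphy of the tail $g_k$ at $0$, which is a direct normal-convergence estimate already guaranteed by Lemma \ref{definition_des_moules_symetrEls2}, and the fact that the divergent monotangent $\mathcal{T}e^1$ must be treated through its Eisenstein regularisation, where one simply checks that its principal part is again the simple pole $z^{-1}$. Once these are in place, the statement reduces to the elementary linear independence of the functions $z^{-k}$.
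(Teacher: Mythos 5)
Your proof is correct, but it takes a genuinely different route from the paper's. The paper argues via the differentiation property: since $\partial \mathcal{T}e^{s}/\partial z = -s\,\mathcal{T}e^{s+1}$ for monotangents, each $\mathcal{T}e^{n}$ is a constant multiple of $\partial^{n-1}\mathcal{T}e^{1}/\partial z^{n-1}$, so a nontrivial linear relation would exhibit $\mathcal{T}e^{1} = \pi/\tan(\pi z)$ as a solution of a linear differential equation with constant coefficients, hence as a $\C$-linear combination of exponential polynomials; such a function would extend analytically to all of $\C$, contradicting the poles of the cotangent at the integers. Your route instead localises at $z=0$: writing $\mathcal{T}e^{k}(z) = z^{-k} + g_k(z)$ with $g_k$ holomorphic near $0$ (the $n=0$ term isolated, the tail normally convergent on a neighbourhood of $0$), any vanishing combination has vanishing principal part $\sum_k \lambda_k z^{-k}$ at $0$, and uniqueness of Laurent coefficients forces every $\lambda_k$ to vanish — no contradiction argument is even needed. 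Both proofs are sound; yours is the more elementary and self-contained, resting only on the Laurent expansion at a single point, while the paper's has the merit of running on the differentiation property, a structural tool it exploits repeatedly elsewhere (projections, Fourier expansions), and of reducing everything to the single function $\mathcal{T}e^{1}$. It is worth noting that the paper explicitly acknowledges your approach as an alternative, remarking just before its proof that other arguments are possible ``using the Fourier coefficients of monotangent functions or by looking at the poles of monotangent functions''; your proposal is precisely the pole-based variant, carried out in full.
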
	

We give a proof based on the differentiation property of multitangent functions. 
Many different proofs are possible, for instance using the Fourier coefficients of
monotangent functions or by looking at the poles of monotangent functions, etc.

\begin{Proof}
	Let us suppose the familly $(\mathcal{T}e^{n})_{n \in \N^*}$ is not $\C$-linearly independant.

	So, there would exist an integer $r \geq 1$, a $r$-tuple of integers $(n_1 ; \cdots ; n_r)$ satisfying $0 < n_1 < \cdots < n_r$ and
	$(\lambda_{n_1} ; \cdots ; \lambda_{n_r}) \in \C^r$ such that:
	$$	\displaystyle	{	\sum	_{k = 1}
									^r
									\lambda_{n_k} \mathcal{T}e^{n_k}
							=
							0
						} \ .
	$$
	
	Using the differentiation property of multitangent functions, we would obtain:
	$$	\displaystyle	{	\sum	_{k = 1}
									^r
									\frac	{(-1)^{n_k - 1} \lambda_{n_k}}
											{(n_k - 2)!}
									\frac	{\partial^{n_k - 1} \mathcal{T}e^1}
											{\partial z^{n_k - 1}}
							=
							0
						} \ .
	$$
	
	So, $\mathcal{T}e^1$ would satisfy a linear differential equation with constant coefficients, and therefore could be written as a
	$\C$-linear combination of exponential polynomials. This would allow us to obtain an analytic continuation over all $\C$ of the cotangent function.

	Because such an analytic continuation is impossible, we have proved that monotangent functions are $\C$-free.
	\qed
\end{Proof}
		
Since we have just seen that are many linear relations between multitangent functions, we know that this lemma can not be extended to multitangent
functions. In fact, since we know the Eisenstein relation $ \mathcal{T}e^{2} (z) \mathcal{T}e^1 (z) = \mathcal{T}e^3(z) $, we can affirm that monotangent functions are not algebraically independent, even if we restrict to convergent monotangent functions:
$$	2
	\Big (
			\mathcal{T}e^3
	\Big )	^2
	=
	3 \mathcal{T}e^2 \mathcal{T}e^4
	-
	\Big (
			\mathcal{T}e^2
	\Big )	^3
	\ .
$$

\subsection	{A first approach to algebraic structure of $\mathcal{M}TGF$}

Recall that we have denoted by $\mathcal{M}TGF_{CV,p}$ the $\Q$-algebra spanned by all the functions
$\mathcal{T}e^{\seq{s}}$ , with sequences $\seq{s} \in \mathcal{S}^\star_{b,e}$ of weight $p \in \N$~,
and $\mathcal{M}ZV_{CV,p}$ the $\Q$-algebra spanned by all the numbers $\mathcal{Z}e^{\seq{s}}$ , with
sequences $\seq{s} \in \mathcal{S}^\star_b$ of weight $p$~.

So, the reduction into monotangent functions, together with the previous lemma, yields the following corollary\footnotemark:

\footnotetext	{	The notation $E \cdot \alpha$ denotes the set $\{e \cdot \alpha ; e \in E\}$~.
				}

\begin{Corollary} \label{structure of the algebra MTGF, version 1}
	For all $p \geq 2$ ,	$	\displaystyle	{	\mathcal{M}TGF_{CV , p} \subseteq \bigoplus	_{k = 2}
																								^{p}
																								\mathcal{M}ZV_{CV , p - k} \cdot \mathcal{T}e^k
												} \ .
							$
\end{Corollary}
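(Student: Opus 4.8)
The plan is to reduce the claim to each generator of $\mathcal{M}TGF_{CV,p}$ and then read off the multizeta coefficients produced by the reduction theorem, checking only that they sit in the correct weight. Since $\mathcal{M}TGF_{CV,p}$ is by definition the $\Q$-span of the functions $\mathcal{T}e^{\seq{s}}$ with $\seq{s} \in \mathcal{S}^\star_{b,e}$ and $||\seq{s}|| = p$, and the right-hand side is a $\Q$-vector space, it suffices to prove that each such generator belongs to $\bigoplus_{k = 2}^p \mathcal{M}ZV_{CV,p-k} \cdot \mathcal{T}e^k$. Before that, I would observe that this sum is genuinely direct: a relation $\sum_{k=2}^p m_k \mathcal{T}e^k = 0$ with each $m_k \in \mathcal{M}ZV_{CV,p-k} \subset \C$ forces all $m_k = 0$ by Lemma \ref{linear independence of monotangent functions}, the $\C$-linear independence of the monotangent functions.

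First I would apply Theorem \ref{reduction en monotangente} to a fixed $\seq{s} = (s_1 ; \cdots ; s_r) \in \mathcal{S}^\star_{b,e}$ of weight $p$, which yields
$$	\mathcal{T}e^{\seq{s}} (z)
	=
	\sum	_{k = 2}
			^{\max(s_1, \cdots, s_r)}
			c_k \, \mathcal{T}e^k (z) \ , \qquad
	c_k
	=
	\sum	_{	i \in \crochet{1}{r}
				\atop
				s_i \geq k
			}
			\mathcal{Z}_{i , s_i - k}^{\seq{s}} \ .
$$
Because every $s_i$ satisfies $s_i \leq ||\seq{s}|| = p$, the index $k$ ranges inside $\crochet{2}{p}$, so after setting $c_k = 0$ for $k > \max(s_i)$ the expansion has exactly the shape $\sum_{k=2}^p c_k \mathcal{T}e^k$ required by the right-hand side. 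It then remains only to check that each coefficient $c_k$ lies in $\mathcal{M}ZV_{CV,p-k}$.

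This last verification is the technical heart of the argument and amounts to weight bookkeeping on the formula \eqref{Ziks} for $\mathcal{Z}_{i,m}^{\seq{s}}$. There the summation constraint $k_1 + \cdots + \widehat{k_i} + \cdots + k_r = m$, together with $||\seq{s}^{<i}|| + ||\seq{s}^{>i}|| = p - s_i$, shows that each product $\mathcal{Z}e^{s_r + k_r, \cdots, s_{i+1} + k_{i+1}} \mathcal{Z}e^{s_1 + k_1, \cdots, s_{i-1} + k_{i-1}}$ has total weight $(p - s_i) + m$; taking $m = s_i - k$ makes this weight equal to $p - k$, uniformly in $i$ and in the multi-index $\seq{k}$. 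Moreover $s_1, s_r \geq 2$ guarantees that the leading entry of each factor is $\geq 2$, so only convergent multizeta values (those indexed by $\mathcal{S}^\star_b$) occur, and ${}^iB_{\seq{k}}^{\seq{s}} \in \Z$. Finally, invoking the \symmetrelity of $\mathcal{Z}e^\p$ (which preserves weight and keeps the leading entry $\geq 2$), each such product linearizes into a $\Q$-linear combination of convergent multizeta values of weight $p-k$; hence $\mathcal{Z}_{i,s_i-k}^{\seq{s}}$, and with it $c_k$, lies in $\mathcal{M}ZV_{CV,p-k}$.

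I expect no serious obstacle, since the statement is a direct corollary of the reduction theorem: the only care needed is the grading argument above (tracking that the multizeta coefficient attached to $\mathcal{T}e^k$ sits in weight $p-k$) together with the appeal to the linear independence of monotangents that makes the sum direct. The one point deserving explicit attention is the boundary cases $i = 1$ and $i = r$, where one of the two multizeta factors is indexed by the empty sequence and contributes the weight-$0$ scalar $\mathcal{Z}e^\emptyset = 1 \in \mathcal{M}ZV_{CV,0}$; this is harmless but should be stated so that the weight accounting remains valid throughout.
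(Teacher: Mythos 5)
Your proof is correct and follows essentially the same route as the paper: apply Theorem \ref{reduction en monotangente} to each generator to obtain the containment in the sum, and invoke Lemma \ref{linear independence of monotangent functions} to see that the sum is direct. The weight bookkeeping and the linearization of products via the \symmetrelity of $\mathcal{Z}e^\p$ that you spell out are exactly what the paper relies on implicitly (it records these observations in the discussion preceding the reduction theorem), so your write-up is just a more detailed version of the paper's two-line argument.
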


\begin{Proof}
	The reduction process gives:
	$$	\displaystyle	{	\mathcal{M}TGF_{CV , p} \subseteq \sum	_{k = 2}
																	^{p}
																	\mathcal{M}ZV_{CV , p - k} \cdot \mathcal{T}e^k
						} \ .
	$$
	But, from the previous lemma, it is quite clear that:
	$$	\displaystyle	{	\sum	_{k = 2}
									^{p}
									\mathcal{M}ZV_{CV , p - k} \cdot \mathcal{T}e^k
							=
							\bigoplus	_{k = 2}
										^{p}
										\mathcal{M}ZV_{CV , p - k} \cdot \mathcal{T}e^k
						}
						\ .
	$$
	\qed
\end{Proof}

\subsection	{Consequences}

An easy consequence of the reduction in monotangent functions is that each property on multitangent functions will have
implications on multizeta values. The process will often be like this: we express the property we
are studying in terms of multitangent functions, then we reduce all the multitangents into
monotangent functions and finally use the $\C$-linear independence of the monotangent functions
to conclude something on multizeta values.
\\

The following diagram will evolve throughout the article to explain how the multitangent functions are linked to the multizeta values. 

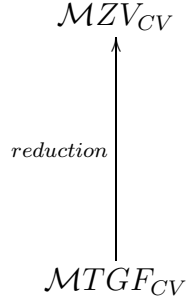
\begin{figure}[h]
	\centering
	$	\xymatrix	{	\mathcal{M}ZV_{CV}
						\\
						\\
						\\
						\mathcal{M}TGF_{CV}
						\ar[uuu]	^{reduction}
					}
	$
	\caption	{Links between multizeta values and multitangent functions}
	\label{figure une de liens entre MZV et MTGF}
\end{figure}

To illustrate this idea, let us show how a computation on multitangent functions, which will be done
in \S \ref{calculs explicite}, gives us a computation on multizeta values. For this purpose, let us consider the following formal power series:
$$	\begin{array}{lllllll}
		\mathcal{Z}_2		&=&	\displaystyle	{	\sum	_{p \geq 0}
															\mathcal{Z}e^{2^{[p]}} X^p
												}
		&,&
		\mathcal{T}_2 (z)	&=&	\displaystyle	{	\sum	_{p \geq 0}
															\mathcal{T}\!e^{2^{[p]}} (z) X^p
												} \ .
	\end{array}
$$
Let us remind that in this definition, the sequence $2^{[p]}$ is defined to be $(\underbrace{2 ; \cdots ; 2}_{p \text{ times}})$ (See Appendix \ref{Elements de calcul moulien debut} , \S \ref {AppendixNotations}).

\noindent
Property \ref{Property2Intro} will show that, in $\mathcal{M}TGF_{CV}\!\!\left[\!\!\left[\sqrt{X}\right]\!\!\right]$, we have:

\begin{equation}	\label{T21}
		\mathcal{T}_2 (z)
		=
		\displaystyle	{	1 + \sum	_{k \geq 1}
										\frac	{2^{2k - 1} \pi^{2k - 2}}	{(2k)!}
										X^k \mathcal{T}\!e^2 (z)
						}
		=
		\displaystyle	{	1 + \frac	{\ch (2 \pi \sqrt{X}) - 1}	{2\pi^2} \mathcal{T}\!e^2 (z) }\ .
\end{equation}

\noindent
On the other hand, the reduction into monotangent functions implies
\begin{equation}	\label{T22}
	\mathcal{T}_2 (z) = 1 + X {\mathcal{Z}_2}^2 \mathcal{T}\!e^2 (z) \ ,
\end{equation}
because we have, for all $p \in \N$:
$	\displaystyle	{	\mathcal{T}\!e^{2^{[p]}}	=	\sum	_{i = 1}
																^p
																\mathcal{Z}e^{2^{[p - i]}} \mathcal{Z}e^{2^{[i - 1]}}
																\mathcal{T}\!e^2 (z)  \ .
					}
$

\noindent
From the last two equations, we therefore obtain:
$$	\displaystyle	{	\mathcal{Z}_2 	= 	\frac	{\sh^2 (\pi \sqrt{X})}	{\pi \sqrt{X}}	} \ , 	$$
that is:
$$	\forall n \in \N \ , \ \mathcal{Z}e^{2^{[n]}} = \displaystyle	{	\frac	{\pi^{2n}}	{(2n + 1) !}	}\ .	$$
\section	{Projection onto multitangent functions}
\label{projection}
\subsection	{A second approach to the algebraic structure of $\mathcal{M}TGF$}

We have just seen that for all $p \geq 2$:
$$	\displaystyle	{	\mathcal{M}TGF_{CV , p} \subseteq \bigoplus	_{k = 2}
										^{p}
										\mathcal{M}ZV_{CV , p - k} \cdot \mathcal{T}e^k
			}\ .
$$
The table of convergent multitangents that we have established up to weight $18$ shows that
the inclusion is in fact an equality for $p \leq 18$. This is why we conjecture that the equality
holds for all $p \in \N$~.

\begin{Conjecture}	\label{structure of the algebra MTGF, version 2}
	For all $p \geq 2$ ,
	$	\displaystyle	{	\mathcal{M}TGF_{CV , p} = \bigoplus	_{k = 2}
										^{p}
										\mathcal{M}ZV_{CV , p - k} \cdot \mathcal{T}e^k
				} \ .
	$
\end{Conjecture}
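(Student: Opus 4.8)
The inclusion $\mathcal{M}TGF_{CV,p}\subseteq\bigoplus_{k=2}^p\mathcal{M}ZV_{CV,p-k}\cdot\mathcal{T}e^k$ is Corollary~\ref{structure of the algebra MTGF, version 1}, so the whole content of the conjecture is the reverse inclusion: every generator $\mathcal{Z}e^{\seq\sigma}\cdot\mathcal{T}e^k$ with $2\le k\le p$ and $\|\seq\sigma\|=p-k$ must be a $\Q$-linear combination of convergent multitangent functions. By the equivalence in the projection theorem it is enough to treat the single case $k=2$, and the mechanism behind that reduction is worth isolating. The space $\mathcal{M}TGF_{CV}$ is stable under $\frac{d}{dz}$: by the differentiation property, $\frac{d}{dz}\mathcal{T}e^{\seq s}$ is a $\Z$-combination of multitangents obtained by incrementing one entry of $\seq s\in\mathcal{S}^\star_{b,e}$, which stays in $\mathcal{S}^\star_{b,e}$, and this raises the weight by one. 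As a multizeta is constant, $\frac{d}{dz}\big(\mathcal{Z}e^{\seq\sigma}\mathcal{T}e^k\big)=-k\,\mathcal{Z}e^{\seq\sigma}\mathcal{T}e^{k+1}$, so $\mathcal{Z}e^{\seq\sigma}\mathcal{T}e^2\in\mathcal{M}TGF_{CV}$ forces $\mathcal{Z}e^{\seq\sigma}\mathcal{T}e^k\in\mathcal{M}TGF_{CV}$ for all $k\ge2$ after $k-2$ differentiations. Together with the fact that products of multizetas are again multizetas, this is exactly the step $(3)\Rightarrow(2)\Rightarrow(1)$. Thus everything comes down to statement $(3)$: $\mathcal{Z}e^{\seq\sigma}\mathcal{T}e^2\in\mathcal{M}TGF_{CV,\,\|\seq\sigma\|+2}$.

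For this core statement I would try to invert the reduction of Theorem~\ref{reduction en monotangente} explicitly. Given $\seq\sigma\in\mathcal{S}_b^\star$, the multitangent $\mathcal{T}e^{2,\overset{\leftarrow}{\seq\sigma}}$ lies in $\mathcal{S}^\star_{b,e}$, and in its reduction the coefficient $\mathcal{Z}_{1,0}^{\seq s}$ of \eqref{Ziks} contributes precisely $(-1)^{\|\seq\sigma\|}\,\mathcal{Z}e^{\seq\sigma}\,\mathcal{T}e^2$, while every remaining contribution to the $\mathcal{T}e^2$-slot, and to the slots $\mathcal{T}e^k$ with $k\ge3$, is again of the admissible shape $(\text{multizeta})\cdot\mathcal{T}e^{\ge2}$. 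One would then set up an induction isolating this leading term and absorbing all the correction terms into $\mathcal{M}TGF_{CV}$ by the induction hypothesis. A parallel, more analytic route is to use the trifactorisation \eqref{trifactorisation generale}, which writes each multitangent through the Hurwitz functions $\mathcal{H}e_\pm^\bullet$ with $\mathcal{H}e_+^{\seq s}(0)=\mathcal{Z}e^{\seq s}$, and to read the projection off the $\mathcal{H}e_\pm$ side, in the spirit of the generating-series identities \eqref{T21}--\eqref{T22} which dispatch the whole family $\mathcal{T}e^{2^{[p]}}$ at one stroke.

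The point where this plan stalls, and the reason the statement is only a conjecture, is the closing of that induction: one must know that the isolated leading coefficient survives after \emph{all} hidden $\Q$-linear relations among multizeta values have been taken into account. Equivalently, the reduction map $\rho_p:\mathcal{M}TGF_{CV,p}\hookrightarrow\bigoplus_{k=2}^p\mathcal{M}ZV_{CV,p-k}$ is injective by Lemma~\ref{linear independence of monotangent functions}, and the conjecture is exactly its surjectivity, i.e. the dimension equality $\dim_\Q\mathcal{M}TGF_{CV,p}=\sum_{k=2}^{p}\dim_\Q\mathcal{M}ZV_{CV,p-k}$. The inequality $\le$ is Corollary~\ref{structure of the algebra MTGF, version 1}; the reverse inequality requires a \emph{lower} bound on $\dim_\Q\mathcal{M}TGF_{CV,p}$, that is, a guarantee that enough multitangents of weight $p$ are $\Q$-independent. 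Since the dimensions of the spaces $\mathcal{M}ZV_{CV,p}$ are themselves only conjecturally known from below, any such lower bound is entangled with these open arithmetic questions, which is precisely why the equality can at present only be verified weight by weight, here up to $p=18$, rather than proved in general.
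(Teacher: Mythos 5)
Your account is correct and follows essentially the same route as the paper: the statement is a conjecture there, with only the inclusion $\subseteq$ actually proved (Corollary \ref{structure of the algebra MTGF, version 1}), the reduction of the whole problem to the single projection statement $\mathcal{Z}e^{\seq{\pmb{\sigma}}}\mathcal{T}e^2\in\mathcal{M}TGF_{CV,||\seq{\pmb{\sigma}}||+2}$ carried out by differentiation exactly as in Properties \ref{prop-proj-1} and \ref{prop-proj-2}, and the remaining core --- which, as you observe, amounts via Lemma \ref{linear independence of monotangent functions} to a lower bound on $\dim_\Q \mathcal{M}TGF_{CV,p}$, i.e.\ to the rank of the linear system the paper sets up in its abstract formalization of the method --- left open and verified only up to weight $18$. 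You have established everything the paper establishes and correctly identified why the equality itself cannot at present be proved.
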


\subsection	{A structure of $\mathcal{M}ZV$-module}

If Conjecture \ref{structure of the algebra MTGF, version 2} is true, then
$\mathcal{M}ZV_{CV,p} \cdot \mathcal{T}e^q$ can be seen as a subset of $\mathcal{M}TGF_{CV, p + q}$
for all integers $(p ; q) \in \N^2$~. This leads us to the following conjecture:

\begin{Conjecture}	\label{MZV-module structure}
	$1$. For all sequences $\seq{\pmb{\sigma}} \in \mathcal{S}_b^\star$ and  $\seq{s} \in \mathcal{S}^\star_{b,e}$ , we have:
	$$\mathcal{Z}e^{\seq{\pmb{\sigma}}} \mathcal{T}e^{\seq{s}} \in \mathcal{M}TGF_{CV , ||\seq{\pmb{\sigma}}|| + ||\seq{s}||} \ .$$
	$2$. $\mathcal{M}TGF_{CV}$ is a $\mathcal{M}ZV_{CV}$-module.
\end{Conjecture}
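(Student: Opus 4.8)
The statement is a conjecture, so the goal here is not an unconditional proof but a \emph{reduction}: I would show that Conjecture~\ref{MZV-module structure} is equivalent to Conjecture~\ref{structure of the algebra MTGF, version 2}, hence to the entire chain of equivalent assertions in the theorem on projection onto multitangent functions stated above, so that a single open inclusion governs everything and the numerical verification up to weight~$18$ applies verbatim. The first observation is that point~$2$ is, word for word, the assertion that $\mathcal{M}TGF_{CV}$ is a $\mathcal{M}ZV_{CV}$-module, which is precisely the second of the three equivalent conjectural statements of that theorem. Thus point~$2$ requires nothing new: it is already known to be equivalent to Conjecture~\ref{structure of the algebra MTGF, version 2}. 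It therefore suffices to establish the equivalence of point~$1$ with point~$2$.

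For the implication point~$2 \Rightarrow$ point~$1$, I would argue through the reduction into monotangent functions. By Theorem~\ref{reduction en monotangente} one writes $\mathcal{T}e^{\seq{s}} = \sum_{k=2}^{M} z_k \mathcal{T}e^k$, where $M = \max_i s_i$ and each coefficient $z_k$ lies in $\mathcal{M}ZV_{CV,\,||\seq{s}||-k}$; a direct weight count in \eqref{Ziks} indeed gives weight $||\seq{s}||-k$ for the coefficient of $\mathcal{T}e^k$. Multiplying by $\mathcal{Z}e^{\seq{\pmb{\sigma}}}$ and using that the stuffle product of multizeta values preserves weight, so that $\mathcal{Z}e^{\seq{\pmb{\sigma}}} z_k \in \mathcal{M}ZV_{CV,\,||\seq{\pmb{\sigma}}||+||\seq{s}||-k}$, I obtain $\mathcal{Z}e^{\seq{\pmb{\sigma}}}\mathcal{T}e^{\seq{s}} \in \bigoplus_{k=2}^{P} \mathcal{M}ZV_{CV,\,P-k}\cdot\mathcal{T}e^k$ with $P = ||\seq{\pmb{\sigma}}||+||\seq{s}||$. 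Since point~$2$ is equivalent to Conjecture~\ref{structure of the algebra MTGF, version 2}, this direct sum is exactly $\mathcal{M}TGF_{CV,P}$, which is point~$1$.

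The converse, point~$1 \Rightarrow$ point~$2$, is immediate: point~$1$ says every generator $\mathcal{Z}e^{\seq{\pmb{\sigma}}}\mathcal{T}e^{\seq{s}}$ of $\mathcal{M}ZV_{CV}\cdot\mathcal{M}TGF_{CV}$ lands in $\mathcal{M}TGF_{CV}$, and by $\Q$-bilinearity this yields $\mathcal{M}ZV_{CV}\cdot\mathcal{M}TGF_{CV}\subseteq\mathcal{M}TGF_{CV}$; the module axioms are then automatic, $\mathcal{M}ZV_{CV}$ being a $\Q$-algebra acting by multiplication of functions.

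The genuine obstacle is not in any of these implications but in the single conjecture to which they all reduce: the reverse inclusion of Corollary~\ref{structure of the algebra MTGF, version 1}, that is, the surjectivity (``projection'') statement asserting that every $\mathcal{Z}e^{\seq{\pmb{\sigma}}}\mathcal{T}e^k$ is actually a $\Q$-linear combination of multitangent functions. The reduction process only delivers the inclusion $\mathcal{M}TGF_{CV,p}\subseteq\bigoplus_{k=2}^{p}\mathcal{M}ZV_{CV,p-k}\cdot\mathcal{T}e^k$, and it is the opposite inclusion that resists proof; this is where the weight-$18$ computation supplies the only available evidence. Accordingly, the contribution I can make here is the clean equivalence above, which collapses the two points of Conjecture~\ref{MZV-module structure} onto the previously isolated open inclusion rather than onto two independent difficulties.
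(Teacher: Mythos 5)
Your overall aim --- reducing the conjecture to Conjecture~\ref{structure of the algebra MTGF, version 2} --- is the right one, and your central computation (reduction into monotangent functions, then linearization of the products of multizeta values by the stuffle, giving $\mathcal{Z}e^{\seq{\pmb{\sigma}}}\mathcal{T}e^{\seq{s}} \in \bigoplus_{k=2}^{P}\mathcal{M}ZV_{CV,P-k}\cdot\mathcal{T}e^k$ with $P = ||\seq{\pmb{\sigma}}||+||\seq{s}||$) is exactly the one in the paper. However, there is a genuine gap in how you organize the equivalence. You take as ``already known'' that point~$2$ alone is equivalent to Conjecture~\ref{structure of the algebra MTGF, version 2}, citing the theorem on projection stated in the introduction. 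That theorem is nothing but a summary of Properties~\ref{prop-proj-1} and~\ref{prop-proj-2}, i.e.\ of the very equivalence you are being asked to establish, so invoking it here is circular.

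The circularity is not harmless, because the implication ``point~$2 \Rightarrow$ point~$1$'' that you rest on it is precisely the step your argument cannot deliver. Point~$2$ carries no weight control: from it alone you can only conclude $\mathcal{Z}e^{\seq{\pmb{\sigma}}}\mathcal{T}e^{\seq{s}} \in \mathcal{M}TGF_{CV}$, not membership in the graded piece $\mathcal{M}TGF_{CV,P}$. Passing from the ungraded to the graded statement would require knowing that $\mathcal{M}TGF_{CV} = \bigoplus_{p}\mathcal{M}TGF_{CV,p}$, i.e.\ the absence of $\Q$-linear relations between multitangent functions of different weights --- itself an open conjecture in the paper (Conjecture~\ref{Conjecture MTGF}) and not available as input. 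The paper avoids this trap by never claiming point~$2 \Rightarrow$ point~$1$: it proves Conjecture~\ref{structure of the algebra MTGF, version 2} $\Rightarrow$ point~$1$ (your computation, with the final inclusion supplied by the conjectured \emph{equality} applied in weight $P$), observes that point~$1 \Rightarrow$ point~$2$ trivially, and for the converse direction uses point~$1$ --- applied to monotangents, hence with weight control --- together with Corollary~\ref{structure of the algebra MTGF, version 1} to recover the conjectured equality. If you restate your argument with point~$1$, rather than point~$2$, as the pivot of the equivalence, the circularity disappears and your proof coincides with the paper's.
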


Of course, the second point is a direct consequence of the first one, but it is the one that
interests us in theory ; in practice, we will favour the first one because of the weight
homogeneity. What is important is the links between the conjectures
\ref{structure of the algebra MTGF, version 2} and \ref{MZV-module structure}:

\begin{Property} \label{prop-proj-1}
	Conjecture \ref{structure of the algebra MTGF, version 2} is equivalent to Conjecture \ref{MZV-module structure}.
\end{Property}

\begin{Proof}
	$1$. Let us suppose that Conjecture \ref{structure of the algebra MTGF, version 2} holds.
	
	Thus, from the reduction into monotangent functions and the \symmetrelity of the mould $\mathcal{Z}e^\p$, it follows, for $(\seq{\pmb{\sigma}} ; \seq{s}) \in \mathcal{S}_b^\star \times \mathcal{S}^\star_{b,e}$, that
	$$	\begin{array}{lll}
			\mathcal{Z}e^{\seq{\pmb{\sigma}}} \mathcal{T}e^{\seq{s}}
			&=&
			\hspace{-0.6cm}
			\displaystyle	{	\sum	_{k = 2}
							^{\max(s_1 ; \cdots ; s_r)}
							\left (
								\displaystyle	{	\sum	_{i}
												c_i
												\mathcal{Z}e^{\seq{s}_{i,k}}
												\mathcal{Z}e^{\seq{\pmb{\sigma}}}
										}
							\right )
							\mathcal{T}e^{k}
					} , \text{where }	\left \{
									\begin{array}{@{}l}
										\seq{s}_{i,k} \in \mathcal{S}_b^\star \ ,
										\\
										||\seq{s}_{i,k}|| \!=\! ||\seq{s}|| - k \ ,
									\end{array}
								\right.
		\end{array}
	$$
	$$	\begin{array}{lll}
			\phantom{\mathcal{Z}e^{\seq{\pmb{\sigma}}} \mathcal{T}e^{\seq{s}}}
			&=&
			\hspace{-0.6cm}
			\displaystyle	{	\sum	_{k = 2}
							^{\max(s_1 ; \cdots ; s_r)}
							\left (
								\displaystyle	{	\sum	_{i}
												c_i' \mathcal{Z}e^{\seq{\pmb{\sigma}}_{i,k}}
										}
							\right )
							\mathcal{T}e^{k}
					} , \text{where }	\left \{
									\begin{array}{@{}l}
										\seq{\pmb{\sigma}}_{i,k} \in \mathcal{S}_b^\star \ .
										\\
										||\seq{\sigma}_{i,k}|| \!=\! ||\seq{s}|| + ||\sigma|| - k \ .
									\end{array}
								\right.
		\end{array}
	$$
	
	According to Conjecture~\ref{structure of the algebra MTGF, version 2}, we are now able to write
	each term of the form $\mathcal{Z}e^{\seq{\pmb{\sigma}}_{i,k}} \mathcal{T}e^k$ in
	$\mathcal{M}TGF_{CV , ||\seq{\pmb{\sigma}}|| + ||\seq{s}||}$ that is,
	$\mathcal{Z}e^{\seq{\pmb{\sigma}}} \mathcal{T}e^{\seq{s}} \in \mathcal{M}TGF_{CV , ||\seq{\pmb{\sigma}}|| + ||\seq{s}||}$~. This concludes that $\mathcal{M}TGF_{CV}$ is a $\mathcal{M}ZV_{CV}$-module.
	\\
	\\
	$2$. According to Conjecture~\ref{MZV-module structure}, for all sequences
	$\seq{s} \in \mathcal{S}^\star_b$ and all integer $k \geq 2$, we are able to express
	$\mathcal{Z}e^{\seq{s}} \mathcal{T}e^k$ in $\mathcal{M}TGF_{CV , ||\seq{s}|| + k}$~. 
	In other words, for all $p \geq 2$:
	$$	\displaystyle	{	\bigoplus	_{k = 2}
							^{p}
							\mathcal{M}ZV_{CV , p - k} \cdot \mathcal{T}e^k
					\subseteq
					\mathcal{M}TGF_{CV, p}
				} \ .
	$$
	We now conclude to the equality	$$	\displaystyle	{	\mathcal{M}TGF_{CV, p}
									=
									\bigoplus	_{k = 2}
											^{p}
											\mathcal{M}ZV_{CV , p - k} \cdot \mathcal{T}e^k
								}
					$$ for all $p \geq 2$ from Corollary \ref{structure of the algebra MTGF, version 1}
	\qed
\end{Proof}

The first point of Conjecture \ref{MZV-module structure} can be a bit reduced. We will restrict to the following statement:

\begin{Conjecture}	\label{new conjecture on projection}
	For all sequences $\seq{\pmb{\sigma}} \in \mathcal{S}_b^\star$ , 
	$\mathcal{Z}e^{\seq{\pmb{\sigma}}} \mathcal{T}e^2 \in \mathcal{M}TGF_{CV,||\seq{\pmb{\sigma}}|| + 2}$ .
\end{Conjecture}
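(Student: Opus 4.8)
The plan is to convert the statement, via the reduction into monotangent functions, into a surjectivity question and then to solve it constructively. By Theorem \ref{reduction en monotangente} the reduction exhibits $\mathcal{M}TGF_{CV,p}$ as a subspace of $\bigoplus_{k=2}^{p}\mathcal{M}ZV_{CV,p-k}\cdot\mathcal{T}e^k$, and by Lemma \ref{linear independence of monotangent functions} the monotangents $\mathcal{T}e^k$ are $\C$-linearly independent, so the coefficients of distinct $\mathcal{T}e^k$ never interfere. With $p=||\seq{\pmb{\sigma}}||+2$, proving $\mathcal{Z}e^{\seq{\pmb{\sigma}}}\mathcal{T}e^2\in\mathcal{M}TGF_{CV,p}$ is therefore \emph{exactly} the problem of finding rationals $\lambda_{\seq{s}}$ such that the reduction of $\sum_{\seq{s}}\lambda_{\seq{s}}\mathcal{T}e^{\seq{s}}$ (the sum over $\seq{s}\in\mathcal{S}^\star_{b,e}$ of weight $p$) has $\mathcal{T}e^2$-coefficient equal to $\mathcal{Z}e^{\seq{\pmb{\sigma}}}$ and vanishing $\mathcal{T}e^k$-coefficient for every $k\geq 3$. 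Granting Property \ref{prop-proj-1}, establishing this for all $\seq{\pmb{\sigma}}$ is equivalent to the full module structure of Conjecture \ref{MZV-module structure} and to the equality of Conjecture \ref{structure of the algebra MTGF, version 2}.

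Second, I would look for the combination by starting from the natural candidate $\mathcal{T}e^{\seq{\pmb{\sigma}},2}$, which is convergent since $(\seq{\pmb{\sigma}},2)\in\mathcal{S}^\star_{b,e}$ whenever $\seq{\pmb{\sigma}}\in\mathcal{S}^\star_b$, and whose weight is $p$. Evaluating the reduction coefficients \eqref{Ziks} at the final index $i=l(\seq{\pmb{\sigma}})+1$ with all auxiliary $k_l=0$ produces precisely the term $\mathcal{Z}e^{\seq{\pmb{\sigma}}}\,\mathcal{T}e^2$; the remaining contributions of $\mathcal{T}e^{\seq{\pmb{\sigma}},2}$ are products of strictly shorter multizeta values with various $\mathcal{T}e^k$, which relinearise through the \symmetrelity of $\mathcal{Z}e^\p$. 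The idea is then to cancel these unwanted terms against multitangents indexed by shorter or lexicographically smaller sequences of the same weight, using Property \ref{firstProperties}(4) to remain inside $\mathcal{M}TGF_{CV}$ while forming products, and to organise the whole thing as an induction on the length $l(\seq{\pmb{\sigma}})$ with the single-integer cases $\mathcal{Z}e^{s}\mathcal{T}e^2$ (read off from the weight-$4$ and weight-$5$ entries of Table \ref{table1}) as the anchor.

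The hard part is the cancellation step, and I expect it to be the genuine obstruction rather than a routine bookkeeping matter. Forcing all $\mathcal{T}e^k$-components with $k\geq 3$ to vanish while fixing the $\mathcal{T}e^2$-component is, after the identifications above, exactly the surjectivity of the reduction onto $\mathcal{M}ZV_{CV,p-2}\cdot\mathcal{T}e^2$; by Corollary \ref{structure of the algebra MTGF, version 1} this forces the dimension identity $\dim_\Q\mathcal{M}TGF_{CV,p}=\sum_{m=0}^{p-2}\dim_\Q\mathcal{M}ZV_{CV,m}$, i.e. a precise count of the $\Q$-linear relations among multitangents. I can verify this count by machine (the checks go up to weight $18$) but have no uniform proof, since it is entangled with the still-open structure of the algebra of multizeta values. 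My realistic expectation is therefore to prove the statement only for tractable families such as $\seq{\pmb{\sigma}}=s^{[n]}$ by the generating-series device used for \eqref{T21}–\eqref{T22}, and to leave the general case as the conjecture it is.
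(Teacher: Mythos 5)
Your proposal is sound and matches the paper's own treatment of this statement: it is an open conjecture, and the paper, exactly like you, supports it only by converting it—via the reduction into monotangent functions and Lemma \ref{linear independence of monotangent functions}—into the problem of isolating the $\mathcal{T}e^2$-components of the weight-$(||\seq{\pmb{\sigma}}||+2)$ reductions (with $\mathcal{T}e^{\seq{\pmb{\sigma}},2}$ indeed supplying $\mathcal{Z}e^{\seq{\pmb{\sigma}}}\mathcal{T}e^2$ as its leading term), setting up the resulting rational linear system, checking its rank by machine up to weight $18$, and proving the equivalence with Conjectures \ref{structure of the algebra MTGF, version 2} and \ref{MZV-module structure} through the differentiation property. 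Your concluding admission that the cancellation/rank step is the genuine obstruction, with no uniform proof available and only special families (such as $\seq{\pmb{\sigma}}=s^{[n]}$, via the generating-series computation) tractable, is precisely the paper's own position.
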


From the differentiation property, it is easy to see that we have not lost information.

\begin{Property}\label{prop-proj-2}
	Conjecture \ref{new conjecture on projection} is equivalent to Conjectures \ref{structure of the algebra MTGF, version 2} and \ref{MZV-module structure}.
\end{Property}

\begin{Proof}
	It is sufficient to show that Conjecture \ref{new conjecture on projection} implies the first point of
	Conjecture \ref{MZV-module structure}.	\\
	Let us assume that Conjecture \ref{new conjecture on projection} holds. We are able
	to express each expression of the type $\mathcal{Z}e^{\seq{\pmb{\sigma}}} \mathcal{T}e^2$ in
	$\mathcal{M}TGF_{CV,||\seq{\pmb{\sigma}}|| + 2}$ . By differentiation, we explicitly find an expression of
	$\mathcal{Z}e^{\seq{\pmb{\sigma}}} \mathcal{T}e^k$ in $\mathcal{M}TGF_{CV,||\seq{\pmb{\sigma}}|| + k}$
	for all $k \geq 2$~. Using the reduction into monotangent functions, we have proved the first point of
	Conjecture \ref{MZV-module structure}, which is the desired conclusion. 
	\qed
\end{Proof}

\subsection	{About the projection of a multizeta value onto multitangent functions}

For $k \geq 2$, an explicit expression of
$\mathcal{Z}e^{\seq{\pmb{\sigma}}} \mathcal{T}e^k \in \mathcal{M}TGF_{CV,||\seq{\pmb{\sigma}}|| + k}$
will be called a projection of the multizeta $\mathcal{Z}e^{\seq{\pmb{\sigma}}}$ onto the space of
multitangent functions, or a projection onto multitangents to shorten the terminology.

\subsubsection	{A converse of the reduction into monotangent functions}

The relations of projections onto multitangent functions can be considered as a converse of the reduction into monotangent functions in the following sense:
according to Conjecture \ref{MZV-module structure}, each property that will be proven on multizeta values will have implications on multitangent functions.

The process will often be like this: we express the fact under consideration in terms of multizeta functions, then we multiply the different relations by a monotangent function and finally project all of these onto multitangent functions to conclude something on multitangent functions.
\\

The diagram on the figure \ref{figure deux de liens entre MZV et MTGF} completes the figure \ref{figure une de liens entre MZV et MTGF}. An arrow indicates a link between two algebras while an arrow in dotted lines indicates a hypothetical link.

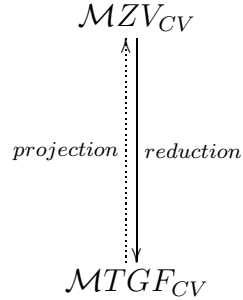
\begin{figure}[h]
	\centering
	$	\xymatrix	{	\mathcal{M}ZV_{CV}
						\ar[ddd]				^{reduction}
						\\
						\\
						\\
						\mathcal{M}TGF_{CV}
						\ar @{.>} @<4pt> [uuu]	^{projection}
					}
	$
	\caption	{Links between multizeta values and multitangent functions}
	\label{figure deux de liens entre MZV et MTGF}
\end{figure}

\subsubsection{How to find a projection onto multitangents in practice?}

The idea is to proceed by induction on the weight of $\seq{\pmb{\sigma}}$. Let us suppose that we know how to express $\mathcal{Z}e^{\seq{s}} \mathcal{T}e^{2}$ in $\mathcal{M}TGF_{CV,p + 2}$ for all sequences $\seq{s}$ of weight $p < ||\seq{\pmb{\sigma}}||$~. According to the differentiation property, we know how to express $\mathcal{Z}e^{\seq{s}} \mathcal{T}e^{||\seq{\pmb{\sigma}}|| - ||\seq{s}|| + 2}$ in $\mathcal{M}TGF_{CV,||\seq{\pmb{\sigma}}|| + 2}$~.

We write the reduction into monotangent functions of $\mathcal{T}e^{\seq{s}}$ , for all sequences $\seq{s} \in \mathcal{S}^\star_{b,e}$ of weight $||\seq{\pmb{\sigma}}|| + 2$, as:
$$	\Big (
			\sum	_{i \in E(\seq{s})}
					\mathcal{Z}e^{\seq{s}^i}
	\Big )
	\mathcal{T}e^{2}
	+
	\cdots
	\ .
$$

Here, the set $E(\seq{s})$ is finite and has only sequences of $\mathcal{S}_b^\star$ of weight $||\seq{\pmb{\sigma}}||$ ; the dotted stand for some elements of $\mathcal{M}TGF_{CV,||\seq{\pmb{\sigma}}|| + 2}$~. In order to express $\mathcal{Z}e^{\seq{\pmb{\sigma}}} \mathcal{T}e^2$ in $\mathcal{M}TGF_{CV,||\seq{\pmb{\sigma}}|| + 2}$~, the idea is to find out a linear relation with rational coefficients between
$	\displaystyle	{	\sum	_{i \in E(\seq{s})}
								\mathcal{Z}e^{\seq{s}^i}
					}
$ which is equal to $\mathcal{Z}e^{\seq{\pmb{\sigma}}}$ .

\subsubsection{Some examples}

Just before giving some examples, let us introduce some notations.
We denote by $\mathcal{M}TGF_2 = Vect_\Q (\mathcal{T}e^{\seq{s}})	_{	\seq{s} \in \text{seq} (\N_2)	}$, the vector space spanned by multitangents with valuation at least $2$ (in general, the valuation of a sequence is the smallest integer composing this sequence ; here this means that all the $s_i$'s are greater or equal to $2$) and by
$\mathcal{M}TGF_{2,p} = Vect_\Q (\mathcal{T}e^{\seq{s}})	_{	\seq{s} \in \text{seq} (\N_2)
																\atop
																||\seq{s}|| = p
															}
$ the subspace of convergent multitangent functions with valuation at least $2$ and weight $p$, where $\N_2 = \{n \in \N \, , \, n \geq 2\}$~.

As an example of what we have explained in the previous section, let us express $\mathcal{Z}e^{\seq{s}} \mathcal{T}e^{2}$ in
$\mathcal{M}TGF_{2,||\seq{s}|| + 2}$, for all sequences satisfying $||\seq{s}|| \leq 5$:
\\

The table \ref{table1} gives us 
$	\mathcal{Z}e^2 \mathcal{T}e^{2}
		=
		\displaystyle	{	\frac	{1}	{2}
							\mathcal{T}e^{2,2}
						}
$ .
\\

Moreover, we know that all multizeta values of weight $3$ and $4$ can be respectively expressed in terms of $\mathcal{Z}e^3$ and $(\mathcal{Z}e^2)^2$. Hence, we only have to consider the quantities $\mathcal{Z}e^{3} \mathcal{T}e^{2}$ and $(\mathcal{Z}e^{2})^2 \mathcal{T}e^{2}$~.

The table \ref{table1} gives us the reduction into monotangent functions of weight $5$ and $6$. We can, for example, choose the expressions:
$$	\begin{array}{rcl}
		\mathcal{Z}e^3 \mathcal{T}e^{2}
		&=&
		\displaystyle	{	\frac	{1}	{6}
							\mathcal{T}e^{3,2}
							-
							\frac	{1}	{6}
							\mathcal{T}e^{2,3}
						}  .
		\\
		(\mathcal{Z}e^2)^2 \mathcal{T}e^{2}
		&=&
		\displaystyle	{	-\frac	{5}	{12}
							\mathcal{T}e^{3,3}
						}  .
	\end{array}
$$

The case $||\seq{s}|| = 5$ is a bit more complicated. Indeed, a classical conjecture is that the
$\Q$-vector space spanned by multizeta values of weight $5$ is a $2$-dimensional vector space and
one of its bases is $(\mathcal{Z}e^5 ; \mathcal{Z}e^{2}\mathcal{Z}e^{3})$. What is certain, is that
its dimension is bounded by $2$~. Therefore, it is sufficient to express $\mathcal{Z}e^5 \mathcal{T}e^2$
and $\mathcal{Z}e^{2}\mathcal{Z}e^{3}\mathcal{T}e^2$ in $\mathcal{M}TGF_{2,7}$~. An easy computation
based on reduction into monotangent functions gives us the following projections:																 
$$	\begin{array}{rcl}
		\mathcal{Z}e^5 \mathcal{T}e^{2}
		&=&
		\displaystyle	{	\frac	{1}	{30}
							\left (
									\mathcal{T}e^{2,5}
									+
									2\mathcal{T}e^{3,4}
									-
									2
									\mathcal{T}e^{4,3}
									-
									\mathcal{T}e^{5,2}
							\right )
						} \ .
	\end{array}
$$

\begin{equation*}
	\begin{array}{rcl}
		\mathcal{Z}e^2 \mathcal{Z}e^3 \mathcal{T}e^{2}
		&=&
		\displaystyle	{	\frac	{1}	{12}
							\mathcal{T}e^{3,2,2}
							-
							\frac	{1}	{12}
							\mathcal{T}e^{2,2,3}
						}
		\\
		&&
		\displaystyle	{	+
							\frac	{1}	{24}
							\mathcal{T}e^{2,5}
							+
							\frac	{1}	{12}
							\mathcal{T}e^{3,4}
							-
							\frac	{1}	{12}
							\mathcal{T}e^{4,3}
							-
							\frac	{1}	{24}
							\mathcal{T}e^{5,2}
						} \ .
	\end{array}
\end{equation*}

To complete our computation, we only have to use the exact expressions of multizeta values in terms of those just considered (see for instance \cite{table de MZV})~. Table \ref{quelques expressions de projections de multizetas} gives us the complete table of projections onto multitangents of all the multizeta values of weight at most $5$~.

\input{table2.sty}

\subsubsection{An abstract formalization of the method}
\label{Formalisation abstraite de la methode.}

Recall that all multizeta values of weight $n$ can be expressed as $\Q$-linear combinations of a finite number of them, which are called the irreducible multizeta values. Let us denote by $c_n$ the number of irreducibles of weight~$n$.

A theorem proved by Goncharov and Terasoma shows that if $(d_n)_{n \in \N}$ is the sequence defined by
$$	\left \{
			\begin{array}{l}
					d_1 = 0 \ ,\\
					d_2 = d_3 = 1	\ ,\\
					\forall n \in \N^*, d_{n + 3} = d_{n + 1} + d_n\ ,
			\end{array}
	\right.
$$ then we have: $c_n \leq d_n$ for all $n \in \N$~.
A conjecture due to Zagier\footnotemark\, asserts that $(c_n)_{n \in \N}$ satisfies the same recurrence relation as $(d_n)_{n \in \N}$, that is, $c_n = d_n$ for all $n \in \N$~.

\footnotetext	{	Concerning this well-known conjecture, we refer the reader to the works of P. Deligne and A. B. Goncharov
					(see \cite{Deligne-Goncharov}), of T. Terasoma (see \cite{Terasoma}) as well as the recent works of F. Brown (see \cite{Brown1} and
					\cite{Brown2})~.
					\cite{Brown2})~.
				}

Let $\seq{\pmb{\sigma}} \in \mathcal{S}_b^\star$ be of weight $p$. Our objective is to express $\mathcal{Z}e^{\seq{\pmb{\sigma}}} \mathcal{T}e^2$ in $\mathcal{M}TGF_{CV, p + 2}$~.
\\

For this, first begin to write all the reductions into monotangents of weight $p + 2$.
Then, isolate from the same side of the sign $=$ the components $\mathcal{T}e^2$~. Now, in all these expressions, we
can express each multizeta value in terms of the $c_p$ corresponding irreducibles. Since each term
$\mathcal{Z}e^{\seq{\pmb{\sigma}}} \mathcal{T}e^k$, $k \geq 3$, which appears in a reduction into monotangents can be
expressed by induction in $\mathcal{M}TGF_{CV, p + 2}$ according to the differentiation property, we will have written
some $\Q$-linear equations with a left-hand side composed of terms of the form
$\mathcal{Z}e^{\seq{\pmb{\sigma}}} \mathcal{T}e^2$ and a right-hand side expressed in $\mathcal{M}TGF_{CV, p + 2}$~.
We will see in the next section that only a subsystem of this one may be useful: it will be the system with equations
coming from the reduction into monotangents of multitangent functions with valuation at least $2$.
\\

As an example, let us do it for weight~$4$. Each multizeta value of weight $4$ can be written in term of $\mathcal{Z}e^4$, so we obtain the system:
$$	\begin{array}{lll}
		\phantom{-}4 \mathcal{Z}e^4 \mathcal{T}e^2
		&=&
		\mathcal{T}e^{2,4} + 2 \mathcal{Z}e^3 \mathcal{T}e^3 - \mathcal{Z}e^2 \mathcal{T}e^4  .
		\\
		
		-6 \mathcal{Z}e^4 \mathcal{T}e^2
		&=&
		\mathcal{T}e^{3,3} .
		\\

		\phantom{-}4 \mathcal{Z}e^4 \mathcal{T}e^2
		&=&
		\mathcal{T}e^{4,2} - 2 \mathcal{Z}e^3 \mathcal{T}e^3 - \mathcal{Z}e^2 \mathcal{T}e^4  .
		\\
		
		\phantom{-}4 \mathcal{Z}e^4 \mathcal{T}e^2
		&=&
		\mathcal{T}e^{2,4} + 2 \mathcal{Z}e^3 \mathcal{T}e^3 - \mathcal{Z}e^2 \mathcal{T}e^4 .
		\\

		- \mathcal{Z}e^4 \mathcal{T}e^2
		&=&
		\mathcal{T}e^{2,1,3} - \mathcal{Z}e^3 \mathcal{T}e^3 .
		\\

		\phantom{-}4 \mathcal{Z}e^4 \mathcal{T}e^2
		&=&
		\mathcal{T}e^{2,2,2} .
		\\

		- \mathcal{Z}e^4 \mathcal{T}e^2
		&=&
		\mathcal{T}e^{3,1,2} + \mathcal{Z}e^3 \mathcal{T}e^3 .
		\\	

		\phantom{-}2 \mathcal{Z}e^4 \mathcal{T}e^2
		&=&
		\mathcal{T}e^{2,1,1,2} .
	\end{array}
$$

If we see the quantity $\mathcal{Z}e^4 \mathcal{T}e^2$ as a formal variable, this system is connected
to the column matrix ${}^t (4 ; -6 ; 4 ; 4 ; -1 ; 4 ; -1 ; 2)$, which has rank $1$~. Of course, the
second equation in the above system can be chosen to produce the simplest relations when we will
have to derive it.
\\

The first difficulty is to find out the dimension of the matrix we have to deal with. This is exactly
the question of the number of irreducible multizeta values of weight $n$, which is hypothetically solved
by the conjecture of Zagier. Of course, we can bypass this difficulty by treating all the possible values
of $c_n$, i.e. $1, \cdots,  d_n$ , but this is not really satisfying. Nevertheless, we know that,
if $A \in \mathcal{M}_{p,q} (\Q)$ has rank $q$ and if its column are denoted by $a_1, \cdots, a_q$,
then the matrix with columns $a_1 - \alpha a_2 ,  a_3, \cdots, a_r$ , $\alpha \in \Q$, has rank
$q - 1$~. Applying this principle to our matrix (eventually more than once), it is sufficient to consider
the case where $c_n = d_n$, i.e. to suppose that Zagier's conjecture holds.

If we suppose that Zagier's Conjecture holds, the second difficulty is now to evaluate the rank of
the matrix... We expect to obtain a maximal rank, which will therefore be $c_n$.
\\

Table \ref{matrices} shows us the submatrices obtained, using the values of multizeta values
given by \cite{table de MZV}, for the weight $4$, $5$, $6$ and $7$. Their rank are respectively
$1 = c_4$ , $2 = c_5$ , $2 = c_6$ , $3 = c_7$~. Our table
of multitangents up to weight $18$ shows easily that the rank of this matrix is $c_n$ up to $n = 16$~.
Consequently, Conjecture \ref{new conjecture on projection}, and then Conjectures
\ref{structure of the algebra MTGF, version 2} and \ref{MZV-module structure} , hold up to  weight $18$~.

\subsection	{About unit cleansing of multitangent functions}

Remind we have called valuation of a sequence of positive integers, the smallest integer composing this sequence.
Following \cite{Ecalle6}, we know that every multizeta value, even if it is a divergent one, can be
expressed as a $\Q$-linear combination of multizeta values with a valuation at least $2$ , the same
weight and a length which might be lesser. Moreover, this expression is unique up to the relations
of \symmetrelity. The most famous example of such a relation is due to Euler:
$\mathcal{Z}e^{2,1} = \mathcal{Z}e^3$.

Such an expression can be called a ``unit cleansing of multizeta values''.

\subsubsection	{A conjecture about cleansing of multitangent functions}
\label{ConjectureAboutCleansing}
Let us remind we have denoted $\mathcal{M}TGF_2 = Vect_\Q (\mathcal{T}e^{\seq{s}})	_{	\seq{s} \in \text{seq} (\N_2)	}$,
the vector space spanned by multitangents with valuation at least $2$, and 															\linebreak
$\mathcal{M}TGF_{2,p} = Vect_\Q (\mathcal{T}e^{\seq{s}})	_{	\seq{s} \in \text{seq} (\N_2)
																\atop
																||\seq{s}|| = p
															}$, the subspace of convergent multitangent functions with valuation at least $2$ and weight $p$.

We conjecture a similar result relatively to multitangent functions that one can call ``unit cleansing of multitangent functions''.
This can be written by the following

\begin{Conjecture}	\label{Unit cleansing conjecture}
	For all sequences $\seq{s} \in \mathcal{S}^\star_{b,e}$, $\mathcal{T}e^{\seq{s}} \in \mathcal{M}TGF_{2, ||\seq{s}||}$~.
\end{Conjecture}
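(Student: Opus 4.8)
The plan is to deduce Conjecture~\ref{Unit cleansing conjecture} from the reduction into monotangent functions together with the unit cleansing of multizeta values, isolating a single residual ``refined projection'' statement. Fix $\seq{s} \in \mathcal{S}^\star_{b,e}$ and put $p = ||\seq{s}||$. First I would apply Theorem~\ref{reduction en monotangente} to write
$$
\mathcal{T}e^{\seq{s}} = \sum_{k = 2}^{p} z_k \, \mathcal{T}e^k , \qquad z_k \in \mathcal{M}ZV_{CV , p - k} ,
$$
where, using the \symmetrelity of $\mathcal{Z}e^\p$ to linearise the products of two multizeta values occurring in $\mathcal{Z}_{i,k}^{\seq{s}}$ (see \eqref{Ziks}), each coefficient $z_k$ is a $\Q$-linear combination of genuine multizeta values of weight $p - k$. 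Since every monotangent $\mathcal{T}e^k$ already has valuation $\geq 2$, the sole obstruction to membership in $\mathcal{M}TGF_{2,p}$ is that the $z_k$ are multizeta values rather than rationals: one must absorb them into multitangents of valuation $\geq 2$.

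The next step is to cleanse those coefficients. By the result of Ecalle recalled above (\cite{Ecalle6}), each multizeta value of weight $p-k$ is a $\Q$-linear combination of multizeta values $\mathcal{Z}e^{\seq{\tau}}$ of the same weight and valuation $\geq 2$. After substitution it therefore suffices to establish the following \emph{refined projection}: for every sequence $\seq{\tau}$ of valuation $\geq 2$ and every $k \geq 2$,
$$
\mathcal{Z}e^{\seq{\tau}} \, \mathcal{T}e^k \in \mathcal{M}TGF_{2 , \, ||\seq{\tau}|| + k} .
$$
Summing the cleansed monotangent expansion then gives $\mathcal{T}e^{\seq{s}} \in \mathcal{M}TGF_{2,p}$. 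Exactly as in the proof of Property~\ref{prop-proj-2}, the differentiation property (Property~\ref{firstProperties}) reduces this refined projection to its base case $k=2$: differentiating a membership $\mathcal{Z}e^{\seq{\tau}} \mathcal{T}e^2 \in \mathcal{M}TGF_2$ (the multizeta being constant in $z$) raises a single index of each multitangent on the right by one, which preserves valuation $\geq 2$, and so produces $\mathcal{Z}e^{\seq{\tau}} \mathcal{T}e^3$, then $\mathcal{T}e^4$, and so on.

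It remains to treat the base case $\mathcal{Z}e^{\seq{\tau}} \mathcal{T}e^2 \in \mathcal{M}TGF_2$ for $\seq{\tau}$ of valuation $\geq 2$, which I would handle by induction on $w = ||\seq{\tau}||$ using the linear-algebra method of \S\ref{Formalisation abstraite de la methode.}, but restricted to multitangents of valuation $\geq 2$. Writing the reductions into monotangents of all weight-$(w+2)$ multitangents of valuation $\geq 2$ and isolating their $\mathcal{T}e^2$ component yields a system of $\Q$-linear equations whose unknowns are the quantities $\mathcal{Z}e^{\seq{\sigma}} \mathcal{T}e^2$ (with $\seq{\sigma}$ of valuation $\geq 2$ and weight $w$), expressed through the $c_w$ irreducible multizeta values, and whose right-hand sides lie in $\mathcal{M}TGF_2$: the genuine multitangents have valuation $\geq 2$, while the companion terms $\mathcal{Z}e^{\seq{\sigma}} \mathcal{T}e^k$ with $k \geq 3$ carry multizeta weight strictly less than $w$ and hence lie in $\mathcal{M}TGF_2$ by the induction hypothesis and the differentiation step above. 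Solving this system for the target combination $\mathcal{Z}e^{\seq{\tau}} \mathcal{T}e^2$ amounts to the associated matrix having full rank $c_w$.

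I expect this full-rank condition to be the main obstacle, and it is precisely the point at which the statement stays conjectural. The number of columns is the number $c_w$ of irreducible multizeta values, which is itself controlled only conjecturally (Zagier's conjecture bounds it by $d_w$); and even granting $c_w = d_w$, a uniform proof that the matrix attains the maximal rank is not available. What can be done unconditionally is to run the algorithm: reduction, cleansing and differentiation are all effective, and the rank of the relevant matrix is read off from the table of reductions. This is the verification reported in \S\ref{Formalisation abstraite de la methode.}, which establishes the refined projection, and hence Conjecture~\ref{Unit cleansing conjecture}, up to weight $18$.
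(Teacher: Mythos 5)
Your proposal is correct and follows essentially the same route as the paper: the statement is genuinely a conjecture there, and the paper likewise reduces it (via reduction into monotangents, the differentiation property, and the projection onto unit-free multitangents, i.e.\ Conjecture~\ref{new conjecture on projection/cleansing}) to the full-rank condition on the $(f_{p+1}-1)\times c_p$ matrix, which is only verified computationally up to weight~$18$. You have correctly identified both the reduction strategy and the precise point --- the rank condition, entangled with Zagier's conjecture --- at which the argument remains conjectural.
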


For example, the simplest convergent multitangent, which is not cleaned and not the null function, is
$\mathcal{T}e^{2,1,3}$, since $\mathcal{T}e^{3,1,2}(z) = \mathcal{T}e^{2,1,3}(-z)$ and
$\mathcal{T}e^{2,1,2} = 0$~.
Its unit cleansing is given by the following relations, which can be proved by using relations
between multizeta values:

$$	\begin{array}{lll}
		\mathcal{T}e^{2,1,3}
		&=&
		\displaystyle	{	\frac	{1}	{4}
							\mathcal{T}e^{4,2}
							-
							\frac	{1}	{4}
							\mathcal{T}e^{2,4}
							+
							\frac	{1}	{6}
							\mathcal{T}e^{3,3}
						}
		\vspace{0.2cm}	\\
		&=&
		\displaystyle	{	\mathcal{T}e^{4,2}
							-
							\frac	{1}	{4}
							\mathcal{T}e^{2,4}
							-
							\frac	{1}	{4}
							\mathcal{T}e^{2,2,2}
						}
		\vspace{0.2cm}	\\
		&=&
		\displaystyle	{	\mathcal{T}e^{4,2}
							-
							\frac	{1}	{4}
							\mathcal{T}e^{2,4}
							+
							\frac	{1}	{15}
							\mathcal{T}e^{3,3}
							-
							\frac	{3}	{20}
							\mathcal{T}e^{2,2,2} \ .
						}
	\end{array}
$$

As this example shows us, there is no uniqueness of such a cleansing. This is, of course, due to the many relations between multitangent functions.
Here, the responsible relation is $3\mathcal{T}e^{2,2,2} + 2\mathcal{T}e^{3,3} = 0$, which is the prototype of a more general relation between multitangent functions: $\forall k \in \N^* \ , \ 3 \mathcal{T}e^{2^{[3k]}} + (-1)^k 2 \mathcal{T}e^{3^{[2k]}} = 0$. This relation is immediately obtained from the reduction into monotangent functions, or by the evaluation of $\mathcal{T}e^{n^{[p]}}$ that will be given in section \ref {calculs explicite}.

Table \ref{quelques exemples de nettoyage} gives us more examples of unit cleansing for multitangent functions.

\input{table4.sty}

\subsubsection	{On projection onto unit-free multitangent functions}

By analogy with Conjecture \ref{new conjecture on projection}, it is quite natural to consider the following conjecture:

\begin{Conjecture}	\label{new conjecture on projection/cleansing}
	For all sequences $\seq{\pmb{\sigma}} \in \mathcal{S}_b^\star$ , 
	$\mathcal{Z}e^{\seq{\pmb{\sigma}}} \mathcal{T}e^2 \in \mathcal{M}TGF_{2,||\seq{\pmb{\sigma}}|| + 2}$ .
\end{Conjecture}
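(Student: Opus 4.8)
The plan is to prove the statement by induction on the weight $p = ||\seq{\pmb{\sigma}}||$, mimicking the method of \S\ref{Formalisation abstraite de la methode.} but carrying every expression inside the unit-free world $\mathcal{M}TGF_2$. The low-weight cases $||\seq{\pmb{\sigma}}|| \leq 5$ are exhibited explicitly in Table \ref{quelques expressions de projections de multizetas}, whose right-hand sides all involve multitangents of valuation at least $2$. For the inductive step I would assume that for every $\seq{\pmb{\tau}} \in \mathcal{S}_b^\star$ of weight strictly less than $p$ one has $\mathcal{Z}e^{\seq{\pmb{\tau}}} \mathcal{T}e^2 \in \mathcal{M}TGF_{2, ||\seq{\pmb{\tau}}|| + 2}$.

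First I would record that this hypothesis automatically upgrades from $\mathcal{T}e^2$ to $\mathcal{T}e^k$. By the differentiation property (Property \ref{firstProperties}) and the elementary remark that raising any single entry of a sequence of valuation $\geq 2$ leaves it of valuation $\geq 2$, the space $\mathcal{M}TGF_{2, q}$ is stable under $\partial_z$. Since $\mathcal{T}e^2{}' = -2\,\mathcal{T}e^3$, differentiating the relation $\mathcal{Z}e^{\seq{\pmb{\tau}}} \mathcal{T}e^2 \in \mathcal{M}TGF_{2, ||\seq{\pmb{\tau}}|| + 2}$ repeatedly yields $\mathcal{Z}e^{\seq{\pmb{\tau}}} \mathcal{T}e^k \in \mathcal{M}TGF_{2, ||\seq{\pmb{\tau}}|| + k}$ for every $k \geq 2$ and every $\seq{\pmb{\tau}}$ of weight $< p$.

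Next I would write, for each $\seq{s} \in \text{seq}(\N_2)$ with $||\seq{s}|| = p + 2$, its reduction into monotangent functions (Theorem \ref{reduction en monotangente}) in the form $\mathcal{T}e^{\seq{s}} = \big(\sum_{i} \mathcal{Z}e^{\seq{s}^i}\big)\mathcal{T}e^2 + \sum_{k \geq 3}(\cdots)\mathcal{T}e^k$, where after linearising products by the \symmetrelity of $\mathcal{Z}e^\p$ each coefficient of $\mathcal{T}e^k$ is a $\Q$-combination of multizeta values $\mathcal{Z}e^{\seq{\pmb{\tau}}}$ with $||\seq{\pmb{\tau}}|| = p + 2 - k < p$. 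By the previous paragraph every $k \geq 3$ term lies in $\mathcal{M}TGF_{2, p+2}$, while the left-hand side $\mathcal{T}e^{\seq{s}}$ lies in $\mathcal{M}TGF_{2,p+2}$ by definition. Hence $\big(\sum_{i} \mathcal{Z}e^{\seq{s}^i}\big)\mathcal{T}e^2 \in \mathcal{M}TGF_{2,p+2}$ for every unit-free sequence $\seq{s}$ of weight $p+2$.

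The main obstacle is the final, purely arithmetic step: one must show that $\mathcal{Z}e^{\seq{\pmb{\sigma}}}$ itself lies in the $\Q$-span of the coefficient combinations $\big\{\sum_{i} \mathcal{Z}e^{\seq{s}^i} : \seq{s}\in\text{seq}(\N_2),\ ||\seq{s}||=p+2\big\}$. After expressing every multizeta value of weight $p$ in terms of the $c_p$ irreducibles, this is exactly the statement that the matrix assembled from the unit-free reductions (the matrices tabulated in Table \ref{matrices}) has maximal rank $c_p$; if so, $\mathcal{Z}e^{\seq{\pmb{\sigma}}}\mathcal{T}e^2$ is a $\Q$-linear combination of the already-placed quantities $\big(\sum_{i} \mathcal{Z}e^{\seq{s}^i}\big)\mathcal{T}e^2$, closing the induction. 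This maximal-rank claim is what is checked computationally through weight $18$, and it is the genuine difficulty: it is entangled with the value of $c_p$ (Zagier's conjecture) and with the conjectural independence of multizeta values, so I would not expect a weight-uniform proof without new input on the arithmetic side. As a secondary observation worth recording, the statement also follows from the conjunction of Conjecture \ref{new conjecture on projection} and Conjecture \ref{Unit cleansing conjecture}, since the former places $\mathcal{Z}e^{\seq{\pmb{\sigma}}}\mathcal{T}e^2$ in $\mathcal{M}TGF_{CV,p+2}$ and the latter cleans each multitangent occurring there down to valuation $\geq 2$; but this merely relocates, rather than removes, the difficulty identified above.
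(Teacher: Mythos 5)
Your proposal correctly recognizes that this statement is a conjecture in the paper rather than a theorem, and your scaffolding reproduces the paper's own method exactly: induction on the weight, the differentiation property to upgrade $\mathcal{Z}e^{\seq{\pmb{\tau}}}\mathcal{T}e^2$ to $\mathcal{Z}e^{\seq{\pmb{\tau}}}\mathcal{T}e^k$, reduction into monotangents of the unit-free sequences of weight $p+2$ with \symmetrelity linearisation, and the isolation of the genuinely open step as the maximal-rank condition on the $(f_{p+1}-1)\times c_p$ matrices of Table \ref{matrices} --- precisely the condition the paper states as an equivalent conjecture and verifies only computationally up to weight $18$ (supplemented by the heuristic comparing the growth of $f_{p+1}$ and $c_p$). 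Your secondary remark, that the statement also follows from Conjectures \ref{new conjecture on projection} and \ref{Unit cleansing conjecture} together, is likewise consistent with the paper, which records the reverse implication and regards the two projection conjectures as probably equivalent.
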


Conjectures \ref{new conjecture on projection} and \ref{new conjecture on projection/cleansing} are
probably equivalent but it is sufficient for us to know that Conjecture
\ref{new conjecture on projection/cleansing} implies Conjecture \ref{new conjecture on projection}.

Of course, what we have said on the abstract formalization of the method is also valid in this case.
The only modification is to consider multitangent functions of
$\mathcal{M}TGF_{2,||\seq{\pmb{\sigma}}|| + 2}$ instead of
$\mathcal{M}TGF_{CV,||\seq{\pmb{\sigma}}|| + 2}$~. So, we will obtain a linear system with
$f_{n + 1} - 1$ equations\footnotemark \ and $c_n$ unknown ; the matrix we will obtain has size
$(f_{n + 1} - 1) \times c_n$~. Let us mention that the number of equations is $f_{n + 1} - 1$ since
it is, by definition, equals to the numbers of sequences in $\text{seq} (\N_2)$ with weight
$n$ and length $r \geq 2$.
\\

\footnotetext	{	Here, $(f_n)_{n \in \N}$ denote the classical Fibonacci sequence defined by
					$f_0 = 0$, $f_1 = 1$ and the recurence $f_{n + 2} = f_{n + 1} + f_n$ for all
					non negative integer $n$.
				}

Table \ref{matrices} shows us the obtained matrices\footnotemark, using the values of multizeta
values given by \cite{table de MZV}, for the weight $4$, $5$, $6$ and $7$, whose ranks are respectively
$1 = c_4$ , $2 = c_5$ , $2 = c_6$ and $3 = c_7$~. Again, the table of multitangents up to weight $18$
shows easily that the obtained system has a maximal rank, that is $c_n$, up to $n = 16$~. Consequently,
Conjecture \ref{new conjecture on projection/cleansing} holds up to the weight $18$~.

\footnotetext	{	That's why, in section \ref{Formalisation abstraite de la methode.}, we refer oursef to the same table as submatrix of these we must have.
					It was the submatrix obtained by considering only multitangent functions of $\mathcal{M}TGF_{2,||\seq{\pmb{\sigma}}|| + 2}$~.
				}

\input{table3.sty}

It leads to think that:
\begin{Conjecture}
	Let $(c_n)_{n \in \N}$ be the sequence defined by:
	$$	\left \{
			\begin{array}{l}
				c_1 = 0,  c_2 = c_3 = 1  .	\\
				\forall n \in \N, c_{n + 3} = c_{n + 1} + c_n .
			\end{array}
		\right.
	$$
	If $p \geq 2$, the $(f_{p + 1} - 1) \times c_p$ matrix obtained by the previous process
	from all the sequences of $\text{seq} (\N_2)$ of weight $p$ has rank $c_p$~.
\end{Conjecture}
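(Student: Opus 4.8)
The plan is to turn the rank equality into a spanning statement about multizeta values and then to isolate precisely where the genuine arithmetic enters. Write $\mathcal{Z}_p = \mathcal{M}ZV_{CV,p}$ for the space of weight $p$ multizeta values, and for each multitangent $\mathcal{T}e^{\seq{s}}$ entering the construction (valuation at least $2$, weight $p+2$, length $\geq 2$) let $C^{\seq{s}}_2 \in \mathcal{Z}_p$ be the coefficient of $\mathcal{T}e^2$ in its reduction into monotangent functions (Theorem \ref{reduction en monotangente}). Once the $c_p$ irreducibles are fixed as coordinates, the rows of the matrix are exactly the coordinate vectors of the $C^{\seq{s}}_2$, so the matrix has rank $c_p$ if and only if the family $\big(C^{\seq{s}}_2\big)_{\seq{s}}$ spans $\mathcal{Z}_p$. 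First I would record the trivial half: since $\dim_\Q \mathcal{Z}_p \leq d_p = c_p$ by the theorem of Goncharov and Terasoma (or by Brown's result that the $\mathcal{Z}e^{\seq{h}}$ with all entries of $\seq{h}$ in $\{2 ; 3\}$ already span, see \cite{Brown1}), the rank is automatically at most $c_p$; the entire content is therefore the lower bound, that is, the spanning property.

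To establish the spanning I would proceed by induction on $p$ and try to exhibit an explicit $c_p \times c_p$ submatrix of nonzero determinant attached to a well chosen subfamily. The natural candidates are the sandwiched sequences $\seq{s} = (2 ; \seq{\sigma} ; 2)$, indexed by Hoffman words $\seq{\sigma}$ with entries in $\{2 ; 3\}$. Expanding $C^{\seq{s}}_2$ via \eqref{Ziks} and isolating the contributions of the two boundary positions $s_1 = s_r = 2$ — for which all the internal summation indices vanish and ${}^{i}B_{\seq{k}}^{\seq{s}} = \pm 1$ — produces a distinguished pair of weight $p$ multizeta values attached to $\seq{\sigma}$ and to its reversal, plus a remainder supported on multizeta values of strictly higher internal complexity. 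Ordering the Hoffman words appropriately, I would aim to make the resulting matrix triangular modulo the span of these remainders, so that the diagonal entries are nonzero and the determinant does not vanish. The \symmetrelity of $\mathcal{Z}e^\p$ serves to rewrite the remainder terms in the chosen basis, while the differentiation property (Property \ref{firstProperties}) is what allows the higher monotangent contributions $\mathcal{T}e^k$, $k \geq 3$, to be replaced by lower weight data already controlled by the induction, exactly as in the abstract method of \S \ref{Formalisation abstraite de la methode.}.

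The hard part, and the reason the statement remains conjectural, is twofold. On the one hand, the remainder in $C^{\seq{s}}_2$ is a true $\Q$-linear combination of many multizeta values, and I know of no purely combinatorial mechanism that guarantees, uniformly in $p$, that a triangular leading structure survives after reduction to the irreducibles; controlling those leading terms seems to demand the motivic input behind Brown's basis theorem rather than the elementary mould identities available here. On the other hand, the statement asks for the rank to be exactly $c_p$, and since $c_p = d_p$ this forces $\dim_\Q \mathcal{Z}_p = c_p$: the conjecture thus contains Zagier's dimension conjecture and cannot be made unconditional. Consequently, what is realistically within reach is the unconditional inequality (rank at most $c_p$) together with the reduction of the remaining inequality to the explicit spanning statement above; that spanning is equivalent to Conjecture \ref{new conjecture on projection/cleansing}, and it is precisely there that the difficulty concentrates, which is why it has so far only been verified weight by weight up to $18$.
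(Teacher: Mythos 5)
You are right about the essential point: the statement under review is a conjecture, and the paper contains no proof of it, so no complete argument could have been expected from you. What the paper offers instead is (i) the assertion that the rank statement is equivalent to Conjecture \ref{new conjecture on projection/cleansing}, (ii) computational verification that the rank equals $c_n$ up to $n = 16$, which is what gives the projection statements up to weight $18$, and (iii) a purely heuristic plausibility argument: the computed matrices contain a large number of zeros, and the number of rows $f_{p+1} - 1$ grows like $1.62^p$ while the number of columns $c_p$ grows only like $1.32^p$, so that full column rank becomes ``more and more likely'' as $p$ grows. Your reduction of the rank condition to a spanning statement, and your localisation of the entire difficulty in Conjecture \ref{new conjecture on projection/cleansing}, agree with (i); your proposed triangularisation over sandwiched sequences $(2 ; \seq{\sigma} ; 2)$ is a different, untested line of attack from the paper's counting heuristic, and you are right to present it as incomplete.

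There is, however, a genuine error in your logical analysis. You claim that rank $= c_p$ forces $\dim_\Q \mathcal{M}ZV_{CV,p} = c_p$, so that the conjecture ``contains Zagier's dimension conjecture''. It does not. The matrix is a concrete matrix with rational entries: these entries are produced by rewriting the $\mathcal{T}e^2$-coefficients of the reductions in terms of the chosen $c_p$ irreducibles by means of the known relations (the tables of \cite{table de MZV}), and its rank is a property of those formal expressions, completely blind to any hypothetical further $\Q$-linear relations among the irreducibles themselves. Concretely, if $z_1, \cdots, z_{c_p}$ denote the chosen irreducibles, the linear map $\Q^{c_p} \longrightarrow \mathcal{M}ZV_{CV,p}$ sending $e_j$ to $z_j$ is surjective but possibly non-injective; full rank means that the rows span $\Q^{c_p}$, which does imply that the coefficients $C^{\seq{s}}_2$ span $\mathcal{M}ZV_{CV,p}$ (that is the unconditional direction of your claimed equivalence), but it gives no lower bound whatsoever on $\dim_\Q \mathcal{M}ZV_{CV,p}$; it is the converse implication, from spanning to full rank, that requires the $z_j$ to be linearly independent, i.e.\ Zagier's conjecture. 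The paper's own treatment goes in the opposite direction from yours: Zagier's count enters as an assumption normalising the number of columns, and if the genuine number of irreducibles were smaller, column operations would shrink the matrix and the target rank accordingly (see \S \ref{Formalisation abstraite de la methode.}); nowhere is the rank conjecture asserted to imply Zagier's. A minor point in the same vein: your appeal to Goncharov--Terasoma (or Brown) for the upper bound on the rank is superfluous, since a matrix with $c_p$ columns has rank at most $c_p$ for trivial reasons; that input is needed only so that a matrix with $c_p$ columns can be written down at all.
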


This new conjecture is equivalent to Conjecture \ref{new conjecture on projection/cleansing} and hence implies Conjecture \ref{Unit cleansing conjecture} as well as Conjectures \ref{structure of the algebra MTGF, version 2} and \ref{MZV-module structure}~.
\\

Here is a quantitative argument to support this last conjecture, and consequently all of the other conjectures of this section.

The first matrices we have obtained contain lots of $0$'s. This allows us to say they have ``highly''
rank $c_n$ ; we are hinting that if this matrix has rank $c_n$, this is not by chance. It results
from the large number of equations and from the small number of unknowns (in comparison to the other
one) , but also from the repartition of the large number of zeros which forced the column vectors to
be linearly independent.

Moreover, we know that:
$$	\begin{array}{rcl l rcl}
			f_{p + 1}	&\approx&	0.45 \times 1,62^{p}
			&\hspace{1cm},\hspace{1cm} &
			c_p		&\approx&	0,41 \times 1,32^p \ .
	\end{array}
$$
\indent
Thus, the more $p$ will be tall, the more there will be ``chances'' to find some linearly independent rows. Therefore, the conjecture will be probably more true.

\subsubsection	{Unit cleansing of divergent multitangent functions}
\label{Unit cleansing of divergent multitangent functions}

Let us finish this section by a little anticipation on a later section. We will see in Section \ref{prolongement des multitangentes au cas divergent} that there exists a regularization process allowing us to define multitangent functions for sequences $\seq{s} \in \text{seq}(\N^*)$ which begin or end by a $1$. These functions will be expressed by the reduction into monotangent functions, with a small non-zero correction which will be a power of $\pi$ in a few cases. So, according to Conjecture \ref{new conjecture on projection}, each divergent multitangent function can be expressed as a $\Q$-linear combination of unit-free convergent multitangent functions.

Therefore, Conjecture \ref{Unit cleansing conjecture} can be generalised to:

\begin{Conjecture}
	For all sequences $\seq{s} \in \text{seq} (\N^*)$ , $\mathcal{T}e^{\seq{s}} \in \mathcal{M}TGF_{2, ||\seq{s}||}$ .
\end{Conjecture}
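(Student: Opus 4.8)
The plan is to deduce the statement from the machinery already assembled in this section, splitting $\text{seq}(\N^*)$ into the convergent sequences $\mathcal{S}^\star_{b,e}$ and the divergent ones. In both cases the real content will be the projection/cleansing Conjecture~\ref{new conjecture on projection/cleansing}, i.e. the fact that $\mathcal{Z}e^{\seq{\pmb{\sigma}}}\mathcal{T}e^2 \in \mathcal{M}TGF_{2,\,||\seq{\pmb{\sigma}}||+2}$ for every $\seq{\pmb{\sigma}} \in \mathcal{S}_b^\star$.

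For a convergent $\seq{s} \in \mathcal{S}^\star_{b,e}$ the claim is exactly Conjecture~\ref{Unit cleansing conjecture}. I would start from the reduction into monotangent functions (Theorem~\ref{reduction en monotangente}), which writes $\mathcal{T}e^{\seq{s}} = \sum_{k\geq 2} Z_k\,\mathcal{T}e^k$ with $Z_k \in \mathcal{M}ZV_{CV}$; linearising each $Z_k$ through the \symmetrelity of $\mathcal{Z}e^\p$ reduces the problem to controlling every single product $\mathcal{Z}e^{\seq{\pmb{\sigma}}}\mathcal{T}e^k$. By the differentiation property of Property~\ref{firstProperties}, a projection of $\mathcal{Z}e^{\seq{\pmb{\sigma}}}\mathcal{T}e^2$ into $\mathcal{M}TGF_2$ propagates to all $k \geq 2$, so the convergent case is precisely Conjecture~\ref{new conjecture on projection/cleansing}, equivalently the statement that the $(f_{p+1}-1)\times c_p$ matrices built from the reductions of the unit-free multitangents have rank $c_p$ (checked here up to weight $18$).

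For a divergent sequence, i.e. $\seq{s} \in \text{seq}(\N^*)\setminus\mathcal{S}^\star_{b,e}$ beginning or ending with a $1$, I would invoke the regularization constructed in \S\ref{prolongement des multitangentes au cas divergent}. There the regularized $\mathcal{T}e^{\seq{s}}$ is again given by its reduction into monotangents $\sum_{k\geq 2} Z_k\,\mathcal{T}e^k$, up to a correction which, in the few cases where it is non-zero, is a power of $\pi$. The monotangent part is treated exactly as in the convergent case. For the correction I would use $\pi^2 = 6\,\mathcal{Z}e^2$: an even power $\pi^{2m}$ is a rational multiple of $(\mathcal{Z}e^2)^m \in \mathcal{M}ZV_{CV}$, so whenever it appears multiplying a monotangent $\mathcal{T}e^k$ the resulting term is of the form $\mathcal{Z}e^{\seq{\pmb{\sigma}}}\mathcal{T}e^k$ and is absorbed by the same projection argument. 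Combining the two contributions yields $\mathcal{T}e^{\seq{s}} \in \mathcal{M}TGF_{2,\,||\seq{s}||}$.

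The hard part will be Conjecture~\ref{new conjecture on projection/cleansing} itself: proving that the projection systems have maximal rank $c_p$ at every weight, which is where the (still conjectural) arithmetic of multizeta values and the identity $c_p = d_p$ intervene, and which is exactly why the whole statement remains conjectural. A genuine secondary difficulty is the bookkeeping of the regularization: I would have to verify that no stray non-decaying term survives in the correction, since every convergent multitangent tends exponentially to $0$ as $\im z \to +\infty$ (see \S\ref{caractere exponentiellement plat}), so any true constant or $\mathcal{T}e^1$-component must cancel for membership in $\mathcal{M}TGF_2$ to hold.
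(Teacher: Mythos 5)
Your overall route is the one the paper itself follows: the paper offers no proof of this statement (it is a conjecture), and motivates it exactly as you do, by feeding the regularized reduction into monotangent functions into the projection/cleansing conjectures; your treatment of the convergent case (reduction, \symmetrel linearisation, propagation from $\mathcal{T}e^2$ to all $\mathcal{T}e^k$ by the differentiation property) matches the paper's discussion around Conjectures \ref{Unit cleansing conjecture} and \ref{new conjecture on projection/cleansing}. The comparison therefore hinges on the one step where the divergent case genuinely differs from the convergent one, namely the correction term, and there your argument breaks.

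By Theorem \ref{reduction en monotangente 2}, the reduction of a divergent multitangent reads
$$	\mathcal{T}e^{\seq{s}}
	=
	\delta^{\seq{s}}
	+
	\sum_{i = 1}^{r}\,\sum_{k = 1}^{s_i}
	\mathcal{Z}_{i,s_i - k}^{\seq{s}}\,\mathcal{T}e^{k} \ ,
$$
so two non-decaying contributions may occur: the \emph{additive} constant $\delta^{\seq{s}}$ and a genuine $\mathcal{T}e^1$-component, since the inner sum starts at $k = 1$, not $k = 2$. Neither is ``a power of $\pi$ multiplying a monotangent'': $\delta^{\seq{s}}$ is not the coefficient of any $\mathcal{T}e^k$, so rewriting $\pi^{2m}$ as a rational multiple of $(\mathcal{Z}e^2)^m$ never produces a term of the form $\mathcal{Z}e^{\seq{\pmb{\sigma}}}\mathcal{T}e^k$, and the projection argument simply does not see it. Worse, the cancellation you defer to a final verification does not happen: by \S \ref{calcul_de_Te111} (see also Table \ref{table8}), $\mathcal{T}e^{1^{[2k]}} = (-1)^k \pi^{2k}/(2k)!$ and $\mathcal{T}e^{1^{[2k+1]}} = (-1)^k \frac{\pi^{2k}}{(2k+1)!}\,\mathcal{T}e^1$, i.e. for these sequences the constant, respectively the $\mathcal{T}e^1$-part, \emph{is} the whole function and is nonzero. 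Since every element of $\mathcal{M}TGF_{2,p}$ with $p \geq 1$ is a $\Q$-linear combination of convergent multitangent functions, hence exponentially flat as $|\im z| \longrightarrow + \infty$ (\S \ref{caractere exponentiellement plat}), no nonzero constant and no nonzero multiple of $\mathcal{T}e^1$ can belong to it. So your concluding step fails at least for the family $1^{[r]}$: a correct conditional derivation would have to show that $\delta^{\seq{s}}$ and the $\mathcal{T}e^1$-coefficient vanish for every divergent $\seq{s}$ outside this family, and treat (or explicitly exclude) the sequences $1^{[r]}$ themselves. The paper's own motivating paragraph glosses over this same point, but it does not claim to resolve it, whereas your proposal does, via an absorption mechanism that does not apply.
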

\section	{Algebraic properties}
\label{algebraic properties}
\subsection	{Is $\mathcal{M}TGF_{CV}$ a graded algebra ?}

Many conjectures have been stated about multizeta values. These are deep ones, but seem to be completely out of reach nowadays. We will see a first application of the dual process of reduction and projection. Thanks to it, we will state a new conjecture, which is related to a hypothetical structure of graded algebra. Then, we will see two simple examples where it is impossible to have non-trivial $\Q$-linear relations between multitangent functions of different weights.

\subsubsection	{Hypothetical absence of $\Q$-linear relations between different weight}

Let us remind the following well-known conjecture on multizeta values:

\begin{Conjecture} \label{Conjecture MZV}
	There is no non-trivial $\Q$-linear relation between multizeta values of different weights:
	$$	\displaystyle	{	\mathcal{M}ZV_{CV}
							=
							\bigoplus	_{p \in \N}
										\mathcal{M}ZV_{CV , p} \ .
						}
	$$
	In other words, $\mathcal{M}ZV_{CV}$ is a graded $\Q$-algebra.
\end{Conjecture}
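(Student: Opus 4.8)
The plan is, in all honesty, to first acknowledge that this statement lies far beyond the elementary techniques developed so far in this article: already in length one it subsumes the algebraic independence asserted in Conjecture~\ref{main diophantian conjecture}, since a nontrivial $\Q$-linear relation mixing the $\mathcal{Z}e^{s}$ of distinct weights would contradict the transcendence-type independence of $\pi$, $\zeta(3)$, $\zeta(5)$, $\cdots$. Thus no self-contained argument from the \symmetrelity relations and the reduction into monotangents can be expected; the only known route passes through the theory of mixed Tate motives, and even there one reaches the conclusion only conditionally.

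Concretely, I would lift each convergent multizeta value $\mathcal{Z}e^{\seq{s}}$ to a \emph{motivic} multizeta value $\mathcal{Z}e^{\seq{s},\mathfrak{m}}$ living in the graded $\Q$-algebra $\mathcal{H}$ attached to the Tannakian category of mixed Tate motives over $\Z$. By construction $\mathcal{H}$ carries an intrinsic weight grading $\mathcal{H} = \bigoplus_{p} \mathcal{H}_{p}$, with $\mathcal{Z}e^{\seq{s},\mathfrak{m}} \in \mathcal{H}_{||\seq{s}||}$. Invoking the theorem of Brown, these motivic values span $\mathcal{H}$ and $\dim_{\Q} \mathcal{H}_{p} = d_{p}$, where $(d_{p})$ is precisely the Goncharov--Terasoma sequence $d_{p+3} = d_{p+1} + d_{p}$ recalled in \S\ref{Formalisation abstraite de la methode.}. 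One then disposes of a period homomorphism $\mathrm{per} \colon \mathcal{H} \longrightarrow \R$ satisfying $\mathrm{per}(\mathcal{Z}e^{\seq{s},\mathfrak{m}}) = \mathcal{Z}e^{\seq{s}}$, whose image is exactly $\mathcal{M}ZV_{CV}$.

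With this apparatus in place, the grading statement reduces to the \emph{injectivity} of $\mathrm{per}$. Indeed, any putative relation $\sum_{p} x_{p} = 0$ with $x_{p} \in \mathcal{M}ZV_{CV,p}$ can be written $x_{p} = \mathrm{per}(\widetilde{x}_{p})$ with $\widetilde{x}_{p} \in \mathcal{H}_{p}$ homogeneous, since $\mathcal{M}ZV_{CV,p}$ is spanned by periods of weight-$p$ motivic multizetas. If $\mathrm{per}$ is injective, then $\sum_{p} \widetilde{x}_{p} = 0$ in $\mathcal{H}$, and the intrinsic grading of $\mathcal{H}$ forces each $\widetilde{x}_{p} = 0$, hence each $x_{p} = 0$; this is exactly the desired decomposition $\mathcal{M}ZV_{CV} = \bigoplus_{p} \mathcal{M}ZV_{CV,p}$.

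The hard part — and the reason the statement remains a conjecture — is precisely this injectivity, which is a special case of Grothendieck's period conjecture for $MT(\Z)$. No method is currently available to rule out a hypothetical transcendental coincidence producing a nonzero real kernel, and even the weakest consequences, such as $\zeta(3) \notin \Q$ or the irrationality results of Rivoal and Zudilin quoted in the introduction, demanded considerable effort. Consequently a full proof of the graded-algebra property must be regarded as inaccessible at present; the most one can responsibly offer is the conditional reduction above, which shows that the statement follows from the period conjecture via Brown's spanning theorem.
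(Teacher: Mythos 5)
The statement you were asked to prove is not a theorem of the paper at all: it is stated there precisely as Conjecture~\ref{Conjecture MZV}, a well-known open problem, and the author never attempts a proof. The only things established about it in the paper are implications between conjectures, namely Property~\ref{ConjectureEquivalenceTwoProperties}, which shows that Conjecture~\ref{Conjecture MZV} implies the analogous grading statement for multitangent functions (Conjecture~\ref{Conjecture MTGF}), and that the converse holds assuming the $\mathcal{M}ZV_{CV}$-module structure of Conjecture~\ref{MZV-module structure}; the proof there proceeds by reduction into monotangent functions and the $\C$-linear independence of the monotangents, a mechanism entirely internal to the paper. You were therefore right to refuse an unconditional argument, and your conditional reduction is mathematically sound and goes well beyond anything in the paper: lifting each $\mathcal{Z}e^{\seq{s}}$ to a motivic multizeta value in the weight-graded algebra $\mathcal{H}$ of mixed Tate motives over $\Z$, invoking Brown's theorem that these lifts span $\mathcal{H}$ with $\dim_{\Q}\mathcal{H}_p = d_p$, and observing that injectivity of the period map (a special case of Grothendieck's period conjecture) transports the intrinsic grading of $\mathcal{H}$ onto $\mathcal{M}ZV_{CV}$, is the standard modern route, and each step is correctly deployed, including the key point that a mixed-weight relation among multizeta values lifts to a relation among homogeneous elements of $\mathcal{H}$ which the grading then annihilates componentwise. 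The two approaches are complementary rather than comparable: the paper trades this conjecture against functional statements about multitangents (buying testability up to weight $18$ and a transfer principle between the two algebras), while your reduction identifies the precise transcendence-theoretic obstruction (injectivity of the period map) behind the conjecture, at the price of invoking machinery far outside the paper's mould-theoretic framework.
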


Let us remark that this conjecture implies in particular the transcendence of all the numbers $\mathcal{Z}e^{s}$, where $s \geq 2$. According to the reduction/projection process, we can state the analogue conjecture for the multitangent functions:

\begin{Conjecture} \label{Conjecture MTGF}
	There is no non-trivial $\Q$-linear relation between multitangent functions of different weights:
	$$	\displaystyle	{	\mathcal{M}TGF_{CV}
							=
							\bigoplus	_{p \in \N}
										\mathcal{M}TGF_{CV , p + 2} \ .
						}
	$$
	In other words, $\mathcal{M}TGF_{CV}$ is a graded $\Q$-algebra.
\end{Conjecture}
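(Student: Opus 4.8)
The plan is to establish the implication that Conjecture \ref{Conjecture MZV} forces Conjecture \ref{Conjecture MTGF}; in other words, I would prove the multitangent grading \emph{conditionally} on the (widely believed but still open) multizeta grading, using only the reduction into monotangent functions (Theorem \ref{reduction en monotangente}) together with the $\C$-linear independence of the monotangents (Lemma \ref{linear independence of monotangent functions}). No projection result is needed for this direction, which is fortunate since projection is itself only conjectural.

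First I would start from an arbitrary finite $\Q$-linear relation mixing weights, $\sum_{\seq{s}} c_{\seq{s}} \mathcal{T}e^{\seq{s}} = 0$ with $c_{\seq{s}} \in \Q$ and $\seq{s} \in \mathcal{S}^\star_{b,e}$, and apply Theorem \ref{reduction en monotangente} to each $\mathcal{T}e^{\seq{s}}$. Writing $\mathcal{T}e^{\seq{s}} = \sum_{k \geq 2} W_k^{\seq{s}} \mathcal{T}e^k$, the weight count already recorded in \eqref{Ziks} shows that the coefficient $W_k^{\seq{s}}$ lies in $\mathcal{M}ZV_{CV , ||\seq{s}|| - k}$. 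Collecting the monotangents gives $\sum_{k \geq 2} \big( \sum_{\seq{s}} c_{\seq{s}} W_k^{\seq{s}} \big) \mathcal{T}e^k = 0$, so Lemma \ref{linear independence of monotangent functions} yields $\sum_{\seq{s}} c_{\seq{s}} W_k^{\seq{s}} = 0$ for every $k \geq 2$.

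Next I would invoke the grading hypothesis on $\mathcal{M}ZV_{CV}$. For a fixed $k$, each summand $c_{\seq{s}} W_k^{\seq{s}}$ is homogeneous of weight $||\seq{s}|| - k$, so the vanishing sum is a relation between multizeta values spread over the weights $p - k$ as $p = ||\seq{s}||$ ranges over the weights occurring in the original relation. Conjecture \ref{Conjecture MZV} then forces each graded component to vanish separately, i.e. $\sum_{||\seq{s}|| = p} c_{\seq{s}} W_k^{\seq{s}} = 0$ for all $p$ and all $k \geq 2$. Reassembling through $\sum_{||\seq{s}|| = p} c_{\seq{s}} \mathcal{T}e^{\seq{s}} = \sum_{k \geq 2} \big( \sum_{||\seq{s}|| = p} c_{\seq{s}} W_k^{\seq{s}} \big) \mathcal{T}e^k$ shows that the weight-$p$ part of the original relation is itself zero, which is exactly the decomposition $\mathcal{M}TGF_{CV} = \bigoplus_{p \in \N} \mathcal{M}TGF_{CV, p+2}$.

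The main obstacle is that this argument is irreducibly conditional: the full unconditional statement cannot be reached with the present tools, since $\mathcal{M}ZV_{CV}$ being graded already implies the transcendence of every $\zeta(s)$, $s \geq 2$, and is wide open. To obtain a genuine \emph{equivalence} of the two grading conjectures one would have to run the dual argument, multiplying an alleged weight-mixing multizeta relation by $\mathcal{T}e^2$ and pushing it into $\mathcal{M}TGF_{CV}$ by projection, but that step rests on Conjecture \ref{new conjecture on projection}, which is equally unproven. Thus the honest target is the one-directional reduction above, and the true difficulty is entirely inherited from the arithmetic of multizeta values rather than from anything specific to multitangents.
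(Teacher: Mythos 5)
Your conditional argument is correct and is essentially the paper's own treatment: since the statement is a conjecture, the paper likewise only proves the implication from Conjecture \ref{Conjecture MZV} (Property \ref{ConjectureEquivalenceTwoProperties}, first point), using exactly your two ingredients --- reduction into monotangent functions with weight-homogeneous multizeta coefficients, and the $\C$-linear independence of monotangents --- the only cosmetic difference being that the paper argues by contrapositive (a mixed-weight multitangent relation produces a mixed-weight multizeta relation) while you run the same computation forward. Your closing remark also matches the paper, whose converse direction indeed requires the projection/module-structure Conjecture \ref{MZV-module structure}.
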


In Section \ref{projection}, we have conjectured that for all sequences $\seq{\pmb{\sigma}} \in \mathcal{S}_b^\star$~, we have
$\mathcal{Z}e^{\seq{\pmb{\sigma}}} \mathcal{T}e^2 \in \mathcal{M}TGF_{CV , ||\seq{\pmb{\sigma}}|| + 2}$~.
The following property explains how these two conjectures are related.

\begin{Property}	\label{ConjectureEquivalenceTwoProperties}
	$1$. Conjecture \ref{Conjecture MZV} implies Conjecture \ref{Conjecture MTGF}.	\\
	$2$. Conjectures \ref{MZV-module structure} and \ref{Conjecture MTGF} imply Conjecture \ref{Conjecture MZV}.
\end{Property}

\begin{Proof}
		$1$. Suppose that there exists a $\Q$-linear relation between multitangent functions of different weights. So, 
			there exists a family of non zero	\linebreak
			$\Q$-linear combination of multitangent functions
			$(t_i)_{i \in I} \in	\displaystyle	{	\prod	_{i \in I}
																\mathcal{M}TGF_{CV , i}
													}
			$ such that
			$$	\displaystyle	{	\sum	_{i \in I}
											t_i
									=
									0
								} \ ,
			$$
			where $I$ is a finite subset of $\N$.
			
			Each $t_i$ is a $\Q$-linear combination of convergent multitangents of weight $i$ that we can suppose to be nonzero.
			By reduction into monotangent functions, for all terms $t_i$, there exists a familly $(z_{i,j})_{j \in \crochet{1}{i}}$ of multizeta values, $z_{i,j}$
			being of weight $j$, such that:
			$$	t_i = \displaystyle	{	\sum	_{k = 2}
												^i
												z_{i, i - k} \mathcal{T}e^k
									} .
			$$
			Let us remark that, for a fixed $i$ in $I$, there exists $z_{i,j} \neq 0$ (otherwise, we would have $t_i = 0$)~. Thus, denoting by $M$ the greatest element of $I$, we can write:
			\begin{center}
				$	\displaystyle	{	0	=	\sum	_{i \in I}
														t_i
											=	\sum	_{i \in I}
														\sum	_{k = 2}
																^i
																z_{i, i - k} \mathcal{T}e^k
											=	\sum	_{k = 2}
														^M
														\Bigg(
																\sum	_{	i \in I
																			\atop
																			i \geq k
																		}
																		z_{i , i - k}
														\Bigg )
														\mathcal{T}e^k  .
									}
				$
			\end{center}
			Consequently, the linear independence of the monotangent functions implies that for all $k \in \crochet{2}{M}$:
			$$	\sum	_{i \in I  \atop  i \geq k}
					z_{i , i - k}
				=
				0 \ .
			$$
			We have obtained a non trivial $\Q$-linear relation between multizeta values of different weights. Thus:
			$	\displaystyle	{	\mathcal{M}ZV_{CV}
									\neq
									\bigoplus	_{k \in \N^*}
												\mathcal{M}ZV_{CV,k} \ .
								}
			$
			\\
			We have therefore shown that:
			$$	\mathcal{M}TGF_{CV}
				\neq
				\bigoplus	_{k \in \N^*}
							\mathcal{M}TGF_{CV,k}
				\Longrightarrow
				\mathcal{M}ZV_{CV}
				\neq
				\bigoplus	_{k \in \N^*}
							\mathcal{M}ZV_{CV,k} \ .
			$$
			\\
			$2$. Suppose now that there exists some non trivial $\Q$-linear relation between
			multizeta values of different weights. So, there exist two famillies, one of
			sequences $\seq{s}^1$, $\cdots$, $\seq{s}^n$ in $\mathcal{S}_b^\star$ and the
			second of non-zero rational numbers					 \linebreak
			$c_1$, $\cdots$, $c_n$ such that
			$$	\displaystyle	{	\sum	_{i = 1}
								^n
								c_i \mathcal{Z}e^{\seq{s}^i}
						}
				=
				0
				\ ,
			$$
			where the map $i \longmapsto ||\seq{s}^i||$ is supposed non constant.
			\\
			Thus:
			$$	\sum	_{i = 1}
					^n
					c_i \mathcal{Z}e^{\seq{s}^i} \mathcal{T}e^2 (z)
				=
				0 \ , \text{ for all } z \in \C - \Z \ .
			$$
			
			According to Conjecture \ref{MZV-module structure}, the term
			$c_i \mathcal{Z}e^{\seq{s}^i} \mathcal{T}e^2 (z)$ can be expressed as
			a $\Q$-linear combination of multitangent functions
			$$	c_i \mathcal{Z}e^{\seq{s}^i} \mathcal{T}e^2
				=
				\displaystyle	{	\sum	_{j = 1}
								^{n_i}
								c_{i, j} \mathcal{T}e^{\seq{s}^{i,j}}
						} \ ,
			$$
			where $||\seq{s}^{i,j}|| = ||\seq{s}^i|| + 2$ and for each index $i$, there
			exists an index $j$ such that $c_{i,j} \neq 0$~(otherwise, $c_i$ would be $0$)~.

			Therefore, we obtain a non-trivial $\Q$-linear relation between multitangent
			functions not all of the same weight:
			$$	\displaystyle	{	\sum	_{i = 1}
								^n
								\sum	_{j = 1}
									^{n_i}
 									c_{i, j} \mathcal{T}e^{\seq{s}^{i,j}}
    							=
 							0
						} \ .
			$$
			This is a contradiction with the absence of $\Q$-linear relation between multitangent functions of different weights. Therefore, we have shown:
			\begin{center}
			$	\left \{
					\begin{array}{@{}l}
						\displaystyle	{	\mathcal{M}TGF_{CV}
											=
											\bigoplus	_{k \in \N^*}
														\mathcal{M}TGF_{CV,k}
										}
						\\
						\forall \seq{\pmb{\sigma}} \in \mathcal{S}_b^\star , 
						\mathcal{Z}e^{\seq{\pmb{\sigma}}} \mathcal{T}e^2 \in \mathcal{T}_{||\seq{\pmb{\sigma}}|| + 2}
					\end{array}
				\right.
				\hspace{-0.5cm}
				 \Longrightarrow 
				\displaystyle	{	\mathcal{M}ZV_{CV}
									=
									\bigoplus	_{k \in \N^*}
												\mathcal{M}ZV_{CV,k} \ .
								}
			$
			\end{center}
			\qed
\end{Proof}

\subsubsection	{Transcendence of multitangent functions which are not identically zero}

As an example of the absence of the existence of $\Q$-linear combinations between multitangent
of different weight, we can of course think about the linear independence of monotangent functions
given in Lemma \ref{linear independence of monotangent functions}. Another example concerns a
transcendence property.

In order to use a transcendence method, it may be useful to know if a function is transcendent or not.
Here, a transcendent function means a function which is transcendant over $\C[X]$. If we found a nonzero multitangent function which is not transcendent,
then we would have a $\Q$-linear relation between multitangents of different weights, which would be a
contradiction to the property \ref{ConjectureEquivalenceTwoProperties}. Fortunately for
Conjecture \ref{Conjecture MTGF}, we can state that:

\begin{Lemma}
	Any nonzero multitangent function is transcendent.
\end{Lemma}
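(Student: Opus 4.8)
The plan is to reduce the transcendence question to that of a single well-understood function, the cotangent, by combining the reduction into monotangent functions with Eisenstein's description of the monotangents as polynomials in $\mathcal{T}e^1$.

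First I would take a nonzero multitangent function $f$ and apply the reduction into monotangent functions (Theorem \ref{reduction en monotangente}, extended by $\C$-linearity to all of $\mathcal{M}TGF_{CV}$): we may write $f = \sum_{k=2}^{M} \lambda_k \mathcal{T}e^k$ with $\lambda_k \in \C$, where $M$ is the largest index with $\lambda_M \neq 0$. Such an $M$ exists precisely because $f \neq 0$ and, by Lemma \ref{linear independence of monotangent functions}, the monotangent functions are $\C$-linearly independent, so at least one coefficient is nonzero.

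Next I would invoke the fact recalled in the Eisenstein subsection that every monotangent $\mathcal{T}e^k$ is a polynomial in $u := \mathcal{T}e^1(z) = \pi/\tan(\pi z)$. Writing $\mathcal{T}e^k = P_k(u)$, a comparison of pole orders at $z=0$ gives $\deg P_k = k$: indeed $u \sim 1/z$ near $0$, while $\mathcal{T}e^k(z) = \sum_{n} (n+z)^{-k} \sim z^{-k}$. Consequently $f = Q(u)$ with $Q = \sum_{k=2}^{M} \lambda_k P_k$, whose degree equals $M \geq 2$, since the top-degree contribution comes solely from $k=M$ with nonzero leading coefficient $\lambda_M$ times the leading coefficient of $P_M$. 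In particular $Q$ is nonconstant.

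I would then argue by contradiction. Suppose $f$ is algebraic over $\C(z)$. The field $\C(z)[f]$ is then a finite, hence algebraic, extension of $\C(z)$, and $u$ is a root of the nonzero polynomial $Q(Y) - f \in (\C(z)[f])[Y]$, so $u$ is algebraic over $\C(z)[f]$ and therefore over $\C(z)$. This is impossible: the function $u = \pi/\tan(\pi z)$ has infinitely many poles, one at each integer, whereas any function algebraic over $\C(z)$ extends to a meromorphic function on a compact Riemann surface finitely covering $\mathbb{P}^1(\C)$ and thus has only finitely many poles. This contradiction shows that $f$ is transcendent over $\C[X]$. The main obstacle I expect lies in the second step, namely cleanly establishing that $f$ is a \emph{nonconstant} polynomial in $u$: the polynomial representation itself is classical, but one must pin down the degree as exactly $M$, equivalently that $f$ has a genuine pole of order $M$ at the integers, so that algebraicity can be transferred from $f$ down to $u$. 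Once $Q$ is known to be nonconstant, the transcendence of the cotangent finishes the argument at once.
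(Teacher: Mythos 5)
Your proof is correct, but it follows a genuinely different route from the paper's. The paper argues analytically: it assumes a polynomial relation $P\big(z;\mathcal{T}e^{\seq{s}}(z)\big)\equiv 0$ of minimal degree $d$ in $Y$, uses the exponentially flat character of convergent multitangent functions as $\im z \to +\infty$ (see \S\ref{caractere exponentiellement plat}) to force the constant term $P_0$ to vanish, and then divides by the nonzero function $\mathcal{T}e^{\seq{s}}$ to contradict the minimality of $d$. You instead argue algebraically: reduction into monotangent functions (Theorem \ref{reduction en monotangente}) plus Eisenstein's representation of each $\mathcal{T}e^k$ as a polynomial of exact degree $k$ in $u=\mathcal{T}e^1$ exhibits any nonzero multitangent as a \emph{nonconstant} polynomial $Q(u)$, so algebraicity of the multitangent would transfer to $u$, contradicting the transcendence of the cotangent (an algebraic function over $\C(z)$ can blow up at only finitely many points, while $u$ blows up at every integer). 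Each approach has its advantages: the paper's proof bypasses the reduction theorem and the structure of monotangents entirely, at the cost of invoking the flatness property, which is only established later in the paper via Schwarz's lemma; yours avoids all analysis at infinity, works verbatim for any nonzero element of $\mathcal{M}TGF_{CV}$ (not merely a single multitangent), and is insensitive to whether $\mathcal{T}e^1$ appears in the reduction, at the cost of relying on Theorem \ref{reduction en monotangente} and on Eisenstein's polynomial fact --- which the paper recalls without proof, though it follows by induction from the differentiation property via $\big(\mathcal{T}e^1\big)'=-\big(\pi^2+(\mathcal{T}e^1)^2\big)$, and your pole-order argument at $z=0$ pins the degree down exactly as needed. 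One cosmetic remark: you do not need Lemma \ref{linear independence of monotangent functions} to conclude that some coefficient $\lambda_k$ is nonzero; that follows directly from $f\neq 0$ and the reduction identity itself.
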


Let us remark that if we want to be able to use this lemma in a transcendence argument, it will be necessary to characterize the null multitangent functions. This will be hypothetically done in a forthcoming section (see Section \ref{odd, even or null mtgf}).

\begin{Proof}
	Let us consider $\seq{s} \in \mathcal{S}^\star_{b,e}$ such that
	$\mathcal{T}e^{\seq{s}} \not \equiv 0$~.

	If we suppose that $\mathcal{T}e^{\seq{s}}$ is an algebraic function, there exists
	a polynomial $P \in \C [X ; Y]$ such that
	$P \big ( z ; \mathcal{T}e^{\seq{s}}(z) \big ) \equiv 0$. Let us consider the smallest
	possible degree in $Y$ of such a polynomial, which will be denoted by $d$~. Writing $P$
	in an expanded form, there exists a non-trivial familly of polynomials
	$(P_i)_{i \in \crochet{0}{d}}$ such that we would have:
	$$	\displaystyle	{	\sum	_{i = 0}
						^d
						P_i(z)	\left (
								\mathcal{T}e^{\seq{s}} (z)
							\right )
						^i
					=
					0 \ , \text{ for all } z \in \C - \Z\ .
				}
	$$
	Thanks to the exponentially flat character of convergent multitangent functions (see \S
	\ref{caractere exponentiellement plat})	when $\im z$ goes to the infinity, we have
	for all polynomial $P$ and sequence $\seq{s} \in \mathcal{S}_{b,e}^\star$:
	$$  P(z) \mathcal {T}e^{\seq{s}} (z)	\longrightarrow 0 \ .	$$

	Thus:	$$	\displaystyle	{	\sum	_{i = 1}
							^d
							P_i(z)	\left (
									\mathcal{T}e^{\seq{s}} (z)
								\right )
								^i
						\longrightarrow
						0
					} \ .
		$$
	Because the function	$	\displaystyle	{	\sum	_{i = 0}
									^d
									P_i	\left (
											\mathcal{T}e^{\seq{s}}
										\right )
										^i
							}
				$ is supposed to be null, this shows that
	$P_0(z) \longrightarrow 0$, which means that $P_0$ is null. From the hypothesis,
	$\mathcal{T}e^{\seq{s}}$ is not the null function. So:
	$$	\displaystyle	{	\sum	_{i = 1}
						^d
						P_i(z)	\left (
								\mathcal{T}e^{\seq{s}} (z)
							\right )
							^{i - 1}
					=
					\sum	_{i = 0}
						^{d - 1}
						P_{i + 1} (z)	\left (
									\mathcal{T}e^{\seq{s}} (z)
								\right )
								^i
					\equiv
					0 \ .
				}
	$$
	This contradicts the fact that $d$ is the smallest possible degree in $Y$ for such a
	polynomial $P$. Consequently, we have shown that every nonzero multitangent function
	is transcendent.
	\qed
\end{Proof}

\subsection	{On a hypothetical basis of $\mathcal{M}TGF_{CV , p}$}

In this paragraph, we will study the analogue of Zagier's conjecture on the dimension of
$\mathcal{M}ZV_{CV , p}$~. For this, we will use the reduction/projection process to
translate it in $\mathcal{M}TGF_{CV}$~. Recall that the Zagier conjecture states that
$(\text{dim } \mathcal{M}ZV_{CV , n})_{n \in \N}$ satisfies the recurrence relation:
$$	\left \{
			\begin{array}{l}
					c_0 = 1  \ , \ c_1 = 0  \ , \ c_2 = 1  \ .	\\
					c_{n + 3} = c_{n + 1} + c_n \text{ , for all } n \in \N\ .
			\end{array}
	\right.
$$

The analogue for $\mathcal{M}TGF_{CV}$ is:

\begin{Conjecture}	\label{dimension}
	$(\text{dim } \mathcal{M}TGF_{CV , n + 2})_{n \in \N}$ satisfies the recurrence relation:
	$$	\left \{
				\begin{array}{l}
					d_0 = d_1 = 1 	\ , \ d_2 = 2	\ , \ d_3 = 3			\ .	\\
					d_{n + 4} = d_{n + 3} + d_{n + 2} - d_n \text{ , where } n \in \N\ .
				\end{array}
		\right.
	$$
\end{Conjecture}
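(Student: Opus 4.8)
The plan is to prove the statement conditionally, granting both Conjecture~\ref{structure of the algebra MTGF, version 2} (the equality in the module decomposition) and Zagier's conjecture, and to reduce the recurrence for $\dim \mathcal{M}TGF_{CV,n+2}$ to Zagier's recurrence for $c_n = \dim_{\mathbb{Q}} \mathcal{M}ZV_{CV,n}$ by means of the reduction/projection duality. The core observation is that the module decomposition turns a dimension count for multitangents into a dimension count for multizetas, so that the whole recurrence becomes a purely arithmetic manipulation of the sequence $(c_n)$.

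First I would fix $p \geq 2$ and compute $\dim_{\mathbb{Q}} \mathcal{M}TGF_{CV,p}$. For each $k \in \crochet{2}{p}$, the map $z \longmapsto z\, \mathcal{T}e^k$ is a $\mathbb{Q}$-linear injection of $\mathcal{M}ZV_{CV,p-k}$ into the space of functions, since $\mathcal{T}e^k \not\equiv 0$; hence
$$ \dim_{\mathbb{Q}} \left( \mathcal{M}ZV_{CV,p-k} \cdot \mathcal{T}e^k \right) = \dim_{\mathbb{Q}} \mathcal{M}ZV_{CV,p-k} = c_{p-k}. $$
Because the $\mathcal{T}e^k$ are $\mathbb{C}$-linearly independent (Lemma~\ref{linear independence of monotangent functions}), the sum in Conjecture~\ref{structure of the algebra MTGF, version 2} is direct, so summing over $k$ and substituting $j = p-k$ yields
$$ \dim_{\mathbb{Q}} \mathcal{M}TGF_{CV,p} = \sum_{k=2}^{p} c_{p-k} = \sum_{j=0}^{p-2} c_j. $$
Writing $d_n = \dim_{\mathbb{Q}} \mathcal{M}TGF_{CV,n+2}$, this reads $d_n = \sum_{j=0}^{n} c_j$, so that $d_{n+1} - d_n = c_{n+1}$ for every $n$.

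Next I would verify the four seeds by direct enumeration of $\mathcal{S}^\star_{b,e}$: the only weight-$2$ and weight-$3$ admissible sequences are $(2)$ and $(3)$, giving $d_0 = d_1 = 1$; in weight $4$ one has $\mathcal{T}e^4$ and $\mathcal{T}e^{2,2} = 2\mathcal{Z}e^2 \mathcal{T}e^2$, so $d_2 = 2$; and $d_3 = c_0 + c_1 + c_2 + c_3 = 3$, matching the claimed initial data. For the recurrence, from $d_{n+1} - d_n = c_{n+1}$ the target identity $d_{n+4} = d_{n+3} + d_{n+2} - d_n$ is equivalent to $d_{n+4} - d_{n+3} = d_{n+2} - d_n$, that is to $c_{n+4} = c_{n+2} + c_{n+1}$, which is precisely Zagier's relation $c_{m+3} = c_{m+1} + c_m$ evaluated at $m = n+1$. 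This simultaneously shows that, granting Conjecture~\ref{structure of the algebra MTGF, version 2}, the present conjecture is \emph{equivalent} to Zagier's conjecture.

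The main obstacle is that the argument is entirely conditional, since both input conjectures are open. Unconditionally, Corollary~\ref{structure of the algebra MTGF, version 1} only provides the inclusion $\mathcal{M}TGF_{CV,p} \subseteq \bigoplus_{k=2}^{p} \mathcal{M}ZV_{CV,p-k} \cdot \mathcal{T}e^k$, hence merely the upper bound $d_n \leq \sum_{j=0}^{n} c_j$, while the Goncharov--Terasoma theorem bounds each $c_j$ from above by the Fibonacci-type number of Zagier. The genuinely hard content is therefore the reverse inclusion of Conjecture~\ref{structure of the algebra MTGF, version 2} (equivalently the projection Conjecture~\ref{new conjecture on projection}), which alone supplies the lower bound on $d_n$, together with the exact dimension count of Zagier; neither is accessible by these elementary linear-algebra methods.
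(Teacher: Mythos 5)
Your proposal is correct and follows essentially the same route as the paper: granting the module-structure conjecture (you invoke Conjecture~\ref{structure of the algebra MTGF, version 2}, the paper the equivalent Conjecture~\ref{MZV-module structure}), both arguments derive the key dimension formula $\text{dim } \mathcal{M}TGF_{CV , n + 2} = \sum_{k = 0}^{n} \text{dim } \mathcal{M}ZV_{CV , k}$ (the paper's Property~\ref{property 8}) and then observe that this identity makes the stated recurrence and seeds equivalent to Zagier's conjecture. The only differences are presentational: the paper packages the final step as the Hilbert--Poincar\'e series identity $(1 - X)\, \mathcal{H}_{\mathcal{M}TGF_{CV}} = \mathcal{H}_{\mathcal{M}ZV_{CV}}$ and justifies the dimension formula by exhibiting an explicit basis, whereas you obtain it directly from the direct-sum decomposition and telescope the recurrence by hand.
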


Of course, this conjecture is related to that of Zagier:

\begin{Property}
	Let us suppose that Conjecture \ref{MZV-module structure} holds, i.e. that $\mathcal{M}TGF_{CV}$ is a $\mathcal{M}ZV_{CV}$-module.	\\
	Then, Conjecture \ref{dimension} is equivalent to the Zagier's conjecture.
\end{Property}

The proof is based on Property \ref{property 8} stated below. We have the following formal power
series which are respectively the hypothetically Hilbert-Poincar\'e series of the hypothetically
graded $\Q$-algebras $\mathcal{M}ZV_{CV}$ and $\mathcal{M}TGF_{CV}$:
$$	\begin{array}{lclcl}
		\mathcal{H}_{\mathcal{M}ZV_{CV}}	&=&					\displaystyle	{	\sum	_{p \in \N}
																							\text{dim } \mathcal{M}ZV_{CV , p}
																							X^p
																				}
											&\stackrel{?}{=}&	\displaystyle	{	\frac	{1}	{1 - X^2 - X^3}	} \ .
		\\
		\mathcal{H}_{\mathcal{M}TGF_{CV}}	&=&					\displaystyle	{	\sum	_{p \in \N}
																							\text{dim } \mathcal{M}TGF_{CV , p + 2}
																							X^p
																				}
											&\stackrel{?}{=}&	\displaystyle	{	\frac	{1}	{(1 - X^2 - X^3)(1 - X)}	} \ .
	\end{array}
$$
So, it shall be sufficient to prove that $(1 - X) \mathcal{H}_{\mathcal{M}TGF_{CV}} = \mathcal{H}_{\mathcal{M}ZV_{CV}}$, which is done in the following:

\begin{Property}	\label{property 8}
	Let us suppose that Conjecture \ref{MZV-module structure} holds, i.e. that $\mathcal{M}TGF_{CV}$ is a $\mathcal{M}ZV_{CV}$-module.	\\
	$1$.	If	$	\left (
							\mathcal{Z}e^{\seq{s}_p^k}
					\right )
					_{k = 1 , \cdots , \text{dim } \mathcal{M}ZV_{CV , p}}
				$ denotes a basis of $\mathcal{M}VZ_{CV , p}$ for all $p \in \N$ ,
			then	$	\left (
								\mathcal{Z}e^{\seq{s}_u^k}	\mathcal{T}e^v
						\right )
						_{	{	k = 1 , \cdots , \text{dim } \mathcal{M}ZV_u
								\atop
								u + v = p + 2
							}
							\atop
							v \geq 2
						}
					$ is a basis of $\mathcal{M}TGF_{CV , p + 2}$ for all $p \in \N$~.
	\\
	$2$.	We have:	$	\displaystyle	{	\forall p \in \N \ , \ 
												\text{dim } \mathcal{M}TGF_{CV , p + 2}	= \sum	_{k = 0}
																								^{p}
																								\text{dim } \mathcal{M}ZV_{CV , k}
											} \ .
						$
\end{Property}

\begin{Proof}
	Since the second point follows directly from the first one, we will only prove that a basis of $\mathcal{M}TGF_{CV , p + 2}$ is given by the family:
	$$	\left (
				\mathcal{Z}e^{\seq{s}_u^k}	\mathcal{T}e^v
		\right )
		_{	{	k = 1 , \cdots , \text{dim } \mathcal{M}ZV_u
				\atop
				u + v = p + 2
			}
			\atop
			v \geq 2
		}\ .
	$$
	\noindent
	\underline{Step 0:}\\
	\\
	Because we have supposed that $\mathcal{M}TGF_{CV}$ is a $\mathcal{M}ZV_{CV}$-module, each term of the hypothetical basis is indeed an element of
	$\mathcal{M}TGF_{CV , p + 2}$~.\\
	\\
	\underline{Step 1:} the linear independence property.	\\
	\\
	If we suppose the existence of scalars $(\lambda_{v , k})$ such that:
	$$	\displaystyle	{	\sum	_{v = 2}
									^{p + 2}
									\hspace{0.2cm}
									\sum	_{k = 1}
											^{\text{dim }\mathcal{M}ZV_{CV , p + 2 - v}}
											\lambda_{v , k}
											\mathcal{Z}e^{\seq{s}_u^k}
											\mathcal{T}e^v
							=
							0
						}\ ,
	$$
	by the linear independence of monotangent functions, we obtain:
	$$	\forall v \in \crochet{2}{p + 2} \ , \	\sum	_{k = 1}
														^{\text{dim }\mathcal{M}ZV_{CV , p + 2 - v}}
														\lambda_{v , k}
														\mathcal{Z}e^{\seq{s}_u^k}
												=
												0 \ .
	$$
	Consequently, from the linear independence of the	$	\left (
																	\mathcal{Z}e^{\seq{s}_u^k}
															\right )
															_{	k = 1 , \cdots , \text{dim } \mathcal{M}ZV_u	}
														$, we conclude that all the scalars $\lambda_{v , k}$ are null, which concludes this step.\\
	\\
	\underline{Step 2:} the spanning property.	\\
	\\
	By the reduction into monotangent functions, one writes each multitangent function in terms of $\mathcal{Z}e^{\seq{s}^1} \mathcal{Z}e^{\seq{s}^2} \mathcal{T}e^n$. Consequently, using the \symmetrelity of the mould $\mathcal{Z}e^\p$, we now only have to express each multizeta value of weight $k$ which appears in such a relation in terms of the basis of $\mathcal{M}ZV_{CV , k}$~.
	
	By this process, we would have expressed each convergent multitangent function in terms of $\mathcal{Z}e^{\seq{s}_u^k}	\mathcal{T}e^v$ , where $k \in \crochet{1}{\text{dim } \mathcal{M}ZV_u}$ , $u + v = p + 2$ and $v \geq 2$ because the reduction into monotangent functions, as well as the \symmetrelity, preserves the weight.
	\qed
\end{Proof}

To conclude this paragraph, we give in the following figure the first hypothetical dimensions of the space of multitangent functions of weight $p + 2$~. We can recognize the sequences $A000931$ and $A023434$ from the On-Line Encyclopedia of Integer Sequences (see \cite{Sloane})

\begin{figure}[h]

	$$	\begin{array}{|c|c|c|c|c|c|c|c|c|c|c|c|c|c|}
			\hline
			p										&	0	&	1	&	2	&	3	&	4	&	5	&	6	&	7	&	8	&	9	&	10	&	11	&	12	\\
			\hline
			
			\text{dim } \mathcal{M}ZV_{CV , p}		&	1	&	0	&	1	&	1	&	1	&	2	&	2	&	3	&	4	&	5	&	7	&	9	&	12	\\
			\hline
			
			\text{dim } \mathcal{M}TGF_{CV , p + 2}	&	1	&	1	&	2	&	3	&	4	&	6	&	8	&	11	&	15	&	20	&	27	&	36	&	48	\\
			\hline
		\end{array}\\
	$$
	
	\caption{\textbf{The first hypothetical dimensions of the space of multitangent functions of weight $p + 2$.}}

\end{figure}

\subsection	{The $\Q$-linear relations between multitangent functions}
\label{linear relations between mtgfs}
We know that multizeta values have two encodings. The first one is the one we have used since
the beginning of this article, resulting from the specialization $x_n = n^{-1}$ (up to a
convention of choosing the summation sequence to be an increasing or a decreasing sequence)
of the monomial basis of quasi-symmetric functions: it is exactly the \symmetrel mould
$\mathcal{Z}e^\p$~. The reader is invited to consult \cite{Gessel} for the first appearance
of quasi-symmetric functions in literature, as well as \cite{Bergeron}, \cite{Hazewinkel},
\cite{Hivert}, or \cite{Stanley} for other presentations. The second encoding of
multizeta values comes from an iterated integral representation (which has been mentioned
in the introduction, see \S \ref{iterated integral representation}): it is the \symmetral
mould $\mathcal{W}a^\p$~.

Because of the dual process reduction/projection, we can imagine that the \symmetral encoding
has a translation in the algebra $\mathcal{M}TGF$~. One can think that a quadratic relation in
$\mathcal{M}ZV$ will be translated in $\mathcal{M}TGF$ into another quadratic relation, but this
does not provide to be true (essentially because Hurwitz multizeta functions have one and only one
encoding, see \cite{Bouillot-Cras})~. In fact, there are lots of null $\Q$-linear relations between multitangent functions;
that correspond to the \symmetrality relations. Of course, the existence of such relations is
a natural fact, but, as we will see, they all have an odd look. The situation is not yet
completely understood.

We will see in the following that sometimes, it is possible to prove easily a relation using
multitangent properties (because it is the derivative of a well-know relation between multitangent
function or using a parity argument). Most of the time, these relations remain completely mysterious.

Now, let us explain what happens for small weights. 

\paragraph	{Up to weight $5$}

The only $\Q$-linear relation up to weight $5$ is the surprising existence of a null multitangent
function, $\mathcal{T}e^{2,1,2}$~. Although it is an interesting fact, we postpone the study of
null multitangent functions to \S \ref{odd, even or null mtgf}~. Let us just mention that
many multitangent functions are the null function and that we have a conjectural
characterization of those.

\paragraph	{Weight $6$}

Since
$\text{dim } \mathcal{M}ZV_{CV,2} =  \text{dim } \mathcal{M}ZV_{CV,3} = \text{dim } \mathcal{M}ZV_{CV,4} = 1$,
we deduce\footnotemark\, from Conjecture \ref{structure of the algebra MTGF, version 2} that
$\text{dim } \mathcal{M}TGF_{CV,6} = 4$~. Consequently, there exists exactly four independent
$\Q$-linear relations between the eight convergent multitangent functions of weight $6$. Actually, it is not
difficult to find them, using the known values of multizeta values of weight $4$~.

\footnotetext	{	Because Conjecture \ref{new conjecture on projection} is true for the global
			weight $6$, as we have seen with the table~\ref{quelques
			expressions de projections de multizetas}~.
		}

These four independent relations are given in Table \ref{linear combination of weight 6}.

\begin{table}[h]
	
	$$	\hspace{-2.25cm}
		\begin{array}{|r@{\ }c@{\ }l||r@{\ }c@{\ }l|c|}
			\hline
			\multicolumn{3}{|c||}{\text{Relations in } \mathcal{M}TGF_6}
			&
			\multicolumn{4}{c|}{\text{Equivalent relations in }\mathcal{M}ZV}
			\\
			&&&&&\text{Relations}&\text{Origin}
			\\
			\hline
			\mathcal{T}e^{3,1,2} + \mathcal{T}e^{2,1,3} + \mathcal{T}e^{2,1,1,2}	&=&	0 \ .
			&
			\mathcal{Z}e^{2, 1, 1} &=& \mathcal{Z}e^{3, 1} + \mathcal{Z}e^{2, 2} \ .
			&
			\text{Double-shuffle}
			\\
			\hline
			2 \mathcal{T}e^{3,1,2} + \mathcal{T}e^{2,2,2} + 2 \mathcal{T}e^{2,1,3}	&=&	0 \ .
			&
			\big (\mathcal{Z}e^{2} \big)^2 &=& 4 \mathcal{Z}e^{3, 1} + 2 \mathcal{Z}e^{2, 2} \ .
			&
			\text{Shuffle}
			\\
			\hline
			\mathcal{T}e^{2,4} - \mathcal{T}e^{4,2} + 2 \mathcal{T}e^{2,1,3} - 2 \mathcal{T}e^{3,1,2}	&=&	0 \ .
			&
			\left \{
					\begin{array}{@{}l@{}}
						\mathcal{Z}e^{3} \mathcal{Z}e^{2}	\\
						\mathcal{Z}e^{3} 	\\
					\end{array}
			\right.
			&
			\begin{array}{c}
				=	\\
				=
			\end{array}
			&
			\begin{array}{@{}l@{}}
				6 \mathcal{Z}e^{4, 1} + 3 \mathcal{Z}e^{3, 2} + \mathcal{Z}e^{2, 3} \ .	\\
				\mathcal{Z}e^{2, 1} \ .
			\end{array}
			&
			\begin{array}{@{}c@{}}
				\text{Shuffle}	\\
				\text{Double-shuffle}
			\end{array}
			\\
			\hline
			3 \mathcal{T}e^{3,1,2} + 3 \mathcal{T}e^{2,1,3}	- \mathcal{T}e^{3,3} &=&	0 \ .
			&
			\mathcal{Z}e^{4} &=& \mathcal{Z}e^{3, 1} + \mathcal{Z}e^{2, 2}
			&
			\text{Double-shuffle}
			\\
			\hline
		\end{array}
	$$
	
	\caption{\textbf{The four independent $\Q$-linear relations between multitangent functions of weight $6$.}}
	\label{linear combination of weight 6}
\end{table}

\paragraph	{Weight $7$}

Concerning the weight $7$, we obtain Table $\ref{linear combination of weight 7}$. As it is
conjectured that $\text{dim } \mathcal{M}TGF_7 = 6$ and as $\mathcal{M}TGF_7$ is spanned
by sixteen functions, one could find exactly ten independent relations. That is what we obtain,
but the remaining question is ``are there any other relations between multitangent functions
of weight $7$?'' Consequently, this is the first weight where we can only speak hypothetically.

\begin{table}[h]

	$$	\begin{array}{|r@{\ }c@{\ }l|}
			\hline
			\multicolumn{3}{|c|}{\text{Relations in } \mathcal{M}TGF_7}
			\\
			\hline
			\mathcal{T}e^{2,1,1,1,2}	&=&	0 \ .
			\\
			- 4 \mathcal{T}e^{3,1,3} + \mathcal{T}e^{3,1,1,2} + \mathcal{T}e^{2,1,1,3}	&=&	0 \ .
			\\
			4 \mathcal{T}e^{3,1,3} - 2 \mathcal{T}e^{3,1,1,2} + \mathcal{T}e^{2,1,2,2}	&=&	0 \ .
			\\
			-4 \mathcal{T}e^{3,1,3} + 2 \mathcal{T}e^{3,1,1,2}	+ \mathcal{T}e^{2,2,1,2} &=&	0 \ .
			\\
			\mathcal{T}e^{4,1,2} + 5 \mathcal{T}e^{3,1,3}	+ \mathcal{T}e^{2,1,4} &=&	0 \ .
			\\
			- \mathcal{T}e^{4,3} + \mathcal{T}e^{4,1,2}	+ 5\mathcal{T}e^{3,1,3} + \mathcal{T}e^{2,2,3} - 4 \mathcal{T}e^{3,1,1,2} &=&	0 \ .
			\\
			-5 \mathcal{T}e^{3,1,3} + \mathcal{T}e^{2,3,2} &=&	0 \ .
			\\
			\mathcal{T}e^{4,3} -  \mathcal{T}e^{4,1,2} + \mathcal{T}e^{3,2,2} - 8 \mathcal{T}e^{3,1,3} + 4 \mathcal{T}e^{3,1,1,2} &=&	0 \ .
			\\
			- \mathcal{T}e^{5,2} + \mathcal{T}e^{2,5} - 4 \mathcal{T}e^{4,1,2} - 18 \mathcal{T}e^{3,1,3} + 4 \mathcal{T}e^{3,1,1,2} &=&	0 \ .
			\\
			\mathcal{T}e^{4,3} + \mathcal{T}e^{3,4}	+ 8 \mathcal{T}e^{3,1,3} &=&	0 \ .
			\\
			\hline
		\end{array}
	$$
	
	\caption{\textbf{The ten independent $\Q$-linear relations between multitangent functions of weight $7$.}}
	\label{linear combination of weight 7}
\end{table}

\subsection	{On the possibility of finding relations between multizeta values from the multitangent functions}

\subsubsection	{Two different process for multiplying two multitangent functions.}

It is clear that we have two possibilities to compute a product of two multitangent functions,
according either to the \symmetrelity of $\mathcal{T}e^\p$ or to the relations of reduction into
monotangent functions. This is summed up in the following diagram, where ``reduction''
(resp. ``\symmetrel multiplication'') indicates the linear extension to $\mathcal{M}TGF_{CV}$
(resp. $\mathcal{M}TGF_{CV} \otimes \mathcal{M}TGF_{CV}$) of the reduction process (resp. the
multiplication by the \symmetrelity of $\mathcal{T}e^\p$):

\begin{center}	\label{diagramme}
	\hspace{0.1cm}
	$\xymatrix	{	\ar@{}[ddrrr] 
					\mathcal{M}TGF \otimes \mathcal{M}TGF
					\ar[rrr]^-{\symmetrel}_{\phantom{mumu}\text{multiplication}}
					\ar[d]_{\text{reduction} \otimes \text{reduction}}							&&&	\mathcal{M}TGF \ar[dd]^-{\text{reduction}}	\\
					\mathcal{M}TGF \otimes \mathcal{M}TGF
					\ar[d]_{\symmetrel}^{\text{multiplication}}								&&&												\\
					\mathcal{M}TGF \ar[rrr] ^{\text{reduction}}									&&&	\mathcal{M}TGF
				}
	$
\end{center}

Obviously, these two processes give the same result in $\mathcal{M}TGF_{CV}$, but the expressions
are different. This gives us the opportunity to find out some relations between multizeta values.

First, we see that the previous commutative diagram gives us a way to find out all the relations
of \symmetrelity. Let us emphasize that the following property is not the best way to prove the
\symmetrelity of $\mathcal{Z}e^\p$, but its aim is just to begin to describe the relations between
multizeta values obtained from relations between multitangent functions.

\begin{Property}\label{relations de symetrelite}
	\begin{enumerate}
		\item The relations of \symmetrelity of $\mathcal{T}e^\p$ and the previous commutative diagram imply all the relations of \symmetrelity of
				$\mathcal{Z}e^\p$.
		\item The previous commutative diagram gives us more relations than those of \symmetrelity.
	\end{enumerate}
\end{Property}

\begin{Proof}
	$1$. First of all, let us explain how we can extract the leading term in the reduction in monotangent functions of the multitangent $\mathcal{T}e^{\seq{s}}$,
	where $\seq{s} \in \mathcal{S}^*_{b,e}$, if $\text{max }(s_1, \cdots, s_r)$ is reached one and only one time at the index $i$. According to the
	expression \eqref{Ziks} and Theorem \ref{reduction en monotangente}, this coefficient is:
	$$	\mathcal{Z}_{i, 0}^{\seq{s}}
		= {}^{i}B^{\seq{s}}_{0, \cdots, \widehat{0}, \cdots, 0} \mathcal{Z}e^{s_r, \cdots, s_{i + 1}} \mathcal{Z}e^{s_1, \cdots, s_{i - 1}}
		= (-1)^{s_{i + 1} + \cdots + s_r}	\mathcal{Z}e^{s_r, \cdots, s_{i + 1}} \mathcal{Z}e^{s_1, \cdots, s_{i - 1}}
		.
	$$
	\noindent
	Therefore, if $i = r$, we have: $	\mathcal{Z}_{r, 0}^{\seq{s}} = \mathcal{Z}e^{s_1, \cdots, s_{r - 1}}	$.
	
	\bigskip
	
	Now, let us consider two sequences $\seq{s}^1$ and $\seq{s}^2$ in $\mathcal{S}^\star_{b,e}$ and denote by $M_0$, the largest integer which appears in these
	two sequences, and finally set $M = M_0 + 1$.
	\\
	\\
	Using the \symmetrelity of $\mathcal{T}e^\p$ and then from the recursive definition of the set sh\textit{\textbf{\underline{e}}}, we obtain:
	$$	\mathcal{T}e^{\seq{s}^1 \cdot M}\mathcal{T}e^{\seq{s}^2 \cdot M}
		=
		\displaystyle	{	\sum	_{	\seq{s}
										\in
										sh\textit{\textbf{\underline{e}}}(\seq{s}^1 ;  \seq{s}^2 \cdot M)
										\cup
										sh\textit{\textbf{\underline{e}}}(\seq{s}^1 \cdot M   ;   \seq{s}^2)
									}
									\mathcal{T}e^{\seq{s} \cdot M}
							+
							\sum	_{	\seq{s}
										\in
										sh\textit{\textbf{\underline{e}}}(\seq{s}^1  ;  \seq{s}^2)
									}
									\mathcal{T}e^{\seq{s} \cdot 2M}
						} \ .
	$$

	Thus, the coefficient of $\mathcal{T}e^{2M}$ in the product $\mathcal{T}e^{\seq{s}^1 \cdot M}\mathcal{T}e^{\seq{s}^2 \cdot M}$, obtained first by the \symmetrel multiplication and then by reduction into monotangent functions, is:
	$$	\displaystyle	{	\sum	_{	\seq{s}
										\in
										sh\textit{\textbf{\underline{e}}}(\seq{s}^1   ;  \seq{s}^2)
									}
									\mathcal{Z}e^{\seq{s}}
						} \ .
	$$
	
	On another hand, in the reduction into monotangent functions of $\mathcal{T}e^{\seq{s}^1 \cdot M}$ and $\mathcal{T}e^{\seq{s}^2 \cdot M}$, the only monotangents which may contribute to $\mathcal{T}e^{2M}$ are the terms $\mathcal{T}e^{M}$ coming from the reduction into monotangent functions of $\mathcal{T}e^{\seq{s}^1 \cdot M}$ and $\mathcal{T}e^{\seq{s}^2 \cdot M}$. These are respectively equal to $\mathcal{Z}e^{\seq{s}^1} \mathcal{T}e^{M}$ and $\mathcal{Z}e^{\seq{s}^2} \mathcal{T}e^{M}$. Consequently, the coefficient of $\mathcal{T}e^{2M}$ in the product $\mathcal{T}e^{\seq{s}^1 \cdot M}\mathcal{T}e^{\seq{s}^1 \cdot M}$, obtained first by reduction into monotangent functions, then by the \symmetrel multiplication and finally by reduction one more time is: $\mathcal{Z}e^{\seq{s}^1} \mathcal{Z}e^{\seq{s}^2}$.

	As a consequence, we obtain the relation we are looking for:
	$$	\mathcal{Z}e^{\seq{s}^1} \mathcal{Z}e^{\seq{s}^2}
		=
		\displaystyle	{	\sum	_{	\seq{s}
										\in
										sh\textit{\textbf{\underline{e}}}(\seq{s}^1  ;  \seq{s}^2)
									}
									\mathcal{Z}e^{\seq{s}}
						} \ .
	$$
	
	$2$. We make the computation in the simplest case where the diagram gives a result:
	\begin{enumerate}
		\item $		\begin{array}[t]{lll}
						\mathcal{T}e^2 \cdot \mathcal{T}e^{2,2}
						&=&
						2 \mathcal{Z}e^2 \big ( \mathcal{T}e^2 \big )^2
						=
						4 \mathcal{Z}e^2 \mathcal{T}e^{2,2} + 2 \mathcal{Z}e^2 \mathcal{T}e^4
						\vspace{0.15cm}
						\\
						&=&
						8 \big ( \mathcal{Z}e^2 \big )^2 \mathcal{T}e^2 + 2 \mathcal{Z}e^2 \mathcal{T}e^4 \ .
					\end{array}
				$
		\vspace{0.15cm}
		\item $		\begin{array}[t]{lll}
						\mathcal{T}e^2 \cdot \mathcal{T}e^{2,2}
						&=&
						3 \mathcal{T}e^{2,2,2} + \mathcal{T}e^{4,2} + \mathcal{T}e^{2,4}
						\vspace{0.15cm}
						\\
						&=&
						\left (
								3 \big ( \mathcal{Z}e^2 \big ) ^2 + 6 \mathcal{Z}e^{2,2} + 8 \mathcal{Z}e^4
						\right ) \mathcal{T}e^2 + 2 \mathcal{Z}e^2 \mathcal{T}e^4 \ .
					\end{array}
				$
	\end{enumerate}

	\noindent
	Thus, by the linear independence of monotangent functions, we obtain:
	\begin{equation} \label{(*)}
		6 \mathcal{Z}e^{2,2} + 8 \mathcal{Z}e^{4} = 5 \big (\mathcal{Z}e^2 \big )^2 \ .
	\end{equation}
	\noindent
	Using the \symmetrelity of multizeta values, $(\ref{(*)})$ can be written:
	\begin{equation} \label{(**)}
		3 \mathcal{Z}e^{4} = 4 \mathcal{Z}e^{2, 2} \ .
	\end{equation}
	But, the only relation of \symmetrelity of weight $4$ is:
	\begin{equation} \label{(***)}
		\big (\mathcal{Z}e^2 \big )^2 = 2\mathcal{Z}e^{2,2} + \mathcal{Z}e^{4} \ .
	\end{equation}
		
	If $(\ref{(**)})$ could be proven from $(\ref{(***)})$, using only the \symmetrelity of $\mathcal{Z}e^\p$, we would know the following values
	$	\left \{
				\begin{array}{rll}
						\mathcal{Z}e^4	&=&	\displaystyle	{	\frac	{2}	{5}
																\big (
																		\mathcal{Z}e^2
																\big )
																^2  .
															}
						\vspace{0.2cm}
						\\
						\mathcal{Z}e^{2,2}	&=&	\displaystyle	{	\frac	{3}	{10}
																	\big (
																			\mathcal{Z}e^2
																	\big )
																	^2  .
																}
				\end{array}
		\right.
	$

	To prove these relations, we actually need the following three equations coming
	from the three types of relations which are known to conjecturally describe all
	the relation between multizeta values, that is the relations of \symmetrality\!\!/\symmetrelity
	and the double shuffle relations:
	$$	\begin{array}{rll}
				\big (\mathcal{Z}e^2 \big )^2 	&=&	2\mathcal{Z}e^{2,2} + \mathcal{Z}e^{4}		\ .		\\
				\big (\mathcal{Z}e^2 \big )^2 	&=&	2\mathcal{Z}e^{2,2} + 4\mathcal{Z}e^{3,1}	\ .		\\
				\mathcal{Z}e^4 					&=& \mathcal{Z}e^{2,2} + \mathcal{Z}e^{3,1}		\ .
		\end{array}
	$$

	This proves that the diagram gives us more relations than those of \symmetrelity.
	\qed
\end{Proof}

Let us also notice that we can write another commutative diagram for the derivative of a
multitangent function, but this does not provide a way to find new relations between multizeta values.

\subsection	{Back to the absence of the monotangent $\mathcal{T}e^1$ in the relations of reduction}
\label{absence de composante TE1}

We will see that the convergent multitangent functions are exponentially flat, near infinity
(see \textsection \ref{caractere exponentiellement plat}). This implies the absence of the
monotangent $\mathcal{T}e^1$ in the relations of reduction. From this, we can deduce some
relations between multizeta values. For all sequences $\seq{s} \in \mathcal{S}_{b,e}^\star$
of length $r$, we have:
\begin{center}
	$	\displaystyle	{	\sum	_{i = 1}
						^r
						\sum	_{	k_1 ,\cdots, \widehat{k_i},\cdots, k_r \geq 0
								\atop
								k_1 + \cdots + \widehat{k_i} + \cdots + k_r = s_i - 1
							}
							{}^i B_{\seq{k}}^{\seq{s}}
							\mathcal{Z}e^{s_1 + k_1, \cdots , s_{i - 1} + k_{i - 1}}
							\mathcal{Z}e^{s_r + k_r, \cdots , s_{i + 1} + k_{i + 1}}
					=
					0
				} \ .
	$
\end{center}

For instance, with $\seq{s} = (2,1,2)$, we obtain:
$	\left (
		\mathcal{Z}e^2
	\right )
	^2
	=
	2 \mathcal{Z}e^{2, 2} + 4 \mathcal{Z}e^{3,1}
	\ ,
$
which is a \symmetrality relation.

On the other hand, we can prove these relations between multizeta values in an independent way. Consequently, this will immediately show the exponentially flat character as a corollary and also will answer the question ``\textit{Are these relations consequences of the quadratic relations?}''
\\

Indeed, these relations are consequences of \symmetrality relations of multizetas values.
For all sequences $\seq{s} \in \mathcal{S}_{b,e}^\star$ of length $r$, let us denote:\\
$$	S(r) =		\displaystyle	{	\sum	_{i = 1}
							^r
							\sum	_{	k_1 ,\cdots, \widehat{k_i},\cdots, k_r \geq 0
									\atop
									k_1 + \cdots + \widehat{k_i} + \cdots + k_r = s_i - 1
								}
								{}^i B_{\seq{k}}^{\seq{s}}
								\mathcal{Z}e^{s_1 + k_1, \cdots , s_{i - 1} + k_{i - 1}}
								\mathcal{Z}e^{s_r + k_r, \cdots , s_{i + 1} + k_{i + 1}} \ .
					}
$$

We will show that $S(r) = 0$ for all sequences $\seq{s} \in \mathcal{S}_{b,e}^\star$, by linearization
of the product of multizeta values coming from the relations of 
symmetr\textbf{\textit {\underline {a}}}lity. Let us explain this in detail.

Each multizeta value can be written as an iterated integral. Writting the equality $(81)$ of \cite{Cartier}
like an iterated integral, we obtain for all sequences $\seq{s} \in \mathcal{S}_b^\star$:
$$	\begin{array}{@{}lll}
		\mathcal{Z}e^{\seq{s}}
		&=&
		\displaystyle	{	\int	_{0 < u_1 < \cdots < u_r < + \infty}
						\hspace{-1cm}
						\frac	{{u_1}^{s_1 - 1} (u_2 - u_1)^{s_2 - 1} \cdots {(u_r - u_{r - 1})}^{s_r - 1}}
							{(e^{u_1} - 1) \cdots (e^{u_r} - 1)}
						\frac	{du_1 \cdots du_r}	{	\displaystyle	{	\prod	_{i = 1}
															^r
															(s_i - 1) !
													}
										}
						}
		\ .
	\end{array}
$$

Thus, for all sequences $\seq{s} \in \mathcal{S}_{b,e}^\star$ of length $r$ and all
$i \in \crochet{1}{r}$, we have if we set $u_0 = 0$ and $u_{r + 1} = 0$:
\\

\noindent
$	\displaystyle	{	\sum	_{	k_1 ,\cdots, \widehat{k_i},\cdots, k_r \geq 0
									\atop
									k_1 + \cdots + \widehat{k_i} + \cdots + k_r = s_i - 1
								}
								{}^i B_{\seq{k}}^{\seq{s}}
								\mathcal{Z}e^{s_1 + k_1, \cdots , s_{i - 1} + k_{i - 1}}
								\mathcal{Z}e^{s_r + k_r, \cdots , s_{i + 1} + k_{i + 1}}
					}
$
\vspace{-0.1cm}	\\
\noindent
$	=
	\displaystyle	{	\hspace{-0.75cm}
				\sum	_{	k_1 ,\cdots, \widehat{k_i},\cdots, k_r \geq 0
						\atop
						k_1 + \cdots + \widehat{k_i} + \cdots + k_r = s_i - 1
					}
			}
						\begin{array}[t]{@{}l}
							\displaystyle	{	\frac	{(-1)^{s_{i + 1} + \cdots + s_r}}
											{	\displaystyle	{	\prod	_{p = 1}
																					^r
																								(s_p - 1)!
																					}
																}
														\cdot
														\frac	{(s_i - 1)!}
																{k_1! \cdots \widehat{k_i!} \cdots k_r!}
									}
							\vspace{0.15cm}
							\\
							\left (
								\displaystyle	{	\int	_{0 < u_1 < \cdots < u_{i - 1} < + \infty}
												\prod	_{p = 0}
													^{i - 2}
													\displaystyle	{	\frac	{	(u_{p + 1} - u_p)^{s_{p + 1} + k_{p + 1} - 1}
																											}
																											{	e^{u_{p+1}} - 1	}
																								}
																		du_1 \cdots du_{i - 1}
										}
							\right )
							\vspace{0.15cm}
							\\
							\left (
								\displaystyle	{	\int	_{0 < u_r < \cdots < u_{i + 1} < + \infty}
												\prod	_{p = i + 1}
													^{r}
																				\displaystyle	{	\frac	{	(u_p - u_{p + 1})^{s_p + k_p - 1}
																											}
																											{	e^{u_p} - 1	}
																								}
													du_{i + 1} \cdots du_r
										}
							\right ) ,
						\end{array}
$
\\
\noindent
$	=
	\displaystyle	{	\frac	{(-1)^{s_{i + 1} + \cdots + s_r}}
								{	\displaystyle	{	\prod	_{p = 1}
																^r
																(s_p - 1)!
													}
								}
						\int	_{\{0 < u_1 < \cdots < u_{i - 1} < + \infty\} \times \{0 < u_r < \cdots <u_{i + 1} < + \infty \}}
								\hspace{-2cm}
								g_i (u_1 ; \cdots ; u_r)
								 du_1 \cdots \widehat{du_i} \dots du_r \  ,
					}
$
\\
\noindent
where
$	g_i (u_1 ; \cdots ; u_r)
	=
	\displaystyle	{	\left (
								\prod	_{p \in \crochet{1}{r} - \{i\}}
										\frac	{(u_p - u_{p - 1})^{s_p - 1}}
												{e^{u_{p}} - 1}
						\right )
						( u_{i + 1} - u_{i - 1} )^{s_i - 1}
					} , 
$
always with $u_0 = 0$ and $u_{r + 1} = 0$~.
\\

To conclude that $S(r) = 0$, we have to compute the last integral.
An integral over a cartesian product like this integral is given by the sum of all the integrals over the domain
$\{0 < u_{\sigma(1)} < \cdots < u_{\sigma(r)} < + \infty\}$, where $\sigma$ is a permutation of $\crochet{1}{r}$ satisfying
$\sigma^{-1}(1) < \cdots < \sigma^{-1}(i - 1)$ and $\sigma^{-1}(r) < \cdots < \sigma^{-1}(i + 1)$. This set is exactly 
encoding by the shuffle product
$$u_1 \cdots u_{i - 1} \shuffle u_r \cdots u_{i + 1} \ .$$
The shuffle product is defined in Appendix \ref{Elements de calcul moulien debut}, \S \ref{AppendixSymmetrality}, 
from the syntatic point of view.

In order to compute $S(r)$, let us introduce a few notations. We now consider the alphabet $\Omega_r = \{u_1 ; \cdots ; u_r\}$, the non-commutative polynomial
$	e_r	=	\displaystyle	{	\sum	_{i = 1}
										^r
										(u_1 \cdots u_{i - 1}) \shuffle (u_r \cdots u_{i + 1})
							}
$ and the set $E_r$ of words of $\text{seq} (\Omega_r)$ which appears in $e_r$, that is to say in the linearization of the multizeta values we have to take in account:
$$	E_r
	=
	\{	\omega \in \text{seq} (\Omega_r)
		 \ ; \ 
		\left < e_r | \omega \right > \neq 0
	\}
$$ 
Finally, to each word $\omega = u_{i_1} \cdots u_{i_r}$ of $\text{seq} (\Omega_r)$ which contains exactly one time all the letters of $\text{seq} (\Omega_r)$ except one which will be denoted $u_i$, we associate an integral $I(\omega)$ defined by:
$$	I(\omega)	=	(-1)^{s_{i + 1} + \cdots + s_r}
					\displaystyle	{	\int	_{0 < u_{i_1} < \cdots < u_{i_r} < + \infty}
												g_i (u_1 ; \cdots ; u_r)
												du_1 \cdots \widehat{du_i} \cdots du_r
									} \ .
$$
\noindent
Thus:
$$	S(r) = \displaystyle	{	\frac	{1}
										{	\displaystyle	{	\prod	_{p = 1}
																		^r
																		(s_p - 1)!
															}
										}
								\left (
										\sum	_{\omega \in E_r}
												I(\omega)
								\right )
							}	\ .
$$

We will evaluate the sum, in the right-hand side, by grouping pairwise the elements of $E_r$~. $E_r$ will then be decomposed into a family of pairs of words we will call associated words.

\begin{Definition}
	Let us consider, for all $(k ; l) \in \crochet{1}{r}^2$, the morphism $\varphi_{k, l}$ from $\text{seq} (\Omega_r)$ (for the word concatenation) defined by:
	$$	\begin{array}{cccl}
			\varphi_{k, l} :&	\Omega_r	&	\longrightarrow	&	\Omega_r
			\\
								&	u_i		&	\longmapsto		&	\left \{
																	\begin{array}{l@{}l}
																		u_i	&	\text{ , if } i \neq l \ .	\\
																		u_k	&	\text{ , if } i = l \ .
																	\end{array}
																\right.
		\end{array}
	$$

	We will say that two words $\omega^1$ and $\omega^2$ of $\text{seq} (\Omega_r)$ are associated when:
	$$	\exists i \in \crochet{1}{r} ,  
		\omega^2 = \varphi_{i, i + 1} (\omega^1)
		\text{  or  }
		\omega^1 = \varphi_{i, i + 1} (\omega^2) \ .
	$$
	
	We then write: $\omega^1 \between \omega^2$\ .
\end{Definition}

Let $\widetilde{\text{seq} (\Omega_r)}$ be the set of words of $\text{seq} (\Omega_r)$ which
contain exactly one time all the letters of $\Omega_r$, except one. One can notice that two
words of $\widetilde{\text{seq} (\Omega_r)}$ can of course have a different missing letter.
Finally, we associate a permutation $\sigma_{\omega}$ of $\crochet{1}{r} - \{i\}$ with each
word $\omega = u_{s_1} \cdots u_{s_{r - 1}}$ of $\widetilde{\text{seq} (\Omega_r)}$, where
$i$ is the index of the absent letter of $\omega$, defined by:
$$	\sigma_{\omega} =	\left (
								\begin{array}{cccccc}
									1	&	\cdots	&	i - 1		&	i + 1	&	\cdots	&	r			\\
									s_1	&	\cdots	&	s_{i - 1}	&	s_{i}	&	\cdots	&	s_{r - 1}	\\
								\end{array}
						\right )
	\ .
$$

We have then:

\begin{Lemma}
	\begin{enumerate}
		\item For all $\omega \in E_r$ and all integer $i \in \crochet{1}{r - 1}$ , we have:
			$$	\sigma_{\varphi_{i, i + 1}(\omega)}
				=
				\rho_{i, i + 1} \circ \sigma_{\omega} \circ {\rho_{i, i + 1}}^{-1} \ ,
			$$ with	$	\begin{array}[t]{cccl}
								\rho_{i, i + 1} :	&	\crochet{1}{r} - \{i\}	&	\longrightarrow	&	\crochet{1}{r} - \{i + 1\}	\\
													&	k						&	\longmapsto		&	\left \{
																												\begin{array}{l@{}l}
																													k	&	\text{ , if } k \neq i + 1 \ .
																													\\
																													i	&	\text{ , if } k = i + 1 \ .
																												\end{array}
																										\right.
							\end{array}
						$
		\item For all $\omega \in E_r$ , there exists a unique $\omega' \in E_r - \{\omega\}$ such that: $\omega \between \omega'$~.
		\item For all $(\omega^1 ; \omega^2) \in {E_r}^2$ , we have :
			$\omega^1 \between \omega^2  \Longrightarrow  I(\omega^1) = - I(\omega^2)$~.
	\end{enumerate}
\end{Lemma}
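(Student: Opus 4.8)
The plan is to regard the whole lemma as the combinatorial and analytic bookkeeping that lets the terms of $S(r)$ cancel in pairs, and to treat the three points essentially independently. Before anything else I would record a concrete description of $E_r$. Since the two factors $u_1\cdots u_{i-1}$ and $u_r\cdots u_{i+1}$ of the $i$-th summand of $e_r$ involve disjoint letters, their shuffle is multiplicity-free and every word occurring in it omits exactly the letter $u_i$. Hence $E_r$ is the disjoint union, over $i\in\crochet{1}{r}$, of the words $\omega$ omitting $u_i$ in which $u_1,\ldots,u_{i-1}$ occur in increasing order of index and $u_r,\ldots,u_{i+1}$ occur in that order (decreasing index). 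This membership test is the criterion I would check throughout; in particular $\sigma_\omega$ is defined only when $\omega$ omits a single letter, and $\sigma_{\varphi_{i,i+1}(\omega)}$ makes sense only when $\omega$ omits $u_i$, so point $1$ is understood for such $\omega$.

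For point $1$ I would simply chase indices. Writing $\omega=u_{s_1}\cdots u_{s_{r-1}}$ with missing letter $u_i$ and $m_1<\cdots<m_{r-1}$ for the elements of $\crochet{1}{r}-\{i\}$, so that $\sigma_\omega(m_k)=s_k$, the word $\varphi_{i,i+1}(\omega)$ turns the unique $u_{i+1}$ into $u_i$; thus its index word is $(\rho_{i,i+1}(s_1),\ldots,\rho_{i,i+1}(s_{r-1}))$ and its missing letter is $u_{i+1}$. A three-range check ($k<i$, $k=i$, $k>i$) shows that the increasing enumerations of $\crochet{1}{r}-\{i\}$ and of $\crochet{1}{r}-\{i+1\}$ correspond term by term under $\rho_{i,i+1}$. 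Substituting these two facts into $\sigma_\omega(m_k)=s_k$ yields $\sigma_{\varphi_{i,i+1}(\omega)}=\rho_{i,i+1}\circ\sigma_\omega\circ\rho_{i,i+1}^{-1}$ at once; this is the only computation in the point and it is routine.

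For point $2$ I would first show that the only words associated with a given $\omega\in E_r$ omitting $u_i$ are the two relabellings $\omega'_A$, obtained by changing the unique $u_{i+1}$ into $u_i$, and $\omega'_B$, obtained by changing the $u_{i-1}$ into $u_i$: any other $\varphi_{j,j+1}$ either leaves $\omega$ fixed or creates a repeated letter, hence leaves the set of single-gap words. The decisive step is then a dichotomy against the membership criterion: $\omega'_A$ belongs to $E_r$ exactly when the $u_{i+1}$ of $\omega$ stands to the right of its $u_{i-1}$, while $\omega'_B$ belongs to $E_r$ exactly in the opposite case (the remaining order conditions being inherited automatically from $\omega$). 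Since positions are distinct, precisely one of the two lies in $E_r$, giving existence and uniqueness of the partner; the extreme cases $i=1$ and $i=r$, where only one candidate exists, I would dispose of by hand, $E_r^{(i)}$ being then a single word whose lone candidate always qualifies. Symmetry of $\between$ promotes this to a fixed-point-free pairing of $E_r$.

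For point $3$ I would identify the two integrals. As $\omega^2$ differs from $\omega^1$ only by the label of one letter, renaming the corresponding dummy variable carries the simplex of $I(\omega^1)$ onto that of $I(\omega^2)$ --- this is exactly what point $1$ records, the conjugation by $\rho_{i,i+1}$ being the combinatorial shadow of that renaming. After the identification the two integrands $g_i$ and $g_{i+1}$ agree factor by factor (the exponential denominators and the difference-monomials coinciding, using the conventions $u_0=u_{r+1}=0$), so the whole discrepancy reduces to signs: the explicit prefactors differ by $(-1)^{s_{i+1}}$, while the comparison of the integrands amounts to reorienting the difference-factor carrying the exponent $s_{i+1}-1$, which contributes $(-1)^{s_{i+1}-1}$; together these give the overall factor $-1$, i.e. $I(\omega^1)=-I(\omega^2)$. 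The hard part will be precisely this sign count in point $3$: because $g_i$ is written with the boundary conventions $u_0=u_{r+1}=0$ and with one ``merged'' factor standing in for the two factors that would involve the missing variable, the identification of $g_i$ with $g_{i+1}$ must be carried out carefully to confirm that all signs but one cancel. Points $1$ and $2$, being purely combinatorial, I expect to go through cleanly.
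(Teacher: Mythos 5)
Your proposal is correct and follows essentially the same route as the paper: both identify the only two candidate partners of $\omega$ (replace $u_{i+1}$ by $u_i$, or replace $u_{i-1}$ by $u_i$), settle which one lies in $E_r$ by the dichotomy on the relative positions of $u_{i-1}$ and $u_{i+1}$ (with the boundary cases $i=1$ and $i=r$ done by hand), and prove the sign flip in point $3$ by renaming $u_i \leftrightarrow u_{i+1}$, the factor $(-1)^{s_{i+1}}$ from the prefactors combining with the factor $(-1)^{s_{i+1}-1}$ from the reoriented difference monomial to give $-1$. The only notable difference is cosmetic: your point $1$ replaces the paper's two-case analysis (according to whether $\sigma_\omega(i+1)=i+1$ or not) by the uniform term-by-term correspondence of the increasing enumerations of $\crochet{1}{r}-\{i\}$ and $\crochet{1}{r}-\{i+1\}$ under $\rho_{i,i+1}$, which is a slight streamlining of the same computation.
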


\begin{Proof}
	1. Let $\omega \in E_r$ and $i$ the index of the missing letter in $\omega$.
	\\
	\\
	We will distinguish two different cases, depending on whether $\sigma_{\omega} (i + 1)$ equals $i + 1$ or not.
	\\
	\\
	\underline{First case:} $\sigma_{\omega} (i + 1) = i + 1$.
	\\
	\\
	The word $\varphi_{i, i + 1}(\omega)$ can be written:
	$\varphi_{i, i + 1}(\omega) = \omega^{< i - 1} u_i \omega^{> i + 1}$. Remind that the notations $\omega^{< i - 1}$ and $\omega^{> i + 1}$ denote
	respectively the first and the last terms in the sequence $\omega$ (see appendix \ref{Elements de calcul moulien debut}, \S \ref{AppendixNotations}).
	Thus: 
	$$	\sigma_{\varphi_{i, i + 1}(\omega)}
		=
		\left (
			\begin{array}{cccccccc}
				1					&	\cdots	&	i - 1					&	i	&	i + 2					&	\cdots	&	r					\\
				\sigma_{\omega}(1)	&	\cdots	&	\sigma_{\omega}(i - 1)	&	i	&	\sigma_{\omega}(i + 2)	&	\cdots	&	\sigma_{\omega}(r)	\\
			\end{array}
		\right ) \ .
	$$
	We only have to compute $\rho_{i, i + 1} \circ \sigma_\omega \circ {\rho_{i, i + 1}}^{-1}$ to conclude that:
	$$	\left \{
			\begin{array}{l@{}l}
				\rho_{i, i + 1} \circ \sigma_{\omega} \circ {\rho_{i, i + 1}}^{-1} (k)
				=
				\rho_{i, i + 1} \circ \sigma_{\omega} (k)
				=
				\sigma_{\omega} (k)
				&\text{ , if } k \in \crochet{1}{r} - \{i, i + 1\} \ .
				\\
				\rho_{i, i + 1} \circ \sigma_{\omega} \circ {\rho_{i, i + 1}}^{-1} (i) = i\ .
			\end{array}
		\right.
	$$
	\\
	Indeed, with $k \in \crochet{1}{r} - \{i \,;i + 1\} \ , \ \sigma_{\omega} (k) \neq i + 1$~.	\\
	\\
	\underline{Second case:} $\sigma_{\omega} (i + 1) \neq i + 1$.
	\\
	\\
	This case is something like the first one if we denote $j = \sigma_{\omega}^{-1} (i + 1)$~.
	Let us write $\varphi_{i, i + 1}(\omega) = \omega^{< i} \cdot \omega^{i \leq  \cdot  < j} \cdot u_i \cdot \omega^{> j}$
	or $\varphi_{i, i + 1}(\omega) = \omega^{< j} \cdot u_i \cdot \omega^{j <  \cdot  < i} \cdot \omega^{\geq i}$, depending if
	$i + 1 < j$ or $i + 1 > j$.
	\\
	\\
	Thus, we have:	$	\sigma_{\varphi_{i, i + 1}(\omega)}
						=
						\rho_{i, i + 1} \circ \sigma_\omega \circ {\rho_{i, i + 1}}^{-1}
					$.
	\\
	\\
	2.	Let $\omega \in E_r$ and $i$ the index of the missing letter in the word $\omega$.
	\\
		Since no letters appears twice in $\omega$, the associated words to $\omega$ in $E_r$ can only be
		$\omega' = \varphi_{i, i + 1} (\omega)$ or $\omega'' = \varphi_{i - 1, i} (\omega)$.
	\\
	\\
		If $i = 1$, $\omega'$ is the only well-defined candidate. More precisely, we have $\omega = u_r \cdots u_2$
		and $\omega' = u_r \cdots u_3 u_1$.
		Thus, $\omega'$ is a shuffle of $u_1$ and $u_r \cdots u_3$ ; consequently, it is an element of $E_r$ .
	\\
	\\
		If $i \in \crochet{2}{r - 1}$, since $\omega$ is a shuffle of $u_1 \cdots u_{i - 1}$ and $u_r \cdots u_{i + 1}$,
		according to the definition of a shuffle of two sequences, we have:
		$$	\sigma_{\omega}^{-1} (1) < \cdots < \sigma_{\omega}^{-1} (i - 1)
			\ \text{ and } \ 
			\sigma_{\omega}^{-1} (r) < \cdots < \sigma_{\omega}^{-1} (i + 1) \ .	\\
		$$
		\noindent
		Moreover	$	\sigma_{\varphi_{i, i + 1}(\omega)}
						=
						\rho_{i, i + 1} \circ \sigma_\omega \circ {\rho_{i, i + 1}}^{-1}
					$ and	$	\sigma_{\varphi_{i, i - 1}(\omega)}
								=
								\rho_{i, i - 1} \circ \sigma_\omega \circ {\rho_{i, i - 1}}^{-1}
							$. So, we obtain that:
		$$	\left \{
				\begin{array}{@{}l}
					\sigma_{\omega'}^{-1} (1) < \cdots < \sigma_{\omega'}^{-1} (i - 1)	\\
					\sigma_{\omega'}^{-1} (r) < \cdots < \sigma_{\omega'}^{-1} (i + 2) < \sigma_{\omega'}^{-1} (i)	\\
				\end{array}
			\right.
		$$
		and
		$$	\left \{
				\begin{array}{@{}l}
					\sigma_{\omega''}^{-1} (1) < \cdots < \sigma_{\omega''}^{-1} (i - 2) < \sigma_{\omega''}^{-1} (i)\ .	\\
					\sigma_{\omega''}^{-1} (r) < \cdots < \sigma_{\omega''}^{-1} (i + 1) \ .	\\
				\end{array}
			\right.
		$$
		Consequently:
		$$	\left \{
				\begin{array}{lllll}
					\omega' \in E_r	
					&\iff&
					\sigma_{\omega'}^{-1} (i - 1) < \sigma_{\omega'}^{-1} (i)	&\iff& \sigma_{\omega}^{-1} (i - 1) < \sigma_{\omega}^{-1} (i + 1) \ .
					\\
					\omega'' \in E_r
					&\iff&
					\sigma_{\omega''}^{-1} (i + 1) < \sigma_{\omega''}^{-1} (i)	&\iff& \sigma_{\omega}^{-1} (i + 1) < \sigma_{\omega}^{-1} (i - 1) \ .
				\end{array}
			\right.
		$$
		Therefore, $\omega$ has one, and only one, associated word in $E_r$.
		\\
		\\
		Finally, if $i = r$, like in the first case, $\omega''$ is the only well-defined candidate:
		$\omega'' = u_1 \cdots u_{r - 2} u_r \in E_r$ . Since $\omega = u_1 \cdots u_{r - 1}$, we have $\omega \between \omega''$ .
		\\
		\\
		3.	Let $\omega^1 = u_{k_1} \cdots u_{k_{r - 1}}$ and $\omega^2 = u'_{k_1} \cdots u'_{k_{r - 1}}$ two associated words in $E_r$ .
		Let us denote by $i$ the index of the missing letter of $\omega^1$ and $j = \sigma_{\omega^1}^{-1} (i + 1)$. Thus,
		$\omega^2 = {\omega^1}^{<j} u_i {\omega^1}^{>j}$. Let us also denote
		$$	\left \{
				\begin{array}{l}
					D(\omega^1) = \{0 < u_{k_1} < \cdots < u_{k_{j - 1}} < u_{i + 1} < u_{k_{j + 1}} < u_{k_{r - 1}} < + \infty \} \ .
					\\
					D(\omega^2) = \{0 < u_{k_1} < \cdots < u_{k_{j - 1}} < u_i < u_{k_{j + 1}} < u_{k_{r - 1}} < + \infty \} \ .
				\end{array}
			\right.
		$$
		Performing the change of variable's names $u_{i + 1} \leftrightarrow u_i$ in the integral  $I(\omega^1)$, we obtain:
		\\	
		$	\begin{array}[t]{@{}lll}
				I(\omega^1)
				&=&
				\displaystyle	{	\frac	{(-1)^{s_{i + 1} + \cdots + s_r}}
											{	\displaystyle	{	\prod	_{p = 1}
																		^r
																			(s_i - 1)!
																}
											}
									\int	_{D(\omega^1)}
											g_i (u_1 ; \cdots ; u_r)
											\ du_1 \cdots \widehat{du_i} \dots du_r
								}
				\vspace{0.25cm}
				\\
				&=&
				\displaystyle	{	\frac	{(-1)^{s_{i + 1} + \cdots + s_r}}
											{	\displaystyle	{	\prod	_{p = 1}
																			^r
																			(s_i - 1)!
																}
										}
									\int	_{D(\omega^2)}
											(-1)^{s_{i + 1} - 1} g_{i + 1} (u_1 ; \cdots ; u_r)
												\ du_1 \cdots \widehat{du_{i + 1}} \dots du_r
								}
				\vspace{0.25cm}
				\\
				&=&
				- I(\omega^2) \ .
			\end{array}
		$
		\qed
\end{Proof}

Since $E_r$ has $2^{r - 1}$ elements counted with their multiplicity in $E_r$, we can conclude, from the second statement of the previous lemma that $E_r$ can be cut into $2^{r - 2}$ pairs of associated words in $E_r$.

Thus, according to the third statement of the lemma, we finally deduce that $S(r) = 0$. This can be written as:

\begin{Property}
	\begin{enumerate}
		\item The  exponentially flat character of multitangent functions implies some relations between multizeta values which are coming from the \symmetrality
				relations of the multizeta values.
		\item The \symmetrality relations of the multizeta values impose the absence of the monotangent $\mathcal{T}e^1$ in the relation of reduction into monotangent functions, and thus force the multitangent functions to be exponentially flat.
	\end{enumerate}
\end{Property}

Consequently, one can ask the following question:
\\
``\textit	{	Using to the exponentially flat character of convergent multitangent functions,
			are we able to find all the \symmetrality relations between multizeta values? If the
			answer is negative, which relations do we obtain?
		}''
\\

The answer is simple and comes from a rapid exploration of the table of multitangent functions.
We immediately see that, in weight $5$, we find all the \symmetrality relations of multizeta values
of weight $4$, but this situation is really exceptional. For instance, in weight $6$, one \symmetrality
relation is not obtained:
\begin{equation}	\label{relation zetaique}
	3\mathcal{Z}e^{2, 2, 1}+6\mathcal{Z}e^{3, 1, 1}+\mathcal{Z}e^{2, 1, 2}
	=
	\mathcal{Z}e^{2, 1}\mathcal{Z}e^{2}  .
\end{equation}

Nevertheless, considering all $\Q$-linear relations between multitangent functions, we are able
to find all the \symmetrality relations. To illustrate this, one can find $(\ref{relation zetaique})$
from
\begin{equation}
	4 \mathcal{T}e^{3, 1, 3} - 2 \mathcal{T}e^{3, 1, 1, 2} + \mathcal{T}e^{2, 1, 2, 2} = 0 \ ,
\end{equation}
using the reduction into monotangent as well as the values of the multizeta values given for instance
from \cite{table de MZV}.
\section	{Analytic properties}
\label{analytiques}

As announced in Section \ref{annonce caractere exp plat}, we now will see that each multitangent
function tends to $0$ when $\im z$ goes to infinity at least as an exponential function. This is
the so-called exponentially flat character of convergent multitangent functions. In order to study
the convergence of series involving multitangent functions, we look for an upper bound depending on
the weight of the sequence $\seq{s}$ and which also shows us the exponentially flat character.

To obtain such an upper bound, we will have to avoid the use of the triangular inequality, 
which is not so precise. Consequently, we want to use directly an upper bound on the sum. First, we will
focus on Fourier coefficients of multitangents ; then, we will deal with geometric upper
bounds of multitangent functions in order to obtain upper bound of the Fourier coefficients.
Finally, using this, we obtain an upper bound as required.

\subsection{Fourier expansion of convergent multitangent functions}
\label{Fourier et multitangentes}

Since multitangent functions are $1$-periodic on $\C - \Z$, we are naturally interested in their
Fourier expansions. The result proved here is central for the explicit computation of analytical
invariants of tangent-to-identity diffeomorphisms (see \cite{Bouillot})

Let us first remind the Fourier expansion of $\mathcal{T}e^1$~(see~\cite{Serre})~:
\begin{equation}	\label{dvp de fourier de te1}
	\displaystyle	{	\mathcal{T}e^1 (z)	=	\frac	{\pi}
									{\tan (\pi z)}
							=
			}
	\left \{
		\begin{array}{l@{}l}
			\phantom{-}i \pi + 2i \pi \displaystyle	{	\sum	_{n < 0}
										e^{2i n \pi z}
								} 
			&
			\text{ , if } \im z < 0 \ .
			\\
			- i \pi - 2i \pi \displaystyle	{	\sum	_{n > 0}
									e^{2i n \pi z}
							} 
			&
			\text{ , if } \im z > 0 \ .
		\end{array}
	\right.
\end{equation}

Since the convergence of the right hand side is normal on the set 			\linebreak
$\{\zeta \in \C  ; \im \zeta < -c\}$ and $\{\zeta \in \C  ; \im \zeta > c\}$, for all $c > 0$
the expression $(\ref{dvp de fourier de te1})$ is the Fourier expansion of $\mathcal{T}e^1$~.
The differentiation property gives us for all $\sigma \in \N-\{0 ;1\}$ and all $z \in \C - \Z$:

$$	\mathcal{T}e^\sigma (z)
	=
	\left \{
			\begin{array}{l@{}l}
				\displaystyle	{	\phantom{-}2i\pi
									\sum	_{n < 0}
											\frac	{(-2i n \pi)^{\sigma - 1}}
													{(\sigma - 1)!}
											e^{2i n \pi z}
								} 
				&
				\text{ , if } \im z < 0 \ .
				\vspace{0.2cm}
				\\
				\displaystyle	{	- 2i\pi
									\sum	_{n > 0}
											\frac	{(-2i n \pi)^{\sigma - 1}}
													{(\sigma - 1)!}
											e^{2i n \pi z}
								} 
				&
				\text{ , if } \im z > 0 \ .
			\end{array}
	\right.
$$

Plugging this Fourier expansion in the expression for the reduction into monotangents (see \textsection \ref{reduction en monotangente})
when $\seq{s} \in \mathcal{S}^\star$ , we obtain:	\\
$	\begin{array}{@{}lll}
		\mathcal{T}e^{\underline{s}} (z)
		&=&
		\displaystyle	{	\sum	_{j = 1}
									^r
									\sum	_{k = 2}
											^{s_j}
											\mathcal{Z}_{j,s_j - k}^{\seq{s}}
											\mathcal{T}e^k (z)
						}
		\vspace{0.2cm}
		\\
		&=&
		\left \{
			\begin{array}{l@{}l}
				\displaystyle	{	\phantom{-} 2i\pi
									\sum	_{n < 0}
											\sum	_{j = 1}
													^r
													\left (
															\displaystyle	{	\sum	_{k = 2}
																						^{s_j}
																						\frac	{(-2i n \pi)^{k - 1}}
																								{(k - 1)!}
																						\mathcal{Z}_{j,s_j - k}^{\seq{s}}
																			}
													\right )
													e^{2i n \pi z}
								}
				&
				\text{ , if } \im z < 0 \ .
				\vspace{0.2cm}
				\\
				\displaystyle	{	- 2i\pi
									\sum	_{n > 0}
											\sum	_{j = 1}
													^r
													\left (
															\displaystyle	{	\sum	_{k = 2}
																						^{s_j}
																						\frac	{(-2i n \pi)^{k - 1}}
																								{(k - 1)!}
																						\mathcal{Z}_{j,s_j - k}^{\seq{s}}
																			}
													\right )
													e^{2i n \pi z}
								}
				&
				\text{ , if } \im z > 0 \ .
			\end{array}
		\right.
	\end{array}
$
\\

Since the convergence of this series is normal on $\{\zeta \in \C  ; \im \zeta < -c\}$ and $\{\zeta \in \C  ; \im \zeta > c\}$, for all $c > 0$
we obtain the Fourier expansion of the multitangent functions:

\begin{Lemma}
	Let us set\footnotemark, for all $n \in \Z$ and all $\seq{s} \in \mathcal{S}^\star_{b,e}$~:
	$$	\displaystyle	{	\widehat{\mathcal{T}}_n^{\seq{s}}
							=
							2i \pi
							\sum	_{j = 1}
									^{l(\seq{s})}
									\left (
											\displaystyle	{	\sum	_{k = 2}
																		^{s_j}
																		\frac	{(-2i n \pi)^{k - 1}}
																				{(k - 1)!}
																		\mathcal{Z}_{j , s_j - k}^{\seq{s}}
															}
									\right )
						} \ .
	$$
	Then, for all sequences $\seq{s} \in \mathcal{S}^\star_{b,e}$ and all $z \in \C - \Z$, we have:
	$$	\mathcal{T}e^{\seq{s}} (z)
		=
		\left \{
			\begin{array}{l@{}l}
				\displaystyle	{	\phantom{-}
									\sum	_{n < 0}
											\widehat{\mathcal{T}}_n^{\seq{s}}  q^n
								}
			&	
				\text { , if } \im z < 0\ ,
				\\
				\displaystyle	{	-
									\sum	_{n > 0}
											\widehat{\mathcal{T}}_n^{\seq{s}}  q^n
								}
			&
				\text { , if } \im z > 0\ ,
			\end{array}
		\right.
	$$
	where $q = e^{2 \pi i z}$~.
\end{Lemma}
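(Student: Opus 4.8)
The plan is to combine the reduction into monotangent functions (Theorem~\ref{reduction en monotangente}) with the Fourier expansion of the individual monotangents $\mathcal{T}e^k$, $k \geq 2$, which itself follows from \eqref{dvp de fourier de te1} by differentiation. Concretely, I would first establish the Fourier expansion of each monotangent, then substitute it into the reduction formula and collect, for each $n$, the coefficient of $q^n = e^{2 i n \pi z}$. Since $\mathcal{T}e^{\seq{s}}$ is a \emph{finite} linear combination of monotangents, once the monotangent expansions are in hand the rest is pure bookkeeping.

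For the monotangent step, the differentiation property applied to a length-one sequence gives $\left( \mathcal{T}e^\sigma \right)'(z) = - \sigma\, \mathcal{T}e^{\sigma + 1}(z)$, hence by induction
$$	\mathcal{T}e^\sigma = \frac	{(-1)^{\sigma - 1}}	{(\sigma - 1)!} \frac	{\partial^{\sigma - 1} \mathcal{T}e^1}	{\partial z^{\sigma - 1}} \ , \text{ for all } \sigma \geq 1 \ .	$$
On each half-plane $\{ \im z > c \}$ (resp. $\{ \im z < -c \}$) with $c > 0$, the series \eqref{dvp de fourier de te1} and all its term-by-term derivatives converge normally, because $|q^n| = e^{- 2 \pi n \im z}$ decays geometrically and dominates the polynomial factor $(2 i n \pi)^{\sigma - 1}$. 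This legitimizes termwise differentiation and, the constant term of $\mathcal{T}e^1$ being annihilated by the derivative, yields for $\sigma \geq 2$ and $\im z > 0$
$$	\mathcal{T}e^\sigma (z) = - 2 i \pi \sum	_{n > 0} \frac	{(-2 i n \pi)^{\sigma - 1}}	{(\sigma - 1)!} e^{2 i n \pi z} \ ,	$$
the case $\im z < 0$ being identical after replacing $-2 i \pi \sum_{n > 0}$ by $2 i \pi \sum_{n < 0}$.

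The second step is then elementary. Substituting this into $\mathcal{T}e^{\seq{s}}(z) = \sum_{j = 1}^{l(\seq{s})} \sum_{k = 2}^{s_j} \mathcal{Z}_{j, s_j - k}^{\seq{s}} \mathcal{T}e^k(z)$ and interchanging the finite double sum over $(j,k)$ with the summation over $n$ — an interchange that is automatic since the $(j,k)$-sum has finitely many terms — the coefficient of $q^n$ is exactly the quantity $\widehat{\mathcal{T}}_n^{\seq{s}}$ of the statement. The resulting trigonometric series, being a finite linear combination of normally convergent monotangent series, again converges normally on each $\{ \im z > c \}$ and $\{ \im z < -c \}$, so it genuinely is the Fourier expansion of $\mathcal{T}e^{\seq{s}}$. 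The only point demanding a little care — and it is entirely standard — is the justification of termwise differentiation of \eqref{dvp de fourier de te1} together with the normal convergence on strips needed to identify the limit series as the Fourier expansion; there is no real analytic obstacle, and the argument is uniform in $\seq{s} \in \mathcal{S}^\star_{b,e}$.
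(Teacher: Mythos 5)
Your proposal is correct and follows essentially the same route as the paper: the paper likewise obtains the Fourier expansion of each monotangent $\mathcal{T}e^\sigma$, $\sigma \geq 2$, by applying the differentiation property to the classical expansion \eqref{dvp de fourier de te1} of $\mathcal{T}e^1$, then substitutes these into the reduction into monotangent functions and invokes normal convergence on the half-planes $\{\im \zeta > c\}$ and $\{\im \zeta < -c\}$ to identify the result as the Fourier expansion. Your explicit induction formula $\mathcal{T}e^\sigma = \frac{(-1)^{\sigma-1}}{(\sigma-1)!}\,\partial_z^{\sigma-1}\mathcal{T}e^1$ and the justification of termwise differentiation merely spell out steps the paper leaves implicit.
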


\footnotetext	{	Let us remark that the mould $\widehat{\mathcal{T}}^{\p}_n$ can not be a \symmetrel one.
					For example, we have: 
					\begin{center}
						$	\left \{
									\begin{array}{l}
										\displaystyle	{	2 \widehat{\mathcal{T}}^{2,2}_1 + \widehat{\mathcal{T}}^{4}_1
															=
															2 \times \frac{4}{3} \pi^4 - \frac{8}{3} \pi^4 = 0 \ .
														}
										\\
										\left ( \widehat{\mathcal{T}}^{2}_1 \right )^2 = 16 \pi^4  .
									\end{array}
							\right.
						$
					\end{center}
					This explains the absence of the letter $e$ in its name.
				}

\subsection	{An upper bound for multitangent functions}

In this paragraph, we will prove two geometric upper bounds (or nearly geometric ones), where the exponent will be the weight of the multitangent and then give two hypothetical upper bounds. For this, we will use elementary methods.

\label{estimation_des_multitangentes}

We begin, for a convergent multitangent function, by proving the following upper bound:

\begin{Lemma} \label {lemma_estimation_des_multitangentes}
	For all sequences $\seq{s} \in \mathcal{S}^\star_{b,e}$ and all $z \in \C - \R$, we have :
	$$	\left |
				\mathcal{T}e^{\seq{s}} (z)
		\right |
		\leq
		\displaystyle	{	\frac	{4 l(\seq{s})}	{|\im z|^{||\seq{s}|| - l(\seq{s}) - 1}}
						} \ .
	$$
\end{Lemma}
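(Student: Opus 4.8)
The plan is to bound $\mathcal{T}e^{\seq{s}}(z)$ directly on its defining series rather than through the triangle inequality applied to the reduction into monotangents, which would destroy the decay in $\im z$. Since the series converges normally on every compact subset of $\C - \Z$ (Property \ref{firstProperties}), I would start from
$$\left| \mathcal{T}e^{\seq{s}}(z)\right| \leq \sum_{-\infty < n_r < \cdots < n_1 < +\infty} \prod_{i=1}^r \frac{1}{|n_i + z|^{s_i}},$$
so that everything reduces to estimating this positive ordered sum. The two elementary facts I would use throughout are $|n+z|^2 = (n + \re z)^2 + (\im z)^2 \geq (\im z)^2$, valid for every $n \in \Z$, and the one-dimensional estimate
$$\sum_{n \in \Z} \frac{1}{|n+z|^2} \leq \frac{4}{|\im z|},$$
which I would get by comparing the series with $\int_{\R} \frac{dt}{t^2 + (\im z)^2} = \pi/|\im z|$ after isolating the largest term, or from the closed form $\frac{\pi}{|\im z|}\coth(\pi|\im z|)$. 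This single inequality is the engine producing both the constant $4$ and one power of $|\im z|^{-1}$, and it already settles the base case $l(\seq{s}) = 1$ (where $s_1 \geq 2$): extracting $s_1 - 2$ powers of $|\im z|$ from the only factor leaves exactly this sum.

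Next I would use $|n_i + z| \geq |\im z|$ to lower the exponents to a canonical profile. For an interior index I replace $\frac{1}{|n_i+z|^{s_i}}$ by $\frac{1}{|\im z|^{s_i - 1}}\cdot\frac{1}{|n_i+z|}$, and for the two extreme indices (where $s_1, s_r \geq 2$) by $\frac{1}{|\im z|^{s_i - 2}}\cdot\frac{1}{|n_i+z|^2}$. Pulling these powers out of the sum factors off exactly $|\im z|^{-(||\seq{s}|| - l(\seq{s}) - 2)}$, the exponent being nonnegative since $||\seq{s}|| \geq s_1 + s_r + (l(\seq{s}) - 2) \geq l(\seq{s}) + 2$, and it leaves the ordered sum
$$R = \sum_{n_r < \cdots < n_1} \frac{1}{|n_1+z|^2\,|n_r+z|^2}\prod_{i=2}^{l(\seq{s})-1}\frac{1}{|n_i+z|}.$$
It then suffices to prove $R \leq 4\,l(\seq{s})/|\im z|$, so that the two displayed powers of $|\im z|$ combine into the claimed exponent $||\seq{s}|| - l(\seq{s}) - 1$.

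Bounding $R$ with a constant that is \emph{linear} in the length is where the real difficulty lies, and is the precise reason the triangle inequality must be avoided: a naive factorisation of the ordered simplex into independent coordinates produces a constant growing like $4^{l(\seq{s})}$ and, worse, diverges as soon as an interior exponent equals $1$, since the unit middle factors are summable only by virtue of the joint ordering $n_r < \cdots < n_1$. My plan is to argue by induction on $l(\seq{s})$, peeling the extreme variables $n_1$ and $n_r$ and re-using the one-dimensional estimate at each step so that the successive contributions \emph{add} rather than multiply; the delicate point to control is that summing an interior unit factor against the two square factors at the ends gains back precisely one power of $|\im z|$ through the inequality above, the count of the $l(\seq{s})$ peeling steps being what yields the factor $4\,l(\seq{s})$. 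The hard part will thus be the bookkeeping of the ordered sum with unit interior exponents. I would also record that the engine estimate, and hence the whole bound, needs $|\im z|$ bounded below (the clean form $\sum_n |n+z|^{-2}\le 4/|\im z|$ holding for $|\im z| \geq 1$), which is exactly the regime in which this bound expresses the exponentially flat behaviour of multitangent functions near infinity.
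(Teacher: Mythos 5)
Your reduction step is the easy half of the paper's argument, and your bookkeeping is correct: pulling $|\im z|^{||\seq{s}||-l(\seq{s})-2}$ out of the modulus series leaves exactly the claim $R\leq 4\,l(\seq{s})/|\im z|$ for the ordered sum $R$ with exponent profile $(2,1,\ldots,1,2)$. But observe that $R$ is precisely the quantity the lemma bounds for the sequence $(2,1^{[r-2]},2)\in\mathcal{S}^\star_{b,e}$ (for which $||\seq{s}||-l(\seq{s})-1=1$), so you have reduced the statement to its hardest special case, and the plan you offer there collapses at the first step. If you peel $n_1$ using $\sum_{n\in\Z}|n+z|^{-2}\leq 4/|\im z|$, the remaining expression has $n_2$ as its new extreme variable with exponent $1$, and $\sum_{n_2>n_3}|n_2+z|^{-1}$ diverges; the same happens if you peel $n_1$ and $n_r$ together, leaving an interior ordered sum of unit factors over an unbounded range. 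The interior unit exponents are summable only while they stay coupled to the square factors at the extremes, so the engine estimate cannot be "re-used at each step": no mechanism is given for the asserted additivity of contributions, and the core inequality remains unproved.

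What the paper does at exactly this point, and what is absent from your proposal, is a splitting of the summation set rather than an induction: it writes $fe^\p=fe_+^\p\times Ie^\p\times fe_-^\p$, where $fe_\pm^\p$ are the one-sided sums over integers above, respectively below, $-E(\re z)$ and $Ie^\p$ carries the single index nearest $-\re z$. After the same exponent reduction you use (a $2$ on the leading index of each one-sided sum, $1$'s elsewhere), each one-sided factor is bounded by the \emph{constant} $\mathcal{Z}e^{2,1,\ldots,1}\leq 2$ with no powers of $|\im z|$ spent, because on a half-line the ordered sum with profile $(2,1,\ldots,1)$ converges; all powers of $|\im z|$ are then carried by the single middle factor $Ie^{s_k}\leq |\im z|^{-s_k}$, and the factor $4\,l(\seq{s})$ arises as $2\times 2$ times the number of possible positions $k$ of that middle index in the deconcatenation sum --- not from peeling steps. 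Without this splitting, or an equivalent device, your bound on $R$ is not established. A secondary point: as you note, your engine needs $|\im z|$ bounded below, so at best you would prove the lemma for $|\im z|\geq 1$ while it is stated on all of $\C-\R$; this mismatch is minor in substance (the stated inequality actually fails near the real axis --- e.g.\ $\mathcal{T}e^{2,2}=2\mathcal{Z}e^2\,\mathcal{T}e^2\sim 2\mathcal{Z}e^2/z^2$ as $z\to 0$, which exceeds $8/|\im z|$ for small $|\im z|$, and the paper's own proof quietly bounds $fe_\pm^{\emptyset}=1$ by $2|\im z|$), but it should be stated; the essential gap is the one above.
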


\begin{Proof}
			Let $\seq{s} \in \mathcal{S}^\star_{b,e}$ and $z \in \C - \R$~.
			\\
			We will denote by $fe^{\seq{s}}$ the function defined on $\R \times \R^*_+$ by:
			$$	fe^{\seq{s}} (x ; y)
				=
				\displaystyle	{	\sum	_{- \infty < n_r < \cdots < n_1 < + \infty}
											\frac	{1}
													{	\big (
																(n_1 + x)^2 + y^2
														\big )
														^{\frac{s_1}{2}}
														\cdots
														\big (
																(n_r + x)^2 + y^2
														\big )
														^{\frac{s_r}{2}}
													} \ .
								}
			$$
			We hence have:
			$	\left |
						\mathcal{T}e^{\seq{s}}(z)
				\right |
				\leq
				fe^{\seq{s}} (\re z ; |\im z|)
			$. Moreover, using an argument we will develop in a forthcoming section (see Section \ref{factorisation moulienne}), we obtain the following
			trifactorisation:
			$$ fe^\p (x ; y) = fe_+^\p(x ; y) \times Ie^\p(x ; y) \times fe_-^\p(x ; y) \ ,$$
			where the functions $fe_+^\p$ , $fe_-^\p$ and $Ie^\p$ are defined on $\R \times \R_+^*$ by:

			$$	\begin{array}{lll}
				fe_+^{\seq{s}}(x ; y)
				&=&
				\displaystyle	{	\sum	_{- E(x) < n_r < \cdots < n_1 < + \infty}
											\frac	{1}
													{	\left (
																(n_1 + x)^2 + y^2
														\right )
														^{\frac{s_1}{2}}
														\cdots
														\left (
																(n_r + x)^2 + y^2
														\right )
														^{\frac{s_r}{2}}
													} \ .
								}
				\\
				\\
				fe_-^{\seq{s}}(x ; y)
				&=&
				\displaystyle	{	\sum	_{- \infty < n_r < \cdots < n_1 < - E(x)}
											\frac	{1}
													{	\left (
																(n_1 + x)^2 + y^2
														\right )
														^{\frac{s_1}{2}}
														\cdots
														\left (
																(n_r + x)^2 + y^2
														\right )
														^{\frac{s_r}{2}}
													} \ .
								}
				\\
				\\
				Ie^{\seq{s}} (x ; y)
				&=&
				\left \{
					\begin{array}{c@{}l}
						0				&	\text{ , if } l(\seq{s}) \neq 1 \ .
						\\
						\left (
								(x - E(x))^2 + y^2
						\right )
						^{-\frac{s}{2}}	&	\text{ , if } l(\seq{s}) = 1 \ .
					\end{array}
				\right.
				\end{array}
			$$
			
			Thus, we successively have:	\\
			$	\begin{array}{@{}l@{}l@{}l}
					fe_+^{\seq{s}}(x ; y)
					& =&
					\displaystyle	{	\sum	_{0 < n_r < \cdots < n_1 < + \infty}
												\frac	{1}
														{	\left (
																	(n_1 + x - E(x))^2 + y^2
															\right )
															^{\frac{s_1}{2}}
															\cdots
															\left (
																	(n_r + x - E(x))^2 + y^2
															\right )
															^{\frac{s_r}{2}}
														}
									}
					\vspace{0.2cm}
					\\
					& \leq &
					\displaystyle	{	 \frac	{1}
												{(y^2)^{\frac{s_1 - 2}{2}} (y^2)^{\frac{s_2 - 1}{2}} \cdots (y^2)^{\frac{s_r - 1}{2}}}
										 
										fe_+^{2,1, \cdots , 1} (x - E(x) ; 0)
									}
					\vspace{0.2cm}
					\\
					& \leq &
					\displaystyle	{	 \frac	{1}
												{y^{s_1 - 2} y^{s_2 - 1} \cdots y^{s_r - 1}}
										\sum	_{0 < n_r < \cdots < n_1 < + \infty}
												\frac	{1}
														{	{n_1}^2
															n_2 \cdots n_r
														}
									}
					\vspace{0.2cm}
					\\
					& = &
					\displaystyle	{	 \frac	{\mathcal{Z}e^{2,1, \cdots , 1}}
												{y^{||\seq{s}|| - l(\seq{s}) - 1}}
									}
					\ \leq \ 
					\displaystyle	{	 \frac	{2}
												{y^{||\seq{s}|| - l(\seq{s}) - 1}}
									} \ .		\label{majoration a affiner}
				\end{array}
			$
			\\
			On the same way, we have:
			$	fe_-^{\seq{s}}(x ; y)
				\leq
				\displaystyle	{	\frac	{2}
											{y^{||\seq{s}|| - l(\seq{s}) - 1}}
								}
			$\ .
			Hence:\\
			$	\begin{array}{@{}lll}
					\left |
							\mathcal{T}e^{\seq{s}}(z)
					\right |
					&\leq&
					\displaystyle	{	\sum	_{k = 1}
												^{l(\seq{s})}
												fe_+^{\seq{s}^{< k}} (\re z ; |\im z|)
												Ie^{s_k} (\re z ; |\im z|)
												fe_-^{\seq{s}^{> k}} (\re z ; |\im z|)
									}
					\vspace{0.2cm}
					\\
					&\leq&
					\displaystyle	{	\sum	_{k = 1}
												^{l(\seq{s})}
												\frac	{2}
														{|\im z|^{||\seq{s}^{< k}|| - (k - 1) - 1}}
												\times
												\frac	{1}
														{|\im z|^{s_k}}
												\times
												\frac	{2}
														{|\im z|^{||\seq{s}^{> k}|| - (l(\seq{s}) - k) - 1}}
									}
					\vspace{0.2cm}
					\\
					&=&
					\displaystyle	{	\sum	_{k = 1}
												^{l(\seq{s})}
												\frac	{4}
														{|\im z|^{||\seq{s}|| - l(\seq{s}) - 1}}
									}
					=
					\displaystyle	{	\frac	{4 l(\seq{s})}
												{|\im z|^{||\seq{s}|| - l(\seq{s}) - 1}}
									} \ .
				\end{array}
			$\\
			\qed
\end{Proof}

Next, we present the second upper bound. It is an improvement of the first one when we restrict to nonempty sequences of $\text{seq} (\N_2)$~.
The proof uses the same notations and also the same ideas as for the first upper bound.

\begin{Lemma}	\label{lemma_estimation_des_multitangentes_valuation_2}
	For all $\seq{s} \in \text{seq} (\N_2) - \{\emptyset\}$ and all $z \in \C - \R$ satisfying $|z| \geq 1$, we have:
	\label{majoration des multitangentes de valuations superieure a 2}
	$	\left |
				\mathcal{T}e^{\seq{s}} (z)
		\right |
		\leq
		\displaystyle	{	\frac	{1}	{l(\seq{s}) !}
							\left (
									\frac	{2}	{\sqrt{|\im z|}}
							\right )^{||\seq{s}||}
						}.
	$
\end{Lemma}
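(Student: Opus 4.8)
The plan is to estimate $\mathcal{T}e^{\seq{s}}$ through the positive majorant series already introduced in the proof of Lemma~\ref{lemma_estimation_des_multitangentes}, but this time to bound it by a single global symmetrisation instead of going through the trifactorisation. Setting $x = \re z$, $y = |\im z|$ and $r = l(\seq{s})$, one has $|\mathcal{T}e^{\seq{s}}(z)| \leq fe^{\seq{s}}(x;y)$ with
\begin{equation*}
	fe^{\seq{s}}(x;y) = \sum_{-\infty < n_r < \cdots < n_1 < +\infty} \prod_{i=1}^{r} \frac{1}{\big((n_i + x)^2 + y^2\big)^{s_i/2}} \ ,
\end{equation*}
so everything reduces to bounding this series of nonnegative terms.

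The key step is a pointwise inequality that extracts the factor $(2/\sqrt{y})^{s_i}$ from each summand while leaving a weight that depends on the summation index but \emph{not} on its position. Precisely, I would prove that for every $n \in \Z$, every real exponent $\sigma \geq 2$ and every $y \geq 1$,
\begin{equation*}
	\frac{1}{\big((n + x)^2 + y^2\big)^{\sigma/2}} \leq \left( \frac{2}{\sqrt{y}} \right)^{\sigma} B_n \ , \qquad B_n = \frac{y}{4\big((n + x)^2 + y^2\big)} \ .
\end{equation*}
This is elementary: the inequality is equivalent to $\big( ((n+x)^2 + y^2)/y \big)^{1 - \sigma/2} \leq 2^{\sigma - 2}$, and since $(n+x)^2 + y^2 \geq y^2 \geq y$ the base of the left-hand side is $\geq 1$ while its exponent $1 - \sigma/2$ is $\leq 0$, so the left-hand side is $\leq 1 \leq 2^{\sigma - 2}$. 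It is exactly here that the hypotheses are used: the valuation condition $\seq{s} \in \text{seq}(\N_2)$ guarantees $s_i \geq 2$, hence the nonpositive exponent, while the condition on $z$ enters only as the lower bound $y \geq 1$.

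It then remains to feed this into the series. Applying the pointwise bound with $\sigma = s_i$ in each factor gives
\begin{equation*}
	fe^{\seq{s}}(x;y) \leq \left( \frac{2}{\sqrt{y}} \right)^{||\seq{s}||} \sum_{-\infty < n_r < \cdots < n_1 < +\infty} \prod_{i=1}^{r} B_{n_i} \ .
\end{equation*}
Because the weight $B_n$ is the same for every position, the remaining ordered sum is the $r$-th elementary symmetric function of $(B_n)_{n \in \Z}$, and the standard simplex inequality yields $\sum_{n_r < \cdots < n_1} \prod_i B_{n_i} \leq \frac{1}{r!}\big(\sum_{n \in \Z} B_n\big)^{r}$ (each strictly decreasing $r$-tuple accounts, together with its $r!$ reorderings, for $r!$ nonnegative terms of the expanded $r$-th power). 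Finally I would check $\sum_{n \in \Z} B_n \leq 1$: using the closed form $\sum_{n \in \Z}\frac{1}{(n+x)^2+y^2} = \frac{\pi}{y}\cdot\frac{\sinh(2\pi y)}{\cosh(2\pi y) - \cos(2\pi x)} \leq \frac{\pi}{y}\coth(\pi y)$ one gets $\sum_{n} B_n \leq \frac{\pi}{4}\coth(\pi y) \leq \frac{\pi}{4}\coth(\pi) < 1$ for $y \geq 1$. Combining the three displays gives $fe^{\seq{s}}(x;y) \leq \frac{1}{r!}(2/\sqrt{y})^{||\seq{s}||}$, which is the claim.

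The only genuinely delicate point is the summability estimate $\sum_n B_n \leq 1$, i.e.\ $\sum_n \frac{1}{(n+x)^2+y^2} \leq 4/y$: this is what pins down both the constant $2$ appearing in the bound and the need to bound $|\im z|$ away from $0$. Everything else — the pointwise factorisation and the elementary-symmetric/simplex inequality — is routine, and the factor $1/l(\seq{s})!$ comes out automatically precisely because the extracted weights $B_{n}$ no longer remember the position $i$, so that the whole simplex of the $r$ summation indices is symmetrised at once.
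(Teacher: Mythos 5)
Your proof is correct, but it follows a genuinely different route from the paper's. The paper recycles the trifactorisation $fe^\p = fe_+^\p \times Ie^\p \times fe_-^\p$ set up in the proof of Lemma~\ref{lemma_estimation_des_multitangentes}: each one-sided block $fe_\pm^{\seq{s}}$ is bounded by the symmetrisation trick followed by a comparison with an integral, giving $\frac{1}{r!}\left(\frac{\pi}{2}\right)^{r} y^{-(||\seq{s}||-r)}$ per block; the $r$ possible positions of the central factor $Ie^{s_k}$ are then summed (a binomial identity gives $\frac{\pi^{r-1}}{(r-1)!}$, bounded by $\frac{4^{r}}{r!}$), and only at the very last step is $y^{-(||\seq{s}||-r)}$ converted into $y^{-||\seq{s}||/2}$ using $||\seq{s}|| \geq 2\,l(\seq{s})$. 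You dispense with the trifactorisation altogether: your pointwise inequality peels the factor $(2/\sqrt{y})^{s_i}$ off each term, leaving weights $B_{n}$ that no longer remember the position $i$, so the whole bilateral simplex is symmetrised in one stroke (producing the $\frac{1}{l(\seq{s})!}$), and the closed Eisenstein-type formula for $\sum_{n\in\Z}\big((n+x)^2+y^2\big)^{-1}$ shows the leftover sum is at most $1$ since $\frac{\pi}{4}\coth(\pi) < 1$. Both arguments use the valuation hypothesis essentially, but in different places: the paper globally at the end via $||\seq{s}|| \geq 2\,l(\seq{s})$, you pointwise via the nonpositive exponent $1 - s_i/2$. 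What the paper's route buys is uniformity with the companion estimate: the trifactorisation scheme still works when interior entries $s_i = 1$ are present (where your direct symmetrisation over $\Z^{r}$ would fail, each factor with $s_i=1$ being non-summable over $\Z$), which is exactly the situation of Lemma~\ref{lemma_estimation_des_multitangentes}. What your route buys, under the valuation-$2$ hypothesis, is a shorter and more self-contained argument with no bookkeeping over the position of the central index, and slightly sharper intermediate constants.

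One caveat, which you share with the paper rather than introduce: you use the hypothesis in the form $|\im z| \geq 1$ (your ``the condition on $z$ enters only as the lower bound $y \geq 1$''), whereas the statement assumes only $|z| \geq 1$ with $z \notin \R$. The paper's own proof also needs $|\im z| \geq 1$ (its final step, equivalent to $y^{\,r - ||\seq{s}||/2} \leq 2^{||\seq{s}||-2r}$, fails for $y < 1$), and in fact the bound is false under the literal hypothesis: for $\seq{s} = (2)$ and $z = N + i\varepsilon$ with $N \in \N^*$ and $\varepsilon$ small, the left-hand side $\pi^2/\sinh^2(\pi\varepsilon)$ grows like $\varepsilon^{-2}$ while the right-hand side is $4/\varepsilon$. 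So this is a defect of the statement itself; your proof is sound provided you state explicitly that the estimate is established for $|\im z| \geq 1$.
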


\begin{Proof}
			Let $\seq{s} \in \text{seq} (\N_2)$ be a sequence of length $r$ and $z \in \C - \R$ satisfying $|z| \geq 1$~.
			Let us also consider, as in the previous proof, the notations $fe^{\seq{s}}$ , $fe_+^{\seq{s}}$ , $fe_-^{\seq{s}}$ and $Ie^{\seq{s}}$~.
			\\
			\\
			When $\seq{s} \in \text{seq} (\N_2) \subset \mathcal{S}^\star_{b,e}$ , we will improve the upper bounds which have just been found in the
			proof of the first upper bound by using an integral test for convergence:
		
			$	\begin{array}{@{}lll}
					fe_+^{\seq{s}}(x ; y)
					&=&
					\displaystyle	{	\sum	_{0 < n_r < \cdots < n_1 < + \infty}
												 
												\prod	_{i = 1}
														^r
														\displaystyle	{	\left (
																					\frac	{1}
																					{	\left (
																								(n_i + x - E(x))^2 + y^2
																						\right )
																						^{\frac{s_i}{2}}
																					}
																			\right )
																		}
									}
					\vspace{0.2cm}	\\
					&=&
					\displaystyle	{	\frac	{1}	{r!}
										\sum	_{	(n_1 ; \cdots ; n_r) \in (\N^*)^r
													\atop
													i \neq j \Longrightarrow n_i \neq n_j
												}
												\prod	_{i = 1}
														^r
														\displaystyle	{	\left (
																					\frac	{1}
																							{	\left (
																										(n_i + x - E(x))^2 + y^2
																								\right )
																								^{\frac{s_i}{2}}
																								}
																			\right )
																		}
									}
					\vspace{0.2cm}	\\
					&\leq&
					\displaystyle	{	\frac	{1}	{r!}
										\prod	_{i = 1}
												^r
												\left (
														\displaystyle	{	\sum	_{n \in \N^*}
																					\frac	{1}
																							{	\left (
																										(n + x - E(x))^2 + y^2
																								\right )
																								^{\frac{s_i}{2}}
																							}
																		}
												\right )
									}
					\vspace{0.2cm}	\\
					&\leq&
					\displaystyle	{	\frac	{1}	{r!}
										\prod	_{i = 1}
												^r
												\left (
														\displaystyle	{	\int	_0
																					^{+ \infty}
																					\frac	{dt}
																							{	\left (
																										(t + x - E(x))^2 + y^2
																								\right )
																								^{\frac{s_i}{2}}
																							}
																		}
												\right )
									}
					\vspace{0.2cm}	\\
					&\leq&
					\displaystyle	{	\frac	{1}	{r!}
										\frac	{1}	{y^{||\seq{s}|| - 2r}}
										\left (
												\int	_{0}
														^{+ \infty}
														\displaystyle	{	\frac	{du}
																					{u^2 + y^2}
																		}
										\right )
										^r
									}
					\ \leq \ 
					\displaystyle	{	\frac	{1}	{r!}
										\frac	{	\left (
															\frac	{\pi}	{2}
													\right )
													^r
												}
												{y^{||\seq{s}|| - r}}
									} \ .
				\end{array}
			$
			\\
			\\
			On the same way, we have:
			$	fe_-^{\seq{s}}(x;y)
				\leq
				\displaystyle	{	\frac	{1}	{r!}
									\frac	{	\left (
														\frac	{\pi}	{2}
												\right )
												^r
											}
											{y^{||\seq{s}|| - r}}
								}
			$ .
			Hence:
			\vspace{-0.65cm}
			$$	\begin{array}[t]{lll}
					\left |
							\mathcal{T}e^{\seq{s}}(z)
					\right |
					&\leq&
					\displaystyle	{	\sum	_{k = 1}
												^r
												fe_+^{\seq{s}^{< k}} (\re z ; |\im z|)
												Ie^{s_k} (\re z ; |\im z|)
												fe_-^{\seq{s}^{> k}} (\re z ; |\im z|)
									}
					\\
					\vspace{-0.35cm}
					&\leq&
					\displaystyle	{	\sum	_{k = 1}
												^r
												 \begin{array}[t]{l}
														\displaystyle	{	\frac	{1}	{(k - 1)!}
																			\frac	{	\left (
																								\displaystyle	{\frac	{\pi}	{2}}
																						\right )
																						^{k - 1}
																					}
																					{|\im z|^{||\seq{s}^{< k}|| - (k - 1)}}
																			\times
																			\frac	{1}
																					{|\im z|^{s_k - 1}}
																			\times
																		}
														\\
														\displaystyle	{	\frac	{1}	{(r - k)!}
																			\frac	{	\left (
																								\displaystyle	{\frac	{\pi}	{2}}
																						\right )
																						^{r - k}
																					}
																					{|\im z|^{||\seq{s}^{> k}|| - (r - k)}}
																		}
													\end{array}
									}
				\end{array}
			$$
			$$	\begin{array}[t]{lll}
					\phantom	{	\left |
											\mathcal{T}e^{\seq{s}}(z)
									\right |
								}
					&\leq&
					\displaystyle	{	\frac	{	\left (
															\frac	{\pi}	{2}
													\right )
													^{r - 1}
												}
												{|\im z|^{||\seq{s}|| - r}}
										\sum	_{k = 1}
												^r
												\frac	{1}	{(k - 1)!}
												\frac	{1}	{(r - k)!}
									}
					\leq
					\displaystyle	{	\frac	{2^{r - 1}}	{(r - 1)!}
										\frac	{	\left (
															\frac	{\pi}	{2}
													\right )
													^{r - 1}
												}
												{|\im z|^{||\seq{s}|| - r}}
									}
					\vspace{0.2cm}	\\
					&\leq&
					\displaystyle	{	\frac	{\pi^{r - 1}}	{(r - 1)!}
										\frac	{1}		{|\im z|^{||\seq{s}|| - r}}
									}
					\leq
					\displaystyle	{	\frac	{4^{r}}	{r!}
										\frac	{1}		{|\im z|^{||\seq{s}|| - r}}
									}
					\ .
				\end{array}
			$$
			
			To conclude, we only have to notice that, for $\seq{s} \in \text{seq} (\N_2)$ , we have $||\seq{s}|| \geq 2 l(\seq{s})$~. Consequently, we deduce
			the sought upper bound:
			$$	\left |
						\mathcal{T}e^{\seq{s}}(z)
				\right |
				\leq
					\displaystyle	{	\frac	{1}	{r!}
										\left (
												\frac	{2}		{\sqrt{|\im z|}}
										\right )
										^{||\seq{s}||}
									} .
			$$
			\qed
\end{Proof}

\subsection	{About the exponentially flat character}
\label {caractere exponentiellement plat}

The exponentially flat character of convergent multitangent functions is a consequence of Schwarz's lemma. To enlighten this, let us give the following statement:

\begin{Lemma}	\label{Consequence_of_chwarz_Lemma}
	Suppose that a function $f$ is $1$-periodic, holomorphic on the half-plane $\{ \zeta \in \C \,; \im \zeta > 0\}$ and satisfies
	$\displaystyle	{	\lim	_{t \longrightarrow + \infty}
								f(it)
						=
						0
					}
	$~.
	\\
	Then, for every $c > 0$, we have:
	$$	\im z > c
		\Longrightarrow
		|f(z)| \leq M(c) e^{-2\pi \im z} \ ,
	$$
	where	$M(c) = \displaystyle	{	e^{2 \pi c} \sup	_{z \in \R + i c}
															|f(z)|
									} \ .
			$
\end{Lemma}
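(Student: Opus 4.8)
The plan is to exploit the $1$-periodicity in order to transfer the problem from the half-plane to a disk, where Schwarz's lemma becomes available. Setting $q = e^{2i\pi z}$, the map $z \longmapsto q$ sends the half-plane $\{\im z > 0\}$ onto the punctured disk $\{0 < |q| < 1\}$, sends the horizontal line $\R + ic$ onto the circle $\{|q| = e^{-2\pi c}\}$, and pushes $q$ to $0$ as $\im z \longrightarrow + \infty$. Since $f$ is $1$-periodic and holomorphic on the half-plane, it factors as $f(z) = g(q)$ for a unique function $g$ holomorphic on $\{0 < |q| < 1\}$; equivalently, $f$ admits a Fourier expansion $f(z) = \sum_{n \in \Z} a_n q^n$ convergent there.

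First I would show that $g$ extends holomorphically across $q = 0$ with $g(0) = 0$. Restricting to $z = it$ gives $f(it) = \sum_{n \in \Z} a_n e^{-2\pi n t}$. As $t \longrightarrow + \infty$, each coefficient with $n < 0$ contributes a term $a_n e^{2\pi |n| t}$ whose modulus grows without bound and which cannot be cancelled, the distinct frequencies growing at distinct exponential rates; hence the existence of the finite limit $\lim_{t \to +\infty} f(it) = 0$ forces $a_n = 0$ for all $n < 0$. The surviving expansion then yields $\lim_{t \to +\infty} f(it) = a_0$, so $a_0 = 0$ as well. Therefore $g(q) = \sum_{n \geq 1} a_n q^n$ is holomorphic on the whole disk $\{|q| < 1\}$ and vanishes at the origin. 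This removal of the non-positive frequencies is the crux of the argument and the step I expect to be the main obstacle: one cannot invoke a bare removable-singularity theorem, since no \emph{a priori} bound on $f$ near infinity is assumed, and it is precisely the $q$-expansion, combined with the one-directional hypothesis, that supplies both the holomorphic extension and the condition $g(0) = 0$.

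With this in hand, fix $c > 0$ and put $\rho = e^{-2\pi c} < 1$. The function $g$ is holomorphic on a neighbourhood of the closed disk $\{|q| \leq \rho\}$ and vanishes at $0$, so Schwarz's lemma applies on $\{|q| \leq \rho\}$: writing $S(c) = \sup_{|q| = \rho} |g(q)| = \sup_{z \in \R + ic} |f(z)|$ (the last equality via the correspondence between the circle and the line, using $1$-periodicity), the maximum modulus principle gives $|g| \leq S(c)$ on the disk, whence $|g(q)| \leq \dfrac{S(c)}{\rho}\,|q|$ for $|q| \leq \rho$. Translating back through $|q| = e^{-2\pi \im z}$, any $z$ with $\im z > c$ satisfies $|f(z)| = |g(q)| \leq \dfrac{S(c)}{\rho}\, e^{-2\pi \im z} = e^{2\pi c} S(c)\, e^{-2\pi \im z}$, which is exactly the claimed bound with $M(c) = e^{2\pi c} \sup_{z \in \R + ic} |f(z)|$. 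The only routine points then remaining are the finiteness of $S(c)$ (immediate by continuity on the compact circle) and the standard check that the supremum over the full line $\R + ic$ agrees with that over one period, and hence over the circle $\{|q| = \rho\}$.
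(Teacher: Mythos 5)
Your overall route is the same as the paper's: substitute $q = e^{2i\pi z}$, extend the factored function $g$ across $q = 0$ with $g(0) = 0$, and apply Schwarz's lemma on the disk of radius $e^{-2\pi c}$; your final Schwarz computation is identical to the one in the paper's proof. The gap sits exactly in the step you yourself single out as the crux. The principle you invoke --- that terms $a_n e^{2\pi |n| t}$ with distinct exponential growth rates cannot cancel, so the existence of a finite limit forces $a_n = 0$ for all $n < 0$ --- is false when infinitely many negative modes are present. It is valid for a pole (finitely many negative coefficients, where the most negative one dominates), but not for an essential singularity. The standard counterexample is
$$ g(q) \;=\; e^{-1/q} \;=\; \sum_{k \geq 0} \frac{(-1)^k}{k!}\, q^{-k}\,, $$
whose negative Laurent coefficients are all nonzero and each of whose terms blows up as $q \to 0^+$, yet $g(q) \to 0$ along the positive real axis. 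Pulled back, $f(z) = \exp \left( -e^{-2i\pi z} \right)$ is entire, $1$-periodic, and satisfies $f(it) = \exp(-e^{2\pi t}) \to 0$, but $f(\tfrac{1}{2} + it) = \exp(e^{2\pi t}) \to +\infty$, so the conclusion of the lemma fails for this $f$. Hence your argument for the vanishing of the negative coefficients cannot be repaired as written: in fact the lemma as stated is false, because decay along the single ray $z = it$ is too weak a hypothesis.

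For what it is worth, the paper's own proof has the same blind spot: it simply writes $f(z) = F(e^{2i\pi z})$ ``where $F$ is a holomorphic function defined over $\{|\zeta| < 1\}$ and $F(0) = 0$'', i.e.\ it asserts without justification exactly the extension you tried (and failed) to prove. Both proofs become correct once the hypothesis is strengthened so as to control $f$ on a whole half-plane: for instance, $f$ bounded on $\{\zeta \in \C \,;\, \im \zeta > c_0\}$ for some $c_0 > 0$, or $f(z) \to 0$ as $\im z \to +\infty$ uniformly in $\re z$. Then $g$ is bounded near $q = 0$, Riemann's removable singularity theorem gives the holomorphic extension, the limit along the ray forces $g(0) = 0$, and your Schwarz argument finishes verbatim. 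This stronger hypothesis is the one actually available in the paper's application: the upper bounds established just before in the same section (Lemmas \ref{lemma_estimation_des_multitangentes} and \ref{lemma_estimation_des_multitangentes_valuation_2}) show that convergent multitangent functions are bounded on $\{\im z \geq c\}$, so the lemma is only ever invoked in a regime where the repaired statement applies.
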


\begin{proof}
	Let us fix $c > 0$.	\\
	Writing $f(z) = F(e^{2i\pi z})$, where $F$ is a holomorphic function defined over $\{ \zeta \in \C \,;\, |\zeta| < 1\}$ and $F(0) = 0$, we can
	consider
	$$	\varphi (z) = \displaystyle	{	\frac	{F(e^{-2\pi c} z)}	{M}
									}
		\ ,
	$$
	where	$\displaystyle	{	M	= \sup	_{z \in \R + i c}
											|f(z)|
									= \sup	_{z \in \{\zeta \in \C \,; |\zeta| < e^{-2 \pi c} \}}
											|F(z)|
							} \ .
			$
	\\
	
	Since $\varphi$ satisfies the Schwarz's lemma hypothesis, we can conclude that:
	$$	|\varphi(z)| \leq |z| \ ,
		\text{ for all } z \in \C \text{ such that } |z| < e^{-2 \pi c}\ .
	$$
	Consequently, we have proved:
	$$	|F(z)| \leq M e^{2\pi c} |z| \text{ , for all } z \in \C \text{ such that } |z| < e^{-2 \pi c} \ ,
	$$
	which conclude the proof.
\end{proof}

From Lemma \ref{lemma_estimation_des_multitangentes} and \ref{lemma_estimation_des_multitangentes_valuation_2}, we have:
$$	\displaystyle	{	\sup	_{z \in \R + i c}
								\left |
										\mathcal{T}e^{\seq{s}} (z)
								\right |
					}
	\leq
	\left \{
			\begin{array}{lll}
				\displaystyle	{	\frac	{4l(\seq{s})}	{c^{||\seq{s}|| - l(\seq{s}) - 1}}	}
				&\text{, if } \seq{s} \in \mathcal{S}_{b,e}^\star \ .
				\\
				\displaystyle	{	\frac	{1}	{l(\seq{s})!}
									\left (
											\frac	{2}	{\sqrt{c}}
									\right )
									^{||\seq{s}||}
								}
				&\text{, if } \seq{s} \in \text{seq}(\N_2) \ .
			\end{array}
	\right.
$$

Moreover, we have	$	\displaystyle	{	e^{-2x}
											\leq
											\frac	{1}	{e^{2x} - 1}
											\leq
											\frac	{4}	{\sh^2 (x)}
										}
					$, provided $x > 0$~.

For a given $z \in \C - \R$, setting $c = \displaystyle	{	\frac	{|\im z|}	{2}	}$, we deduce from Lemma \ref{Consequence_of_chwarz_Lemma}:

\begin{Property}
	\begin{enumerate}
		\item For all sequences $\seq{s} \in \mathcal{S}_{b,e}^\star$ and $z \in \C - \R$, we have:
				$$	\left |
							\mathcal{T}e^{\seq{s}} (z)
					\right |
					\leq
					\displaystyle	{	\left (
												\frac	{2 }
														{|\im z|}
										\right )
										^{||\seq{s}|| - l(\seq{s}) - 1}
										\frac	{16 \pi l(\seq{s})}	{\sh^2 \left (	\displaystyle	{	\frac	{\pi |\im z|}	{2}	}	\right )	}
									}\ .
				$$
		\item For all sequences $\seq{s} \in \text{seq}(\N_2)$ and $z \in \C - \R$, we have:
				$$	\left |
							\mathcal{T}e^{\seq{s}} (z)
					\right |
					\leq
					\displaystyle	{	\frac	{1}	{l(\seq{s})!}
								\left (
									\frac	{2 \sqrt{2}}
										{\sqrt{|\im z|}}
								\right )
								^{||\seq{s}||}
								\frac	{4 \pi}	{\sh^2 \left (	\displaystyle	{	\frac	{\pi |\im z|}	{2}	}	\right )	}
							}\ .
				$$
	\end{enumerate}
\end{Property}
\section	{Study of a \symmetrel extension of multitangent functions to $\text{seq}(\N^*)$}
\label{prolongement des multitangentes au cas divergent}

In this section, our aim is to provide a regularization of $\mathcal{T}e^{\seq{s}}$ when the series \eqref{defMTGF} is a divergent one, which is when
$\seq{s} \in \text{seq}(\N^*) - \mathcal{S}^\star_{b,e}$, i.e. when $s_1 = 1$ or $s_r = 1$, as well as when $s_1 = s_r = 1$...

Moreover, we want the extension to satisfy the same properties as the convergent multitangent functions (see Property \ref{firstProperties} and Theorem \ref{reduction en monotangente}). So, we must preserve:

\begin{enumerate}
	\item The \symmetrel character.
	\item The differentiation property.
	\item The parity property.
	\item The property of reduction into monotangent functions.
\end{enumerate}

Note that we have anticipated this section in \S \ref{Unit cleansing of divergent multitangent functions} when we have studied the unit cleansing of  multitangent functions.

\subsection	{A generic method to extend the definition of a \symmetrel mould}

In this section, we consider a \symmetrel mould $\mathcal{S}e^\p$ over the alphabet $\Omega = \N^*$, with values in an algebra $\mathbb{A}$, which is well-defined for sequences in $S_b^\star = \{ \seq{s} \in \text{seq} (\N^*) \ ; \ s_1 \geq 2 \}$~. We want to define an extension of $\mathcal{S}e^\p$ for all sequences of $\text{seq} (\N^*)$ such that the `new' mould $\mathcal{S}e^\p$ is again a \symmetrel one.

The following lemma is due to Jean Ecalle. The first part is now well-known while the second was not published. To be exhaustive, we shall prove both points.

\begin{Lemma}	\label{extension lemma}
	\begin{enumerate}
		\item For all $\theta \in \mathbb{A}$ , there exists a unique \symmetrel extension of $\mathcal{S}e^\p$ to $\text{seq} (\N^*)$, denoted by
				$\mathcal{S}e^{\p}_{\theta}$ , such that $\mathcal{S}e_{\theta}^1 = \theta$~.
		\item For all $\gamma \in \mathbb{A}$ , let $\mathcal{N}e_{\gamma}^\p$ be the \symmetrel mould defined on sequences of $\text{seq}(\N^*)$ by:
			$	\mathcal{N}e^{\seq{s}}_{\gamma} =	\left \{
															\begin{array}{cl}
																\displaystyle	{	\frac	{\gamma^r}	{r!}	}	&	\text{, if } \seq{s} = 1^{[r]}\ .
																\\
																0													&	\text{, otherwise.}
															\end{array}
													\right.
			$
			\\
			Then, for all $(\theta_1 \,;\, \theta_2) \in \mathbb{A}^2$ , we have:
			$$	\mathcal{S}e_{\theta_1}^{\p}
				=
				\mathcal{N}e^{\p}_{\theta_1 - \theta_2}
				\times
				\mathcal{S}e_{\theta_2}^{\p}
				\ .
			$$
	\end{enumerate}
\end{Lemma}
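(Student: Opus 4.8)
The goal is to extend a symmetrel mould $\mathcal{S}e^\p$, defined on $S_b^\star$, to all of $\text{seq}(\N^*)$ while preserving symmetrelity, and to understand how the extensions corresponding to different choices of $\mathcal{S}e^1 = \theta$ are related. I would prove the two points in the stated order, since the uniqueness/existence of point~1 is the structural backbone on which the factorization formula of point~2 rests.

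First let me sketch point~1. The idea is that symmetrelity is a recursive constraint: for any $(x;\seq{s}) \in \N^* \times \text{seq}(\N^*)$ the product $\mathcal{S}e^x \mathcal{S}e^{\seq{s}}$ expands via the stuffle as a sum of terms $\mathcal{S}e^{\seq{\gamma}}$, one of which is $\mathcal{S}e^{x \cdot \seq{s}}$ (the concatenation) while all the others have strictly shorter length or are already known. Concretely, the plan is to proceed by induction on the length $r = l(\seq{s})$ and, within each length, on the number of leading $1$'s. For a divergent sequence $\seq{s}$ beginning with $s_1 = 1$, I would write $\seq{s} = 1 \cdot \seq{s}'$ and use the symmetrel relation
$$
\mathcal{S}e^1 \mathcal{S}e^{\seq{s}'}
=
\sum_{\seq{\gamma} \in sh\text{\textbf{\underline{\textit{e}}}}(1 ; \seq{s}')}
\mathcal{S}e^{\seq{\gamma}}
$$
to \emph{solve} for the one unknown term $\mathcal{S}e^{1 \cdot \seq{s}'}$ in terms of $\theta = \mathcal{S}e^1$, of $\mathcal{S}e^{\seq{s}'}$ (known by induction on length), and of the remaining shuffle terms, each of which either is shorter or has fewer leading $1$'s and hence is already defined. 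This simultaneously forces the value (giving uniqueness) and defines it (giving existence); one then checks that the mould so constructed satisfies \emph{all} symmetrel relations, not merely those used to define it, again by an induction exploiting the associativity of the stuffle product.

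For point~2, the plan is to verify the factorization $\mathcal{S}e_{\theta_1}^{\p} = \mathcal{N}e^{\p}_{\theta_1 - \theta_2} \times \mathcal{S}e_{\theta_2}^{\p}$ directly, and then invoke the uniqueness from point~1 to conclude. First I would note $\mathcal{N}e_\gamma^\p$ is symmetrel: its only nonzero values sit on sequences $1^{[r]}$ with value $\gamma^r/r!$, and the stuffle expansion of $1^{[p]} \stuffle 1^{[q]}$ produces exactly the multinomial count making $\mathcal{N}e_\gamma^{1^{[p]}}\mathcal{N}e_\gamma^{1^{[q]}} = \binom{p+q}{p}\frac{\gamma^{p+q}}{(p+q)!} = \sum \mathcal{N}e_\gamma^{\seq{\gamma}}$, so this is the exponential-type mould $\exp(\gamma \cdot)$ on the single letter $1$. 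Since a mould product of symmetrel moulds is symmetrel (used already in Lemma~\ref{definition_des_moules_symetrEls2}), the right-hand side $\mathcal{N}e^{\p}_{\theta_1 - \theta_2} \times \mathcal{S}e_{\theta_2}^{\p}$ is a symmetrel extension of $\mathcal{S}e^\p$. I must then check two things: that it restricts correctly to $S_b^\star$, and that its value on the length-one sequence $1$ is $\theta_1$. For the restriction, since $\mathcal{N}e_\gamma^{\emptyset} = 1$ and $\mathcal{N}e_\gamma^{\seq{s}} = 0$ for any $\seq{s}$ containing a letter $\geq 2$, the convolution $\sum_{\seq{s} = \seq{a}\cdot\seq{b}} \mathcal{N}e_\gamma^{\seq{a}} \mathcal{S}e_{\theta_2}^{\seq{b}}$ collapses to $\mathcal{S}e_{\theta_2}^{\seq{s}} = \mathcal{S}e^{\seq{s}}$ whenever $s_1 \geq 2$, giving agreement on $S_b^\star$. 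For the value at $1$, the convolution gives $\mathcal{N}e_{\theta_1-\theta_2}^1 \mathcal{S}e_{\theta_2}^\emptyset + \mathcal{N}e_{\theta_1-\theta_2}^\emptyset \mathcal{S}e_{\theta_2}^1 = (\theta_1 - \theta_2) + \theta_2 = \theta_1$. Hence the right-hand side is a symmetrel extension agreeing with $\mathcal{S}e^\p$ on $S_b^\star$ and taking value $\theta_1$ on $1$; by the uniqueness of point~1 it must equal $\mathcal{S}e_{\theta_1}^\p$.

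**The main obstacle.**
The delicate part is point~1, specifically the passage from "the extension satisfies the defining subset of symmetrel relations" to "it satisfies all of them." The definition pins down $\mathcal{S}e_\theta^{\seq{s}}$ using only the relations of the form $\mathcal{S}e^1 \mathcal{S}e^{\seq{s}'} = \sum_{\seq{\gamma}} \mathcal{S}e^{\seq{\gamma}}$, but symmetrelity demands $\mathcal{S}e^{\seq{\alpha}}\mathcal{S}e^{\seq{\beta}} = \sum_{\seq{\gamma}} \mathcal{S}e^{\seq{\gamma}}$ for \emph{all} pairs. Verifying these remaining identities requires a coherence argument: one must show the recursively-defined values are consistent, which amounts to the associativity and commutativity of the stuffle product interacting correctly with the recursion. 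I expect the cleanest route is in fact to prove point~2 as a \emph{tool} for point~1 rather than afterwards — that is, to define $\mathcal{S}e_\theta^\p := \mathcal{N}e^{\p}_{\theta} \times \mathcal{S}e_0^\p$ once a single canonical extension $\mathcal{S}e_0^\p$ (with $\mathcal{S}e_0^1 = 0$) is shown to exist and be symmetrel, thereby obtaining existence of all $\mathcal{S}e_\theta^\p$ automatically as products of symmetrel moulds, and deducing uniqueness separately from the recursive forcing. This reorganization sidesteps the all-relations verification for general $\theta$ by reducing it to the single base case $\theta = 0$.
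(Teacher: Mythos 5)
Your main line is the paper's own proof. For point~1 you peel off the leading ones exactly as the paper does: the stuffle relation for $\mathcal{S}e^{1}\,\mathcal{S}e^{1^{[k]}\cdot\,\seq{s}}$ expresses $(k+1)\,\mathcal{S}e^{1^{[k+1]}\cdot\,\seq{s}}$ in terms of values on sequences with fewer leading ones (note the unknown term enters with multiplicity $k+1$, not coefficient $1$, which your ``solve for the one unknown term'' glosses over but your induction scheme accommodates); this simultaneously forces uniqueness and serves as the recursive definition, and the \symmetrelity of the resulting mould is then checked by induction using associativity and commutativity of the stuffle --- precisely what the paper does, by induction on $k+l$, the numbers of ones beginning the two factors of the product. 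For point~2 your argument is also the paper's (both sides are \symmetrel extensions agreeing on the sequence $1$, hence equal by uniqueness), with the two auxiliary verifications --- that $\mathcal{N}e^{\p}_{\gamma}$ is \symmetrel, and that the mould product restricts correctly to $\mathcal{S}_b^\star$ because the convolution collapses when $s_1 \geq 2$ --- spelled out more explicitly than in the paper, which leaves them implicit.

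The one place where you go astray is the closing ``reorganization''. Defining $\mathcal{S}e_{\theta}^{\p} := \mathcal{N}e_{\theta}^{\p} \times \mathcal{S}e_{0}^{\p}$ does yield existence for every $\theta$ once $\mathcal{S}e_{0}^{\p}$ is known to exist and be \symmetrel, but it does not ``sidestep the all-relations verification'': the existence of that single canonical extension is exactly the statement you were trying to avoid proving, and its verification is the same double induction --- setting $\theta = 0$ merely deletes the term $\theta\,\mathcal{S}e^{1^{[k]}\cdot\,\seq{s}}$ from the recursion and changes nothing in the combinatorial matching of the stuffle expansions. So the heart of the proof, the inductive computation showing that the recursively defined values satisfy \emph{every} stuffle relation (which occupies essentially all of the paper's proof of point~1), cannot be bypassed; your plan is complete only once that computation, which you describe correctly but do not perform, is carried out, and the paper shows it costs nothing extra to run it for arbitrary $\theta$ directly.
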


\begin{Proof}
	$1$.	If such an extension $\mathcal{S}e^\p$ exists, it must satisfy the shift to the right of the ones begining an evaluation sequence. 
	In other words, the following identities must be valid for all $k \in \N$ and all sequences $\seq{s} \in \mathcal{S}_b^\star$:
	\begin{equation}	\label{algorithmic removal...}
		(k + 1) \mathcal{S}e^{1^{[k + 1]} \cdot \seq{s}}
		\ =\ 
		\displaystyle	{	\mathcal{S}e^{1} \mathcal{S}e^{1^{[k]} \cdot \seq{s}}
							\ -
							\sum	_{	\seq{u}
										\in
										sh\text{\textit{\textbf{\underline{e}}}} \, (1 \,;\, 1^{[k]} \cdot \seq{s}) - \{1^{[k + 1]} \cdot \seq{s}\}
									}
									\mathcal{S}e^{\seq{u}}
						}
		\ \ .
	\end{equation}
	By induction on the number of ones that begin such sequences, these identities implies the uniqueness of the extension $\mathcal{S}e^\p$ to 
	$\text{seq}(\N^*)$~.
	\\

	To prove the existence of such an extension of $\mathcal{S}e^\p$, we define $\mathcal{S}e_\theta^{1^{[k]} \cdot \seq{s}}$ recursively by:
	$$	(k + 1) \mathcal{S}e_{\theta}^{1^{[k + 1]} \cdot \seq{s}}
		\ =\ 
		\displaystyle	{	\theta \mathcal{S}e_{\theta}^{1^{[k]} \cdot \seq{s}}
							\ -
							\sum	_{	\seq{u}
										\in
										sh\text{\textit{\textbf{\underline{e}}}} \, (1 \,;\, 1^{[k]} \cdot \seq{s}) - \{1^{[k + 1]} \cdot \seq{s}\}
									}
									\mathcal{S}e_{\theta}^{\seq{u}}
						}
		\text{ , where } k \in \N \ .
	$$

	The only point to check is the \symmetrelity of $\mathcal{S}e^\p_\theta$~. This may be done by induction on $k + l$, where $k$
	and $l$ denote the number of ones beginning the first and second sequences in the product:\\
	$	\begin{array}{@{}ll}
			(k + 1) \mathcal{S}e_{\theta}^{1^{[k + 1]} \cdot \seq{s}^1} \mathcal{S}e_{\theta}^{1^{[l]} \cdot \seq{s}^2}
			\\
			&	\hspace{-3.5cm}
				\begin{array}{@{}ll}
					=&
					\displaystyle	{	\theta \mathcal{S}e_{\theta}^{1^{[k]} \cdot \seq{s}^1} \mathcal{S}e_{\theta}^{1^{[l]} \cdot \seq{s}^2}
										\ -
										\sum	_{	\seq{u}
													\in
													sh\text{\textit{\textbf{\underline{e}}}} \, (1 \,;\, 1^{[k]} \cdot \seq{s}^1) - \{1^{[k + 1]} \cdot \seq{s}^1\}
												}
												\mathcal{S}e_{\theta}^{\seq{u}} \mathcal{S}e_{\theta}^{1^{[l]} \cdot \seq{s}^2}
									}
					\vspace{0.1cm}	\\
					=&
					\displaystyle	{	\theta
										\sum	_{	\seq{u}
													\in
													sh\text{\textit{\textbf{\underline{e}}}} \, (1^{[k]} \cdot \seq{s}^1 , 1^{[l]} \cdot \seq{s}^2)
												}
												\mathcal{S}e_{\theta}^{\seq{u}}
										\ -
										\sum	_{	\seq{u}
													\in
													sh\text{\textit{\textbf{\underline{e}}}} \, (1 \,;\, 1^{[k]} \cdot \seq{s}^1) - \{1^{[k + 1]} \cdot \seq{s}^1\}
												}
												\ 
												\sum	_{	\seq{u'}
															\in
															sh\text{\textit{\textbf{\underline{e}}}} \, (\seq{u} ; 1^{[l]} \cdot \seq{s}^2)
														}
														\mathcal{S}e_{\theta}^{\seq{u'}}
									}
					\vspace{0.1cm}	\\
					=&
					\displaystyle	{	\sum	_{	\seq{u}
													\in
													sh\text{\textit{\textbf{\underline{e}}}} \, (1^{[k]} \cdot \seq{s}^1 ; 1^{[l]} \cdot \seq{s}^2)
												}
												\ 
												\sum	_{	\seq{u'}
															\in
															sh\text{\textit{\textbf{\underline{e}}}} \, (\seq{u} ; 1)
														}
														\mathcal{S}e_{\theta}^{\seq{u}'}
										\ -
										\sum	_{	\seq{u}
													\in
													sh\text{\textit{\textbf{\underline{e}}}} \, (1 \,;\, 1^{[k]} \cdot \seq{s}^1)
												}
												\ 
												\sum	_{	\seq{u'}
															\in
															sh\text{\textit{\textbf{\underline{e}}}} \, (\seq{u} ; 1^{[l]} \cdot \seq{s}^2)
														}
														\mathcal{S}e_{\theta}^{\seq{u'}}
									}
					\vspace{0.2cm}	\\
					&
					\displaystyle	{	\ +
										(k + 1)
										\sum	_{	\seq{u}
													\in
													sh\text{\textit{\textbf{\underline{e}}}} \, (1^{[k + 1]} \cdot \seq{s}^1 ; 1^{[l]} \cdot \seq{s}^2)
														}
														\mathcal{S}e_{\theta}^{\seq{u'}}
									}
					\vspace{0.1cm}	\\
					=&
					\displaystyle	{	\sum	_{	\seq{u}
													\in
													sh\text{\textit{\textbf{\underline{e}}}} \, (1^{[k]} \cdot \seq{s}^1 ; 1^{[l]} \cdot \seq{s}^2 ; 1)
												}
												\mathcal{S}e_{\theta}^{\seq{u}}
										\ -
										\sum	_{	\seq{u}
													\in
													sh\text{\textit{\textbf{\underline{e}}}} \, (1 \,;\, 1^{[k]} \cdot \seq{s}^1)
												}
												\ 
												\sum	_{	\seq{u'}
															\in
															sh\text{\textit{\textbf{\underline{e}}}} \, (\seq{u} ; 1^{[l]} \cdot \seq{s}^2)
														}
														\mathcal{S}e_{\theta}^{\seq{u'}}
									}
					\vspace{0.2cm}	\\
					&
					\displaystyle	{	\ +
										(k + 1)
										\sum	_{	\seq{u}
													\in
													sh\text{\textit{\textbf{\underline{e}}}} \, (1^{[k + 1]} \cdot \seq{s}^1 ; 1^{[l]} \cdot \seq{s}^2)
														}
														\mathcal{S}e_{\theta}^{\seq{u'}}
									}
					\vspace{0.1cm}	\\
					=&
					\displaystyle	{	\sum	_{	\seq{u}
													\in
													sh\text{\textit{\textbf{\underline{e}}}} \, (1^{[k]} \cdot \seq{s}^1 ; 1^{[l]} \cdot \seq{s}^2 ; 1)
												}
												\mathcal{S}e_{\theta}^{\seq{u}}
										\ -
										\sum	_{	\seq{u}
													\in
													sh\text{\textit{\textbf{\underline{e}}}} \, (1 \,;\, 1^{[k]} \cdot \seq{s}^1 ; 1^{[l]} \cdot \seq{s}^2)
												}
												\mathcal{S}e_{\theta}^{\seq{u'}}
									}
					\vspace{0.1cm}	\\
					&
					\displaystyle	{	+
										(k + 1)
										\sum	_{	\seq{u}
													\in
													sh\text{\textit{\textbf{\underline{e}}}} \, (1^{[k + 1]} \cdot \seq{s}^1 ; 1^{[l]} \cdot \seq{s}^2)
												}
												\mathcal{S}e_{\theta}^{\seq{u'}}
									}
					\vspace{0.1cm}	\\
					=&
					\displaystyle	{	(k + 1)
										\sum	_{	\seq{u}
													\in
													sh\text{\textit{\textbf{\underline{e}}}} \, (1^{[k + 1]} \cdot \seq{s}^1 ; 1^{[l]} \cdot \seq{s}^2)
												}
												\mathcal{S}e_{\theta}^{\seq{u'}}
									} \ ,
				\end{array}
		\end{array}
	$
	where we have used the recursive definition of $\mathcal{S}e_{\theta}^{\p}$ in the first and third equality, the inductive step in the second one and finally
	the associativity and commutativity of the stuffle product\footnotemark \ in the last three equalities.
	\footnotetext	{	Let us remind that the stuffle product is the product recursively defined on
						words constructed over the alphabet $\Omega$ having a semi-group structure. For 
						two words $P = p_1 \cdots p_r$ and $Q = q_1 \cdots q_s$ constructed over the alphabet
						$\Omega$, we have:
						$$	\left \{
									\begin{array}{@{}l@{}}
											P \star \varepsilon
											=
											\varepsilon \star P
											=
											P \ \ .
											\\
											P \star Q
											=
											p_1 \big( p_2 \cdots p_r \star Q \big)
											+
											q_1 \big( P \star q_2 \cdots q_s \big)
											+
											(p_1 \cdot q_1) \big ( p_2 \cdots p_r \star q_2 \cdots q_s \big )
											\ .
									\end{array}
							\right.
						$$
						See the appendix \ref{Elements de calcul moulien debut} about mould calculus.
					}
	\\

	This concludes the proof of the first point and allows us to consider $\mathcal{S}e^\p_\theta$ for all $\theta \in \C$~.
	\\

	$2$. It is clear that for all $\theta \in \C$, $\mathcal{N}e_\theta$ is a \symmetrel mould.
	Moreover, by construction, $\mathcal{S}e_{\theta_1}^\p$ and $\mathcal{S}e_{\theta_2}^\p$ are
	also \symmetrel. Consequently, the two moulds
	$\mathcal{N}e_{\theta_2 - \theta_1}^\p \times \mathcal{S}e_{\theta_1}^\p$
	and $\mathcal{S}e_{\theta_2}^\p$ are defined over $\text{seq}(\N^*)$, \symmetrel and their
	evaluation on the sequence $1$ are equal to $\theta_2$. According to the first point, we
	therefore have 
	$	\mathcal{S}e_{\theta_1}^\p
		=
		\mathcal{N}e_{\theta_2 - \theta_1}^\p \times \mathcal{S}e_{\theta_1}^\p
	$~, which ends the proof of the lemma.
	\qed
\end{Proof}

As a direct application, there exists a unique \symmetrel extension of $\mathcal{Z}e^\p$ to $\text{seq} (\N^*)$ such that $\mathcal{Z}e^1 = 0$~. From now on to the end of this section, $\mathcal{Z}e^\p$ will denote this extension, especially in \S \ref{FHmfafmf}.

\subsection	{Trifactorization of $\mathcal{T}e^\p$ and consequences}

The extension of multitangent functions to the divergent case is more complicated than the case exposed
in the previous section. Actually, even if we fix $\mathcal{T}e^1$, one
cannot apply the shift to the right of the ones beginning an evaluation sequence because the
ones beginning or ending the sequences will be sent respectively at the end or the beginning of the
sequences. To illustrate this, we have:

$$	\label{calcul_multitangente_divergente_la_plus_simple}
	\mathcal{T}e^{1,2} (z) =	\underbrace {\mathcal{T}e^1(z)}	_{	\text{known by}
																	\atop
																	\text{hypothesis}
																}
								\ \cdot
								\underbrace {\mathcal{T}e^2(z)}	_{	\text{convergent}	
																	\atop
																	{	\text{multitangent}
																		\atop
																		\text{function}
																}	}
								-
								\underbrace {\mathcal{T}e^{2,1}(z)}	_{	\text{problem}
																		\atop
																		= \text{ unknown}
																	}
								-
								\underbrace {\mathcal{T}e^3(z)}	_{	\text{convergent}	
																	\atop
																	{	\text{multitangent}
																		\atop
																		\text{function}
																}	}
								\ .
$$

The difficulty here comes from the joint management of the two sources of divergence created at $- \infty$ and $+ \infty$~. To overcome it, we will separate the divergence at $- \infty$ from that at $+ \infty$~. To this end, we will use a mould factorization in which each term has only one source of divergence. Let us remind that we have already used and proved such a factorization when we have proved the convergence rule for the multitangent functions (see p.~\pageref{definition_des_moules_symetrEls2})~, but we give here a separate statement because of its importance:

\begin{Lemma}
	\label{factorisation moulienne}
			Let us consider the \symmetrel moulds $\mathcal{H}e_+^\p$, $\mathcal{H}e_-^\p$ and $\mathcal{C}e^\p$, with values in holomorphic functions over
			$\C - \Z$, defined by:
			\vspace{-0.15cm}
			$$	\begin{array}{lll}
					\mathcal{H}e_+^{\seq{s}} (z)
					&=&
					\hspace{-0.7cm}
					\displaystyle	{	\sum	_{0 < n_r < \cdots < n_1 < + \infty}
												\frac	{1}	{(n_1 + z)^{s_1} \cdots (n_r + z)^{s_r}}
									}
					\text { and }
					\mathcal{H}e_+^{\emptyset} (z) = 1 \ , \  \seq{s} \in \mathcal{S}_b^\star \ .
					\\
					\vspace{-0.5cm}
					\\
					\mathcal{H}e_-^{\seq{s}} (z)
					&=&
					\hspace{-0.7cm}
					\displaystyle	{	\sum	_{- \infty < n_r < \cdots < n_1 < 0}
												\frac	{1}	{(n_1 + z)^{s_1} \cdots (n_r + z)^{s_r}}
									}
					\text { and }
					\mathcal{H}e_-^{\emptyset} (z) = 1 \ , \ \seq{s} \in \mathcal{S}_e^\star\ .
					\\
					\vspace{-0.5cm}
					\\
					\mathcal{C}e^{\seq{s}} (z)
					&=&
					\hspace{-0.5cm}
					\left \{
							\begin{array}{cll}
								1								&\text{ if }&	\seq{s} = \emptyset
								\\
								\displaystyle	{\frac{1}{z^s}}	&\text{ if }&	l(\seq{s}) = 1
								\\
								0								&\text{ if }&	l(\seq{s}) > 1
							\end{array}
					\right. 
					\ , \ \seq{s} \in \text{seq}(\N^*)\ .
				\end{array}
			$$
			Then	$$\mathcal{T}e^\p = \mathcal{H}e^\p_+ \times \mathcal{C}e^\p \times \mathcal{H}e_-^\p \ .$$
\end{Lemma}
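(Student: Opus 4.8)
The plan is to observe that this trifactorization is, in essence, the specialization of the general trifactorization \eqref{trifactorisation generale} — already established inside the proof of Lemma \ref{definition_des_moules_symetrEls2} — to the concrete sequence of functions $f_n(z) = \frac{1}{n+z}$ on $\mathcal{U} = \C - \Z$. Under this choice one has $Fe^\p = \mathcal{T}e^\p$, $Fe_+^\p = \mathcal{H}e_+^\p$, $Fe_-^\p = \mathcal{H}e_-^\p$, while $Ce^{\seq{s}}(z) = (f_0(z))^{s_1} = z^{-s_1}$ in length one and $0$ in length $\geq 2$, which is exactly $\mathcal{C}e^\p$. So the identity follows at once. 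Because of its importance, however, I would also spell out a direct, self-contained argument.

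First I would recall the definition of the mould product: for three moulds $A^\p$, $B^\p$, $C^\p$, the value $(A \times B \times C)^{\seq{s}}$ equals $\sum A^{\seq{s}^1} B^{\seq{s}^2} C^{\seq{s}^3}$, the sum running over all factorizations $\seq{s} = \seq{s}^1 \cdot \seq{s}^2 \cdot \seq{s}^3$ of the word $\seq{s}$ into three (possibly empty) consecutive subwords. Since $\mathcal{C}e^{\seq{s}^2}$ vanishes unless $\seq{s}^2$ is empty or of length one, only two families of terms survive on the right-hand side: those indexed by splittings $\seq{s} = \seq{s}^1 \cdot \seq{s}^3$, contributing $\mathcal{H}e_+^{\seq{s}^1}(z)\,\mathcal{H}e_-^{\seq{s}^3}(z)$, and those indexed by splittings $\seq{s} = \seq{s}^1 \cdot (s) \cdot \seq{s}^3$, contributing $\mathcal{H}e_+^{\seq{s}^1}(z)\, z^{-s}\, \mathcal{H}e_-^{\seq{s}^3}(z)$.

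The matching step is then to partition the summation simplex $\{-\infty < n_r < \cdots < n_1 < +\infty\}$ appearing in $\mathcal{T}e^{\seq{s}}(z)$ according to the sign of the indices. Since the $n_i$ are strictly decreasing in $i$ (with $n_1$ the largest), positivity is monotone in the subscript, so there is a unique threshold: a prefix $n_1 > \cdots > n_a > 0$ of positive indices, at most one index equal to $0$, and a block of indices $< 0$. A configuration with no zero index matches a splitting $\seq{s} = \seq{s}^1 \cdot \seq{s}^3$, the positive block contributing $\mathcal{H}e_+^{\seq{s}^1}(z)$ and the negative block contributing $\mathcal{H}e_-^{\seq{s}^3}(z)$ directly, since $\mathcal{H}e_-$ is defined with exactly the negative-index convention suited to $f_n(z) = 1/(n+z)$; a configuration with one zero index matches $\seq{s} = \seq{s}^1 \cdot (s) \cdot \seq{s}^3$, the middle $n=0$ term contributing $\frac{1}{(0+z)^{s}} = \mathcal{C}e^{(s)}(z)$. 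Summing over all thresholds reproduces exactly the two families above, which proves the factorization.

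The only point requiring care — rather than a genuine obstacle — is the bookkeeping of orientation: because the convention places the largest index $n_1$ on the left, associated with $s_1$, it is the prefix of $\seq{s}$ that feeds $\mathcal{H}e_+$ and the suffix that feeds $\mathcal{H}e_-$, not the reverse. I would also note that every nonempty prefix arising here begins with $s_1 \geq 2$ and every nonempty suffix ends with $s_r \geq 2$ (as $\seq{s} \in \mathcal{S}^\star_{b,e}$), so each factor lies in the domain where the corresponding Hurwitz mould is defined, and the rearrangement of the multiple sum is legitimate by the absolute convergence granted by Lemma \ref{definition_des_moules_symetrEls1}.
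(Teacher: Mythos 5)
Your proposal is correct and takes essentially the same route as the paper: the paper's own proof of this lemma is precisely the reference back to the general trifactorization \eqref{trifactorisation generale} established inside the proof of Lemma \ref{definition_des_moules_symetrEls2}, specialized to $f_n(z)=1/(n+z)$, which is your opening observation. Your direct argument --- partitioning the summation simplex $\{-\infty<n_r<\cdots<n_1<+\infty\}$ into a positive prefix, at most one zero index, and a negative suffix, with the prefix feeding $\mathcal{H}e_+^\p$ and the suffix feeding $\mathcal{H}e_-^\p$ --- is the same decomposition the paper carries out there in two steps (first $Fe^\p = Fe_{+0}^\p \times Fe_-^\p$, then isolating the index $n=0$ to get $Fe_{+0}^\p = Fe_+^\p \times Ce^\p$), and your checks on convergence and on the prefix/suffix convention are exactly the points the paper relies on.
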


The moulds $\mathcal{H}e_+^\p$ and $\mathcal{H}e_+^\p$ are called Hurwitz multizeta functions; $\mathcal{C}e^\p$ is going to play the role of a correction because it only appears for null summation indexes in the expression of $\mathcal{T}e^\p$~. Let us remark that it is clear from Lemma
\ref{definition_des_moules_symetrEls1} and their definition that these three moulds are \symmetrel, which justifies the letter \textbf{\textit{\underline{e}}} in their names.

First of all, we mention that this trifactorization acts as we wanted: it separates the divergence sources. Secondly, it is now clear that it is sufficient to extend (with the \symmetrelity property) the definition of the two moulds of Hurwitz multizeta functions to $\text{seq}(\N^*)$ in order to obtain an extension of $\mathcal{T}e^\p$ to $\text{seq}(\N^*)$ which is also \symmetrel~. For this purpose, Lemma \ref{extension lemma} can be applied: given $(\Phi_+ ; \Phi_-) \in \mathcal{H} (\C - \Z)^2$, the moulds $\mathcal{H}e_+^\p$ and $\mathcal{H}e_-^\p$ admit a unique extension to $\text{seq}(\N^*)$ such that $\mathcal{H}e_+^1 = \Phi_+$ and $\mathcal{H}e_-^1 = \Phi_-$~. 

We complete Figure \ref{figure deux de liens entre MZV et MTGF} with the following diagram, in which we define
$$	\mathcal{H}MZV_{CV,\pm} = \text{Vect}_{\mathcal{M}ZV_{CV}}	\left (
										\mathcal{H}e_+^{\seq{s}^1}
										\mathcal{H}e_-^{\seq{s}^2}
									\right )
									_{	\seq{s}^1 \in \mathcal{S}^\star_{b}
										\atop
										\seq{s}^2 \in \mathcal{S}^\star_{e}
									} \ .
$$
From the trifactorisation, we see that $\mathcal{M}TGF_{CV}$ can be embedded in $\mathcal{H}MZV_{CV,\pm}$, which will be indicated by a curly arrow in the diagram. Recall that an arrow indicates a link between two algebras, while an arrow in dotted lines indicates a hypothetical link.

\begin{figure}[h]	\label{Figure trois de liens entre MZV et MTGF}
	\centering
	$	\xymatrix	{		\mathcal{M}ZV_{CV}	\ar@{.>}@<4pt>[ddd]	^{\text{projection}}	&&&		\mathcal{H}MZF_{+,CV}	\ar[lll]	_{\text{evaluation at 0}}
																															\ar@{^(->}[ddd]
							\\
							\\
							\\
							\mathcal{M}TGF_{CV}	\ar[uuu]			^{reduction}
												\ar@{^(->}[rrr]		^{\text{trifactorization}}	&&&		\mathcal{H}MZF_{\pm, CV}
					}
	$
	\caption	{Links between multizeta values and multitangent functions}
\end{figure}
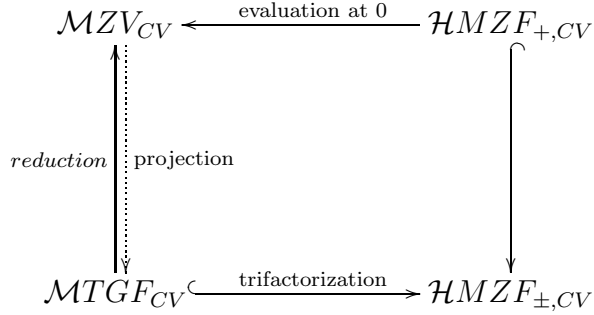

As a consequence, we obtain

\begin{Corollary} \label{corollary - definition extension}
		Let $\Phi_+$ and $\Phi_-$ be two holomorphic functions over $\C - \Z$~.	\\
		The mould $\mathcal{T}e^\p$ admits a \symmetrel extension to $\text{seq} (\N^*)$ such that:
		$$	\left \{
					\begin{array}{l}
						\forall z \in \C - \Z \ , \, \mathcal{T}e^1 (z) = \Phi_+ (z) + \displaystyle{\frac{1}{z}} + \Phi_-(z) \ .
						\vspace{0.1cm}	\\
						\forall \seq{s} \in \text{seq} (\N^*) \ ,  \,
						\mathcal{T}e^{\seq{s}} = (\mathcal{H}e^\p_+ \times \mathcal{C}e^\p \times \mathcal{H}e_-^\p)^{\seq{s}} \ .
					\end{array}
			\right.
		$$
\end{Corollary}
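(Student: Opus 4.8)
The plan is to build the extension one factor at a time, exploiting the trifactorization $\mathcal{T}e^\p = \mathcal{H}e_+^\p \times \mathcal{C}e^\p \times \mathcal{H}e_-^\p$ of Lemma~\ref{factorisation moulienne}. The correction mould $\mathcal{C}e^\p$ is already defined and \symmetrel on all of $\text{seq}(\N^*)$, so it needs no attention. Since a mould product of \symmetrel moulds is again \symmetrel (as used at the end of the proof of Lemma~\ref{definition_des_moules_symetrEls2}), it suffices to extend each of the two Hurwitz moulds $\mathcal{H}e_+^\p$ and $\mathcal{H}e_-^\p$ to a \symmetrel mould on $\text{seq}(\N^*)$ with prescribed length-one value $\Phi_+$, resp.\ $\Phi_-$, and then set $\mathcal{T}e^\p := \mathcal{H}e_+^\p \times \mathcal{C}e^\p \times \mathcal{H}e_-^\p$ with the extended factors.

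For $\mathcal{H}e_+^\p$ this is immediate: it is a \symmetrel mould (Lemma~\ref{definition_des_moules_symetrEls1}) defined exactly on $\mathcal{S}_b^\star = \{\seq{s}\,;\, s_1 \geq 2\}$, so Lemma~\ref{extension lemma}(1) applied with $\theta = \Phi_+$ furnishes a unique \symmetrel extension of $\mathcal{H}e_+^\p$ to $\text{seq}(\N^*)$ with $\mathcal{H}e_+^1 = \Phi_+$. The mould $\mathcal{H}e_-^\p$ is the delicate one, and this is where the main work lies: it is defined on $\mathcal{S}_e^\star = \{\seq{s}\,;\, s_r \geq 2\}$, so its divergence sits at the \emph{end} of a sequence, whereas Lemma~\ref{extension lemma} is phrased for divergence at the beginning. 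I would remove this asymmetry by reversal. One first checks that word reversal $\seq{s} \mapsto \overset{\leftarrow}{\seq{s}}$ carries a \symmetrel mould to a \symmetrel mould; this follows from the right-peeling recursive description of the stuffle product recorded just after its definition in \S\ref{reminder on symmetrel mould}, which shows that $sh\text{\textbf{\underline{\textit{e}}}}(\overset{\leftarrow}{\seq{\pmb{\alpha}}}\,;\,\overset{\leftarrow}{\seq{\pmb{\beta}}})$ is obtained from $sh\text{\textbf{\underline{\textit{e}}}}(\seq{\pmb{\alpha}}\,;\,\seq{\pmb{\beta}})$ by reversing every word. Hence $\mathcal{R}e^{\seq{s}} := \mathcal{H}e_-^{\overset{\leftarrow}{\seq{s}}}$ is \symmetrel and defined precisely on $\mathcal{S}_b^\star$; Lemma~\ref{extension lemma}(1) with $\theta = \Phi_-$ extends it, and $\mathcal{H}e_-^{\seq{s}} := \mathcal{R}e^{\overset{\leftarrow}{\seq{s}}}$ is then the desired \symmetrel extension with $\mathcal{H}e_-^1 = \Phi_-$.

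It remains to assemble and verify. With the three \symmetrel factors in hand, $\mathcal{T}e^\p := \mathcal{H}e_+^\p \times \mathcal{C}e^\p \times \mathcal{H}e_-^\p$ is \symmetrel on $\text{seq}(\N^*)$. Evaluating the mould product on the length-one sequence, where the only concatenation decompositions of $1$ place the single letter in exactly one of the three factors, yields $\mathcal{T}e^1 = \mathcal{H}e_+^1 + \mathcal{C}e^1 + \mathcal{H}e_-^1 = \Phi_+ + \frac{1}{z} + \Phi_-$, as required. Finally one confirms this is genuinely an extension, i.e.\ that it agrees with the convergent mould on $\mathcal{S}^\star_{b,e}$: for $\seq{s}$ with $s_1, s_r \geq 2$, every nonempty prefix appearing in the product has first letter $s_1 \geq 2$ and every nonempty suffix has last letter $s_r \geq 2$ while $\mathcal{C}e^\p$ annihilates middle blocks of length $\geq 2$, so only the original convergent values of $\mathcal{H}e_\pm^\p$ enter, and Lemma~\ref{factorisation moulienne} identifies the product with $\mathcal{T}e^{\seq{s}}$. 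The only genuinely non-routine step is the reversal argument for $\mathcal{H}e_-^\p$; everything else is a direct application of Lemmas~\ref{definition_des_moules_symetrEls1}, \ref{extension lemma} and~\ref{factorisation moulienne}.
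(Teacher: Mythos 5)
Your proof is correct and follows essentially the same route as the paper: extend $\mathcal{H}e_+^\p$ and $\mathcal{H}e_-^\p$ to \symmetrel moulds on $\text{seq}(\N^*)$ with prescribed length-one values via Lemma~\ref{extension lemma}, then define $\mathcal{T}e^\p$ as the trifactorization product of Lemma~\ref{factorisation moulienne}, which is \symmetrel because a mould product of \symmetrel moulds is \symmetrel. Your two added verifications --- the reversal argument making Lemma~\ref{extension lemma} applicable to $\mathcal{H}e_-^\p$, whose divergence sits at the end of sequences rather than the beginning, and the check that the product reproduces the convergent values on $\mathcal{S}_{b,e}^\star$ --- are details the paper leaves implicit, not a different approach.
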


\subsection	{Formal Hurwitz multizeta functions and formal multitangent functions}
\label{FHmfafmf}
In order to simplify the following proof, we will work in the ring of formal power series by introducing the notion of formal Hurwitz multizeta functions and formal multitangent functions. To distinguish whether we are working analytically or formally without specifying it, we use two different notations. The formal character will be denoted by a straight capital letter while the analytic character will be denoted by a cursive capital letter (as we have always done from the beginning) . 

\subsubsection{The mould $He_+^\p(X)$}
\label{defintionHe_+}
Since the Hurwitz multizeta functions $\mathcal{H}e_+^{\seq{s}}$ are regular near $0$, we have:
\begin{Lemma}
	The Taylor series of $\mathcal{H}e^{\seq{s}}$, for all $\seq{s} \in \mathcal{S}^\star_{b,e}$, is given by:
	$$	He_+^{\seq{s}} (X) = \displaystyle	{	\sum	_{k \geq 0}
														\sum	_{	k_1 , \cdots , k_r \geq 0
																	\atop
																	k_1 + \cdots + k_r = k
																}
																\displaystyle	{	\prod	_{i = 1}
																							^r
																							\binom{s_i + k_i - 1}{k_i}
																				}
																\mathcal{Z}e^{s_1 + k_1, \cdots , s_r + k_r}
																(-X)^k
											} \ . 
	$$
\end{Lemma}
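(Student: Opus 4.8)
The plan is to obtain the expansion by developing each elementary factor $(n_i + X)^{-s_i}$ as a power series in $X$ and then resumming over the summation simplex, the only genuine work being the justification of the interchange of the two (infinite) summations.

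First I would expand each factor. For a positive integer $n$ and $|X| < 1 \leq n$, the negative binomial series gives
$$	\frac{1}{(n + X)^{s}}
	=
	\frac{1}{n^{s}} \left( 1 + \frac{X}{n} \right)^{-s}
	=
	\sum_{j \geq 0} (-1)^{j} \binom{s + j - 1}{j} \frac{X^{j}}{n^{s + j}} \ ,
$$
which converges since $|X/n| < 1$. Taking the product over $i \in \crochet{1}{r}$ of these expansions, with an independent index $k_i$ for each factor, and using $(-1)^{k_1 + \cdots + k_r} X^{k_1 + \cdots + k_r} = (-X)^{k_1 + \cdots + k_r}$, I would obtain, for each fixed tuple on the simplex,
$$	\prod_{i = 1}^{r} \frac{1}{(n_i + X)^{s_i}}
	=
	\sum_{k_1 , \cdots , k_r \geq 0}
	\left( \prod_{i = 1}^{r} \binom{s_i + k_i - 1}{k_i} \right)
	\frac{(-X)^{k_1 + \cdots + k_r}}{n_1^{s_1 + k_1} \cdots n_r^{s_r + k_r}} \ .
$$

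Next I would sum this identity over $0 < n_r < \cdots < n_1 < + \infty$ and interchange the summation over the simplex with the summation over the Taylor indices $k_i$; this is the crux of the argument. To justify it I would estimate the series of moduli: resumming each geometric-type series in $k_i$ via $\sum_{j \geq 0} \binom{s_i + j - 1}{j} (|X|/n_i)^{j} = (1 - |X|/n_i)^{-s_i}$ shows that the total sum of absolute values equals $\sum_{0 < n_r < \cdots < n_1} \prod_{i = 1}^{r} (n_i - |X|)^{-s_i}$. Since $n_i - |X| \geq n_i (1 - |X|)$, this is bounded by $(1 - |X|)^{-||\seq{s}||}\, \mathcal{Z}e^{s_1, \cdots, s_r}$, which is finite because $\seq{s} \in \mathcal{S}^\star_{b,e}$ forces $s_1 \geq 2$. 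Hence the double family is absolutely summable for every $|X| < 1$ and the rearrangement is legitimate.

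Finally, after the interchange I would regroup the terms according to the total degree $k = k_1 + \cdots + k_r$. For fixed exponents $(k_1 ; \cdots ; k_r)$ the inner sum $\sum_{0 < n_r < \cdots < n_1} n_1^{-(s_1 + k_1)} \cdots n_r^{-(s_r + k_r)}$ is exactly $\mathcal{Z}e^{s_1 + k_1, \cdots, s_r + k_r}$, which converges since $s_1 + k_1 \geq s_1 \geq 2$. Collecting these contributions yields precisely the announced formula and identifies it as the Taylor series of $\mathcal{H}e_+^{\seq{s}}$ on the disc $|X| < 1$, the radius being dictated by the pole of $\mathcal{H}e_+^{\seq{s}}$ nearest to $0$, at $X = -1$. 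I expect the combinatorial bookkeeping to be routine; the delicate step is the absolute-convergence estimate that validates the interchange of summations.
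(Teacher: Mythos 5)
Your proof is correct. The paper itself states this lemma without proof --- it is presented as an immediate consequence of the regularity of $\mathcal{H}e_+^{\seq{s}}$ near $0$ --- and your argument (negative binomial expansion of each factor $(n_i + X)^{-s_i}$, the absolute-convergence estimate $\sum \prod_i (n_i - |X|)^{-s_i} \leq (1-|X|)^{-||\seq{s}||}\,\mathcal{Z}e^{\seq{s}} < +\infty$ justifying the interchange of summations, and regrouping by total degree $k$) is exactly the routine computation the paper leaves implicit.
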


We now define formal Hurwitz multizeta functions by their Taylor expansions near $0$ and $He_+^\emptyset (X) = 1$:
$$	He_+^{\seq{s}} (X) = \displaystyle	{	\sum	_{k_1 , \cdots , k_r \geq 0}
															\displaystyle	{	\prod	_{i = 1}
																						^r
																						\binom{s_i + k_i - 1}{k_i}
																			}
															\mathcal{Z}e^{s_1 + k_1, \cdots , s_r + k_r}
															(-X)^{k_1 + \cdots + k_r}
										} \ . 
$$

Thus, $He_+^\p (X)$ is a \symmetrel mould defined over $\mathcal{S}^\star_b$ , with values in $\C [\![ X ]\!]$~.
According to Lemma \ref{extension lemma}, for all $S \in \C [\![ X ]\!]$ , $He^\p$ has a unique \symmetrel extension to $\text{seq}(\N^*)$ such that
$He^1_+ (X) = S(X)$~. We have now to define in a suitable manner $He_+^1 (X)$~. Since we extended $\mathcal{Z}e^\p$ with $\mathcal{Z}e^1 = 0$, we can set:
\begin{equation} \label{defHe+1}
	He_+^1 (X) = \displaystyle	{	\sum	_{k = 1}
											^{+ \infty}
											\mathcal{Z}e^{k + 1} (-X)^k
								} \ .
\end{equation}

This definition is natural because we want the differentiation property to be satisfied by the extension of $\mathcal{H}e^\p_+$, as well as its formal analogue. Consequently, the analytic analogue of $He^1_+$ is
$$	\mathcal{H}e_+^1 (z)
	=
	\sum	_{n \geq 1}
			\left (
					\frac	{1}	{n + z}
					-
					\frac	{1}	{n}
			\right )
	\ .
$$

Let us remind that $\mathcal{Z}e^\p$ is the unique extension of the mould $\mathcal{Z}e^\p$ to $\text{seq}(\N^*)$ satisfying $\mathcal{Z}e^1 = 0$~. So
we obtain:

\begin{Property}	\label{ExtensionOfHe+}
	The unique \symmetrel extension of $He_+^\p(X)$ to $\text{seq}(\N^*)$ satisfying \eqref{defHe+1} is given by:
	$$	He_+^{\seq{s}} (X) = \displaystyle	{	\sum	_{k_1 , \cdots , k_r \geq 0}
														\displaystyle	{	\prod	_{i = 1}
																					^r
																					\binom{s_i + k_i - 1}{k_i}
																		}
														\mathcal{Z}e^{s_1 + k_1, \cdots , s_r + k_r}
														(-X)^{k_1 + \cdots + k_r}
											} \ .
	$$
\end{Property}

\begin{Proof}
	By uniqueness of the moulds satisfying these properties, it is sufficient to prove that the mould $\widetilde{He}_+^{\seq{s}} (X)$
	defined by the right hand-side satisfies:
	\begin{enumerate}
		\item $\widetilde{He}_+^{\seq{s}} (X)$ extends the definition of $He_+^{\seq{s}} (X)$ to $\text{seq} (\N^*)$~.
		\item $\widetilde{He}_+^1 (X) = He_+^1 (X)$~.
		\item $\widetilde{He}_+^\p (X)$ is a \symmetrel mould.
	\end{enumerate}

	The third point is the only one requiring some explanations. Let us denote by $M_k^\p$ the coefficient of $X^k$ of
	$\widetilde{He}_+^{\seq{s}} (X)$. In order to prove the \symmetrelity of $\widetilde{He}_+^{\seq{s}} (X)$, we will show that:
	$$	\forall (\seq{s}^1 ; \seq{s}^2) \in \big (\text{seq} (\N^*) \big )^2,  \forall p \in \N ,  
		\displaystyle	{	\sum	_{k = 0}
									^p
									M_k^{\seq{s}^1} M_{p - k}^{\seq{s}^2}
							=
							\sum	_{\seq{\gamma} \in sh\text{\textbf{\underline{\textit{e}}}} (\seq{s}^1 ; \seq{s}^2)}
									M_p^{\seq{\gamma}}
						} .
	$$

	For $(\seq{s}^1 ; \seq{s}^2) \in \big ( \text{seq} (\N^*) \big )^2$ and $p \in \N$, we have, if we define $r = l(\seq{s}^1)$ and $r' = l(\seq{s}^2)$:
	\\
	\noindent
	$	\begin{array}{@{}l@{}l@{}l}
			\displaystyle	{	\sum	_{k = 0}
										^p
										M_k^{\seq{s}^1} M_{p - k}^{\seq{s}^2}
							}
			& =  &
			\displaystyle	{	\sum	_{k_1 + \cdots + k_{r + r'} = k}
										\begin{array}[t]{l}
												\left (
														\displaystyle	{	\prod	_{i = 1}
																					^r
																					\binom{s_i^1 + k_i - 1}{k_i}
																		}
												\right )
												\left (
														\displaystyle	{	\prod	_{i = 1}
																					^{r'}
																					\binom{s_i^2 + k_{i + r} - 1}{k_{i + r}}
																		}
												\right )
												\vspace{0.1cm}	\\
												\times
												\mathcal{Z}e^{s_1^1 + k_1, \cdots , s_r^1 + k_r}
												\mathcal{Z}e^{s_1^2 + k_{r + 1}, \cdots , s_{r'}^2 + k_{r'}}
										\end{array}
							}
			\vspace{0.1cm}	\\
			& =  &
			\displaystyle	{	\sum	_{k_1 + \cdots + k_{r + r'} = k}
										\begin{array}[t]{l}
												\left (
														\displaystyle	{	\prod	_{i = 1}
																					^r
																					\binom{s_i^1 + k_i - 1}{k_i}
																		}
												\right )
												\left (
														\displaystyle	{	\prod	_{i = 1}
																					^{r'}
																					\binom{s_i^2 + k_{i + r} - 1}{k_{i + r}}
																		}
												\right )
												\vspace{0.1cm}	\\
												\times
												\displaystyle	{	\Big (
																			\sum	_{\seq{\pmb{\gamma}} \in sh\text{\textbf{\underline{\textit{e}}}} (\seq{s}^1 + \seq{k}^{\leq r} ; \seq{s}^2 + \seq{k}^{> r})}
																					\mathcal{Z}e^{\seq{\pmb{\gamma}}}
																	\Big )
																} \ ,
										\end{array}
							}
		\end{array}
	$
	where $\seq{s}^1 + \seq{k}^{\leq r} \text{ and } \seq{s}^2 + \seq{k}^{> r}$ respectively denote
	$(s_1^1 + k_1 ; \cdots ; s_r^1 + k_r)$ and 																		\linebreak
	$(s_1^2 + k_{r + 1} ; \cdots ; s_{r'}^2 + k_{r + r'})$~.
	Two cases are possible:
	\begin{enumerate}
		\item $\seq{\pmb{\gamma}}$ is a shuffle of $\seq{s}^1 + \seq{k}^{\leq r}$ and $\seq{s}^2 + \seq{k}^{> r}$:
		\item[] Then, we can reorder if necessary the $k_i$'s such that the resulting term is $M_p^{\widetilde{\seq{\pmb{\gamma}}}}$~, where
				$\widetilde{\seq{\pmb{\gamma}}}$ is deduced from $\seq{\pmb{\gamma}}$ by setting $k_i = 0$ for all $i$.
		\item $\seq{\pmb{\gamma}}$ contains one or more contractions of $\seq{s}^1 + \seq{k}^{\leq r}$ and $\seq{s}^2 + \seq{k}^{> r}$:
		\item[] We can separate indexes which do not act on contractions from the other ones. Let us denote these by $\seq{s}_i^1$ and $\seq{s}_j^2$.
				We then obtain some sums of binomial coefficients:
				$$	\displaystyle	{	\sum	_{k_i + k_j = K}
												\binom{s_i^1 + k_i - 1}{k_i}
												\binom{s_j^1 + k_j - 1}{k_j}
										=
										\binom{s_i^1 + s_j^2 + K - 1}{K}
									} \ ,
				$$ which is exactly the expected binomial coefficient.
		\item[] Thus, the term expected is again $M_p^{\widetilde{\seq{\pmb{\gamma}}}}$~, where $\widetilde{\seq{\pmb{\gamma}}}$ is deduced from 
					$\seq{\pmb{\gamma}}$ by cancelling all the $k_i$'s.
	\end{enumerate}
			
	Let us remark that $\seq{\widetilde{\pmb{\gamma}}}$ runs over the set $sh\text{\textbf{\underline{\textit{e}}}} (\seq{s}^1 ; \seq{s}^2)$ when
	$\seq{\pmb{\gamma}}$ runs over the set $sh\text{\textbf{\underline{\textit{e}}}} (\seq{s}^1 + \seq{k}^{\leq r} ; \seq{s}^2 + \seq{k}^{> r})$~.
	We deduce from this that:
	$$	\displaystyle	{	\sum	_{k = 0}
									^p
									M_k^{\seq{s}^1} M_{p - k}^{\seq{s}^2}
							=
							\sum	_{\seq{\pmb{\gamma}} \in sh\text{\textbf{\underline{\textit{e}}}} (\seq{s}^1 ; \seq{s}^2)}
									M_p^{\seq{\pmb{\gamma}}}
						} \ .
	$$
	Consequently, $\widetilde{He}_+^{\seq{s}} (X)$ is a \symmetrel mould and is equal to $He_+^\p (X)$~.
			
	\qed
\end{Proof}

\subsubsection{The mould $He_-^\p(X)$}

Property \ref{ExtensionOfHe+} can be adapted to $He_-^\p (X)$. Thus, we can extend this mould to $\text{seq}(\N^*)$ 
for all sequences $\seq{s} \in \text{seq} (\N^*)$ by:
$$	\begin{array}{@{}lll}
		He_-^{\seq{s}} (X)
		&=&
		(-1)^{||\seq{s}||} He_+^{\overset {\leftarrow} {\underline{s}}} (-X)
		\\
		&=&
		\displaystyle	{	\sum	_{k_1 , \cdots , k_r \geq 0}
									\displaystyle	{	\hspace{0.2cm}
														\prod	_{i = 1}
																^r
																\binom{s_i + k_i - 1}{k_i}
													}
									\mathcal{Z}e_-^{s_1 + k_1, \cdots , s_r + k_r}
									(-X)^{k_1 + \cdots + k_r}
						} \ ,
	\end{array}
$$
where $\mathcal{Z}e_-^\p$ is defined from $\mathcal{Z}e^\p$ by a pseudo-parity relation:\label{definition_du_moule_ze_moins}

$$	\begin{array}{lll}
		\mathcal{Z}e_-^{s_1, \cdots, s_r}
		&=&
		(-1)^{||\seq{s}||}
		\mathcal{Z}e^{s_r, \cdots, s_1}
		=
		\displaystyle	{	\sum	_{0 < n_1 < \cdots < n_r}
									\frac	{(-1)^{||\seq{s}||}}	{{n_1}^{s_1} \cdots {n_r}^{s_r}}
						}
		\\
		&=&
		\displaystyle	{	\sum	_{p_r < \cdots < p_1 < 0}
									\frac	{1}	{{p_r}^{s_1} \cdots {p_1}^{s_r}}
						} \ .
	\end{array}
$$

Implicitly, this relation forces:
$$		\mathcal{H}e_-^1 (z)
		=
		\displaystyle	{	\sum	_{n < 0}
									\left (
											\frac	{1}	{n + z}
											-
											\frac	{1}	{n}
									\right )
						}
		\ .
$$

\subsubsection{The mould $Te^\p (X)$} 

We have seen in Corollary \ref{corollary - definition extension}
that for $\Phi_+$ and $\Phi_-$ two holomorphic function over $\C - \Z$, there exists a \symmetrel extension of $\mathcal{T}e^\p$ to $\text{seq} (\N^*)$, 
defined by $\mathcal{T}e^\p = \mathcal{H}e_+^\p \times \mathcal{C}e^\p \times \mathcal{H}e_-^\p$, such that 
$\mathcal{T}e^1 (z) = \Phi_+(z) + \displaystyle	{	\frac	{1}	{z}	} + \Phi_- (z)$ for all $z \in \C - \Z$~.

With the definition of the formal Hurwitz multizeta functions, the formal analogue $Te^\p (X)$ should be defined by:
$$Te^\p (X) = He_+^\p (X) \times Ce^\p (X) \times He_-^\p (X)\ ,$$
where
$	Ce^{\seq{s}} (X)
	=
	\left \{
			\begin{array}{ll}
					1		&	\text{, if }l(\seq{s}) = 0 
					\\
					X^{-s}	&	\text{, if }l(\seq{s}) = 1  	\ ,
					\\
					0		&	\text{, if }l(\seq{s}) \geq 2
			\end{array}
	\right.
$
and is a \symmetrel mould defined on $\text{seq} (\N^*)$ and with values in $\C(\!(X)\!)$~.

\subsection	{Properties of the extension of the mould $\mathcal{T}e^\p$ to $\text{seq} (\N^*)$}

The convergent Hurwitz multizeta functions satisfies the differentiation and parity properties, as
the convergent multitangent functions. We want their extensions to $\text{seq} (\N^*)$ to satisfy
the same properties. These depend on the choice of the functions $\mathcal{H}e_+^1$ and
$\mathcal{H}e_-^1$~. From now on, we always define these functions by:
$$	\begin{array}{lll}
		\mathcal{H}e_+^1 (z)
		=
		\displaystyle	{	\sum	_{n > 0}
									\left (
											\frac	{1}	{n + z}
											-
											\frac	{1}	{n}
									\right ) 
						}
		&\hspace{0.3cm} , \hspace{0.3cm}&
		\mathcal{H}e_-^1 (z)
		=
		\displaystyle	{	\sum	_{n < 0}
									\left (
											\frac	{1}	{n + z}
											-
											\frac	{1}	{n}
									\right ) .
						}
	\end{array}
$$

\noindent
With these definitions, the corresponding \symmetrel extensions satisfy:

\begin{Lemma}
	For all sequences $\seq{s} \in \text{seq} (\N^*)$, we have:
	\begin{enumerate}
		\item $	\displaystyle	{	\frac	{\partial \mathcal{H}e_{\pm}^{\seq{s}}}	{\partial z}
									=
									- \sum	_{i = 1}
											^{l(\seq{s})}
											s_i \mathcal{H}e_{\pm}^{\seq{s} + e_i}
								}
			$ .
		\item $\mathcal{H}e_+^{\seq{s}}(-z) = (-1)^{||\seq{s}||} \mathcal{H}e_-^{\overset {\leftarrow} {\underline{s}}} (z)$ , where $z \in \C - \Z$~.
	\end{enumerate}
\end{Lemma}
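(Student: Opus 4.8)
The plan is to treat the two identities in turn, and to set things up so that the differentiation formula for $\mathcal{H}e_-^\p$ is extracted at the end from the differentiation formula for $\mathcal{H}e_+^\p$ together with the parity relation; this way the genuine work concerns only $\mathcal{H}e_+^\p$ and the parity statement.

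For the parity property I would argue by uniqueness of the \symmetrel extension (Lemma \ref{extension lemma}, part $1$). Both maps $\seq{s}\longmapsto \mathcal{H}e_+^{\seq{s}}(-z)$ and $\seq{s}\longmapsto (-1)^{||\seq{s}||}\mathcal{H}e_-^{\overset{\leftarrow}{\seq{s}}}(z)$ are \symmetrel moulds on $\text{seq}(\N^*)$: the first because precomposition with $z\mapsto -z$ preserves every pointwise stuffle identity, and the second because reversal $\seq{s}\mapsto\overset{\leftarrow}{\seq{s}}$ is an automorphism of the stuffle product (the stuffle being commutative and weight-preserving) while $(-1)^{||\cdot||}$ is a character for it. On $\mathcal{S}_b^\star$ the two moulds coincide by the convergent parity relation, obtained by the substitution $n\mapsto -n$ in the defining series exactly as in Property \ref{firstProperties}, point $3$. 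Finally their values at the one-letter sequence $1$ agree: with $\mathcal{H}e_+^1(z)=\sum_{n>0}\big(\tfrac1{n+z}-\tfrac1n\big)$ and $\mathcal{H}e_-^1(z)=\sum_{n<0}\big(\tfrac1{n+z}-\tfrac1n\big)$, the change $n\mapsto -n$ gives $\mathcal{H}e_+^1(-z)=-\mathcal{H}e_-^1(z)=(-1)^{||1||}\mathcal{H}e_-^{\overset{\leftarrow}{1}}(z)$. By uniqueness of the extension, the two moulds must then be equal on all of $\text{seq}(\N^*)$, which is the claimed parity relation.

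For the differentiation property of $\mathcal{H}e_+^\p$ I would pass to the formal side. By Property \ref{ExtensionOfHe+} the Taylor expansion at $0$ of $\mathcal{H}e_+^{\seq{s}}$ is $He_+^{\seq{s}}(X)=\sum_{k_1,\dots,k_r\geq 0}\prod_i\binom{s_i+k_i-1}{k_i}\,\mathcal{Z}e^{s_1+k_1,\dots,s_r+k_r}(-X)^{k_1+\cdots+k_r}$, the identification of the analytic and formal extensions being guaranteed by uniqueness (Lemma \ref{extension lemma}) and by the matching at the letter $1$ through \eqref{defHe+1}. Differentiating this series term by term (legitimate, $\partial_z$ corresponding to $\partial_X$) and comparing coefficients of $(-X)^{p}$, the identity $\partial_z\mathcal{H}e_+^{\seq{s}}=-\sum_i s_i\mathcal{H}e_+^{\seq{s}+e_i}$ reduces, after cancelling the common factor $\prod_i\binom{s_i+k_i-1}{k_i}$, to the elementary identity $\sum_{j} k_j=\sum_j k_j$ obtained from $s_j\binom{s_j+k_j-1}{k_j-1}=k_j\binom{s_j+k_j-1}{k_j}$, i.e. from $\binom{N}{k-1}/\binom{N}{k}=k/(N-k+1)$ with $N=s_j+k_j-1$. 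Since both sides of the differentiation formula are holomorphic on all of $\C-\Z$ and their Taylor series agree near $0$, they agree everywhere by the identity theorem.

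It then remains to deduce the differentiation formula for $\mathcal{H}e_-^\p$, which I would obtain by differentiating the parity relation. Writing $\partial_z\big[\mathcal{H}e_+^{\seq{s}}(-z)\big]=-(\partial_z\mathcal{H}e_+^{\seq{s}})(-z)$, substituting the $\mathcal{H}e_+$ formula on the right and the parity relation on the left, and re-indexing the resulting sum through the reversal $\seq{s}\mapsto\overset{\leftarrow}{\seq{s}}$ (which sends $\seq{s}+e_i$ to $\overset{\leftarrow}{\seq{s}}+e_{r+1-i}$, with $r=l(\seq{s})$, and preserves the weight) yields $\partial_z\mathcal{H}e_-^{\seq{t}}=-\sum_j t_j\mathcal{H}e_-^{\seq{t}+e_j}$. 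The main obstacle is essentially bookkeeping: rigorously justifying that Taylor expansion at $0$ intertwines the analytic and formal \symmetrel extensions, so that the formal identity on $He_+^\p(X)$ transfers to the analytic one, and keeping the reversal/index permutation consistent; the binomial computation and the convergent-case change of variables are routine.
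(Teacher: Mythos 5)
Your proposal is correct, and its computational core is the paper's own: the differentiation formula for $\mathcal{H}e_+^\p$ is obtained on the formal side by term-by-term differentiation of the series of Property~\ref{ExtensionOfHe+} together with the identity $k\binom{s+k-1}{k}=s\binom{s+k-1}{k-1}$, and is then transferred to the analytic side by regularity at $0$ and the identity theorem, exactly as in the paper. Where you genuinely diverge is in the treatment of point $2$ and of the $\mathcal{H}e_-$ half of point $1$. In the paper the parity relation costs nothing: the formal mould $He_-^{\seq{s}}(X)$ is \emph{defined} as $(-1)^{||\seq{s}||}He_+^{\overset{\leftarrow}{\seq{s}}}(-X)$, so point $2$ holds by construction (with a short remark reconciling this with the prescribed value $\mathcal{H}e_-^1$), and only the $He_+$ computation is written out, the $\mathcal{H}e_-$ case of point $1$ being left implicit. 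You instead treat $\mathcal{H}e_+^\p$ and $\mathcal{H}e_-^\p$ as two independently constructed \symmetrel extensions, each pinned down by its value at the sequence $(1)$, and derive parity from the uniqueness statement of Lemma~\ref{extension lemma}: both $\seq{s}\mapsto\mathcal{H}e_+^{\seq{s}}(-z)$ and $\seq{s}\mapsto(-1)^{||\seq{s}||}\mathcal{H}e_-^{\overset{\leftarrow}{\seq{s}}}(z)$ are \symmetrel moulds on $\text{seq}(\N^*)$ (this uses that sequence reversal is compatible with the stuffle — visible on the paper's second, right-handed recursion for it — and that the weight is additive across stuffle terms, so the sign factors out), they agree on $\mathcal{S}_b^\star$ by the convergent-case substitution $n\mapsto -n$, and they agree at $(1)$; uniqueness then forces equality everywhere, after which the $\mathcal{H}e_-$ differentiation formula follows by differentiating the parity relation and reindexing via $\overset{\leftarrow}{\seq{s}+e_i}=\overset{\leftarrow}{\seq{s}}+e_{r+1-i}$. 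The price of your route is the extra verification that reversal-plus-sign preserves \symmetrelity\!\!; what it buys is robustness (the statement no longer depends on which presentation of the extension of $\mathcal{H}e_-^\p$ one adopts) and an explicit, rather than implicit, derivation of the second half of point $1$. The only point to keep honest — and you flag it yourself — is the intertwining of the analytic and formal extensions by Taylor expansion at $0$, which again follows from the uniqueness of \symmetrel extensions once the values at $(1)$ are matched through \eqref{defHe+1}.
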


\begin{Proof}
	$1$. Let us begin by proving this for formal Hurwitz multizeta functions.
	\\
	\\
	We just have to prove the first point because the second one is the definition of $He_-^{\seq{s}}(X)$~. For positive integers $s$ and $k$, the key point
	of the following computation is:
	$$	k	\binom{s + k - 1}{k}
		=
		s	\binom{s + k - 1}{k - 1}
		\ .
	$$
	Thus, for all $\seq{s} \in \text{seq} (\N^*)$, if the derivation of $\C[\![X]\!]$ is denoted by $D$, $D (He_+^{\seq{s}}) (X)$ is successively equal to:
			
				$$	\begin{array}{l}
						\displaystyle	{	- \sum	_{k \geq 1}
													\sum	_{	k_1, \cdots ,  k_r \geq 0
																\atop
																k_1 + \cdots + k_r = k
															}
															\displaystyle	{	\sum	_{p = 1}
																						^r
																						k_p
																						\left (
																								\displaystyle	{	\prod	_{i = 1}
																															^r
																															\binom{s_i + k_i - 1}{k_i}
																												}
																						\right )
																						\mathcal{Z}e^{s_1 + k_1, \cdots , s_r + k_r}
																						(-X)^{k - 1}
																			}
										}
						\vspace{0.1cm}	\\
						=
						\displaystyle	{	- \sum	_{p = 1}
													^r
													\sum	_{k \geq 0}
															\displaystyle	{	\sum	_{	k_1, \cdots , \widehat{k_p} , \cdots , k_r \geq 0
																							\atop
																							{	k_p \geq 1
																								\atop
																								k_1 + \cdots k_r = k + 1
																							}
																						}
																						\begin{array}[t]{@{}l@{}}
																							s_p
																							\left (
																									\displaystyle	{	\prod	_{i \in \crochet{1}{r} - \{p\}}
																																\binom{s_i + k_i - 1}{k_i}
																													}
																							\right )
																							\displaystyle	{\binom{s_p + k_p - 1}{k_p - 1}}
																							\\
																							\times \mathcal{Z}e^{s_1 + k_1, \cdots , s_r + k_r}
																							(-X)^{k}
																						\end{array}
																			}
										}
						\\
					\end{array}
				$$
				$$	\begin{array}{l}
						=
						\displaystyle	{	- \sum	_{p = 1}
													^r
													\sum	_{k \geq 0}
															\displaystyle	{	\sum	_{	k_1, \cdots , k_r \geq 0
																							\atop
																							k_1 + \cdots + k_r = k
																						}
																						\begin{array}[t]{@{}l@{}}
																							s_p
																							\left (
																									\displaystyle	{	\prod	_{i \in \crochet{1}{r} - \{p\}}
																																\binom{s_i + k_i - 1}{k_i}
																													}
																							\right )
																							\displaystyle	{\binom{s_p + k_p}{k_p}}
																							\\
																							\times \mathcal{Z}e^{s_1 + k_1, \cdots , s_{p - 1} + k_{p - 1}, s_p + 1 + k_p, s_{p + 1} + k_{p + 1}, \cdots , s_r + k_r}
																							(-X)^k
																						\end{array}
																			}
										}
						\vspace{0.1cm}	\\
						=
						\displaystyle	{	- \sum	_{p = 1}
													^r
													He_+^{s_1, \cdots , s_{p - 1}, s_p + 1, s_{p + 1}, \cdots , s_r} (X)
										} \ .
					\end{array}
				$$
				\\
				\\
			$2$. From the formal case to the analytic one.
				\\
				\\
				It is well-known that $0 \leq \mathcal{Z}e^{\seq{s}} \leq 2$ (resp. $0 \leq \mathcal{Z}e_-^{\seq{s}} \leq 2$)
				for all sequences $\seq{s} \in \mathcal{S}_b^\star$ (resp. $\seq{s} \in \mathcal{S}_e^\star$).
				Thus, the formal power series  $He_+^{\seq{s}}(X)$ and $He_-^{\seq{s}}(X)$ are actually Taylor expansions. Consequently, the previous equalities 
				are valid in the analytical case, first on the disc centered in 0 and with radius $\displaystyle{\frac{1}{2}}$ and then on $\C - \Z$,
				according to the identity theorem for holomorphic functions.
				\qed	\\
\end{Proof}

These properties have immediate consequences on $\mathcal{T}e^\p$ extended to $\text{seq} (\N^*)$. These, as well as Corollary \ref{corollary - definition extension} and the definition of $\mathcal{H}e_+^1$ and $\mathcal{H}€_-^\p$ are summed up in the following theorem:

\begin{Theorem}
	There exists\label{Theo2} a \symmetrel extension of $\mathcal{T}e^\p$ to $\text{seq} (\N^*)$, valued in holomorphic functions over $\C - \Z$, such that:
	$$	\left \{
				\begin{array}{@{}l}
					\mathcal{T}e^\p = \mathcal{H}e_+^\p \times \mathcal{C}e^\p \times \mathcal{H}e_-^\p \ ,	\\
					\mathcal{T}e^1 (z) = \displaystyle	{	\frac	{\pi}	{\tan (\pi z)}	} \text{ , for all }z \in \C - \Z\  .
				\end{array}
		\right.
	$$
	Moreover, the following properties hold for all sequences $\seq{s} \in \text{seq} (\N^*)$:

	\begin{enumerate}
		\item $		\displaystyle	{	\frac	{\partial \mathcal{T}e^{\seq{s}}}	{\partial z}
										=
										- \sum	_{i = 1}
												^{l(\seq{s})}
												s_i \mathcal{T}e^{\seq{s} + \seq{e_i}}
									}
				$~, where $\seq{e_i} = (0, \cdots, 0, 1, 0, \cdots, 0)$, the $1$ being in the $i$-th position.
		\item $		\mathcal{T}e^{\seq{s}}(-z)
					=
					(-1)^{||\seq{s}||} \mathcal{T}e^{\overset {\leftarrow} {\underline{s}}} (z)
				$ , where $z \in \C - \Z$~.
	\end{enumerate}
\end{Theorem}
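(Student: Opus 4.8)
The plan is to obtain the statement almost entirely by assembling results already in place, since Corollary~\ref{corollary - definition extension} does the heavy lifting. First I would specialize that corollary to the two functions prescribed just above the theorem, namely $\Phi_+ = \mathcal{H}e_+^1$ and $\Phi_- = \mathcal{H}e_-^1$ with $\mathcal{H}e_+^1(z) = \sum_{n > 0}\left(\frac{1}{n+z} - \frac{1}{n}\right)$ and $\mathcal{H}e_-^1(z) = \sum_{n < 0}\left(\frac{1}{n+z} - \frac{1}{n}\right)$. This immediately produces a \symmetrel mould $\mathcal{T}e^\p$ on $\text{seq}(\N^*)$, valued in $\mathcal{H}(\C - \Z)$, satisfying the trifactorization $\mathcal{T}e^\p = \mathcal{H}e_+^\p \times \mathcal{C}e^\p \times \mathcal{H}e_-^\p$ and the formula $\mathcal{T}e^1(z) = \mathcal{H}e_+^1(z) + \frac{1}{z} + \mathcal{H}e_-^1(z)$; the \symmetrel character is inherited from the three factors via Lemma~\ref{factorisation moulienne} and the stability of \symmetrelity under the mould product. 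So only the explicit value of $\mathcal{T}e^1$ and the two functional properties remain to be verified.

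For the value of $\mathcal{T}e^1$, I would simply recognize the Mittag--Leffler expansion of the cotangent: regrouping the three summands gives $\mathcal{T}e^1(z) = \frac{1}{z} + \sum_{n \neq 0}\left(\frac{1}{n+z} - \frac{1}{n}\right) = \frac{\pi}{\tan(\pi z)}$, which is exactly the classical partial fraction decomposition recalled for $\varepsilon_1$ in the introduction.

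The differentiation property I would deduce from the corresponding property of the three factors, established in the preceding lemma for $\mathcal{H}e_{\pm}^\p$ and verified by direct inspection for $\mathcal{C}e^\p$ (indeed $\partial_z \mathcal{C}e^s = -s\,\mathcal{C}e^{s+1}$ when $l(\seq{s}) = 1$, while both sides vanish when $l(\seq{s}) \geq 2$). Since $\partial_z$ is a derivation on $\mathcal{H}(\C - \Z)$, it acts as a derivation on the mould product; expanding $\partial_z \mathcal{T}e^{\seq{s}}$ over the decompositions $\seq{s} = \seq{a}\cdot\seq{b}\cdot\seq{c}$ and applying the factorwise formula, each incremented index lands in exactly one of the three blocks, so that collecting the decompositions of $\seq{s} + \seq{e_i}$ reconstitutes $-\sum_{i} s_i\, \mathcal{T}e^{\seq{s}+\seq{e_i}}$.

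The parity property follows the same pattern. I would use the factorwise relations $\mathcal{H}e_+^{\seq{a}}(-z) = (-1)^{||\seq{a}||}\mathcal{H}e_-^{\overset{\leftarrow}{\seq{a}}}(z)$, its mirror image for $\mathcal{H}e_-^\p$ (obtained from the same lemma), and $\mathcal{C}e^{\seq{b}}(-z) = (-1)^{||\seq{b}||}\mathcal{C}e^{\overset{\leftarrow}{\seq{b}}}(z)$. Applying these to every factor in the trifactorization of $\mathcal{T}e^{\seq{s}}(-z)$, the signs combine to $(-1)^{||\seq{s}||}$, and reversing each block turns the decomposition $\seq{s} = \seq{a}\cdot\seq{b}\cdot\seq{c}$ into a decomposition $\overset{\leftarrow}{\seq{s}} = \overset{\leftarrow}{\seq{c}}\cdot\overset{\leftarrow}{\seq{b}}\cdot\overset{\leftarrow}{\seq{a}}$ whose three blocks carry precisely the moulds $\mathcal{H}e_+^\p$, $\mathcal{C}e^\p$, $\mathcal{H}e_-^\p$ in the correct order (the values being holomorphic functions, they commute, so only the block structure matters); summing yields $(-1)^{||\seq{s}||}\mathcal{T}e^{\overset{\leftarrow}{\seq{s}}}(z)$. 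The only genuinely delicate point in the whole argument is this combinatorial bookkeeping of how incrementation and reversal of a sequence distribute over the three-block decomposition, but it becomes routine once the factorwise statements are in hand.
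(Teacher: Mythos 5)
Your proposal is correct and follows essentially the same route as the paper: existence, the trifactorization and \symmetrelity come from Corollary~\ref{corollary - definition extension} specialized to $\Phi_\pm = \mathcal{H}e_\pm^1$, the value of $\mathcal{T}e^1$ is the classical cotangent expansion, and both the differentiation and parity properties are obtained exactly as in the paper by applying the factorwise formulas (the lemma for $\mathcal{H}e_\pm^\p$, direct inspection for $\mathcal{C}e^\p$) across the three-block decompositions $\seq{s} = \seq{s}^1\cdot\seq{s}^2\cdot\seq{s}^3$ and reindexing. No gaps to report.
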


\begin{Proof}
		From the definition of $\mathcal{H}e_+^1$ and $\mathcal{H}e_-^1$ , we only have to prove the differentiation properties as well as the parity property.
		According to the trifactorisation, for $\seq{s} \in \text{seq} (\N^*)$ and if we denote $\seq{e}_i = (0^{[i - 1]} ; 1 ; 0^{[l(\seq{s}) - i]})$, we have
		successively\footnotemark :
		\\
		\footnotetext	{	Let us remind that, for a sequence $\seq{\pmb{\alpha}}$ and two non-negative integers $i$ and $j$ such that $i \leq j \leq r$, the
							sequences $\seq{\pmb{\alpha}}^{\leq i}$ , $\seq{\pmb{\alpha}}^{i < \cdot \leq j}$ and $\seq{\pmb{\alpha}}^{> i}$ are defined by :
							$$	\begin{array}{lllllllll}
									\seq{\pmb{\alpha}}^{\leq i}
									=
									(\alpha_1 ; \cdots ; \alpha_i)
									&\hspace{0.1cm}&,&\hspace{0.1cm}&
									\seq{\pmb{\alpha}}^{i < \cdot \leq j}
									=
									(\alpha_{i + 1} ; \cdots ; \alpha_j)
									&\hspace{0.1cm}&,&\hspace{0.1cm}&
									\seq{\pmb{\alpha}}^{> i}
									=
									(\alpha_{i + 1} ; \cdots ; \alpha_r) \ .
								\end{array}
							$$
						}
		$1$. $	\begin{array}[t]{@{}ll@{}l}
						\displaystyle	{	\frac	{\partial \mathcal{T}e ^{\seq{s}}}	{\partial z}	}
						&=\ &
						\displaystyle	{	\sum	_{\seq{s}^1 \cdot \seq{s}^2 \cdot \seq{s}^3 = \seq{s}}
													\left (
															\displaystyle	{	\frac	{\partial \mathcal{H}e_+ ^{\seq{s}^1}}	{\partial z}
																				\mathcal{C}e^{\seq{s}^2}
																				\mathcal{H}e_- ^{\seq{s}^3}
																				 +  
																				\mathcal{H}e_+ ^{\seq{s}^1}
																				\frac	{\partial \mathcal{C}e^{\seq{s}^2}}	{\partial z}
																				\mathcal{H}e_- ^{\seq{s}^3}
																				 +  
																				\mathcal{H}e_+ ^{\seq{s}^1}
																				\mathcal{C}e^{\seq{s}^2}
																				\frac	{\partial \mathcal{H}e_- ^{\seq{s}^3}}	{\partial z}
																			}
													\right )
										}
						\vspace{0.1cm}	\\
						&=\ &
						\displaystyle	{	- \sum	_{i = 1}
													^{l(\seq{s})}
													\sum	_{	\seq{s}^1 \cdot \seq{s}^2 \cdot \seq{s}^3 = \seq{s}
																\atop
																l(\seq{s}^2) = 1
															}
															s_i
															\mathcal{H}e_+ ^{\seq{s}^1 + \seq{e}_i^{\leq l(\seq{s}^1)}}
															\mathcal{C}e^{\seq{s}^2 + \seq{e}_i^{l(\seq{s}^1) < \cdot \leq l(\seq{s}^1) + l(\seq{s}^2)}}
															\mathcal{H}e_- ^{\seq{s}^3 + \seq{e}_i^{> l(\seq{s}^1) + l(\seq{s}^2)}}
															\ ,
										}
						\\
						&=\ &
						\displaystyle	{	- \sum	_{i = 1}
													^{l(\seq{s})}
													\Bigg (
															\displaystyle	{	s_i
																				\sum	_{	\seq{s}^1 \cdot \seq{s}^2 \cdot \seq{s}^3 = \seq{s} + \seq{e}_i
																							\atop
																							l(\seq{s}^2) = 1
																						}
																						\mathcal{H}e_+ ^{\seq{s}^1}
																						\mathcal{C}e^{\seq{s}^2}
																						\mathcal{H}e_- ^{\seq{s}^3}
																			}
													\Bigg )
										}
						\\
						&=\ &
						\displaystyle	{	- \sum	_{i = 1}
													^{l(\seq{s})}
													s_i
													\mathcal{T}e^{\seq{s} + \seq{e}_i}
										} \ .
				\end{array}
			$
			\\
			\\
			$2$. 
			$	\begin{array}[t]{@{}lll}
					\mathcal{T}e^{\seq{s}} (-z)
					&=&
					\displaystyle	{	\sum	_{\seq{s}^1 \cdot \seq{s}^2 \cdot \seq{s}^3 = \seq{s}}
												\mathcal{H}e_+^{\seq{s}^1} (-z) \mathcal{C}e^{\seq{s}^2} (-z) \mathcal{H}e_-^{\seq{s}^3} (-z)
									}
					\\
					&=&
					\displaystyle	{	(-1)^{||\seq{s}||}
										\sum	_{\seq{s}^1 \seq{s}^2 \seq{s}^3 = \seq{s}}
												\mathcal{H}e_+^{\overset {\leftarrow} {\underline{s}^3}} (z)
												\mathcal{C}e^{\overset {\leftarrow} {\underline{s}^2}} (z)
												\mathcal{H}e_-^{\overset {\leftarrow} {\underline{s}^1}} (z)
									}
					\\
					&=&
					(-1)^{||\seq{s}||}
					\mathcal{T}e^{\overset {\leftarrow} {\underline{s}}} (z) \ .
				\end{array}
			$
			
			\qed
\end{Proof}

\subsection	{Reduction into monotangent functions}

The only property of divergent multitangent which remains to be proved is the reduction into monotangent
functions. From now on and until the end of this section, our aim is to find and prove such a property
for convergent and divergent multitangent functions. For this, we will adapt the proof given in the
convergent case. To simplify the computation, we will apply the same technique, that is to say a partial
fraction expansion, but for the generating series $\mathcal{T}ig^\p$~. Then, we will see that the
reduction into monotangent functions can not have exactly the same expression as in the convergent case.

\subsubsection	{Preliminary reminder}

In this section, we shall use extensively generating functions of a \symmetrel mould $Me^\p$ over $\text{seq} (\N^*)$ with values in a commutative algebra $\mathbb{A}$. Such a generating function is a formal mould (see Appendix \ref{Formal Moulds}), will always be denoted by $Mig^\p$, and is defined by:
$$	\left \{
			\begin{array}{l}
					Mig^\emptyset = 1 \ .	\\
					Mig^{v_1, \cdots, v_r}
					= \displaystyle	{	\sum	_{s_1, \cdots, s_r \geq 1}
												Me^{s_1, \cdots, s_r} {v_1}^{s_1 - 1} \cdots {v_r}^{s_r - 1}
									}
						\in	\mathbb{A} [ \! [ v_1 ; \cdots ; v_r ] \! ]\ .
			\end{array}
	\right.
$$

Such a mould turns out to be automatically \symmetril, meaning that an expression close to the \symmetrelity one holds:
$$	Mig^{\seq{v}} Mig^{\seq{w}}
	=
	\displaystyle	{	\sum	_{\seq{x} \in sh\text{\textbf{\underline{\textit{i}}}} (\seq{v}  ;  \seq{w})}
								Mig ^{\seq{x}}
					} \ .
$$

The multiset $sh\text{\textbf{\underline{\textit{i}}}} (\seq{v} ; \seq{w})$ is also a quasi-shuffle product as defined in \cite{Hoffmann}, as is the stuffle. If $\seq{v}$ and $\seq{w}$ are sequences over an alphabet of indeterminates, this set is defined exactly in the same way as $sh\text{\textbf{\underline{\textit{e}}}} (\seq{v} ; \seq{w})$, but here, the contraction of the quasi-shuffle product is an abstract contraction defined over $(\N^*)^2$. The evaluation of a mould $Mig^\p$ on a sequence which has such a contraction is then done by induction and given by the formula:
$$	Mig^{\seq{v} \cdot (x \circledast y) \cdot \seq{w}}
	=
	\displaystyle	{	\frac	{Mig^{\seq{v} \cdot x \cdot \seq{w}} - Mig^{\seq{v} \cdot y \cdot \seq{w}}}
								{x - y}
					}
	\ .
$$

For examples of relations satisfied by a \symmetril mould, see \S \ref{symmetrelityAppendix} of the appendix \ref{Elements de calcul moulien debut}.

\subsubsection {A first expression of $Tig^\p (X)$}

The moulds $\mathcal{Z}e^\p$, $Te^\p (X)$, $He_+^\p (X)$, $He_-^\p (X)$ and $Ce^\p (X)$ are symmetr\textbf{\textit {\underline {e}}}l. We will consider their generating functions, respectively denoted by $\mathcal{Z}ig^\p$,\label{definition zig} $Tig^\p (X)$, $Hig_+^\p (X)$, $Hig_-^\p (X)$ and $Cig^\p (X)$. Let us remark that these moulds are valued in $\C[\![X]\!]$ or $\C(\!(X)\!)$.

We can begin with the computation of the generating functions of $He^\p_+ (X)$:

\begin{Lemma}
	The generating function of the mould $He_+^\p (X)$ , denoted by $Hig_+^\p (X)$ with values in
	$\C[\![X]\!][\![(Y_r)_{r \in \N^*}]\!] \simeq \C[\![X ; Y_1 ; Y_2 ; \cdots]\!]$, is :
	\begin{center}
		$	Hig_+^{Y_1, \cdots , Y_r} (X) = \mathcal{Z}ig^{Y_1 - X, \cdots , Y_r - X} \ .	$
	\end{center}
\end{Lemma}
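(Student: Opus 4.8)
The plan is to unwind the definitions of both sides and check that they agree coefficient by coefficient. Recall that the generating function $Hig_+^\p(X)$ is defined, just as the generic $Mig^\p$ above, by
$$	Hig_+^{Y_1, \cdots, Y_r} (X)
	=
	\displaystyle	{	\sum	_{s_1, \cdots, s_r \geq 1}
								He_+^{s_1, \cdots, s_r} (X) \,
								{Y_1}^{s_1 - 1} \cdots {Y_r}^{s_r - 1}
					} \ ,
$$
while $\mathcal{Z}ig^\p$ is the generating function of the (extended) mould $\mathcal{Z}e^\p$, so that $\mathcal{Z}ig^{W_1, \cdots, W_r} = \sum_{t_1, \cdots, t_r \geq 1} \mathcal{Z}e^{t_1, \cdots, t_r} {W_1}^{t_1 - 1} \cdots {W_r}^{t_r - 1}$. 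So I would substitute into $Hig_+$ the explicit Taylor expansion of $He_+^{\seq{s}}(X)$ given by Property \ref{ExtensionOfHe+}, namely $He_+^{s_1, \cdots, s_r}(X) = \sum_{k_1, \cdots, k_r \geq 0} \prod_{i=1}^r \binom{s_i + k_i - 1}{k_i} \mathcal{Z}e^{s_1 + k_1, \cdots, s_r + k_r} (-X)^{k_1 + \cdots + k_r}$, and rearrange the resulting multiple sum.

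The heart of the argument is then a one-variable identity. After substitution, the coefficient machinery factorizes across the $r$ indices, so it suffices to handle a single factor: I would fix an exponent $t = s + k$ and collect, for each $i$, the contribution to the coefficient of $\mathcal{Z}e^{\cdots, t, \cdots}$ in $Hig_+$. The claim $Hig_+^{Y_1, \cdots, Y_r}(X) = \mathcal{Z}ig^{Y_1 - X, \cdots, Y_r - X}$ amounts, factor by factor, to the formal identity
$$	\displaystyle	{	\sum	_{t \geq 1}
								(Y - X)^{t - 1}
						}
	=
	\displaystyle	{	\sum	_{s \geq 1}
								\sum	_{k \geq 0}
										\binom{s + k - 1}{k} (-X)^k Y^{s - 1}
						} \ ,
$$
where one reads off the coefficient of $\mathcal{Z}e^t$ with $t = s + k$ on the right. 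First I would expand $(Y - X)^{t-1}$ by the binomial theorem and match powers; equivalently, and more cleanly, I would recognize the right-hand side as the Cauchy product $\left( \sum_{s \geq 1} Y^{s-1} \right)\left( \sum_{k \geq 0} \binom{-s}{k}... \right)$ controlled by the negative binomial series $\sum_{k \geq 0} \binom{s + k - 1}{k} X^k = (1 - X)^{-s}$, so that the inner sum over $k$ rebuilds $(Y - X)$ shifted geometric series. The bookkeeping is entirely formal in $\C[\![X; Y_1; \cdots; Y_r]\!]$, so no convergence issues arise.

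The main obstacle is purely combinatorial rather than conceptual: one must verify that the reindexing $t_i = s_i + k_i$ is a bijection between the summation set $\{(s_i, k_i) : s_i \geq 1, k_i \geq 0\}$ and $\{t_i \geq 1\}$ together with a choice of splitting, and that the binomial coefficients reassemble correctly into the single power $(Y_i - X)^{t_i - 1}$. I expect the cleanest route is to check the identity factorwise using the generating-series identity $(1-Xu)^{-s}$ for the negative binomial expansion and then multiply over $i \in \crochet{1}{r}$, since the products $\prod_i \binom{s_i + k_i - 1}{k_i}$ and the monomials $\prod_i Y_i^{s_i - 1} (-X)^{k_i}$ both split as products. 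Once the one-variable identity is in hand, the $r$-variable statement follows immediately by taking the product, and comparing with the definition of $\mathcal{Z}ig^\p$ evaluated at the shifted arguments $Y_i - X$ completes the proof. I would also note at the end that the extension convention $\mathcal{Z}e^1 = 0$ used to define $He_+^1(X)$ via \eqref{defHe+1} is exactly what makes the identity valid even for sequences containing ones, so no separate divergent case needs to be treated.
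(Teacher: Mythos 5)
Your proof is correct, but it runs the computation in the opposite direction from the paper's, and the mechanics differ. The paper starts from $\mathcal{Z}ig^{Y_1 - X, \cdots, Y_r - X}$ and applies the formal Taylor formula in the indeterminates $Y_i$: writing $S$ for the constant-term map and $D_{Y_i}$ for the derivations, it shows that $\frac{1}{k_1! \cdots k_r!} S\left(D_{Y_1}^{k_1} \circ \cdots \circ D_{Y_r}^{k_r}(\mathcal{Z}ig^{Y_1-X, \cdots, Y_r-X})\right)$ equals precisely the expansion of $He_+^{k_1+1, \cdots, k_r+1}(X)$ given in Property \ref{ExtensionOfHe+} (there the binomial coefficients $\binom{p_i+k_i}{k_i}$ arise from differentiating the powers $(Y_i - X)^{t_i-1}$), and then sums the Taylor series to recover $Hig_+^\p(X)$. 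You start instead from the definition of $Hig_+^\p(X)$, substitute that same expansion of $He_+^{\seq{s}}(X)$, and regroup the multiple sum along $t_i = s_i + k_i$, the binomial theorem reassembling each finite homogeneous block $\sum_{k_i=0}^{t_i-1}\binom{t_i-1}{k_i}(-X)^{k_i}Y_i^{t_i-1-k_i}$ into $(Y_i - X)^{t_i-1}$; this buys an argument with no derivations and no Taylor formula, at the cost of justifying the rearrangement, which is harmless here since for a fixed monomial $X^a Y_1^{b_1} \cdots Y_r^{b_r}$ only the finitely many terms with $s_i = b_i + 1$ and $k_1 + \cdots + k_r = a$ contribute. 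Two cautions on presentation: phrase the regrouping as a rearrangement of a formally summable family rather than as matching ``coefficients of $\mathcal{Z}e^{\seq{t}}$'', since the multizeta values satisfy many linear relations and cannot be treated as independent symbols; and the negative-binomial/Cauchy-product shortcut does not literally factorize once the weights $\mathcal{Z}e^{s+k}$ (which couple $s$ and $k$) are attached, so the binomial regrouping you also describe is the step to keep. Your closing remark is right: the convention $\mathcal{Z}e^1 = 0$ is exactly what makes Property \ref{ExtensionOfHe+} valid on all of $\text{seq}(\N^*)$, so no separate divergent case is needed.
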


Such a result has to be expected, because Hurwitz multizeta functions $\mathcal{H}e_+^\p (z)$ are nothing else than translations of multizeta values. Consequently, this should have a translation readable on the generating function.

\begin{Proof}
			Let $r \in \N^*$.
			Let us denote $D_{Y_i}$ the derivation with respect to $Y_i$ and $S(F)$ the constant term of $F \in \mathbb {A}[\![Y_1 ; \cdots ; Y_r]\!]$~.
			\\
			For all $(k_1 ; \cdots ; k_r) \in (\N^*)^r$~, we have:
			\\
			\\
			$	\displaystyle	{	\frac	{1}	{k_1! \cdots k_r!}
									S	\left (
												D_{Y_1}^{k_1}\circ \cdots \circ D_{Y_r}^{k_r}
												(\mathcal{Z}ig^{Y_1 - X, \cdots , Y_r - X})
										\right )
								}
			$
			\vspace{0.1cm}	\\
			$	\begin{array}{@{}ll}
					\hspace{1cm}
					=&
					\displaystyle	{	S	\left (
													\sum	_{p_1, \cdots ,p_r \geq 0}
															\mathcal{Z}e^{p_1 + k_1 + 1, \cdots , p_r + k_r + 1}
															\displaystyle	{	\left (
																						\prod	_{i = 1}
																								^r
																								\binom{p_i + k_i}{k_i}
																								(Y_i - X)^{p_i}
																				\right )
																			}
											\right )
									}
					\vspace{0.1cm}	\\
					\hspace{1cm}
					=&
					\displaystyle	{	\sum	_{p_1, \cdots ,p_r \geq 0}
												\mathcal{Z}e^{p_1 + k_1 + 1, \cdots , p_r + k_r + 1}
												\binom{p_1 + k_1}{k_1}
												\cdots
												\binom{p_r + k_r}{k_r}
												(- X)^{p_1 + \cdots + p_r}
									}
					\vspace{0.1cm}	\\
					\hspace{1cm}
					=&
					\displaystyle	{	\sum	_{p \geq 0}
												  
												\sum	_{	p_1 + \cdots + p_r = p	}
														\mathcal{Z}e^{p_1 + k_1 + 1, \cdots , p_r + k_r + 1}
														\binom{p_1 + k_1}{k_1}
														\cdots
														\binom{p_r + k_r}{k_r}
														(- X)^p
									}
					\vspace{0.1cm}	\\
					\hspace{1cm}
					=&
					He_+^{k_1 + 1, \cdots, k_r + 1} (X) \ .
				\end{array}
			$
			\\
			\\

			Thus, the formal Taylor formula for formal power series in several indeterminates gives:
			$	\begin{array}[t]{@{}lll}
					\mathcal{Z}ig^{Y_1 - X, \cdots , Y_r - X}
					&=&
					\displaystyle	{	\sum	_{k_1, \cdots , k_r \geq 0}
												He_+^{k_1 + 1, \cdots , k_r + 1} (X) Y_1^{k_1} \cdots Y_r^{k_r}
									}
					\\
					&=&
					Hig_+^{Y_1, \cdots , Y_r} (X) \ .
				\end{array}
			$\\
			\qed	\\
\end{Proof}

Moreover, according to the parity property, the generating function $\mathcal{Z}ig_- ^{Y_1, \cdots ,  Y_r}$ of $\mathcal{Z}e_-^{\p}$ is defined by:
$$	\mathcal{Z}ig_- ^{Y_1, \cdots ,  Y_r} = (-1)^r \mathcal{Z}ig^{- Y_r, \cdots ,  - Y_1} \ .	$$

The same computation as in the previous proof gives:
\begin{Lemma}
	The generating function of the mould $He_-^\p (X)$ , denoted by $Hig_-^\p (X)$ with values in
	$\C[\![X]\!][\![(Y_r)_{r \in \N^*}]\!] \simeq \C[\![X ; Y_1 ; Y_2 ; \cdots]\!]$, is:
	\begin{center}
		$	Hig_-^{Y_1, \cdots , Y_r} (X) = \mathcal{Z}ig_-^{Y_1 - X, \cdots , Y_r - X} .	$
	\end{center}
\end{Lemma}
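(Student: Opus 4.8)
The plan is to deduce this minus-version from the plus-version lemma just proved, rather than redo the Taylor-coefficient extraction from scratch. Two ingredients are already at hand: the parity relation built into the definition of $He_-^\p(X)$, namely $He_-^{\seq{s}}(X) = (-1)^{||\seq{s}||} He_+^{\overset{\leftarrow}{\seq{s}}}(-X)$, and the definition $\mathcal{Z}ig_-^{Y_1, \cdots, Y_r} = (-1)^r \mathcal{Z}ig^{-Y_r, \cdots, -Y_1}$ of the generating function of $\mathcal{Z}e_-^\p$. First I would unwind the generating function from its definition, $Hig_-^{Y_1, \cdots, Y_r}(X) = \sum_{s_1, \cdots, s_r \geq 1} He_-^{s_1, \cdots, s_r}(X)\, Y_1^{s_1 - 1} \cdots Y_r^{s_r - 1}$, and substitute the parity relation for $He_-$, so that each coefficient is expressed through $He_+^{s_r, \cdots, s_1}(-X)$.

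Next I would reverse the summation index by setting $(t_1, \cdots, t_r) = (s_r, \cdots, s_1)$. This is a bijection of $(\N^*)^r$ that leaves the weight, and hence the overall sign, unchanged and replaces $He_+^{s_r, \cdots, s_1}(-X)$ by $He_+^{t_1, \cdots, t_r}(-X)$, while the monomial $\prod_i Y_i^{s_i - 1}$ becomes $\prod_j Y_{r+1-j}^{t_j - 1}$. The delicate point is the sign bookkeeping: using the elementary identity $(-1)^{t_j} Y_{r+1-j}^{t_j-1} = -(-Y_{r+1-j})^{t_j-1}$, the global factor $(-1)^{t_1 + \cdots + t_r}$ is absorbed into the variables, producing a $(-1)^r$ in front and $\prod_j (-Y_{r+1-j})^{t_j-1}$ inside. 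Recognizing the resulting sum as the plus-generating function evaluated at the reversed, negated variables, I obtain
$$Hig_-^{Y_1, \cdots, Y_r}(X) = (-1)^r\, Hig_+^{-Y_r, \cdots, -Y_1}(-X).$$

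Finally I would invoke the previous lemma in the form $Hig_+^{W_1, \cdots, W_r}(-X) = \mathcal{Z}ig^{W_1 + X, \cdots, W_r + X}$, specialized to $W_j = -Y_{r+1-j}$, which yields
$$Hig_-^{Y_1, \cdots, Y_r}(X) = (-1)^r\, \mathcal{Z}ig^{-Y_r + X, \cdots, -Y_1 + X}.$$
Comparing the right-hand side with the definition of $\mathcal{Z}ig_-$ evaluated at $(Y_1 - X, \cdots, Y_r - X)$, namely $\mathcal{Z}ig_-^{Y_1 - X, \cdots, Y_r - X} = (-1)^r \mathcal{Z}ig^{-(Y_r - X), \cdots, -(Y_1 - X)}$, gives exactly the claimed identity $Hig_-^{Y_1, \cdots, Y_r}(X) = \mathcal{Z}ig_-^{Y_1 - X, \cdots, Y_r - X}$.

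The main obstacle is purely the sign and index-reversal bookkeeping described above; there is no analytic content, as everything takes place in the ring of formal power series. As an alternative matching the phrase ``the same computation'', one could instead mimic the proof of the plus-version verbatim, extracting the coefficient of $Y_1^{k_1} \cdots Y_r^{k_r}$ from $\mathcal{Z}ig_-^{Y_1 - X, \cdots, Y_r - X}$ by the formal Taylor formula and identifying it with $He_-^{k_1 + 1, \cdots, k_r + 1}(X)$; this route needs the same binomial manipulation as before, whereas the transfer argument avoids repeating it.
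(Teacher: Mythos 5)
Your proof is correct, but it takes a genuinely different route from the paper's. The paper's proof consists of the single remark that ``the same computation as in the previous proof gives'' the result: that is, it repeats the formal Taylor-coefficient extraction (applying $D_{Y_1}^{k_1}\circ\cdots\circ D_{Y_r}^{k_r}$ and taking constant terms) with $\mathcal{Z}e_-^\p$ and $\mathcal{Z}ig_-^\p$ in place of $\mathcal{Z}e^\p$ and $\mathcal{Z}ig^\p$, which works verbatim because $\mathcal{Z}ig_-^\p$ is precisely the generating function of $\mathcal{Z}e_-^\p$. You instead transfer the plus-lemma to the minus side via the two parity relations $He_-^{\seq{s}}(X) = (-1)^{||\seq{s}||}\, He_+^{\overset{\leftarrow}{\seq{s}}}(-X)$ and $\mathcal{Z}ig_-^{Y_1,\cdots,Y_r} = (-1)^r\, \mathcal{Z}ig^{-Y_r,\cdots,-Y_1}$, and your bookkeeping is sound: the sign identity $(-1)^{t_j}\,Y_{r+1-j}^{t_j-1} = -(-Y_{r+1-j})^{t_j-1}$, the intermediate formula $Hig_-^{Y_1,\cdots,Y_r}(X) = (-1)^r\, Hig_+^{-Y_r,\cdots,-Y_1}(-X)$, and the final comparison with $\mathcal{Z}ig_-^{Y_1-X,\cdots,Y_r-X}$ all check out, and the substitutions $X\mapsto -X$ and $Y_j\mapsto -Y_{r+1-j}$ are legitimate formal operations since the substituted series have no constant term. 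What each approach buys: yours uses the plus-lemma as a black box and makes the structural reason for the statement transparent (the minus objects are by construction reversed-and-negated images of the plus objects), at the cost of delicate sign and index-reversal manipulations; the paper's repetition avoids all sign gymnastics and treats the two signs symmetrically, at the cost of redoing the binomial computation. Your closing remark correctly identifies the paper's actual route as the alternative you set aside.
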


These two lemmas, together with the trifactorisation, imply immediately the first expression of
$Tig^\p (X)$~. 

\begin{Property}
	Let us denote\label{premiere expression de Tig} by $\mathcal{Z}ig^\p (X)$ and $\mathcal{Z}ig_-^\p (X)$ the moulds valued in 					 
	$\C[\![X]\!][\![(Y_r)_{r \in \N^*}]\!] \simeq \C[\![X ; Y_1 ; Y_2 ; \cdots]\!]$ respectively defined by:
	
	$$	\begin{array}{l}
			\mathcal{Z}ig^{Y_1, \cdots , Y_r} (X) = \mathcal{Z}ig^{Y_1 - X, \cdots , Y_r - X}  .
			\vspace{0.2cm}	\\
			\mathcal{Z}ig_-^{Y_1, \cdots , Y_r} (X) = \mathcal{Z}ig^{Y_1 - X, \cdots , Y_r - X}_-  .
		\end{array}
	$$
	Then, in $\C (\!(X)\!)[\![(Y_r)_{r \in \N^*}]\!]$, the generating series of $Te^\p(X)$ is given by:
	$$	Tig^\p (X) = \mathcal{Z}ig^\p(X) \times Cig^\p(X) \times \mathcal{Z}ig_-^\p (X) .	$$
\end{Property}

\subsubsection	{Second expression of $Tig^\p$ and flexion markers}
\label{Second expression of Tig}
The second expression of $Tig^\p (X)$ will use some notations and notions introduced by Jean Ecalle for his study of flexion structures (see. \cite{Ecalle2}, \cite{Ecalle3}, \cite{Ecalle4} or \cite{Ecalle6}). Let us introduce these before stating the result.

\paragraph{Flexion markers}	

The four flexion markers $\lfloor$ , $\rfloor$ , $\lceil$ and $\rceil$ act on factorisation of (bi)sequences. So, let us consider two alphabets $\Omega_1$ ,  $\Omega_2$ and then their product $\Omega = \Omega_1 \times \Omega_2$~; let us also consider a sequence $ \seq{w} \in \text{seq} (\Omega)$ which can be factorized:
$$\seq{w} = \seq{w}^1 \cdots \seq{w}^r \in \text{seq} (\Omega) \ .$$

The flexion marker $\lfloor$ acts on $\seq{w}^{i}$ by subtracting the right inferior element of $\seq{w}^{i - 1}$ to each inferior element of $\seq{w}^i$ while the flexion marker $\lceil$ acts on $\seq{w}^{i}$ by adding the sum of superior elements of $\seq{w}^{i - 1}$ to the left superior element of $\seq{w}^i$. In the same way, the flexion marker $\rfloor$ acts on $\seq{w}^{i}$ by subtracting the left inferior element of $\seq{w}^{i + 1}$ to each inferior element of $\seq{w}^i$ while the flexion marker $\rceil$ acts on $\seq{w}^{i}$ by adding the sum of superior elements of $\seq{w}^{i + 1}$ to the right superior element of $\seq{w}^i$.

By the use of these flexion markers, elements of $\Omega_1$ will be added to each other while elements of $\Omega_2$ will be subtracted each other.

To clarify the definitions and the actions of the different markers, here is an example.
If	$\seq{w}	=	\cdots \seq{a} \cdot \seq{b} \cdots
				=	\cdots
					\left (
							\begin{array}{@{}c@{}c@{}c@{}}
								u_6 &,\cdots,& u_{9}	\\
								v_6 &,\cdots,& v_{9}
							\end{array}
					\right )
					\left (
							\begin{array}{@{}c@{}c@{}c@{}}
								u_{10} &,\cdots,& u_{15}	\\
								v_{10} &,\cdots,& v_{15}
							\end{array}
					\right )
					\cdots
	$, then we have:
$$	\begin{array}{@{}lll}
		\seq{a} \rfloor =	\left (
									\begin{array}{ccc}
										u_6		 &,\cdots,& u_{9}	\\
										v_{6:10} &,\cdots,& v_{9:10}
									\end{array}
							\right )
		&,&
		\seq{a} \rceil =	\left (
									\begin{array}{cccc}
										u_6 &,\cdots,& u_8, & u_{9\cdots15}	\\
										v_6 &,\cdots,& v_8, & v_{9}
									\end{array}
							\right ) \ ,
		\\
		\\
		\lfloor \seq{b} =	\left (
									\begin{array}{ccc}
										u_{11}		&,\cdots,& u_{15}	\\
										v_{11:9}	&,\cdots,& v_{15:9}
									\end{array}
							\right )
		&,&
		\lceil \seq{b} =	\left (
									\begin{array}{cccc}
										u_{6 \cdots 10}	&	,\ u_{11} &,\cdots,& u_{15}	\\
										v_{10}			&	,\ v_{11} &,\cdots,& v_{15}
									\end{array}
							\right ) \ ,
	\end{array}
$$
where $n_{i \cdots j} = n_i + \cdots + n_j$ and $n_{i:j} = n_i - n_j$ in the variables $n$.

\paragraph{Colors}

If we do not care, it is easy not to see flexion structures. But, when there are some addition or
subtraction of the variables, flexion structures are possibly present. The use of colors is a good
way to avoid passing next to them. So, we will stiffen a bit more the situation by using colors in
a temporary way. This requires to redefine our moulds as bimoulds.

First, there is the bimould of colored multizeta values defined for sequences in $\text{seq} \left ( \Q / \Z \times \N^* \right )$ by:
$$	\mathcal{Z}e	^{	\begin{scriptsize}
							\left (
								\begin{array}{@{}l@{}}
										\varepsilon_1, \cdots , \varepsilon_r	\\
										s_1, \cdots , s_r
								\end{array}
							\right )
						\end{scriptsize}
					}
	=
	\displaystyle	{	\sum	_{1 \leq n_r < \cdots < n_1}
								\frac	{{e_1}^{n_1} \cdots {e_r}^{n_r}}
										{{n_1}^{s_1} \cdots {n_r}^{s_r}}
					}
	\text{ where }
	e_k = e^{-2i\pi \varepsilon_k} \text{, for } k \in \crochet{1}{n}  .
$$

Its generating series $\mathcal{Z}ig^{\p}$ is then a \symmetril mould defined for sequences in $\text{seq} \big ( \Q / \Z \times (V_i)_{i \in \N^*} \big )$~.
This also gives us the definition of the bimould $\mathcal{Z}ig^\p_-$:
$$	\mathcal{Z}ig_- ^{	\begin{scriptsize}
							\left (
								\begin{array}{@{}c@{}c@{}c@{}}
									\varepsilon_1,	&	\cdots ,	&	\varepsilon_r	\\
									V_1,			&	\cdots ,	&	V_r
								\end{array}
							\right )
						\end{scriptsize}
					}
	=
	(-1)^r \mathcal{Z}ig	^{	\begin{scriptsize}
									\left (
										\begin{array}{@{}c@{}c@{}c@{}}
												- \varepsilon_r	&	,\cdots ,	&	- \varepsilon_1	\\
												- V_r			&	,\cdots ,	&	- V_1
										\end{array}
									\right )
								\end{scriptsize}
							} .
$$

In a similar way, we can define formal or analytical colored Hurwitz multizeta functions as well as formal or analytical colored multitangent functions. These are bimoulds valued in the algebra of holomorphic functions over $\C - \Z$ or in $\C[\![X]\!]$~. If
$	\begin{scriptsize}
		\left (
			\begin{array}{@{}l@{}}
				\varepsilon_1, \cdots, \varepsilon_r	\\
				s_1, \cdots, s_r
			\end{array}
		\right )
	\end{scriptsize}
	\in
	\text{seq} \left ( \Q / \Z \times \N^* \right ) - \{ \emptyset \}
$, with the notation $e_k = e^{-2i\pi \varepsilon_k}$, for $k \in \crochet{1}{n}$, these are respectively defined by:
$$	\begin{array}{@{}r@{}c@{}l}
		\mathcal{H}e_+	^{	\begin{scriptsize}
								\left (
									\begin{array}{@{}l@{}}
											\varepsilon_1, \cdots , \varepsilon_r	\\
											s_1, \cdots , s_r
									\end{array}
								\right )
							\end{scriptsize}
						} \hspace {-0.1cm} (z)
		&\ =\ &
		\displaystyle	{	\sum	_{0 < n_r < \cdots < n_1 < + \infty}
									\frac	{{e_1}^{n_1} \cdots {e_r}^{n_r}}
											{{(n_1 + z)}^{s_1} \cdots {(n_r + z)}^{s_r}}
						} \ , 
		\nonumber
		\\
		\\
		He_+	^{	\begin{scriptsize}
						\left (
								\begin{array}{@{}l@{}}
									\varepsilon_1, \cdots , \varepsilon_r	\\
									s_1, \cdots , s_r
								\end{array}
						\right )
					\end{scriptsize}
				} \hspace {-0.1cm} (X)
		&\ =\ &
		\displaystyle	{	\hspace{-0.35cm}
							\sum	_{	k , k_1 , \cdots , k_r \geq 0
										\atop
										k_1 + \cdots + k_r = k
									}
									\hspace{-0.5cm}
											\left [
													\displaystyle	{	\prod	_{i = 1}
																				^r
																				\binom{s_i + k_i - 1}{k_i}
																	}
											\right ]
											\mathcal{Z}e	^{	\begin{scriptsize}
																	\left (
																			\begin{array}{@{}c@{}c@{}c@{}}
																				\varepsilon_1	&, \cdots ,& \varepsilon_r	\\
																				s_1 + k_1		&, \cdots ,& s_r + k_r
																			\end{array}
																	\right )
																\end{scriptsize}
															}
											(-X)^k
						} \ , 
	\end{array}
$$
$$	\begin{array}{@{}rcl}
		\mathcal{T}e	^{	\begin{scriptsize}
								\left (
									\begin{array}{@{}l@{}}
										\varepsilon_1, \cdots , \varepsilon_r	\\
										s_1, \cdots , s_r
									\end{array}
								\right )
							\end{scriptsize}
						} \hspace {-0.1cm} (z)
		&=\ &
		\displaystyle	{	\sum	_{- \infty < n_r < \cdots < n_1 < + \infty}
									\frac	{{e_1}^{n_1} \cdots {e_r}^{n_r}}
											{{(n_1 + z)}^{s_1} \cdots {(n_r + z)}^{s_r}}
						} \ ,
		\nonumber
		\\
		\\
		Te	^{	\begin{scriptsize}
					\left (
							\begin{array}{@{}l@{}}
								\varepsilon_1, \cdots , \varepsilon_r	\\
								s_1, \cdots , s_r
							\end{array}
					\right )
				\end{scriptsize}
			}\hspace {-0.1cm} (X)
		&=\ &
		\hspace{-0.4cm}
		\displaystyle	{	\sum	_{	\begin{scriptsize}
											\left (
												\begin{array}{@{}l@{}}
													\seq{\varepsilon}^1
													\\
													\seq{s}^1
												\end{array}
											\right )
											\cdot 
											\left (
												\begin{array}{@{}l@{}}
													\seq{\varepsilon}^2
													\\
													\seq{s}^2
												\end{array}
											\right )
											\cdot 
											\left (
												\begin{array}{@{}l@{}}
													\seq{\varepsilon}^3
													\\
													\seq{s}^3
												\end{array}
											\right )
											=
											\left (
												\begin{array}{@{}l@{}}
													\seq{\varepsilon}
													\\
													\seq{s}
												\end{array}
											\right )
										\end{scriptsize}
									}
									\hspace{-0.4cm}
									He_+	^{	\begin{scriptsize}
													\left (
														\begin{array}{@{}c@{}}
															\seq{\varepsilon}^1	\\
															\seq{s}^1
														\end{array}
													\right )
												\end{scriptsize}
											}
											(X)
									 
									Ce	^{	\begin{scriptsize}
												\left (
													\begin{array}{@{}c@{}}
														\seq{\varepsilon}^2	\\
														\seq{s}^2
													\end{array}
												\right )
											\end{scriptsize}
										}
										(X)
									 
									He_-	^{	\begin{scriptsize}
													\left (
														\begin{array}{@{}c@{}}
															\seq{\varepsilon}^3	\\
															\seq{s}^3
														\end{array}
													\right )
												\end{scriptsize}
											}
											(X)
						} \ .
		\nonumber
	\end{array}
$$

Obviously, these definitions contain some divergent cases for Hurwitz multizeta functions as well as for multitangent functions: in the first case, it is when  $(\varepsilon_1 ; s_1) = (0 ; 1)$ , while it is when $(\varepsilon_1 ; s_1) = (0 ; 1)$ or $(\varepsilon_r ; s_r) = (0 ; 1)$ in the second case. In these exceptional cases, a regularization process is needed and is based, as we have done previously without colors, on the regularization of the generating series $\mathcal{Z}ig^\p$ and, so, on the following well-known lemma due to Jean Ecalle (see \cite{Ecalle3} p. $5$ and \cite{Ecalle6} p. $6$)~:

\begin{Lemma}	\label{lemme de J. Ecalle}
	Let	$	\displaystyle	{	\mu^{n_1, \cdots ,n_r}
								=
								\frac	{1}	{r_1 ! \cdots r_n !}
							}
		$ where the non-increasing sequence $\seq{n} = (n_1 ; \cdots ; n_r) \in \text{seq} (\N^*)$  contains $r_1$ times its highest value, $r_2$ times its second
	highest value, etc.	\\
	For $(u_p)_{p \in \crochet{1}{r}} \in \C^r$ and $k \in \crochet{1}{r}$ , let $e_k = e^{-2 i u_k \pi}$ .	\\
	\\
	Finally, for all $k \in \N^*$, we consider the moulds $do\mathcal{Z}ig_k^\p$ and $co\mathcal{Z}ig_k^\p$ defined for all 
	$	\begin{scriptsize}
			\left (
				\begin{array}{@{}c@{}c@{}c@{}}
					u_1	&	, \cdots ,	&	u_r	\\
					v_1	&	, \cdots ,	&	v_r
				\end{array}
			\right )
		\end{scriptsize}
		\in
		\text{seq} \big ( \Q / \Z \times (V_i)_{i \in \N^*} \big )
	$ by:
	$$	\begin{array}{@{}lll}
			do\mathcal{Z}ig_k	^{	\begin{scriptsize}
										\left (
												\begin{array}{@{}c@{}c@{}c@{}}
													u_1	&	, \cdots ,	&	u_r	\\
													V_1	&	, \cdots ,	&	V_r
												\end{array}
										\right )	
									\end{scriptsize}
								}
			&=&
			\left \{
					\begin{array}{cl}
							\displaystyle	{	\sum	_{1 \leq n_r < \dots < n_1 < k}
														\frac	{{e_1}^{n_1} \cdots {e_r}^{n_r}}
																{(n_1 - V_1) \cdots (n_r - V_r)}
											}
							&
							\text{, if } r \neq 0 \ .
							\\
							1
							&
							\text{, if } r = 0 \ .
					\end{array}
			\right. 
			\\
			\\
			co\mathcal{Z}ig_k	^{	\begin{scriptsize}
										\left (
												\begin{array}{@{}c@{}c@{}c@{}}
													u_1	&	, \cdots ,	&	u_r	\\
													V_1	&	, \cdots ,	&	V_r
												\end{array}
										\right )
									\end{scriptsize}
								}
			&=&
			\left \{
					\begin{array}{cl}
						\displaystyle	{	(-1)^r	\sum	_{1 \leq n_r \leq \dots \leq n_1 < k}
															\frac	{\mu^{n_1, \cdots , n_r}}
																	{n_1 \cdots n_r}
										}
						&
						\text{, if } \seq{u} \neq \seq{0} \text{ and }r \neq 0 \ .
						\\
						0
						&
						\text{, if }\seq{u} = \seq{0} \text{ and }r \neq 0 \ .
						\\
						1
						&
						\text{, if } r = 0 \ .
					\end{array}
			\right. 
		\end{array}
	$$
	
	Then, the mould $\mathcal{Z}ig$ admits an elementary mould ``factorisation'':
	$$	\mathcal{Z}ig^\p = \displaystyle	{	\lim	_{n \longrightarrow + \infty}
														\left (
																co\mathcal{Z}ig^\p_n \times do\mathcal{Z}ig^\p_n
														\right )
											} \ .
	$$
\end{Lemma}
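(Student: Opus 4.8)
The plan is to reduce the asserted factorisation to two explicit computations, one for each factor, followed by an asymptotic cancellation of the divergent parts. Throughout I write $\underline{w} = \binom{\underline{\varepsilon}}{\underline{V}}$ for a bisequence of length $r$, with $e_k = e^{-2i\pi u_k}$ and $H_{n-1} = \sum_{1 \le m < n} \frac{1}{m}$, and I recall that the mould product is $(A \times B)^{\underline{w}} = \sum_{\underline{w} = \underline{a} \cdot \underline{b}} A^{\underline{a}} B^{\underline{b}}$, the sum running over all prefix/suffix splittings of $\underline{w}$ (including the empty ones).

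First I would pin down closed forms for the two factors. The mould $do\mathcal{Z}ig_n$ is, by definition, the truncation of the defining sum of $\mathcal{Z}ig$ to indices $< n$; in particular $do\mathcal{Z}ig_n^{\underline{w}} \to \mathcal{Z}ig^{\underline{w}}$ term by term whenever the series converges, that is, whenever the leading colour is non-trivial. For $co\mathcal{Z}ig_n$ the key is the symmetrisation identity for the $\mu$-weighted sum over weakly decreasing indices:
$$	\sum_{1 \le n_r \le \cdots \le n_1 < n} \frac{\mu^{n_1, \cdots, n_r}}{n_1 \cdots n_r} = \frac{1}{r!} \Bigg( \sum_{1 \le m < n} \frac{1}{m} \Bigg)^r = \frac{H_{n-1}^{\,r}}{r!} \ ,
$$
which follows by grouping the $r!$ orderings of each unordered tuple, the multiplicities being accounted for exactly by $\mu$. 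Hence $co\mathcal{Z}ig_n$ is concentrated on the colourless bisequences $\binom{0^{[r]}}{V_1, \cdots, V_r}$, where it equals $(-H_{n-1})^r / r!$, and it vanishes as soon as some colour is non-trivial.

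Next I would expand the mould product. Since $co\mathcal{Z}ig_n$ is supported on colourless prefixes, only prefixes $\binom{0^{[j]}}{V_1, \cdots, V_j}$ with $j \le p$ contribute, where $p$ is the length of the maximal block of leading trivial colours (so $\varepsilon_1 = \cdots = \varepsilon_p = 0$ and either $p = r$ or $\varepsilon_{p+1} \neq 0$). This gives
$$	(co\mathcal{Z}ig_n \times do\mathcal{Z}ig_n)^{\underline{w}} = \sum_{j = 0}^{p} \frac{(-H_{n-1})^{\,j}}{j!} \, do\mathcal{Z}ig_n^{\binom{\varepsilon_{> j}}{V_{> j}}} \ .
$$
It remains to show that the right-hand side converges and that its limit is the regularised $\mathcal{Z}ig^{\underline{w}}$. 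The mechanism is that a nested sum $do\mathcal{Z}ig_n^{\underline{w}}$ with $p$ leading trivial colours admits, as $n \to \infty$, an asymptotic expansion which is a polynomial in $H_{n-1}$ of degree $p$ with convergent coefficients: each leading trivial index produces one harmonic factor via the recursion $do\mathcal{Z}ig_n^{\binom{0}{V_1} \cdot \underline{u}} = \sum_{m < n} \frac{1}{m - V_1} \, do\mathcal{Z}ig_m^{\underline{u}}$ together with the splitting $\sum_{m < n} \frac{1}{m - V_1} = H_{n-1} + R(n)$, where $R(n)$ converges. The alternating weights $(-H_{n-1})^j / j!$ are tailored to annihilate every positive power of $H_{n-1}$ in this expansion, leaving exactly the finite part. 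I would carry this out by induction on $p$, peeling off one leading harmonic summation at a time by Abel summation.

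The main obstacle is precisely this last cancellation: keeping track of all the powers of $H_{n-1}$ generated by the iterated divergent sums and checking that they match the binomial pattern produced by $co\mathcal{Z}ig_n$. Once convergence is established, one finishes by identifying the limit with the regularisation defined earlier. The limit is \symmetril, being a product of \symmetril moulds that is stable under the limiting process; it agrees with $\mathcal{Z}ig$ on every bisequence with non-trivial leading colour (there $do\mathcal{Z}ig_n$ converges and only the $j = 0$ term survives); and on the length-one pure divergence $\binom{0}{V}$ it returns $\lim_n \big( \sum_{m < n} \frac{1}{m - V} - H_{n-1} \big) = \sum_{m \ge 1} \big( \frac{1}{m - V} - \frac{1}{m} \big)$, the regularised value fixed by $\mathcal{Z}e^1 = 0$. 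By the uniqueness part of Lemma~\ref{extension lemma}, these data determine a \symmetril mould uniquely, so the limit coincides with $\mathcal{Z}ig^{\underline{w}}$.
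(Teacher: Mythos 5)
You should first be aware that the paper never proves this lemma: it is imported as a ``well-known lemma due to Jean Ecalle'', with pointers to \cite{Ecalle3} and \cite{Ecalle6}, and the surrounding text only comments on the roles of the two factors. So there is no proof of the paper to compare yours with, and I assess your proposal on its own terms. Note also that what you argue for is not the statement as printed but a silently corrected version of it: you take $co\mathcal{Z}ig_n^\p$ to vanish as soon as some colour is non-trivial and to equal $(-H_{n-1})^r/r!$ on colourless bisequences, whereas the printed case distinction attaches the harmonic sum to $\seq{u} \neq \seq{0}$ and the value $0$ to $\seq{u} = \seq{0}$. Your reading is the only tenable one --- as printed the lemma already fails in length one, since for $u_1 \neq 0$ the product would be $do\mathcal{Z}ig_n^{\binom{u_1}{V_1}} - H_{n-1}$, which diverges, while for $u_1 = 0$ nothing would compensate the divergence of $do\mathcal{Z}ig_n^{\binom{0}{V_1}}$ --- but you should flag this correction explicitly rather than make it tacitly.

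The genuine gap is the one you name yourself. After the (correct) reduction to
\[
(co\mathcal{Z}ig_n \times do\mathcal{Z}ig_n)^{\seq{w}} \;=\; \sum_{j=0}^{p} \frac{(-H_{n-1})^j}{j!}\, do\mathcal{Z}ig_n^{\seq{w}^{>j}}\ ,
\]
everything rests on the claim that $do\mathcal{Z}ig_n^{\seq{w}^{>j}}$ expands as a polynomial of degree $p-j$ in $H_{n-1}$ whose coefficients are exactly the regularized values of the tail sequences, with remainders small enough to survive multiplication by powers of $H_{n-1}$. Observe that the $o(1)$ control you invoke is not sufficient: an $o(1)$ remainder in $do\mathcal{Z}ig_n^{\seq{w}^{>j}}$ gets multiplied by $(-H_{n-1})^j/j!$, so you need remainders of size $o\big(H_{n-1}^{-j}\big)$. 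You assert that the weights ``annihilate every positive power of $H_{n-1}$'', defer the verification to an induction ``by Abel summation'', and you yourself call this bookkeeping the main obstacle. But this cancellation \emph{is} the content of the lemma; reducing the lemma to it and stopping there is a plan, not a proof.

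The constructive remark is that your closing uniqueness idea, pushed slightly further, closes the gap without any asymptotic analysis. For each fixed $n$, put $A_n^\p = co\mathcal{Z}ig_n^\p \times do\mathcal{Z}ig_n^\p$. Then $A_n^\p$ is \symmetril for every $n$: the truncated nested sums defining $do\mathcal{Z}ig_n^\p$ satisfy the stuffle identities exactly (same induction as in Lemma~\ref{definition_des_moules_symetrEls1}), and $co\mathcal{Z}ig_n^\p$ is precisely the generating mould of $\mathcal{N}e_{-H_{n-1}}^\p$ of Lemma~\ref{extension lemma}, while mould multiplication preserves this symmetry. Passing to the underlying \symmetrel coefficient moulds over the coloured alphabet $\Q/\Z \times \N^*$, whose unique divergent letter is $\binom{0}{1}$, the shift-of-the-leading-divergent-letter algorithm from the uniqueness part of Lemma~\ref{extension lemma} (whose proof carries over verbatim to this alphabet) expresses the value of $A_n$ on any sequence as a polynomial, with coefficients independent of $n$, in its values on sequences whose leading colour is non-trivial and in the single value $A_n^{\binom{0}{1}}$. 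On the former only your $j = 0$ term survives, so there $A_n = do\mathcal{Z}ig_n \longrightarrow \mathcal{Z}ig$; and the latter vanishes identically, since the coefficient of $V^0$ in $A_n^{\binom{0}{V}} = do\mathcal{Z}ig_n^{\binom{0}{V}} - H_{n-1}$ equals $\sum_{1 \leq m < n} \frac{1}{m} - H_{n-1} = 0$ for every $n$. Hence every value $A_n^{\seq{w}}$ converges, and the limits satisfy exactly the identities characterizing the unique \symmetrel extension of $\mathcal{Z}e^\p$ with $\mathcal{Z}e^{\binom{0}{1}} = 0$; that is, the limit mould is $\mathcal{Z}ig^\p$, which is the assertion of the lemma.
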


Let us remark that in this ``factorisation'', the mould $do\mathcal{Z}ig^\p_k$ gives us the dominant terms of $\mathcal{Z}ig^\p$, while the mould $co\mathcal{Z}ig_k^\p$ plays the role of correcting the series to restore the convergence of the divergent series
$$	\displaystyle	{	\sum	_{1 \leq n_r < \dots < n_1 < k}
								\frac	{{e_1}^{n_1} \cdots {e_r}^{n_r}}
										{(n_1 - v_1) \cdots (n_r - v_r)}
					}
	\ .
$$

\paragraph{Some new moulds}

Let $\delta$ be the indicator function of $\{0\}$. Let us also consider the formal bimoulds $Qig^\p$ and $\delta^\p$ defined on
$\text{seq} \big ( \Q / \Z \times (V_i)_{i \in \N^*} \big )$ by:
$$	\begin{array}{ll}
		\left \{
			\begin{array}{l}
					Qig^{\emptyset}	= 0 \ .	\\
					Qig	^{	\begin{scriptsize}
								\left (
									\begin{array}{@{}c@{}}
											u_1	\\
											V_1
									\end{array}
								\right )
							\end{scriptsize}
						}
					=
					- Te^{	\begin{scriptsize}
								\left (
									\begin{array}{@{}c@{}}
										u_1	\\
										1
									\end{array}
								\right )
							\end{scriptsize}
						} (V_1) \ .
					\\
					Qig	^{	\begin{scriptsize}
								\left (
									\begin{array}{@{}c@{}}
											u_1, \cdots, u_r	\\
											V_1, \cdots, V_r
									\end{array}
								\right )
							\end{scriptsize}
						}
					=
					0 \text { , if } r \geq 2 \ .
			\end{array}
		\right. 
	\end{array}
$$

$$	\left \{
			\begin{array}{l}
					\delta^{\emptyset}	= 0 \ .
					\\
					\delta	^{	\begin{scriptsize}
								\left (
									\begin{array}{@{}c@{}c@{}c@{}}
											u_1,	&	\cdots,	&	u_r	\\
											V_1,	&	\cdots,	&	V_r
									\end{array}
								\right )
							\end{scriptsize}
						}
					=
					\left \{
							\begin{array}{ll}
									\displaystyle	{	\frac	{(i\pi)^r}	{r!}	}
									\delta(u_1) \cdots \delta(u_r)
									&
									\text{, if } r \text{ is even.}
									\\
									0
									&
									\text{, if } r \text{ is odd.}
							\end{array}
					\right.
			\end{array}
	\right. 
$$

\paragraph{Second expression of $Tig^\p$}

We will apply the previous lemma, which gives an expression of $\mathcal{Z}ig^\p$ in the first expression of $Tig^\p$. This will allow us to make a partial fraction expansion in the indeterminate $X$. We then obtain:

\begin{Theorem}
	Let $\mathcal{Q}ig^\p$ be the bimould valued in $\mathcal{H}(\C - \Z)$	\label{reduction en monotangente de Tig} and defined for all $z \in \C - \Z$ by:
	$$	\begin{array}[t]{lll}
			\mathcal{Q}ig^{y_1, \cdots , y_r} (z)
			&=&
			\left \{
					\begin{array}{ll}
						- \mathcal{T}e^1 (y_1 - z)	&	\text{, if }r = 1 \ .	\\
						0							&	\text{, otherwise.}
					\end{array}
			\right.
		\end{array}
	$$
	Then, for all $z \in \C - \Z$, we have in $\C [\![X]\!][\![(V_r)_{r \in \N^*}]\!]$:
	$$	\mathcal{T}ig^\p (z)
		=
		\displaystyle	{	\delta^{\p}
							+
							\mathcal{Z}ig^{\p \rfloor} \times \mathcal{Q}ig^{\lceil \p \rceil} (z) \times \mathcal{Z}ig_-^{\lfloor \p}
						} \ .
	$$
\end{Theorem}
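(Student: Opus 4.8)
The plan is to derive the stated monotangent reduction directly from the first expression of the generating series, $Tig^\p(X) = \mathcal{Z}ig^\p(X) \times Cig^\p(X) \times \mathcal{Z}ig_-^\p(X)$ (see Property~\ref{premiere expression de Tig}), read analytically by substituting the variable $z$ for the indeterminate $X$, and to convert it into the announced form by a single partial fraction expansion in $z$. The immediate difficulty is that the two lateral factors $\mathcal{Z}ig^\p$ and $\mathcal{Z}ig_-^\p$ are divergent: their defining sums $\sum_{1 \le n_r < \cdots < n_1 < +\infty} \prod_i (n_i - V_i + z)^{-1}$ do not converge. So I would first replace $\mathcal{Z}ig^\p$ by the elementary factorisation $\displaystyle\lim_{n \to +\infty}\bigl(co\mathcal{Z}ig_n^\p \times do\mathcal{Z}ig_n^\p\bigr)$ provided by Lemma~\ref{lemme de J. Ecalle}. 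This trades each lateral factor for a \emph{finite} dominant sum $do\mathcal{Z}ig_n$, whose denominators are honest rational fractions in $z$, together with a divergence-correcting factor $co\mathcal{Z}ig_n$; the limit $n \to +\infty$ is restored only at the end.

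With finite sums available, I would expand the mould product over $\text{seq}\bigl(\Q/\Z \times (V_i)_{i \in \N^*}\bigr)$. Since $Cig^\p$ is supported on sequences of length at most one, every term is indexed by a factorisation $\seq{w} = \seq{w}^1 \cdot (c) \cdot \seq{w}^3$ in which the central block contributes the single pole $\frac{1}{z - V}$ coming from $\mathcal{C}e^1$, flanked by a left dominant sum $\prod_j (n_j - V_j + z)^{-1}$ and a symmetric right sum on the negative side. The decisive step is then the partial fraction expansion in $z$ of this product of rational fractions, carried out exactly as in the convergent reduction of Theorem~\ref{reduction en monotangente}: isolating the simple poles in $z$ concentrates the whole $z$-dependence onto a single first-order factor, and summing the resulting series over all $n \in \Z$ in the Eisenstein sense produces precisely the cotangent $-\mathcal{T}e^1(y_1 - z)$, i.e. the length-one mould $\mathcal{Q}ig$. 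What remains on either side are again finite multizeta-type sums, but now carrying the \emph{relative} shifts forced by the expansion: the lower (variable) entries of the left block are decreased by the inferior entry of the central block while the upper (colour) entries are transported in the opposite direction. This is exactly the content of the flexion markers $\rfloor$, $\lceil\,\cdot\,\rceil$ and $\lfloor$ in the statement, so that letting $n \to +\infty$ restores $\mathcal{Z}ig^{\p\rfloor}$ on the left and $\mathcal{Z}ig_-^{\lfloor\p}$ on the right.

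The summand $\delta^\p$ collects the purely uncoloured, $z$-independent contributions left behind by this mechanism. They occur because $\mathcal{T}e^1$ is not a bare sum of simple poles: its Fourier/Eisenstein expansion~\eqref{dvp de fourier de te1} carries the additive constants $\pm i\pi$, and when all colours vanish ($\varepsilon_j = 0$) products of these constants accumulate. Tracking them through the expansion and the limit, together with the combinatorial weight $\mu$ of Lemma~\ref{lemme de J. Ecalle}, I expect the constant part to be supported on the all-zero-colour sequences of even length and to equal $\frac{(i\pi)^r}{r!}\,\delta(u_1)\cdots\delta(u_r)$, which is the definition of $\delta^\p$. This is also the mechanism forcing the odd-length uncoloured constant terms to cancel, in agreement with the footnote observation that $\widehat{\mathcal{T}}^\p$ fails to be symmetrel.

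The main obstacle is not any isolated identity but the simultaneous control of the two sources of divergence. One must verify that the limit $n \to +\infty$ commutes with the partial fraction expansion and with the extraction of the constant terms, so that the correcting moulds $co\mathcal{Z}ig_n$ absorb the divergent tails exactly and leave the clean, flexion-marked product. Equivalently, the hard part is to confirm, term by term and in the limit, that the flexion bookkeeping --- which inferior index is subtracted from which block, and on which side the superior colours are carried --- reproduces precisely the shifts generated by the partial fraction expansion. This is where Ecalle's flexion calculus does the decisive work and where the exact placement of the markers $\rfloor$, $\lceil\,\cdot\,\rceil$, $\lfloor$ must be checked rather than assumed.
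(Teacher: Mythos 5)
Your plan is essentially the paper's own proof: it starts from the first expression of $Tig^\p$, invokes Ecalle's $co\mathcal{Z}ig_N/do\mathcal{Z}ig_N$ factorisation (Lemma~\ref{lemme de J. Ecalle}) to regularise the divergent lateral factors, recombines the three factors into the truncated symmetric sums of \eqref{Tig}, and performs the same partial fraction expansion, so that Eisenstein summation of the isolated pole yields $\mathcal{Q}ig^{\lceil \p \rceil}$ while the relative shifts on the lateral blocks produce exactly the flexion-marked $\mathcal{Z}ig^{\p \rfloor}$ and $\mathcal{Z}ig_-^{\lfloor \p}$, with your reading of $\delta^{\p}$ as the accumulated uncoloured constants consistent with what the computation produces. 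The only step you leave implicit, which the paper makes explicit, is the final passage from the formal identity in $\C [\![X]\!][\![(V_r)_{r \in \N^*}]\!]$ to the analytic statement for $z \in \C - \Z$, obtained there from the crude bound $|\mathcal{Z}e^{\seq{s}}| \leq 4^r\, r!$ (which makes all generating series convergent near $0$) together with the identity theorem for holomorphic functions.
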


\begin{Proof}
	Continuing to use the same principle for our notations, we set:
	$$	\begin{array}{lll}
			do\mathcal{Z}ig_N	^{	\begin{scriptsize}
										\left (
												\begin{array}{@{}c@{}}
													u_1, \cdots , u_r	\\
													V_1, \cdots , V_r
												\end{array}
										\right )
									\end{scriptsize}
								} (X)
			&=&
			do\mathcal{Z}ig_N	^{	\begin{scriptsize}
										\left (
											\begin{array}{@{}c@{}c@{}c@{}}
												u_1		&	,  \cdots,	& u_r	\\
												V_1 - Y	&	,  \cdots,	& V_r - Y
											\end{array}
										\right )
									\end{scriptsize}
								} \ .
		\end{array}
	$$
	$$	\begin{array}{lll}
			co\mathcal{Z}ig_N	^{	\begin{scriptsize}
										\left (
												\begin{array}{@{}c@{}}
													u_1, \cdots , u_r	\\
													V_1, \cdots , V_r
												\end{array}
										\right )
									\end{scriptsize}
								} (X)
			&=&
			co\mathcal{Z}ig_N	^{	\begin{scriptsize}
										\left (
												\begin{array}{@{}c@{}c@{}c@{}}
													u_1		&	,  	\cdots, &	u_r	\\
													V_1 - Y	&	,  	\cdots,	&	V_r - Y
												\end{array}
										\right )
									\end{scriptsize}
								} \ .
			\vspace{0.1cm}	\\
			do\mathcal{Z}ig_{-, N}	^{	\begin{scriptsize}
												\left (
														\begin{array}{@{}c@{}}
															u_1, \cdots , u_r	\\
															V_1, \cdots , V_r
														\end{array}
												\right )
										\end{scriptsize}
									} (X)
			&=&
			(-1)^r
			do\mathcal{Z}ig_{-, N}	^{	\begin{scriptsize}
												\left (
														\begin{array}{@{}c@{}}
															-u_r, \cdots, -u_1	\\
															-V_r, \cdots, -V_1
														\end{array}
												\right )
										\end{scriptsize}
									} (X)  \ .
			\vspace{0.1cm}	\\
			co\mathcal{Z}ig_{-, N}	^{	\begin{scriptsize}
												\left (
														\begin{array}{@{}c@{}}
															u_1, \cdots , u_r	\\
															V_1, \cdots , V_r
														\end{array}
												\right )
										\end{scriptsize}
									} (X)
			&=&
			(-1)^r
			co\mathcal{Z}ig_{-, N}	^{	\begin{scriptsize}
												\left (
														\begin{array}{@{}c@{}}
															-u_r, \cdots , -u_1	\\
															-V_r, \cdots , -V_1
														\end{array}
												\right )
										\end{scriptsize}
									} (X)  \ .
		\end{array}		
	$$
	\noindent
	 Let	$	\left (
						\begin{array}{@{}c@{}c@{}c@{}}
								u_1,	&	\cdots,	&	u_r	\\
								V_1,	&	\cdots,&	V_r
						\end{array}
				\right )
				\in
				\text{seq} \big ( \Q / \Z \times (V_i)_{i \in \N^*} \big )
			$.
	Applying Lemmas $\ref{premiere expression de Tig}$ and $\ref{lemme de J. Ecalle}$,
	$$	Tig^\p (X)
		=
		\displaystyle	{	\lim	_{N \longrightarrow + \infty}
									\left (
											co\mathcal{Z}ig_N^\p \times Tig_N^\p (X) \times co\mathcal{Z}ig_{-,N}^\p
									\right )
						} \ ,
	$$
	where $Tig_N^\p (X) = do\mathcal{Z}ig_N^\p (X) \times Cig^\p (X) \times do\mathcal{Z}ig_{-, N}^\p (X)$~.
	\\
			
	It is not difficult to obtain another form of the previous trifactorisation by proceeding in the same way as in the proof of the trifactorisation of $\mathcal{T}e^\p$:
	\begin{equation}\label{Tig}
		\begin{array}{l@{}l@{}l}
			\mathcal{T}ig_N	^{	\begin{scriptsize}
									\left (
											\begin{array}{@{}c@{}}
												u_1, \cdots , u_r	\\
												V_1, \cdots , V_r
											\end{array}
									\right )
								\end{scriptsize}
							}			
			&=&
			\displaystyle	{	\sum	_{-N < n_r < \cdots < n_1 < N}
										\frac	{{e_1}^{n_1} \cdots {e_r}^{n_r}}
												{(n_1 - V_1 + X) \cdots (n_r - V_r + X)}
							}
			\ .
		\end{array}
	\end{equation}
	\\
	\noindent
	Now, we can write down the partial fraction expansion in $\mathcal{T}ig_N^\p$:
	$$	\begin{array}{ll}
			Tig_N	^{	\begin{scriptsize}
							\left (
								\begin{array}{@{}c@{}}
									u_1, \cdots, u_r	\\
									V_1, \cdots, V_r
								\end{array}
							\right )
						\end{scriptsize}
					}
			\hspace{-0.2cm}
			=
			\displaystyle	{	\sum	_{k = 1}
										^r
										\hspace{0.1cm}
										\sum	_{-N < n_r < \cdots < n_1 < N}
							}
			&	\displaystyle	{	\left (
											\prod	_{	j \in \crochet{1}{r}
														\atop
														j \neq k
													}
													\frac	{{e_j}^{n_j}}
															{n_j - n_k + V_k - V_j}
									\right )
									\times
								}
			\\
			&	\displaystyle	{	\frac	{{e_k}^{n_k}}
											{n_k - V_k + X}
								}
			\ .
		\end{array}
	$$
			
	Plugging this expansion in $(\ref{Tig})$, after some computations, we obtain in $\C [\![X]\!][\![(V_r)_{r \in \N^*}]\!]$:
	$$	Tig^\p (X)
		=
		\displaystyle	{	\delta^{\p}
							+
							\mathcal{Z}ig^{\p \rfloor} \times Qig^{\lceil \p \rceil} (X) \times \mathcal{Z}ig_-^{\lfloor \p}
						} \ .
	$$
	\\
	\noindent
	It is clear that $\mathcal{Q}ig^\p (z)$ is also well-defined in a neighbourhood of $0$ in $\text{seq} (\C)$. Moreover, the generating functions
	$\mathcal{Z}ig^\p$ and $\mathcal{Z}ig_-^\p$ are actually Taylor expansions defined in $\text{seq} \big ( D(0 ; 1) \big )$~. Here, the key point is that
	$|\mathcal{Z}e^{\seq{s}}| \leq 4^r r!$ for all sequences $\seq{s} \in \text{seq} (\N^*)$ of length $r$. Such an upper bound is far from being precise, but is
	sufficient for our purpose. A proof of it comes from the shift to the right of the ones beginning an evaluation sequence and:
	$$	\sharp sh\text{\textbf{\underline{\textit{e}}}}(\seq{\pmb{\alpha}} ; \seq{\pmb{\beta}})
		=
		\displaystyle	{	\sum	_{k = 0}
									^{\min (a ; b)}
									2^k
									\binom{a}{a - k}
									\binom{b}{b - k}
						} \ .
	$$
	This gives a neighbourhood of $0$ in $\text{seq} (\C)$ where
	$\mathcal{Z}ig^{\p \rfloor} \times Qig^{\lceil \p \rceil} (X) \times \mathcal{Z}ig_-^{\lfloor \p}$ defines an analytic function. The identity theorem for
	holomorphic functions concludes the proof of this theorem.	\\
	\qed
\end{Proof}

To conclude this section, let us explain why the corrective term $\delta^\p$ is mandatory.

Let us imagine this is not the case, that is to say that we have two functions $\varphi$ and $\psi$ such that the mould $\mathcal{T}e^\p$ is extended to the divergent case by $\mathcal{T}e^\p = \mathcal{H}e^\p_{+, \varphi} \times \mathcal{C}e^\p \times \mathcal{H}e^\p_{-, \psi}$ , where $\mathcal{H}e^\p_{+, \varphi}$ and $\mathcal{H}e^\p_{-, \psi}$ are respectively the extension to the divergent case of the moulds $\mathcal{H}e^\p_+$ and $\mathcal{H}e^\p_-$ such that $\mathcal{H}e^1_+ = \varphi$ and $\mathcal{H}e^1_- = \psi$~.

Then, we would have the following identity because of the fundamental equality proved in the previous theorem, but without the corrective term:

$$	\mathcal{H}ig_{+,\varphi}^\p \times \mathcal{C}ig^\p \times \mathcal{H}ig_{-,\psi}^\p
	=
	\mathcal{T}ig^\p
	=
	\mathcal{Z}ig^{\p \rfloor} \times \mathcal{Q}ig^{\lceil \p \rceil} (z) \times \mathcal{Z}ig_-^{\lfloor \p}
	\ .
$$

In particular, we would have equality of the constant terms of these generating functions, that is, we would have $1 = 0$ $\cdots$ Consequently, we cannot find a choice of the functions $\varphi$ and $\psi$ that extend the Hurwitz multizeta functions such that there is no corrective term in the reduction into monotangent of divergent multitangent functions.

\subsubsection {Reduction into monotangent functions for divergent multitangent functions}
\label{reduction2}

Theorem \ref{reduction en monotangente de Tig} admits the following corollary which comes from a direct formal power series expansion of $\mathcal{T}ig^\p (z)$~. This corresponds exactly to the fourth point mentionned at the beginning of this section. Let us remark that, from now on, we only consider moulds and not bimoulds.

Let us recall that we have introduced the following notations (see sections \ref{section-3-1} and \ref{Second expression of Tig}):
	$$	\begin{array}{lll}
			^i B_{\seq{k}}^{\seq{s}}
			&=&
			\displaystyle	{	\left (
										\prod	_{l = 1}
												^{i - 1}
												(-1)^{k_l}
								\right )
								\left (
										\prod	_{l = i + 1}
												^r
												(-1)^{s_l}
								\right )
								\left (
										\prod	_{	l = 1
													\atop
													l \neq i
												}
												^r
												\binom{s_l + k_l - 1}{s_l - 1}
								\right )
							}\ .
			\vspace{0.1cm}	\\
			\mathcal{Z}^{\seq{s}}_{i,k}
			&=&
			\displaystyle	{	\sum	_{	k_1, \cdots ,  \widehat{k_i}, \cdots , k_r \geq 0
											\atop
											k_1 + \cdots + \widehat{k_i} + \cdots + k_r = k
										}
										{}^i B_{\seq{k}}^{\seq{s}}
										\mathcal{Z}e^{s_r + k_r, \cdots ,  s_{i + 1} + k_{i + 1}}
										\mathcal{Z}e^{s_1 + k_1, \cdots ,  s_{i - 1} + k_{i - 1}}
							}\ .
			\vspace{0.1cm}	\\
			\delta^{\seq{s}} &=&	\left \{
											\begin{array}{ll}
												\displaystyle	{	\frac	{(i \pi)^r}	{r!}	}
												&
												\text{, if } \seq{s} = 1^{[r]} \text{ et if } r \text{ is even.}
												\\
												0
												&
												\text{, otherwise.}
											\end{array}
									\right.
		\end{array}
	$$

Then, we have:

\begin{Theorem}
	\label{reduction en monotangente 2}
	\textit{(Reduction into monotangent functions, version $2$)}	\\
	For all sequences $\seq{s} \in \text{seq} (\N^*)$, we have:
	$$	\mathcal{T}e^{\seq{s}} (z)
		=
		\delta^{\seq{s}}
		+
		\displaystyle	{	\sum	_{i = 1}
									^r
									\sum	_{k = 1}
											^{s_i}
											\mathcal{Z}_{i,s_i - k}^{\seq{s}}
											\mathcal{T}e^k (z)
						} \ .
	$$
	Moreover, if $\seq{s} \in \mathcal{S}_{b,e}^\star$, the summation of $k$ begins at $2$~.
\end{Theorem}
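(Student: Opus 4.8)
The plan is to derive this final reduction formula (Theorem \ref{reduction en monotangente 2}) as a direct consequence of the second expression of the generating series $\mathcal{T}ig^\p$ established in Theorem \ref{reduction en monotangente de Tig}, by extracting the appropriate Taylor coefficients. Recall that $\mathcal{T}ig^\p$ is the generating function of the mould $\mathcal{T}e^\p(X)$, so that for a length-$r$ sequence $\seq{s}$ the multitangent $\mathcal{T}e^{\seq{s}}(z)$ is recovered as the coefficient of $V_1^{s_1 - 1} \cdots V_r^{s_r - 1}$ in $\mathcal{T}ig^{V_1, \cdots, V_r}(z)$. The whole computation is therefore a coefficient-extraction exercise applied to the identity
$$	\mathcal{T}ig^\p (z)
	=
	\delta^{\p}
	+
	\mathcal{Z}ig^{\p \rfloor} \times \mathcal{Q}ig^{\lceil \p \rceil} (z) \times \mathcal{Z}ig_-^{\lfloor \p}
	\ .
$$

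First I would unfold the mould product on the right-hand side according to the definition of $\times$: since $\mathcal{Q}ig^\p$ vanishes except on sequences of length one, where $\mathcal{Q}ig^{y}(z) = -\mathcal{T}e^1(y - z)$, the only surviving terms in the product split the full sequence as $\seq{s} = \seq{s}^{< i} \cdot s_i \cdot \seq{s}^{> i}$, with the left block handled by $\mathcal{Z}ig^{\p \rfloor}$, the single middle letter by $\mathcal{Q}ig$, and the right block by $\mathcal{Z}ig_-^{\lfloor \p}$. This reproduces exactly the one-index-at-a-time splitting that already appeared in Section \ref{section-3-1} for the convergent case. The flexion markers $\rfloor$, $\lceil \cdot \rceil$ and $\lfloor$ encode the shifts of the variables that correspond, after coefficient extraction, to the binomial factors ${}^i B_{\seq{k}}^{\seq{s}}$ and the sign bookkeeping; I would identify the contribution of the central factor $\mathcal{Q}ig^{\lceil y \rceil}(z) = -\mathcal{T}e^1(y - z)$ as the source of the monotangent functions $\mathcal{T}e^k(z)$ via the differentiation property $\frac{\partial}{\partial z}\mathcal{T}e^1 = -\mathcal{T}e^2$ and its iterates, so that extracting the coefficient of the middle variable to a given power $s_i - k$ produces precisely $\mathcal{T}e^k(z)$ together with the factorial normalization. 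The blocks $\mathcal{Z}ig$ and $\mathcal{Z}ig_-$ then contribute, upon extracting the Taylor coefficients in the remaining variables, the multizeta products $\mathcal{Z}e^{s_1 + k_1, \cdots, s_{i-1} + k_{i-1}}$ and $\mathcal{Z}e^{s_r + k_r, \cdots, s_{i+1} + k_{i+1}}$ assembled in the definition of $\mathcal{Z}_{i,s_i - k}^{\seq{s}}$.

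The corrective term is immediate: extracting the coefficient of $V_1^{s_1 - 1} \cdots V_r^{s_r - 1}$ from $\delta^\p$ gives a nonzero value only when every $s_j = 1$ (so that $\seq{s} = 1^{[r]}$) and $r$ is even, yielding exactly $\delta^{\seq{s}} = \frac{(i\pi)^r}{r!}$, which matches the stated definition. For the final clause, when $\seq{s} \in \mathcal{S}_{b,e}^\star$ one has $s_1, s_r \geq 2$, so no term $\mathcal{T}e^1(z)$ can appear: the index $k = 1$ contributions must vanish. I would justify this either by invoking the exponentially flat character argument of Section \ref{absence de composante TE1} (the coefficient of $\mathcal{T}e^1$ is forced to be zero), or more cleanly by noting that when $s_1, s_r \geq 2$ the generating-series identity specializes to the convergent trifactorisation of Lemma \ref{factorisation moulienne}, whose correction mould $\mathcal{C}e^\p$ carries no length-one singular contribution at $k = 1$. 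The main obstacle I anticipate is purely notational-combinatorial: correctly translating the flexion-marker shifts into the binomial coefficients $\binom{s_l + k_l - 1}{s_l - 1}$ and the signs $(-1)^{k_l}$, $(-1)^{s_l}$ packaged in ${}^i B_{\seq{k}}^{\seq{s}}$, and verifying that the abstract contraction rule for $\mathcal{Z}ig$ integrates consistently across the three blocks. Once this dictionary is set up carefully, matching it against the convergent computation of Section \ref{reduction} term by term completes the proof, since the formal identity holds as an equality of Taylor expansions and the analyticity is guaranteed by the bound $|\mathcal{Z}e^{\seq{s}}| \leq 4^r r!$ already used in the proof of Theorem \ref{reduction en monotangente de Tig}.
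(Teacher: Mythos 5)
Your proposal follows essentially the same route as the paper: the paper obtains this theorem precisely as a corollary of Theorem \ref{reduction en monotangente de Tig}, by a direct formal power series expansion of the identity $\mathcal{T}ig^\p (z) = \delta^{\p} + \mathcal{Z}ig^{\p \rfloor} \times \mathcal{Q}ig^{\lceil \p \rceil} (z) \times \mathcal{Z}ig_-^{\lfloor \p}$, with the length-one support of $\mathcal{Q}ig^\p$ forcing the one-index splitting and the expansion of $-\mathcal{T}e^1(y - z)$ generating the monotangents, exactly as you describe. Your handling of the final clause (reduction to the convergent case of Theorem \ref{reduction en monotangente}, where the absence of $\mathcal{T}e^1$ follows from the exponentially flat character) also matches the paper's treatment.
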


This result is computable. One can give complete tables for divergent multitangent functions up to a fixed weight, as in the convergent case (see Table $\ref{table1}$ for the convergent case and Table $\ref{table8}$ for the divergent case)~.

\input {table8.sty}

For example, one can see that $\mathcal{T}e^{2,1}$ and $\mathcal{T}e^{1,2}$ are identically vanishing. As already said in the introduction, this remarkable fact shows that the relation of \symmetrelity
$	\mathcal{T}e^{2} \mathcal{T}e^1 
	=
	\mathcal{T}e^{2,1} + \mathcal{T}e^{1,2} + \mathcal{T}e^3
	=
	\mathcal{T}e^3(z)\label{einsenstein}
$ allows us to find in a different way (more complicated, but more general) the simplest relations between Eisenstein series.

\section	{Some explicit computations of multitangent functions}
\label{calculs explicite}

Before presenting some explicit computations of multitangent functions, let us recall a few notations. If $\seq{\pmb{\alpha}}$ is any sequence, then $\seq{\pmb{\alpha}}^{[r]}$ denotes the sequence $\underbrace {\seq{\pmb{\alpha}}   \cdot  \cdots   \cdot  \seq{\pmb{\alpha}}}_{r \text{ times}}$ , where the sequence $\seq{\pmb{\alpha}}$ is repeated $k$ times. In particular, $n^{[k]}$ is the sequence $(n ; \cdots ; n)$ where $n$ is repeated $k$ times.

\subsection	{Computation of $\mathcal{T}e^{1^{[r]}}(z)$ , for $r \in \N$}
\label{calcul_de_Te111}
For all $r \in \N$, $\mathcal{T}e^{1^{[r]}}(z)$ is the constant term of $Tig^{Y_1,\cdots,Y_r}$, so

$$	\mathcal{T}e^{1^{[r]}} (X)
	=
	\left \{
		\begin{array}{@{}c@{}l@{}}
			\displaystyle	{	\frac	{(i\pi)^r}	{r!}	}
			&
			\text{, if } r \in 2\Z
			\\
			0
			&
			\text{, if } r \not \in 2\Z
		\end{array}
	\right \}
	+
	\left (
			\sum	_{k = 0}
					^{r - 1}
					\mathcal{Z}ig^{0^{[k]}} \mathcal{Z}ig_-^{0^{[r - 1 - k]}}
	\right )
	\mathcal{T}e^1 (X) \ .
$$

We will evaluate	$	\displaystyle	{	\sum	_{k = 0}
								^n
								\mathcal{Z}ig^{0^{[k]}} \mathcal{Z}ig_-^{0^{[n - k]}}
						}
			$ for $n \in \N^*$, by considering the product $\mathcal{Z}_+ \mathcal{Z}_-$, where:
$$	\left \{
			\begin{array}{l}
				\displaystyle	{	\mathcal{Z}_+	= \sum	_{n \geq 0}
															\mathcal{Z}ig^{0^{[n]}} X^n
													= \sum	_{n \geq 0}
														\mathcal{Z}e^{1^{[n]}} X^n
								} \ .
				\\
				\displaystyle	{	\mathcal{Z}_-	= \sum	_{n \geq 0}
															\mathcal{Z}ig_-^{0^{[n]}} X^n
													= \sum	_{n \geq 0}
															\mathcal{Z}e_-^{1^{[n]}} X^n
								} \ .
			\end{array}
	\right.
$$

The mould $\mathcal{Z}e^\p$ and $\mathcal{Z}e_-^\p$ being \symmetrel, we automatically obtain the following formal differential equations
(see Property $\ref{equadif}$)~:

$$	\left \{
			\begin{array}{l}
				\displaystyle	{	\label{calcul de Ze1...1)}
									D \mathcal{Z}_+	=	\mathcal{Z}_+
														\times
														\left (
																\sum	_{n \geq 0}
																		(-1)^n \mathcal{Z}e^{n + 1} X^n
														\right )
													=	\mathcal{Z}_+
														He_+^1
								} \ .
				\vspace{0.1cm}	\\
				\displaystyle	{	D\mathcal{Z}_-	=	\mathcal{Z}_-
														\times
														\left (
																\sum	_{n \geq 0}
																		(-1)^n \mathcal{Z}e_-^{n + 1} X^n
														\right )
													=	\mathcal{Z}_-
														He_-^1
								} \ .
			\end{array}
	\right.
$$

So:	$	\begin{array}[t]{@{}lllll}
				D (\mathcal{Z}_+\mathcal{Z}_-)	&=&	\mathcal{Z}_+ He_+^1 \mathcal{Z}_- + \mathcal{Z}_+ He_-^1 \mathcal{Z}_-
												\ = \ 	\mathcal{Z}_+ \mathcal{Z}_- \big ( He_+^1 + He_-^1 \big )
				\\
												&=&	-2 \mathcal{Z}_+ \mathcal{Z}_-	\left (
																							\displaystyle	{	\sum	_{n \geq 0}
																														\mathcal{Z}e^{2n + 2} X^{2n + 1}
																											}
																					\right )
				\vspace{0.1cm}	\\
												&=&	-2 \mathcal{Z}_+ \mathcal{Z}_-	\left (
																							\displaystyle	{	\sum	_{n \geq 1}
																														\mathcal{Z}e^{2n} X^{2n - 1}
																											}
																					\right ) \,.
			\end{array}
		$
\\
\\
Letting $Exp$ be the exponential map, we obtain:
$$	\mathcal{Z}_+ \mathcal{Z}_-
	=
	Exp	\left (
				- \displaystyle	{	\sum	_{n \geq 1}
											\frac	{\mathcal{Z}e^{2n}}	{n}
											X^{2n}
									}
		\right ) .
$$

\noindent
On the other hand, in $\C(\!(X)\!)$, we have:	$$	He^1_+(X) + He^1_-(X) = Te^1(X) - X^{-1}
													=
													\displaystyle	{	\pi   \frac	{\cos (\pi X)}	{\sin(\pi X)}
																		-
																		\frac	{1}	{X}
																	}
													\ .
												$$
Indeed, this relation is valid in $\C[\![X]\!]$, so:
$$	He^1_+(X) + He^1_-(X)
	=
	D	\left (
				Log	\left (
							\displaystyle	{	\frac	{\sin (\pi X)}	{\pi X}	}
					\right )
		\right )
	\ .
$$
So that:
$$	\mathcal{Z}_+ \mathcal{Z}_-
	=
	Exp	\left (
				- \displaystyle	{	\sum	_{n \geq 1}
											\frac	{\mathcal{Z}e^{2n}}	{n}
											X^{2n}
									}
		\right )
	=
	\frac	{\sin(\pi X)}	{\pi X}
	=
	\sum	_{n \geq 0}
			(-1)^n
			\frac	{(\pi X)^{2n}}
					{(2n + 1)!} \ .
$$

\noindent
Finally, we obtain:
$$	\begin{array}{lll}
		Te^{1^{[r]}} (X)
		&=&
		\left \{
			\begin{array}{@{}c@{}l@{}}
				\displaystyle	{	\frac	{(i\pi)^r}	{r!}	}
				&
				\text{ , if } r \in 2\Z
				\\
				0
				&
				\text{ , if } r \not \in 2 \Z
			\end{array}
		\right \}
		+
		\left \{
			\begin{array}{@{}c@{}l@{}}
				0
				&
				\text{ , if } r \in 2\Z
				\\
				\displaystyle	{	\frac	{(i\pi)^{r - 1}}	{r!}	}
				&
				\text{ , if } r \not \in 2\Z
			\end{array}
		\right \}
		\times
		Te^1 (X) \ ,
	\end{array}
$$
\noindent
and the analytic equality follows for all $z \in \C - \Z$:
$$	\mathcal{T}e^{1^{[r]}} (z)
	=
	\left \{
			\begin{array}{@{}l@{}l@{}}
				(-1)^p
				\displaystyle	{	\frac	{\pi^{2p}}	{(2p)!}	}
				&
				\text{ , if } r = 2p \ .
				\vspace{0.1cm}	\\
				(-1)^p
				\displaystyle	{	\frac	{\pi^{2p}}	{(2p + 1)!}	}
				\mathcal{T}e^1 (z)
				&
				\text{ , if } r = 2p + 1 \ .
			\end{array}
	\right.
$$

\subsection	{Computation of $\mathcal{T}e^{n^{[k]}} (z)$, for $n \in \N^*$ and $k \in \N$}

We now prove Property \ref{Property2Intro}, p. \pageref{Property2Intro}, giving an explicit evaluation of all multitangent functions of the form $\mathcal{T}e^{n^{[k]}} (z)$, for $n \in \N^*$ and $k  \in  \N$,
in terms of monotangent functions and multizeta values.

We will use an elementary theory of formal power series in one indeterminate. The central point is the following lemma. This gives us a formal differential equation automatically satisfied by the generating functions of the family of multitangent functions under consideration. Then, we only have to find out a formal power series expansion of solutions of this equation.

\subsubsection	{A property linking \symmetrelity and formal differential equation}

Let us begin by proving the following general property concerning \symmetrel moulds:

\begin{Property}
	\label{equadif}
	Let us consider a commutative algebra $\mathbb {A}$, a semigroup $(\Omega ; +)$ and
	a \symmetrel mould $Se^\p \in \mathcal{M}_{\mathbb {A}}^\p (\Omega)$~.
	\\
	For all $\omega \in \Omega$ , we set:
	$$	F_\omega = \displaystyle	{	\sum	_{p = 0}
												^{+ \infty}
												Se^{\omega^{[p]}} X^p
									}\hspace{0.5cm} , \hspace{0.5cm}
		G_\omega = \displaystyle	{	\sum	_{p = 0}
												^{+ \infty}
												(-1)^p
												Se^{(p + 1)\omega}
												X^p
									}   .
	$$
	For a given $\omega \in \Omega$, the formal power series $F_\omega$ satisfies the differential equation:
	$$	D Y = Y G_\omega \ .	$$
\end{Property}

Let us point out that this property is well-known in combinatorics as the Newton relations for symmetric functions. Here, the term $F_\omega$ represents the elementary symmetric functions while the term $G_\omega$ is then the power sums.

The proof we will give here of this property is based on the shift to the right of the ones beginning an evaluation sequence $\omega \in \Omega$ of the mould $Se^\p$ ; so the proof is exactly based on the notion of symmetr\textbf{\textit {\underline {e}}}lity. This algorithm is recursively presented by the following formula:
$$	Se^{\omega^{[p]}} Se^\omega
	=
	(p + 1) Se^{\omega^{[p + 1]}}
	+
	\sum	_{k = 0}
			^{p - 1}
			Se^{\omega^{[k]},  2 \omega,  \omega^{[p - k - 1]}}  .
$$

\begin{Proof}
			Let us fix $\omega \in \Omega$ and introduce the temporary notation $u_{p,l}$ for $(p ; l) \in \N \times \N^*$:
			$$	u_{p,l} = \displaystyle	{	(-1)^l	\sum	_{k = 0}
															^p
															Se^{\omega^{[k]} , l \omega, \omega^{[p - k]}}
										}.
			$$

			Then, using the \symmetrelity property, we have for $(p ; l) \in (\N^*)^2$ :
			\\
			$	\begin{array}[t]{@{}lll}
					(-1)^l Se^{\omega^{[p]}} Se ^{l \omega}
					&=&
					(-1)^l	\displaystyle	{	\sum	_{k = 0}
														^p
														Se^{\omega^{[k]}, l \omega , \omega^{[p - k]}}
												-
												(-1)^{l + 1}	\sum	_{k = 0}
																		^{p - 1}
																		Se^{\omega^{[k]} , (l + 1) \omega , \omega^{[p -1 - k]}}
											}
					\\
					&=&
					u_{p,l} - u_{p - 1, l + 1} \ .
				\end{array}
			$
			\\
			\\
			
			This implies successively, for $p \in \N^*$:\\
			$	\begin{array}[t]{@{}lll}
					\displaystyle	{	\sum	_{l = 0}
												^{p - 1}
												(-1)^l Se^{\omega^{[p - l]}} Se^{(l + 1) \omega}
									}
					&=&
					\displaystyle	{	-\sum	_{l = 1}
												^{p}
												(-1)^l Se^{\omega^{[p - (l - 1)]}} Se^{l \omega}
									}
					\vspace{0.2cm}	\\
					&=&
					\displaystyle	{	-\sum	_{l = 1}
												^p
												\big (
														u_{p - (l - 1), l} - u_{p - l, l + 1}
												\big )
									}
					\\
					&=&
					u_{0, p + 1} - u_{p, 1}
					\\
					&=&
					(-1)^{p + 1} Se^{(p + 1) \omega} + (p + 1) Se^{\omega^{[p + 1]}}  .
				\end{array}
			$
			\\
			\\
			Then:	$	 \displaystyle	{	(p + 1) Se^{\omega^{[p + 1]}}
											=
											\sum	_{l = 0}
													^p
													(-1)^l
													Se^{\omega^{[p - 1]}} Se^{(l + 1) \omega}
										}
					$ , for all $p \in \N^*$~.
			\\
			\\
			Since the previous equality is also true for $p = 0$, we can state the following equality between formal power series:
			$$	D F_\omega = F_\omega G_\omega  \ .	$$
			\qed
\end{Proof}

Using the fact that two formal power series with the same formal derivative differ only by their
constant term, it is not difficult to see that, if $\mathbb {A}$ is a ring and if
$\varphi \in \mathbb {A}[\![X]\!]$, then the formal power series satisfying $DY = Y D\varphi$ are defined by:
$$	Y(X) = C  Exp(\varphi(X) -  \varphi(0)),  C \in \mathbb {A} \ .	$$

Here, $Exp$ refers to the exponential. The resolution of such a formal differential equation boils
down to a problem of expressing an indefinite integral. The constant $C$ is then determined
by the constant term in $Y$.
\\

Recall that	$	\displaystyle	{	\mathcal{Z}_+ = \sum	_{r \geq 0}
															\mathcal{Z}e^{1^{[r]}} X^r
								}
			$ satisfies the formal following differential equation, as we have seen in \S \ref{calcul_de_Te111} during the evaluation of $Te^{1, \cdots, 1}(X)$:
$$	D \mathcal{Z}_+
	=
	\mathcal{Z}_+	\left (
						\sum	_{p \geq 0}
								(-1)^p \mathcal{Z}e^{p + 1} X^p
					\right )
	=
	\mathcal{Z}_+ He^1_+ \ .
$$

\subsubsection	{Application to the mould $Te^\p(X)$}

Recall that the mould $Te^\p (X)$ has been extended to $\text{seq} (\N^*)$ in the previous section, in order to preserve the \symmetrelity property. Hence, the previous property applies: if we set, for $n \in \N^*$, 
$	T_n = \displaystyle	{	\sum	_{p = 0}
									^{+ \infty}
									Te^{n^{[p]}}(X) Y^p
						}
$ and	$	U_n = \displaystyle	{	\sum	_{p = 0}
											^{+ \infty}
											(-1)^p Te^{n(p + 1)} (X) Y^p
								}
		$, we then have, for all positive integer $n$, in $\C(\!(X)\!)[\![Y]\!]$:
$$	D T_n = T_n  U_n \ .$$

We just need to compute a formal indefinite integral of $U_n$ in order to compute $Te^{n^{[p]}} (X)$ for all $p \in \N$. Let us consider $V_n \in \C(\!(X)\!)[\![Y]\!]$ defined by $V_n(X ; Y) = U_n (X ; Y^n)$~. A permutation of formal summation symbols (which is a priori a non-authorized operation) , followed by a partial fraction expansion, suggests we have for all positive integer $n$:
$$						  	nY^{n - 1} V_n(X  ; Y)
							=
							\displaystyle	{	- \sum	_{k = 0}
														^{n - 1}
														e	^{(2k + 1)	\frac	{i \pi}	{n}}
														Te^1 	\left (
																		X - e^{(2k + 1)	\frac	{i \pi}	{n}} Y
																\right )
											} \ .
$$

Recall that, here, $S$ denotes the linear map that associates with each formal power series its
constant term, while $D_{(Y)}$ denotes the formal derivative relative to the indeterminate $Y$. Indeed,
the Taylor formula allows to prove this relation in the ring $\C(\!(X)\!)[\![Y]\!]$~. For $l \in \N$ ,
if we denote the right hand side of the previous equality by $W_n$, we have successively:
\\
$	\begin{array}{@{}lll}
		\displaystyle	{	\frac	{1}	{l!}
							S	\left (
										D_{(Y)}^l W_n
								\right )
						}
		&=&
		S	\left (
					\displaystyle	{	- \sum	_{k = 0}
												^{n - 1}
												e	^{(2k + 1)(l + 1)	\frac	{i \pi}	{n}	}
												Te^{l + 1}	\left (
																	X - e	^{(2k + 1) \frac	{i \pi} {n}} Y
															\right )
									}
			\right )
		\vspace{0.2cm}	\\
		&=&
		-
		\left (
				\displaystyle	{	\sum	_{k = 0}
											^{n - 1}
											e	^{(2k + 1)(l + 1)	\frac	{i \pi}	{n}	}
								}
		\right )
		Te^{l + 1}	(X)
		\vspace{0.1cm}	\\
		&=&
		\left \{
				\begin{array}{ll}
					0
					&	\text{, si } l + 1 \not \equiv 0 [n] \ .
					\\
					n(-1)^{q + 1} Te^{qn} (X)
					& 	\text{, si } l + 1 = qn \ .
				\end{array}
		\right.
	\end{array}
$

Hence:
$	\begin{array}[t]{lll}
		W_n
		&=&
		\displaystyle	{	\sum	_{l = 0}
									^{+ \infty}
									\frac	{1}	{l!}
									S	\left (
												D_{(Y)}^l W_n
										\right )
									Y^l
							=
							\sum	_{q = 1}
									^{+ \infty}
									n(-1)^{q + 1} Te^{qn}(X) Y^{qn - 1}
						}
	\end{array}
$\\
$	\begin{array}[t]{lll}
		\phantom{\text{Hence: } W_n}
		&=&
		\displaystyle	{	n Y^{n - 1}
							\sum	_{q = 0}
									^{+ \infty}
									(-1)^{q} Te^{n(q + 1)}(X) Y^{qn}
						}
		\vspace{0.1cm}	\\
		&=&
		n Y ^{n - 1} V_n(X ;Y) \ .
	\end{array}
$

In the ring $\C(\!(X)\!)[\![Y]\!]$, we therefore have:
$$	\displaystyle	{	n Y ^{n - 1} V_n(X ; Y)
						=
						-
						\sum	_{k = 0}
								^{n - 1}
								e	^{(2k + 1)	\frac	{i \pi}	{n}	}
								Te^1	\left (
												X
												-
												e	^{(2k + 1)	\frac	{i \pi}	{n}	}
												Y
										\right )
					}  \ .
$$

The ring morphism $\varphi_n : \C(\!(X)\!)[\![Y]\!] \longrightarrow \C(\!(X)\!)[\![Y^{1/n}]\!]$
defined by $$\varphi_n(Y) = Y^{1/n}$$ is a continuous one for the $I$-adic topology; we hence
observe that if $P$ is a polynomial with coefficients in $\C(\!(X)\!)$, then
$\varphi (P(X ; Y)) = P(X ; Y^{1/n})$~. This can be extended to formal power series of
$\C(\!(X)\!)[\![Y]\!]$, using the continuity of $\varphi_n$ and the density of polynomials.

Transposed in $\C(\!(X)\!)[\![Y^{1/n}]\!]$ using the morphisms $\varphi_n$, the relation expressing $V_n(X ; Y)$ becomes in
$\C(\!(X)\!)[\![Y^{1/n}]\!]$~:
$$		\displaystyle	{	U_n(X ; Y)
							=
							- \frac {1} {n}
							\sum	_{k = 0}
									^{n - 1}
									e	^{(2k + 1)	\frac	{i \pi}	{n}}
									Te^1	\left (
													X
													-
													e^{(2k + 1)	\frac	{i \pi}	{n}}
													Y^{\frac{1}{n}}
											\right )
									Y^{\frac{1}{n} - 1}
						} \ .
$$

A priori, this last equality is in $\C(\!(X)\!)[\![Y^{1/n}]\!]$, while by definition we have $U_n \in \C(\!(X)\!)[\![Y]\!]$~. We can then proceed component by component in the ring $\C(\!(X)\!)[\![Y]\!]$~.

To express $T_n$ by using the general formula of solving a first order formal differential equation, it is sufficient to determine the exponential of the formal indefinite integral (in $Y$), without constant term, of $\omega Te^1(X + \omega Y)$ in $\C(\!(X)\!)[\![Y]\!]$~.

To this purpose, let us recall that we have proved in $\C(\!(X)\!)[\![Y]\!]$ the relation
$	Te^1 (X + Y)
	=
	\displaystyle	{	\frac	{\pi}	{\tan(\pi X + \pi Y)}	}
$ . Therefore, the formal indefinite integral in $Y$ of 
$\omega Te^1 (X + \omega Y)$, for $\omega \in \C$, without constant term, is given by
$	\text{Log }	\left (
						\displaystyle	{	\frac	{\sin (\pi(X + \omega Y))}
													{\sin (\pi X)}
										}
				\right )
$. Consequently, in $\C(\!(X)\!)[\![Y^{1/n}]\!]$, the formal indefinite primitive in $Y$ without constant term of
$\displaystyle{\frac{\omega}{n}} Te^1 \Big (X + \omega Y^{\frac{1}{n}} \Big ) Y^{\frac{1}{n} - 1}$ is
$	\text{Log }	\left (
						\displaystyle	{	\frac	{\sin (\pi(X + \omega Y^{\frac{1}{n}}))}
													{\sin (\pi X)}
										}
				\right )
$. Thus, by solving the formal differential equation in $\C(\!(X)\!)[\![Y^{1/n}]\!]$, we deduce that for all positive integer $n$:
$$	\displaystyle	{	T_n =	\sum	_{p = 0}
										^{+ \infty}
										Te^{n^{[p]}}(X) Y^p
							=	\frac	{	\displaystyle	{	\prod	_{k = 0}
																		^{n - 1}
																		\sin	\left (
																					\pi	\big (
																								X - e^{(2k + 1) \frac{i \pi}{n}} Y^{\frac{1}{n}}
																						\big )
																				\right )
															}
										}
										{	\sin^n (\pi X)	}
					} \ .
$$

Let us insist on the fact that, although seeming to be a priori a relation in $\C(\!(X)\!)[\![Y^{1/n}]\!]$, this equality holds in fact in
 $\C(\!(X)\!)[\![Y]\!]$, by definition of $T_n$~.

\subsubsection	{A new formal power series expansion of $T_n$}

In order to compute $Te^{n^{[p]}} (X)$ for $(n ; p) \in (\N^*)^2$, we need a formal power series expansion of $T_n$ expressed in another way than its definition. To get this new expansion, it is convenient to expand the product of many sinus terms.

It is easily seen, by induction on $n$, that in $\C[\![X_1 ; \cdots ; X_n]\!]$:
$$	\prod	_{k = 1}
			^n
			\sin (X_k)
	=
	\frac	{(-1)^{n - 1}}
			{2^n}
	\hspace{-0.3cm}
	\sum	_{(\varepsilon_1 ; \cdots ; \varepsilon_n) \in \{+ 1 ; - 1\}^n}
			\hspace{-0.3cm}
			(-1)^{\sharp \{ k \in \crochet{1}{n}  ;   \varepsilon_k = -1\}}
			\sin^{(n - 1)}	\left (
									\sum	_{k = 1}
											^n
											\varepsilon_k X_k
							\right ) \ .
$$

Let us consider the moulds $sg^\p$ , $e^\p$ and $s^\p$ , with values in $\C$ and defined over the alphabet
$\Omega = \{ 1 ; -1 \}$ for all sequences $\seq{\varepsilon}  \in \text{seq} (\Omega)$
by:\linebreak
$sg^{\seq{\varepsilon}} = \displaystyle	{	\prod	_{k = 1}
													^n
													\varepsilon_k
										}
						= (-1)^{\sharp \{i \in \crochet{1}{n}  ;   \varepsilon_i = - 1 \}}
$,																								 
$s^{\seq{\varepsilon}} = \displaystyle	{	\sum	_{k = 1}
													^n
													\varepsilon_k
										}
$ and	$e^{\seq{\varepsilon}} = \displaystyle	{	\sum	_{k = 1}
															^n
															\varepsilon_k e^{(2k - 1) \frac{i \pi}{n}}
												}
		$.

Let $E$ be the floor function and define all for $(k ; n) \in \N \times \N^*$ the functions $t_{k,n}$ by:
$$	\forall x \in \R\ ,  \,
	t_{k,n} (x)
	=
	\left \{
			\begin{array}{ll}
				\cos^{(n - 1)}(x)	&	\text{ , if }k \text{ is odd.}	\\
				\sin^{(n - 1)}(x)	&	\text{ , if }k \text{ is even.}	\\
			\end{array}
	\right.
$$

It follows that for all $n \in \N^*$, we have successively:
\\
\\
$	\begin{array}{@{}lll}
		T_n
		&=&
		\displaystyle	{	\frac	{(-1)^{n - 1}}	{(2 \sin (\pi X))^n}
							\sum	_{\seq{\varepsilon} = (\varepsilon_1 ; \cdots ; \varepsilon_n) \in \Omega^n}
									sg^{\seq{\varepsilon}}
									 
									\sin^{(n - 1)}	\left (
															s^{\seq{\varepsilon}} \pi X
															+
															e^{\seq{\varepsilon}} \pi Y^{\frac{1}{n}}
													\right )
						}
		\\
		&=&
		\displaystyle	{	\frac	{(-1)^{n - 1}}	{(2 \sin (\pi X))^n}
							\sum	_{\seq{\varepsilon} = (\varepsilon ;\cdots ; \varepsilon_n) \in \Omega^n}
									\left (
											\displaystyle	{	sg^{\seq{\varepsilon}}
																 
																\sum	_{k = 0}
																		^{+ \infty}
																		\frac	{	(-1)^{E ( \frac{k + 1}{2} )}
																					(e^{\seq{\varepsilon}} \pi)^k
																				}
																				{k!}
																		t_{k,n} (s^{\seq{\varepsilon}} \pi X)
																		Y^{\frac{k}{n}}
															}
									\right )
						}
		\\
		&=&
		\displaystyle	{	\sum	_{k = 0}
									^{+ \infty}
									\left (
											\displaystyle	{	\frac	{(-1)^{n - 1 + E ( \frac{k + 1}{2} )} \pi^k}
																		{k! (2 \sin (\pi X))^n}
																\sum	_{	\seq{\varepsilon}
																			=
																			(\varepsilon_1; \cdots; \varepsilon_n) \in \Omega^n
																		}
																		sg^{\seq{\varepsilon}}
																		 
																		(e^{\seq{\varepsilon}})^k
																		 
																		t_{k,n} (s^{\seq{\varepsilon}} \pi X)
																		Y^{\frac{k}{n}}
															}
									\right ) .
						}
	\end{array}
$

Note that we have used in the second equality
$$	\sin^{(n - 1)} (X + Y) = \displaystyle	{	\sum	_{k = 0}
														^{+ \infty}
														\frac	{(-1)^{E ( \frac{k + 1}{2} )}}
																{k!}
														t_{k,n}(X) Y^k
											} \text{ in } \C[\![X ; Y]\!] \ ,
$$
\noindent
and that in the last sum, $\Omega^n$ is a finite set.
\\

As already indicated, $T_n \in \C(\!(X)\!)[\![Y]\!]$ by definition of $T_n$. This implies that the coefficients of $Y^{\frac{k}{n}}$ if $n \nmid k$ vanish in the previous equality. Hence:

$$	\displaystyle	{	T_n
						=
						\sum	_{k = 0}
								^{+ \infty}
								\left (
										\displaystyle	{	\frac	{(-1)^{n - 1 + E ( \frac{kn + 1}{2} )} \pi^{kn}}
																	{(kn)! (2 \sin (\pi X))^n}
															\sum	_{	\seq{\varepsilon}
																		=
																		(\varepsilon_1; \cdots; \varepsilon_n) \in \Omega^n
																	}
																	sg^{\seq{\varepsilon}}
																	(e^{\seq{\varepsilon}})^{kn}
																	t_{kn,n} (s^{\seq{\varepsilon}} \pi X)
																	Y^{k}
														}
								\right ) .
					}
$$

It follows that we have proved, for all $(n ; k) \in \N^* \times \N$, the formal equality announced
in Property \ref{Property2Intro}:
$$	Te^{n^{[k]}} =	\displaystyle	{	\frac	{(-1)^{n - 1 + E ( \frac{kn + 1}{2} )} \pi^{kn}}
												{(kn)! (2 \sin (\pi X))^n}
										\sum	_{	\seq{\varepsilon}
													=
													(\varepsilon_1; \cdots; \varepsilon_n) \in \Omega^n
												}
												sg^{\seq{\varepsilon}}
												(e^{\seq{\varepsilon}})^{kn}
												t_{kn,n} (s^{\seq{\varepsilon}} \pi X) \ .
									}
$$

To conclude this computation, we only have to justify that the analytic equality follows from the
formal one. This is obvious because each (convergent or divergent) multitangent function is a Laurent
series at $0$ which is exactly given by the expression of the associated formal multitangent function.
In the componentwise equality which has just been proved, we can thus replace the straight capital
letters by cursive capital letters to conclude the proof of Property \ref{Property2Intro}.

\subsubsection	{A few examples}

For $n = 1$, this result gives, for $k \in \N^*$ and $z \in \C - \Z$:\\
$$	\begin{array}{@{}lll}
		\mathcal{T}e^{1^{[k]}}(z)	&=&	\left \{
												\begin{array}{ll}
													\displaystyle	{	\frac	{(-1)^p \pi^{2p}}	{(2p)!}
																	}
													&	\text{, if } k = 2p \ .
													\vspace{0.1cm}	\\
													\displaystyle	{	\frac	{(-1)^p \pi^{2p}}	{(2p + 1)!}
																		\mathcal{T}e^1(z)
																	}
													&	\text{, if } k = 2p + 1 \ .
												\end{array}
										\right.
	\end{array}
$$

Also, for $n = 2$, this result gives, for $k \in \N^*$ and $z \in \C - \Z$:\\
$$	\mathcal{T}e^{2^{[k]}}(z)
	=
	\displaystyle	{	\frac	{2^{2k - 1} \pi^{2k - 2}}
								{(2k)!}
						\mathcal{T}e^2(z) .
					}
$$

Table \ref{table8-1} gives some others explicit results from this property.

\input {table81.sty}

\subsection	{About odd, even or null multitangent functions}
\label{odd, even or null mtgf}
Surprisingly, there exists convergent multitangent functions which are null (see table \ref{table1}).
The first multitangent with this property is $\mathcal{T}e^{2,1,2}$. It is easy to see: the reduction
into monotangent functions imposes on $\mathcal{T}e^{2,1,2}$ to be $\C$-linearly dependent of
$\mathcal{T}e^{2}$, hence to be an even function~; nevertheless, the parity property tells us
$\mathcal{T}e^{2,1,2}$ is also an odd function. Necessary, the multitangent function
$\mathcal{T}e^{2,1,2}$ is the null function.

In the same manner, we can state the following lemma:

\begin{Lemma}
	Let $\seq{s} \in \mathcal{S}^\star_{b,e} \cap \text{seq} \big ( \{1 ; 2\} \big )$ be a symmetric sequence 
	(i.e. $\overset {\leftarrow} {\seq{s}} = \seq{s}$), of odd weight and of length greater than $1$.\\
	Then, $\mathcal{T}e^{\seq{s}}$ is the null function.
\end{Lemma}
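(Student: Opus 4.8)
The plan is to mimic exactly the argument sketched for $\mathcal{T}e^{2,1,2}$, using the interplay between the reduction into monotangent functions and the parity property. The key observation is that any sequence $\seq{s} \in \text{seq}(\{1;2\})$ of odd weight forces a strong constraint once we reduce it. First I would apply the reduction into monotangent functions (Theorem \ref{reduction en monotangente}), which writes $\mathcal{T}e^{\seq{s}}$ as a $\Q$-linear combination $\sum_{k=2}^{M} \lambda_k \mathcal{T}e^k$ of monotangent functions, where $M = \max_i s_i$. Since $\seq{s} \in \text{seq}(\{1;2\})$, every $s_i$ is at most $2$, so $M = 2$ (the case $M=1$ is excluded because $\seq{s} \in \mathcal{S}^\star_{b,e}$ requires $s_1, s_{l(\seq{s})} \geq 2$, so at least one entry equals $2$). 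Hence the reduction collapses to
\begin{equation*}
	\mathcal{T}e^{\seq{s}} (z) = \lambda \, \mathcal{T}e^2 (z) \ ,
\end{equation*}
for some rational number $\lambda$, and therefore $\mathcal{T}e^{\seq{s}}$ is a scalar multiple of $\mathcal{T}e^2$.

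Next I would invoke the parity property (third point of Property \ref{firstProperties}), which states that $\mathcal{T}e^{\seq{s}}(-z) = (-1)^{||\seq{s}||} \mathcal{T}e^{\overset{\leftarrow}{\seq{s}}}(z)$. Because $\seq{s}$ is symmetric, $\overset{\leftarrow}{\seq{s}} = \seq{s}$, and because $||\seq{s}||$ is odd, the sign $(-1)^{||\seq{s}||}$ equals $-1$. Thus
\begin{equation*}
	\mathcal{T}e^{\seq{s}}(-z) = - \mathcal{T}e^{\seq{s}}(z) \ ,
\end{equation*}
which says that $\mathcal{T}e^{\seq{s}}$ is an \emph{odd} function of $z$. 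On the other hand, the monotangent $\mathcal{T}e^2$ is an \emph{even} function: applying the same parity property to the length-one sequence $(2)$ gives $\mathcal{T}e^2(-z) = (-1)^2 \mathcal{T}e^2(z) = \mathcal{T}e^2(z)$.

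Finally, I would combine the two facts. From the reduction, $\mathcal{T}e^{\seq{s}} = \lambda \mathcal{T}e^2$ is even; from parity it is odd. An function that is simultaneously even and odd must vanish identically, so $\lambda \mathcal{T}e^2 \equiv 0$; since $\mathcal{T}e^2$ is not the zero function (it is a nonconstant holomorphic function on $\C - \Z$, indeed its Fourier expansion in Section \ref{Fourier et multitangentes} is nonzero), we conclude $\lambda = 0$ and hence $\mathcal{T}e^{\seq{s}} \equiv 0$. I do not anticipate a genuine obstacle here, as the argument is short; the only point requiring care is confirming that the hypotheses force $M=2$ exactly — that is, verifying that the combination of $\seq{s} \in \text{seq}(\{1;2\})$ and $\seq{s} \in \mathcal{S}^\star_{b,e}$ guarantees all reduced monotangents have argument $2$, so that the reduction really produces a single even term proportional to $\mathcal{T}e^2$. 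The length-greater-than-$1$ hypothesis simply ensures we are not in the trivial situation $\seq{s} = (2)$, for which the weight would be even anyway.
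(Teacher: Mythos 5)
Your proof is correct and follows essentially the same route as the paper: reduction into monotangents forces $\mathcal{T}e^{\seq{s}}$ to be a constant multiple of $\mathcal{T}e^2$ (hence even), while the parity property with $\overset{\leftarrow}{\seq{s}} = \seq{s}$ and odd weight forces it to be odd, so it vanishes. One immaterial slip: the coefficient $\lambda$ is a $\Z$-linear combination of products of multizeta values, hence a real number rather than necessarily rational, but the argument only needs it to be a constant scalar.
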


When we look at a table of convergent multitangent functions up to weight $18$, it seems that the converse is also true:

\begin{Conjecture}
	\label{MTGF_nulles}
	\textit{(Characterisation of null multitangent functions)}
	The null convergent multitangent functions are exactly the multitangent functions $\mathcal{T}e^{\seq{s}}$ with symmetric sequence
	$\seq{s} \in \mathcal{S}_{b,e}^\star \cap \text{seq} \big ( \{1 ; 2\} \big )$,
	of odd weight and of length greater than $1$~.
\end{Conjecture}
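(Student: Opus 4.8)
The plan is to translate the vanishing of $\mathcal{T}e^{\seq{s}}$ into a system of relations between convergent multizeta values, and then to identify exactly which sequences annihilate the whole system. By Theorem~\ref{reduction en monotangente} one has $\mathcal{T}e^{\seq{s}} = \sum_{k = 2}^{M} C_k(\seq{s})\,\mathcal{T}e^k$, where $M = \max(s_1 , \cdots , s_r)$ and $C_k(\seq{s}) = \sum_{i\,:\,s_i \geq k} \mathcal{Z}_{i, s_i - k}^{\seq{s}}$ is, by \eqref{Ziks}, a $\Z$-linear combination of products of two convergent multizeta values. Since the monotangent functions are $\C$-linearly independent (Lemma~\ref{linear independence of monotangent functions}), one has $\mathcal{T}e^{\seq{s}} \equiv 0$ if and only if $C_k(\seq{s}) = 0$ for every $k \in \crochet{2}{M}$. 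Thus the characterisation of null multitangents is equivalent to deciding precisely for which sequences this entire family of multizeta relations holds.

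The implication supplied by the preceding Lemma is the easy half: if $\seq{s}$ is symmetric, lies in $\text{seq}(\{1;2\})$, has odd weight and length at least $2$, then $M = 2$ forces $\mathcal{T}e^{\seq{s}} = C_2(\seq{s})\,\mathcal{T}e^2$, an even function, whereas the parity property together with $\overset{\leftarrow}{\seq{s}} = \seq{s}$ and $(-1)^{||\seq{s}||} = -1$ makes $\mathcal{T}e^{\seq{s}}$ odd, so it vanishes. For the converse I would proceed by contraposition. The length-one case is immediate, since $\mathcal{T}e^{s_1}$ is a single nonzero monotangent. The decisive unconditional step is to examine the top coefficient: specialising $k = M$ in \eqref{Ziks} kills every summation index $k_l$, leaving
$$	C_M(\seq{s}) = \sum	_{i\,:\,s_i = M} (-1)^{s_{i+1} + \cdots + s_r}\, \mathcal{Z}e^{s_r, \cdots , s_{i+1}}\, \mathcal{Z}e^{s_1, \cdots , s_{i-1}} \ . $$
When $M$ is attained at a single index, $C_M(\seq{s})$ is, up to sign, a product of convergent multizeta values, hence a nonzero positive real, so $\mathcal{T}e^{\seq{s}} \not\equiv 0$. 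This disposes at once of every sequence with a unique largest entry, in particular of every symmetric sequence whose maximum exceeds $2$ and is reached only in the interior, such as $(2;3;2)$, which is correctly predicted to be non-null.

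There remain the sequences whose maximal entry is attained at least twice. Combining the reduction with the parity property gives $C_k(\seq{s}) = (-1)^{||\seq{s}|| - k}\,C_k(\overset{\leftarrow}{\seq{s}})$, which already recovers $C_2 = 0$ in the symmetric odd-weight $\text{seq}(\{1;2\})$ case and reduces the general problem to controlling when the signed sums $C_k(\seq{s})$ vanish: for $\seq{s} \in \text{seq}(\{1;2\})$ this is the single equation $C_2(\seq{s}) = 0$, while outside $\text{seq}(\{1;2\})$ one must exhibit some $k$ with $C_k(\seq{s}) \neq 0$. Here lies the main obstacle. Proving that these explicit $\Z$-polynomial combinations of convergent multizeta values do not vanish accidentally is tantamount to a precise $\Q$-linear, indeed algebraic, independence statement for multizeta values, of the same nature as Zagier's conjecture and the conjectures of Section~\ref{projection}. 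I would try to cut down the number of independent vanishing conditions using the symmetry $\seq{s} \leftrightarrow \overset{\leftarrow}{\seq{s}}$ and the weight grading so as to match the conjectural dimensions of $\mathcal{M}ZV_{CV,p}$, but an unconditional proof seems out of reach exactly because the non-vanishing of $C_M(\seq{s})$ in the multiply-attained case cannot be certified without deep arithmetic input on multizeta values. This is precisely why the statement is a conjecture and has only been checked numerically up to weight $18$.
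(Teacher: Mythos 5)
The statement you were asked to prove is a conjecture, and the paper does not prove it either: what the paper actually establishes is the Lemma stated immediately before Conjecture \ref{MTGF_nulles} (the ``if'' direction), and it supports the converse only by its tables up to weight $18$ and by the length-$3$ case proved in \cite{Bouillot}. Your treatment of the provable direction coincides with the paper's: the reduction into monotangent functions (Theorem \ref{reduction en monotangente}) forces $\mathcal{T}e^{\seq{s}}$ to be a multiple of $\mathcal{T}e^2$, hence even, while the parity property together with $\overset{\leftarrow}{\seq{s}} = \seq{s}$ and odd weight makes it odd, so it vanishes; your alternative derivation via the identity $C_k(\seq{s}) = (-1)^{||\seq{s}||-k}\,C_k(\overset{\leftarrow}{\seq{s}})$ (a correct consequence of parity and Lemma \ref{linear independence of monotangent functions}) is equivalent. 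Where you go beyond the paper is the unconditional observation that if the maximal entry $M$ is attained at a unique index, then $C_M(\seq{s})$ is, up to sign, a product of two convergent multizeta values and hence nonzero, so $\mathcal{T}e^{\seq{s}} \not\equiv 0$; this is sound (the factors are convergent because $s_1 \geq 2$ and $s_r \geq 2$ on $\mathcal{S}^\star_{b,e}$) and mirrors the leading-term extraction the paper performs in the proof of Property \ref{relations de symetrelite}, although the paper never applies it to the nullity question. Your diagnosis of the remaining case --- maximal entry attained several times, where certifying non-vanishing of the coefficients $C_k$ built from \eqref{Ziks} would require genuine arithmetic-independence input on multizeta values --- is exactly why the statement remains a conjecture; neither you nor the paper proves the full converse, and your proposal is correctly honest about that, so there is no gap in what you actually claim to establish.
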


We mentioned that this conjecture is true for length $3$ (see \cite{Bouillot}).
\\

Let us remark that even (resp. odd) components of an odd (resp. even) multitangent function are naturaly null. The following question is then an interesting one:
``\textit	{	If $\seq{s} \in \mathcal{S}_{b,e}^\star$ (or $\text{seq}(\N^*)$)~, is there any component $\mathcal{T}e^k$, $k \in \crochet{2}{max(s_1 ; \cdots ; s_r)}$ which does not appear in the reduction into monotangent functions of $\mathcal{T}e^{\seq{s}}$ ?}''

It seems that the answer might be no, except when the multitangent function not having this component is an odd or an even function.
\\

Another question is also: ``\textit	{	If $\mathcal{T}e^{\seq{s}}$ is an odd or even function,
do we have $\overset {\leftarrow} {\seq{s}} = \seq{s}$?}'' The answer seems to be yes.
The converse is already acquired, according to the parity property.
\\

This can be summed up in the following conjecture (which obviously implies the previous one)~:

\begin{Conjecture}
	\textit{(Characterisation of odd or even multitangent functions)}	\\
	Let $\seq{s} \in \mathcal{S}^\star_{b,e}$.\\
	$1$. If the component $\mathcal{T}e^{k}$, $k \in \crochet{2}{max(s_1 ; \cdots ; s_r)}$, does not appear in the reduction into monotangent functions of
		$\mathcal{T}e^{\seq{s}}$ , then $\mathcal{T}e^{\seq{s}}$ will be of opposite parity of $k$ (and thus may be the null function)~.
	\\
	$2$. The multitangent function $\mathcal{T}e^{\seq{s}}$ is an odd or even function if and only if $\overset {\leftarrow} {\seq{s}} = \seq{s}$~.
\end{Conjecture}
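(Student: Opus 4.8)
The plan is to combine three ingredients already established: the reduction into monotangent functions (Theorem~\ref{reduction en monotangente}), the $\C$-linear independence of the monotangent functions (Lemma~\ref{linear independence of monotangent functions}), and the parity property of Property~\ref{firstProperties}. First I would write the reduction as $\mathcal{T}e^{\seq{s}} = \sum_{k=2}^{M} C_k^{\seq{s}}\, \mathcal{T}e^k$ with $M = \max(s_1,\cdots,s_r)$ and $C_k^{\seq{s}} = \sum_{i\,:\,s_i \geq k} \mathcal{Z}_{i,s_i-k}^{\seq{s}}$, the coefficients being given explicitly by \eqref{Ziks}. Since $\mathcal{T}e^k(-z) = (-1)^k \mathcal{T}e^k(z)$, the linear independence of the $\mathcal{T}e^k$ shows that $\mathcal{T}e^{\seq{s}}$ is even (resp. odd) if and only if $C_k^{\seq{s}} = 0$ for every odd (resp. even) $k$. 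This dictionary reduces both points of the statement to assertions about the vanishing pattern of the family $(C_k^{\seq{s}})_k$, and equating coefficients in the parity property yields the auxiliary relation $C_k^{\overset{\leftarrow}{\seq{s}}} = (-1)^{||\seq{s}||-k} C_k^{\seq{s}}$, which I expect to use repeatedly.

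The converse implication in the second point is immediate and is exactly the parity property: if $\overset{\leftarrow}{\seq{s}} = \seq{s}$, then $\mathcal{T}e^{\seq{s}}(-z) = (-1)^{||\seq{s}||}\mathcal{T}e^{\seq{s}}(z)$, so $\mathcal{T}e^{\seq{s}}$ is even when $||\seq{s}||$ is even and odd when $||\seq{s}||$ is odd. For the forward direction I would decompose $\mathcal{T}e^{\seq{s}}$ into its even and odd parts, which by the parity property equal $\tfrac12\bigl(\mathcal{T}e^{\seq{s}} + (-1)^{||\seq{s}||}\mathcal{T}e^{\overset{\leftarrow}{\seq{s}}}\bigr)$ and $\tfrac12\bigl(\mathcal{T}e^{\seq{s}} - (-1)^{||\seq{s}||}\mathcal{T}e^{\overset{\leftarrow}{\seq{s}}}\bigr)$. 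Consequently $\mathcal{T}e^{\seq{s}}$ is even or odd if and only if $\mathcal{T}e^{\seq{s}} = \pm\, \mathcal{T}e^{\overset{\leftarrow}{\seq{s}}}$, and the forward direction becomes precisely the implication $\mathcal{T}e^{\seq{s}} = \pm\,\mathcal{T}e^{\overset{\leftarrow}{\seq{s}}} \Rightarrow \overset{\leftarrow}{\seq{s}} = \seq{s}$, that is, an injectivity-up-to-reflection property of the map $\seq{s}\mapsto\mathcal{T}e^{\seq{s}}$. The sign $-$ paired with a symmetric $\seq{s}$ forces $\mathcal{T}e^{\seq{s}} = 0$, which is consistent with, and indeed tied to, the characterisation of null multitangents in Conjecture~\ref{MTGF_nulles}.

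For the first point I would translate the hypothesis ``the component $\mathcal{T}e^k$ does not appear'' into $C_k^{\seq{s}} = 0$ and try to show that a single vanishing coefficient of a given parity forces every coefficient of that parity to vanish. A natural tool here is the differentiation property: since $\frac{d}{dz}\mathcal{T}e^k = -k\,\mathcal{T}e^{k+1}$ while $\frac{\partial \mathcal{T}e^{\seq{s}}}{\partial z} = -\sum_i s_i\, \mathcal{T}e^{\seq{s}+\seq{e}_i}$, differentiation transports vanishing patterns across weights in a controlled, parity-shifting way, and one would combine this with the integer shape of the $\mathcal{Z}_{i,s_i-k}^{\seq{s}}$ and the symmetry relation above to try to propagate the vanishing to all same-parity components.

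The hard part will be the non-injectivity of $\seq{s}\mapsto\mathcal{T}e^{\seq{s}}$: as Section~\ref{linear relations between mtgfs} and the tables show, there are abundant $\Q$-linear relations between multitangent functions (for instance $\mathcal{T}e^{2,1,2}=0$ and $3\mathcal{T}e^{2,2,2}+2\mathcal{T}e^{3,3}=0$), so an injectivity-up-to-reflection statement cannot follow from formal manipulation of the coefficients alone and would require a complete understanding of these relations. This is exactly why the statement remains a conjecture, checked only by explicit computation of the reduction coefficients up to weight~$18$; a full proof appears to hinge on the still-conjectural description of the $\Q$-linear relations between multitangent functions, equivalently on the characterisation of null multitangents of Conjecture~\ref{MTGF_nulles}.
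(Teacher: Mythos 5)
This statement is left as a \emph{conjecture} in the paper, with no proof: the only things the paper establishes around it are that the converse implication in point $2$ is ``already acquired'' from the parity property, and that the conjecture implies the characterisation of null multitangent functions (Conjecture~\ref{MTGF_nulles}). Your proposal matches this treatment exactly --- the dictionary between parity and vanishing of the reduction coefficients via Lemma~\ref{linear independence of monotangent functions}, the proof of the converse direction of point $2$ from the parity property, and the identification of point $1$ and the forward direction of point $2$ as genuinely open (resting on the uncontrolled $\Q$-linear relations between multitangent functions and verified only computationally up to weight $18$) --- so your assessment is correct and as complete as the paper's own.
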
		

\subsection	{Explicit computation of some multitangent functions}

The reduction into monotangent functions allows us to do some explicit computations of multitangent functions. We will give a few examples in the convergent case.

In order to apply this reduction simply, here are a few elementary remarks:
\begin{enumerate}
	\item Only the indexes $i$ satisfying $s_i \geq 2$ give a contribution to the expression of the reduction into monotangent functions.
	\item If $\seq{s} \in \mathcal{S}_{b,e}^\star \cap \text{seq} (\{1;2;3\})$ is a symmetric sequence
			(ie $\overset {\leftarrow} {\seq{s}} = \seq{s}$) of even weight, only the monotangent function $\mathcal{T}e^2$
			has to be considered; this means that only the indexes $k = 2$ give a contribution to the reduction.
	\item If $\seq{s} \in \mathcal{S}_{b,e}^\star \cap \text{seq} (\{1;2;3\})$ is a symmetric sequence
			of odd weight, only the monotangent function $\mathcal{T}e^3$ has to be considered; this means that only the indexes $k = 3$ give a contribution to
			the reduction.
\end{enumerate}

Applying these remarks, a simple computation gives us the results of the table \ref{table20}. 

\input{table9.sty}
\section	{Conclusion}

In this article, we have thoroughly investigated the algebra $\mathcal{M}TGF_{CV}$ of multitangent functions, spanned as a $\Q$-vector space by the functions:
$$	\begin{array}[t]{lcll} \label{definition te}
		\mathcal{T}e^{\seq{s}} :	&	\C - \Z	&	\longrightarrow	&	\C
		\\
									&	z			&	\longmapsto	&	\displaystyle	{	\sum	_{- \infty < n_r < \cdots < n_1 < + \infty}
																								\frac	{1}
																											{(n_1 + z)^{s_1} \cdots (n_r + z)^{s_r}}
																						} \ ,
	\end{array}
$$
for sequences in	$	\mathcal{S}^\star_{b,e} =	\big \{
															\seq{s} \in \text{seq}(\N^*)
															;
															s_1 \geq 2 \text{ and } s_{l(\seq{s})} \geq 2
													\big \}
					$~.

The first properties we have proved are elementary ones and concern the \symmetrelity of the mould $\mathcal{T}e^\p$, the differentiation property and the parity property. Another seemingly easy property is in fact a deep one, namely the reduction into monotangent functions:

\begin{Theorem*}	\textit{(Reduction into monotangent functions)}	\\
	For all sequences $\seq{s} = (s_1 ; \cdots ; s_r) \in \text{seq} (\N^*)$, there exists an explicit family
	$	(z^{\seq{s}}_k)_{k \in \crochet{0}{M}}
		\in
		\mathcal{M}ZV_{CV}
		^{M + 1}
	$, with
	$M = \displaystyle	{\max	_{i \in \crochet {1}{r}} s_i	}$, such that:
	$$	\forall z \in \C - \Z\ , \ 
		\mathcal{T}e^{\seq{s}} (z)
		=
		z^{\seq{s}}_0
		+
		\displaystyle	{	\sum	_{k = 1}
									^{M}
									z^{\seq{s}}_k \mathcal{T}e^k (z)
						}\ .
	$$
	Moreover, if $\seq{s} \in \mathcal{S}^\star_{b,e}$, then $z^{\seq{s}}_0 = z^{\seq{s}}_1 = 0$ .	\\
\end{Theorem*}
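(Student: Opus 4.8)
The plan is to read off this final statement as a repackaging of the second version of the reduction, Theorem~\ref{reduction en monotangente 2}, which already establishes the expansion for every sequence of $\text{seq}(\N^*)$. That theorem asserts, for $\seq{s}$ of length $r$,
$$\mathcal{T}e^{\seq{s}} (z) = \delta^{\seq{s}} + \sum_{i=1}^r \sum_{k=1}^{s_i} \mathcal{Z}_{i,s_i-k}^{\seq{s}} \mathcal{T}e^k (z) \ .$$
First I would reorganise this double sum by collecting, for each fixed index $k$ of the monotangent function, all the contributions coming from the various $i$. Since the inner sum runs up to $s_i$, a term $\mathcal{T}e^k$ is produced exactly by those $i \in \crochet{1}{r}$ with $s_i \geq k$, and the largest such $k$ is $M = \max_i s_i$. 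This immediately yields the announced shape, with $z_0^{\seq{s}} = \delta^{\seq{s}}$ and $z_k^{\seq{s}} = \sum_{i\,:\,s_i \geq k} \mathcal{Z}_{i,s_i-k}^{\seq{s}}$ for $k \geq 1$.

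The substantive step is to verify that every coefficient lies in $\mathcal{M}ZV_{CV}$. For $k \geq 1$, the quantity $\mathcal{Z}_{i,s_i-k}^{\seq{s}}$ is by definition a finite $\Z$-linear combination (the integers being the ${}^{i}B_{\seq{k}}^{\seq{s}}$) of products of two multizeta values, possibly divergent ones regularised by $\mathcal{Z}e^1 = 0$. Here I would invoke two earlier facts: the \symmetrelity of $\mathcal{Z}e^\p$, which turns every such product back into a $\Q$-linear combination of multizeta values, and the extension of $\mathcal{Z}e^\p$ to $\text{seq}(\N^*)$ with $\mathcal{Z}e^1 = 0$ (Lemma~\ref{extension lemma}), whose values are the unit-cleansed multizetas, that is, $\Q$-linear combinations of convergent multizetas of the same weight (see \cite{Ecalle6}). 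Combining these, each $z_k^{\seq{s}}$ is a $\Q$-linear combination of convergent multizeta values, hence an element of the $\Q$-algebra $\mathcal{M}ZV_{CV}$.

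For the constant term I would argue separately: $z_0^{\seq{s}} = \delta^{\seq{s}}$ is nonzero only when $\seq{s} = 1^{[r]}$ with $r = 2m$, in which case $\delta^{\seq{s}} = (-1)^m \pi^{2m} / (2m)!$; since $\pi^{2m}$ is a rational multiple of $\mathcal{Z}e^{2^{[m]}}$ (as found in \S\ref{calcul_de_Te111}), this scalar also belongs to $\mathcal{M}ZV_{CV}$. The ``moreover'' clause is then immediate: for $\seq{s} \in \mathcal{S}^\star_{b,e}$ the sequence is not of the form $1^{[r]}$, so $z_0^{\seq{s}} = \delta^{\seq{s}} = 0$, while Theorem~\ref{reduction en monotangente 2} states that in this case the $k$-summation starts at $2$, forcing $z_1^{\seq{s}} = 0$. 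I expect the only genuinely delicate point to be the bookkeeping ensuring that the regularised divergent multizetas appearing inside $\mathcal{Z}_{i,s_i-k}^{\seq{s}}$ are indeed convergent after cleansing; the rest is a formal reindexing of an already-proven identity.
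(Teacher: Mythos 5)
Your proposal is correct and takes essentially the same route as the paper: the paper's concluding theorem is precisely a repackaging of Theorem~\ref{reduction en monotangente 2} (itself built on Theorem~\ref{reduction en monotangente} and the regularization of Section~\ref{prolongement des multitangentes au cas divergent}), obtained by collecting the coefficients of each $\mathcal{T}e^k$ exactly as you do. Your supplementary checks --- that the regularized divergent multizetas lie in $\mathcal{M}ZV_{CV}$ via unit cleansing, and that $\delta^{\seq{s}}$ is a rational multiple of $\mathcal{Z}e^{2^{[m]}}$ --- are points the paper leaves implicit, and they are handled correctly.
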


We have then immediately derived that for all $p \geq 2$, we have:
$$ 	\displaystyle	{	\mathcal{M}TGF_{CV , p} \subseteq \bigoplus	_{k = 2}
																	^p
																	\mathcal{M}ZV_{CV , p - k} \cdot \mathcal{T}e^k
					} \ .
$$

Then, we have explained why the reduction into monotangent functions is such an important operation. The reason is that this process has in a certain sense a converse, namely the projection onto multitangent functions. According to Conjectures \ref{structure of the algebra MTGF, version 2} , \ref{MZV-module structure} and \ref{new conjecture on projection} and Properties \ref{prop-proj-1} and \ref {prop-proj-2}, we have proved the following:

\begin{Theorem*} \textit{(Projection onto multitangent functions)}	\\
	The following hypothetically statement are equivalent:
	\begin{enumerate}
		\item For all $p \geq 2$ ,	$	\displaystyle	{	\mathcal{M}TGF_{CV , p} = \bigoplus	_{k = 2}
																								^{p}
																								\mathcal{M}ZV_{CV , p - k} \cdot \mathcal{T}e^k
																		} \ .
													$
		\item $\mathcal{M}TGF_{CV}$ is a $\mathcal{M}ZV_{CV}$-module.
		\item For all sequence $\seq{\pmb{\sigma}} \in \mathcal{S}_e^\star$ ,
				$\mathcal{Z}e^{\seq{\pmb{\sigma}}} \mathcal{T}e^2 \in \mathcal{M}TGF_{CV,||\seq{\pmb{\sigma}}|| + 2}$ .
	\end{enumerate}
\end{Theorem*}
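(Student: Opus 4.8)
The plan is to prove the three statements equivalent by establishing a cycle of implications $(1) \Rightarrow (2) \Rightarrow (3) \Rightarrow (1)$, exploiting the earlier Properties \ref{prop-proj-1} and \ref{prop-proj-2}. Notice first that statement $(1)$ is precisely Conjecture \ref{structure of the algebra MTGF, version 2}, statement $(2)$ is the second point of Conjecture \ref{MZV-module structure}, and statement $(3)$ is Conjecture \ref{new conjecture on projection}, except that $(3)$ here is phrased with $\seq{\pmb{\sigma}} \in \mathcal{S}_e^\star$ rather than $\mathcal{S}_b^\star$. So much of the work has already been done: Property \ref{prop-proj-1} gives the equivalence of $(1)$ and the first point of Conjecture \ref{MZV-module structure}, and Property \ref{prop-proj-2} reduces the first point of Conjecture \ref{MZV-module structure} to Conjecture \ref{new conjecture on projection}. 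The only genuinely new content is reconciling the index set $\mathcal{S}_e^\star$ in statement $(3)$ with the index set $\mathcal{S}_b^\star$ appearing in Conjecture \ref{new conjecture on projection}.

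First I would record the straightforward direction $(2) \Rightarrow (3)$: if $\mathcal{M}TGF_{CV}$ is a $\mathcal{M}ZV_{CV}$-module, then for any $\seq{\pmb{\sigma}} \in \mathcal{S}_e^\star$ the product $\mathcal{Z}e^{\seq{\pmb{\sigma}}} \mathcal{T}e^2$ is automatically an element of $\mathcal{M}TGF_{CV}$, and since both reduction and the \symmetrel multiplication preserve weight, it lands in the correct graded piece $\mathcal{M}TGF_{CV,||\seq{\pmb{\sigma}}|| + 2}$. For $(1) \Rightarrow (2)$ I would invoke Property \ref{prop-proj-1} directly, which states exactly that Conjecture \ref{structure of the algebra MTGF, version 2} implies Conjecture \ref{MZV-module structure}. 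These two steps are immediate citations.

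The substantive step is $(3) \Rightarrow (1)$, and the main obstacle is the index-set discrepancy just mentioned. The cleanest route is to show that statement $(3)$ (over $\mathcal{S}_e^\star$) implies Conjecture \ref{new conjecture on projection} (over $\mathcal{S}_b^\star$), after which Property \ref{prop-proj-2} and Property \ref{prop-proj-1} carry us to $(1)$. Here the parity property of multitangent functions is the key tool: for any sequence $\seq{\pmb{\sigma}}$ one has the reflection $\overset{\leftarrow}{\seq{\pmb{\sigma}}}$, and $\seq{\pmb{\sigma}} \in \mathcal{S}_b^\star$ if and only if $\overset{\leftarrow}{\seq{\pmb{\sigma}}} \in \mathcal{S}_e^\star$. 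Using the parity relation $\mathcal{T}e^{\seq{s}}(-z) = (-1)^{||\seq{s}||} \mathcal{T}e^{\overset{\leftarrow}{\seq{s}}}(z)$ together with $\mathcal{Z}e^{\overset{\leftarrow}{\seq{\pmb{\sigma}}}}$ being a well-defined convergent multizeta value whenever $\overset{\leftarrow}{\seq{\pmb{\sigma}}} \in \mathcal{S}_b^\star$, I would transport an expression of $\mathcal{Z}e^{\overset{\leftarrow}{\seq{\pmb{\sigma}}}} \mathcal{T}e^2$ in $\mathcal{M}TGF_{CV}$ into an expression of $\mathcal{Z}e^{\seq{\pmb{\sigma}}} \mathcal{T}e^2$ by applying the substitution $z \mapsto -z$ to every multitangent appearing and reindexing each reflected sequence. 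Since $\mathcal{M}TGF_{CV,p}$ is stable under $z \mapsto -z$ (again by the parity property) and under weight, the resulting combination stays in the right space.

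Thus the delicate point is bookkeeping: one must check that the map $\seq{s} \mapsto \overset{\leftarrow}{\seq{s}}$ carries the span $\mathcal{M}TGF_{CV,p}$ to itself and correctly matches convergent multizeta indices on both sides, so that an instance of Conjecture \ref{new conjecture on projection} for $\mathcal{S}_b^\star$ is genuinely produced from an instance of statement $(3)$ for $\mathcal{S}_e^\star$ (and vice versa). Once Conjecture \ref{new conjecture on projection} is in hand, Property \ref{prop-proj-2} gives Conjectures \ref{structure of the algebra MTGF, version 2} and \ref{MZV-module structure}, i.e.\ statement $(1)$, closing the cycle. I would expect no analytic difficulty here — everything is purely algebraic and combinatorial, living in the already-established graded framework — and the only care needed is in the symmetry argument identifying the two index conventions.
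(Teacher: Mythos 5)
Your overall skeleton is exactly the paper's own proof: in the Conclusion the author does nothing beyond citing Properties \ref{prop-proj-1} and \ref{prop-proj-2} applied to Conjectures \ref{structure of the algebra MTGF, version 2}, \ref{MZV-module structure} and \ref{new conjecture on projection}, which is precisely your plan. The problem is the one step you isolate as ``the only genuinely new content'' --- the reconciliation of $\mathcal{S}_e^\star$ with $\mathcal{S}_b^\star$ --- and the parity argument you propose for it fails. The parity property $\mathcal{T}e^{\seq{s}}(-z) = (-1)^{||\seq{s}||}\,\mathcal{T}e^{\overset{\leftarrow}{\seq{s}}}(z)$ is a statement about the \emph{functions}; the multizeta coefficients are scalars and are untouched by the substitution $z \mapsto -z$. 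Concretely, applying $z \mapsto -z$ to an identity $\mathcal{Z}e^{\overset{\leftarrow}{\seq{\pmb{\sigma}}}}\,\mathcal{T}e^2(z) = \sum_i c_i\, \mathcal{T}e^{\seq{s}^i}(z)$ only yields $\mathcal{Z}e^{\overset{\leftarrow}{\seq{\pmb{\sigma}}}}\,\mathcal{T}e^2(z) = \sum_i (-1)^{||\seq{s}^i||} c_i\, \mathcal{T}e^{\overset{\leftarrow}{\seq{s}^i}}(z)$: a second expression for the \emph{same} product, still with coefficient $\mathcal{Z}e^{\overset{\leftarrow}{\seq{\pmb{\sigma}}}}$; it never produces $\mathcal{Z}e^{\seq{\pmb{\sigma}}}$. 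Moreover, sequence reversal is not a symmetry of multizeta values ($\mathcal{Z}e^{2,3} \neq \pm\,\mathcal{Z}e^{3,2}$), and for $\seq{\pmb{\sigma}} \in \mathcal{S}_e^\star$ with $\sigma_1 = 1$ the series $\mathcal{Z}e^{\seq{\pmb{\sigma}}}$ is outright divergent under the paper's convention (the sum over $0 < n_r < \cdots < n_1$ converges only if $s_1 \geq 2$), hence is not an element of $\mathcal{M}ZV_{CV}$ at all; even the regularization of Section \ref{prolongement des multitangentes au cas divergent} gives $\mathcal{Z}e^{1,2} = -2\,\mathcal{Z}e^{3} \neq \pm\,\mathcal{Z}e^{2,1}$. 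The same objection invalidates your ``straightforward'' implication $(2) \Rightarrow (3)$: when $\sigma_1 = 1$, the product $\mathcal{Z}e^{\seq{\pmb{\sigma}}}\,\mathcal{T}e^2$ is not of the form (element of $\mathcal{M}ZV_{CV}$) times (element of $\mathcal{M}TGF_{CV}$), so the module hypothesis says nothing about it.

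The correct handling of the discrepancy is editorial, not mathematical: the $\mathcal{S}_e^\star$ in point $3$ is an inconsistency of the paper, and the statement must be read with $\mathcal{S}_b^\star$, exactly as in Conjecture \ref{new conjecture on projection}, whose supporting Properties \ref{prop-proj-1} and \ref{prop-proj-2} and whose data (Table \ref{quelques expressions de projections de multizetas}, where every projected multizeta has first entry at least $2$) all use $\mathcal{S}_b^\star$. Under that reading your two citations close the cycle and nothing further is needed. A last caution on weights: in $(2) \Rightarrow (3)$ you appeal to ``weight preservation'' to place the product in the graded piece $\mathcal{M}TGF_{CV, ||\seq{\pmb{\sigma}}|| + 2}$, but the bare module statement $(2)$ carries no weight information; the paper sidesteps this by always arguing through the weight-homogeneous point $1$ of Conjecture \ref{MZV-module structure}, of which statement $(3)$ (read over $\mathcal{S}_b^\star$) is literally the special case $\seq{s} = (2)$, and you should route that implication the same way rather than through $(2)$ alone.
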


By a linear algebra argument, we have explained that the largest $p$ is, the stronger are the reasons to believe in the previous assertions. We have verified them up to weight $18$~.

The third important fact, which was used during the regularization process of divergent multitangent
functions, is the trifactorization of the multitangent functions: all multitangents can be expressed
as a finite product of Hurwitz multizeta functions in such a way to preserve the exponentially flat
character of multitangent functions.

\bigskip

Finally, the links between the algebra of multizeta values and the algebra of multitangent functions are summed up by these three properties and the following diagram:

$$	\xymatrix	{		\mathcal{M}ZV_{CV}	\ar@{.>}@<4pt>[ddd]	^{\text{projection}}	&&&		\mathcal{H}MZF_{+,CV}	\ar[lll]	_{\text{evaluation at 0}}
																														\ar@{^(->}[ddd]
						\\
						\\
						\\
						\mathcal{M}TGF_{CV}	\ar[uuu]			^{reduction}
											\ar@{^(->}[rrr]		^{\text{trifactorization}}	&&&		\mathcal{H}MZF_{\pm, CV}
				}
$$

As an example of the ``duality'' multizeta values/multitangent functions, we have explained that if
the hypothetical $\mathcal{M}ZV_{CV}$-module structure holds, then we have a conjecture concerning
the dimension of $\mathcal{M}TGF_{CV, p}$ which is actually equivalent to Zagier's conjecture on
multizeta values. This justifies the table \ref{conjectural dimension} of conjectural dimensions.

\begin{figure}[h]
	$$	\begin{array}{|c|c|c|c|c|c|c|c|c|c|c|c|c|c|}
			\hline
			p										&	0	&	1	&	2	&	3	&	4	&	5	&	6	&	7	&	8	&	9	&	10	&	11	&	12	\\
			\hline
		
			\text{dim } \mathcal{M}ZV_{CV , p}		&	1	&	0	&	1	&	1	&	1	&	2	&	2	&	3	&	4	&	5	&	7	&	9	&	12	\\
			\hline
		
			\text{dim } \mathcal{M}TGF_{CV , p + 2}	&	1	&	1	&	2	&	3	&	4	&	6	&	8	&	11	&	15	&	20	&	27	&	36	&	48	\\
			\hline
		\end{array}
	$$

	\caption	{The first hypothetical dimensions of multitangent vector space of weight $p + 2$.}
	\label {conjectural dimension}
\end{figure}

If these dimensions looks reasonable, this is because of the existence of many $\Q$-linear relations
between multitangent functions. For instance,
$$	4 \mathcal{T}e^{3, 1, 3} - 2 \mathcal{T}e^{3, 1, 1, 2} + \mathcal{T}e^{2, 1, 2, 2} = 0	$$

\noindent
is an interesting relation because it implies a relation between multizeta values, discussed at the end of Section \ref{algebraic properties}.

\bigskip
\null
\bigskip

Now, the remaining question is to find a new method to prove that there is no non-trivial $\Q$-linear relations between the multitangent functions which are supposed to span $\mathcal{M}TGF_{CV , p}$. To illustrate this, if we were able to prove the absence of non-trivial $\Q$-linear relations between $\mathcal{T}e^5$ , $\mathcal{T}e^{3,2}$ , $\mathcal{T}e^{2,3}$ and $\mathcal{T}e^{2}$, this would imply a well-known fact: $\zeta(3) = \mathcal{Z}e^3 \not \in \Q$~. Such a partial result would already be an important breakthrough, because such a method would certainly be generalisable to other weights, while Apery's method is not. Nevertheless, such a method would probably not give an upper bound of the irrationality measure, while Apery's method can.

Probably, the new method would come from the study of the Hurwitz multizeta functions, and more precisely from the study of algebraic relations in the algebra $\mathcal{H}MZV_{\pm, CV}$~.
\appendix
\renewcommand*{\thesection}{\Alph{section}}
\section	{Introduction to mould notations and calculus}
\label{Elements de calcul moulien debut}

For all this annex, references can be found in many text of Jean Ecalle. See for instance
\cite{Ecalle1.5} or \cite{Ecalle3} ; for other presentations of mould calculus, see
\cite{Cresson} or \cite{Sauzin}.

\subsection{Notion of moulds}
\label{AppendixDefinition}

If $\Omega$ is a set, $\text{seq}(\Omega)$ will denote in the sequel the set of (finite) sequence
of positive integers:
$$	\text{seq}(\N^*)
	=
	\displaystyle	{	\{ \emptyset \}
						\cup
						\bigcup	_{r \in \N^*}
								\{
									(s_1 ; \cdots ; s_r) \in (\N^*)^r
								\}
						\ .
					}
$$

A \textbf{mould} is a function defined over a free monoid $\text{seq} (\Omega)$
or sometimes over a subset of $\text{seq} (\Omega)$, with values in an algebra $\mathbb{A}$.
Concretely, this means that ``a mould is a function with a variable number of variables''.

Sometimes, it may be usefull to see a mould as a collection of functions $(f_0, f_1, f_2, \cdots)$, where, for all non negative integer $i$, $f_i$ is a function defined on $\Omega^i$ (and consequently, $f_0$ is a constant function)~.

Thus, moulds depend on sequences $\seq{w} = (w_1 ; \cdots ; w_r)$ of any length $r$. The empty sequence is denoted by $\emptyset$. The variables $w_i$ are elements of $\Omega$. We will often identify sequences of $\text{seq} (\Omega)$ and non-commutative polynomials over the alphabet $\Omega$.

In a general way, we will use the mould notations:
\begin{enumerate}
	\item	Sequences will always be written in bold and underlined, with an upper indexation
		if necessary. We call length of $\seq{w}$ and denote $l(\seq{w})$ the number of
		elements of $\seq{w}$. Without more precisions, we will use the letter $r$ to
		indicate the length of any sequence. We also define the weight of $\seq{w}$,
		when $\Omega$ has a semi-group structure, by: $$||\seq{w}|| = w_1 + \cdots + w_r \ .$$
	\item	For a given mould, we will prefer the notation $M^{\seq{\pmb{\omega}}}$, which indicate the evaluation of the mould $M^\p$ on the sequence $\seq{\pmb{\omega}}$ of $\text{seq} (\Omega)$, to the functionnal notation which would have been $M(\seq{\pmb{\omega}})$~.
	\item	We shall use the notation $\mathcal{M}_{\mathbb{A}}^\p (\Omega)$ to refer to the set
		of all the moulds constructed over the alphabet $\Omega$ with values in the algebra $\mathbb{A}$~.
\end{enumerate}

\subsection{Mould operations}
\label{AppendixOperations}
Moulds can be, among other operations, added, multiplied by a scalar as well as multiplied, composed, and so on. In this article, among the operations we will use, only the multiplication needs to be defined:
if	$(A^\p ; B^\p) \in	\left (
					\mathcal{M}^\p_{\mathbb{A}} (\Omega)
				\right )
				^2
	$, then, the mould multiplication $M^\p = A^\p \times B^\p$ is defined for all sequences $\underline{\pmb{\omega}} \in \text{seq} (\Omega)$ by:
$$	M^{\seq{\pmb{\omega}}}
	=
	\displaystyle	{	\sum	_{	(\seq{\pmb{\omega}}^1 ; \seq{\pmb{\omega}}^2) \in \text{seq}(\Omega)^2
						\atop
						\seq{\pmb{\omega}}^1 \cdot  \seq{\pmb{\omega}}^2 = \seq{\pmb{\omega}}
					}
					A^{\seq{\pmb{\omega}}^1}
					B^{\seq{\pmb{\omega}}^2}
				=
				\sum	_{i = 0}
					^{l(\seq{\pmb{\omega}})}
					A^{\seq{\pmb{\omega}}^{\leq i}}
					B^{\seq{\pmb{\omega}}^{> i}}
			} \ .
$$

A few explanations relative to notations used here have to be given. For a sequence
$\seq{\pmb{\omega}} = (\omega_1 ; \cdots ; \omega_r) \in \text{seq} (\Omega)$ and an integer
$k \in \crochet{0}{r}$, we write:
$$	\begin{array}{@{}lll}	\label{lecture}
		\seq{\pmb{\omega}}^{\leq k}
		=
		\left \{
				\begin{array}{l@{}l}
					\emptyset						&	\text{ , if } k = 0	\\
					(\omega_1 ; \cdots ; \omega_k)	&	\text{ , if } k > 0
				\end{array}
		\right.
		&\text{and}&
		\seq{\pmb{\omega}}^{> k}
		=
		\left \{
				\begin{array}{l@{}l}
					(\omega_{k + 1} ; \cdots ; \omega_r)	&	\text{ , if } k < r \ .	\\
					\emptyset								&	\text{ , if } k = r \ .
				\end{array}
		\right.
	\end{array}
$$

Let us remark that the two deconcatenations $\emptyset \cdot \seq{\omega}$ and $\seq{\omega} \cdot \emptyset$ intervene in the definition of the mould multiplication and refer respectively to the index $i = 0$ and $i = l(\seq{\omega})$. 
We will denote such a product in a short way by:
$$	(A^\p \times B^\p)^{\seq{\omega}}
	=
	\displaystyle	{	\sum	_{\seq{\omega}^1 \cdot \seq{\omega}^2 = \seq{\omega}}
								A^{\seq{\omega}^1} B^{\seq{\omega}^2}
					}
	\ .
$$

Finally, $	\left (
					\mathcal{M}^\p_{\mathbb{A}} (\Omega) , + , \cdot , \times
			\right )
		$ is an associative but non-commutative $\mathbb{A}$-algebra with unit, whose invertibles
are easily characterised:
$$	\left (
			\mathcal{M}^\p_{\mathbb{A}} (\Omega)
	\right )
	^\times
	=
	\{	M^\p \in \mathcal{M}^\p_{\mathbb{A}} (\Omega)
		\, ; \,
		M^\emptyset \in \mathbb{A}^\times
	\}
	\ .
$$
We will denote by $(M^\p)^{\times -1}$ the multiplicative inverse of a mould $M^\p$, when it exists.

\subsection{\Symmetrality}
\label{AppendixSymmetrality}
Let us first remind that the shuffle product of two words $P = p_1 \cdots p_r$ and $Q = q_1 \cdots q_s$ constructed over the alphabet $\Omega$ is denoted by $\shuffle$ and recursively defined by:
$$	\left \{
		\begin{array}{@{}l@{}}
			P \shuffle \varepsilon
			=
			\varepsilon \shuffle P
			=
			P \ \ ,
			\\
			P \shuffle Q
			=
			p_1 \big( p_2 \cdots p_r \shuffle Q \big)
			+
			q_1 \big( P \shuffle q_2 \cdots q_r \big)
			
			\ ,
		\end{array}
	\right.
$$
where $\varepsilon$ is the empty word. As an example, if $P = a \cdot b$ and $Q = c$, we have $P \shuffle Q = abc + acb + cab$~.

In order to have a better understanding of the shuffle, a visual representation of it can be usefull.
A word can be seen as a desk of card, then the shuffle of two words becomes the set of all the obtained result
by inserting \textit{classically} one desk of cards in the other one.
\\

The multiset $sh\text{\textbf{\underline{\textit{a}}}}(\seq{\pmb{\alpha}} ; \seq{\pmb{\beta}})$,
where $\seq{\pmb{\alpha}}$ and $\seq{\pmb{\beta}}$ are sequences of $\text{seq} (\Omega)$, is
defined to be the set of all monomials that appear in the non-commutative polynomial
$\seq{\pmb{\alpha}} \shuffle \seq{\pmb{\beta}}$, counted with their multiplicity.
\\

When $\mathbb{A}$ is an algebra, we define a \symmetral mould $Ma^\p$ to be a mould of
$\mathcal{M}_{\mathbb{A}}^\p (\Omega)$ which satisfies for all $(\seq{\pmb{\alpha}} ; \seq{\pmb{\beta}}) \in \big (\text{seq} (\Omega) \big )^2$:

\begin{equation}
	\left \{
		\begin{array}{l}
			\displaystyle	{	Ma^{\seq{\pmb{\alpha}}}
								Ma^{\seq{\pmb{\beta}}}
								=
								\sum	_{	\seq{\pmb{\gamma}}
										\in
										sh\text{\textbf{\underline{\textit{a}}}} (\seq{\pmb{\alpha}}   ;   \seq{\pmb{\beta}})
									}
									\hspace{-0.5cm}'  \hspace{0.2cm}
									Ma^{\seq{\pmb{\gamma}}}
							} \ .
			\\
			Ma^\emptyset = 1 \ .
		\end{array}
	\right.
\end{equation}

Here, the sum	$	\displaystyle	{	\hspace{-0.3cm}
										\sum	_{	\seq{\pmb{\gamma}}
													\in
													sh\text{\textbf{\underline{\textit{a}}}} (\seq{\pmb{\alpha}}   ;    \seq{\pmb{\beta}})
												}
												 \hspace{-0.5cm}'  \hspace{0.2cm}
												Ma^{\underline{\pmb{\gamma}}}
									}
				$ is a shorthand for
$	\displaystyle	{	\sum	_{\seq{\pmb{\gamma}} \in \text{seq} (\Omega)}
								\text{mult}	\binom{\seq{\pmb{\alpha}} \,;\, \seq{\pmb{\beta}}}{\seq{\pmb{\gamma}}}
								Ma^{\seq{\pmb{\gamma}}}
					}
$, where $\text{mult}	\binom{\seq{\pmb{\alpha}} \,;\, \seq{\pmb{\beta}}}{\seq{\pmb{\gamma}}}$ is the coefficient of the monomial $\seq{\pmb{\gamma}}$ in the product $\seq{\pmb{\alpha}} \shuffle \seq{\pmb{\beta}}$ and is equal to 
$\langle \seq{\pmb{\alpha}} \shuffle \seq{\pmb{\beta}} | \seq{\pmb{\gamma}} \rangle$~. From now on, we shall omit the prime on the sum:

$$	\displaystyle	{	Ma^{\seq{\pmb{\alpha}}} Ma^{\seq{\pmb{\beta}}}
						=
						\sum	_{\seq{\pmb{\gamma}} \in \text{seq} (\Omega)}
								\langle
										\seq{\pmb{\alpha}} \shuffle \seq{\pmb{\beta}} | \seq{\pmb{\gamma}}
								\rangle
								Ma^{\seq{\pmb{\gamma}}}
						=
						\sum	_{\seq{\pmb{\gamma}} \in sh\text{\textbf{\underline{\textit{a}}}}(\seq{\pmb{\alpha}} , \seq{\pmb{\beta}})}
								Ma^{\seq{\pmb{\gamma}}} \ .
					}
$$

The \symmetrality imposes, through a multitude of relations, a strong rigidity. For example, if $(x ; y) \in \Omega^2$ and $Ma^\p$ denote a \symmetral mould, then we have necessarily:
$$	\begin{array}{lll}
			Ma^{x} Ma^{y}
			&=&
			Ma^{x,y} + Ma^{y,x} \ .
			\\
			Ma^{x,y} Ma^{y}
			&=&
			Ma^{y, x, y} + 2Ma^{x, y, y} \ .
	\end{array}
$$

The definition of \symmetrelity may also apply to a mould which is defined only on a subset $D$ of $\text{seq}(\Omega)$ (in which case, we require $D$ to be stable by the shuffle).

\subsection{\Symmetrelity}
\label{AppendixSymmetrelity}
Let $(\Omega , \cdot)$ be an alphabet with a semi-group structure. Let us first remind that the stuffle product of two words $P = p_1 \cdots p_r$ and $Q = q_1 \cdots q_s$ constructed over the alphabet $\Omega$ is denoted by $\stuffle$ and defined recursively by:
$$	\left \{
		\begin{array}{@{}l@{}}
			P \stuffle \varepsilon
			=
			\varepsilon \stuffle P
			=
			P \ ,
			\\
			P \stuffle Q
			=
			p_1 \big( p_2 \cdots p_r \stuffle Q \big)
			+
			q_1 \big( P \stuffle q_2 \cdots q_s \big)
			+
			(p_1 \cdot q_1) \big ( p_2 \cdots p_r \stuffle q_2 \cdots q_s \big )
			\ ,
		\end{array}
	\right.
$$
where $\varepsilon$ is again the empty word. As an example, in $\text{seq}(\N)$, if $P = 1 \cdot 2$ and $Q = 3$, we have:
$P \stuffle Q = 1 \cdot 2 \cdot 3 + 1 \cdot 3 \cdot 2 + 3 \cdot 1 \cdot 2 + 1 \cdot 5 + 4 \cdot 2~.$

As well as for the shuffle product, a visual representation of the stuffle product can be usefull.
Seeing one more time a word as a desk of card, the stuffle of two words becomes the set of all
the obtained results by inserting \textit{magically} one desk of blue cards in a desk of red cards.
By magically, we mean that some new cards may appear: these new ones are hydrid cards, that is, one of
their sides is blue while the other is red. Such a hybrid card can only be obtained when two cards of
different colors are situated side by side in a classic shuffle of the two desks of cards. In the 
previous example, the hybrid cards are $5$, coming from $2 + 3$, and $4$, from $3 + 1$~.
\\

The multiset $sh\text{\textbf{\underline{\textit{e}}}}(\seq{\pmb{\alpha}} ; \seq{\pmb{\beta}})$, where $\seq{\pmb{\alpha}}$ and $\seq{\pmb{\beta}}$ are sequences in $\text{seq} (\Omega)$, is defined to be the set of all monomials that appear in the non-commutative polynomial $\seq{\pmb{\alpha}} \stuffle \seq{\pmb{\beta}}$, counting with their multiplicity.
\\

When the alphabet $\Omega$ is an additive semigroup and $\mathbb{A}$ an algebra, we define a \symmetrel mould $Me^\p$ to be a mould of
$\mathcal{M}_{\mathbb{A}}^\p (\Omega)$ which satisfies for all $(\seq{\pmb{\alpha}} ; \seq{\pmb{\beta}}) \in \big (\text{seq} (\Omega) \big )^2$:

$$	\left \{
		\begin{array}{l}\displaystyle	
			Me^{\seq{\pmb{\alpha}}}
			Me^{\seq{\pmb{\beta}}}
			=
			\sum	_{	\seq{\pmb{\gamma}}
					\in
					sh\text{\textbf{\underline{\textit{e}}}} (\seq{\pmb{\alpha}} \,  ; \,  \seq{\pmb{\beta}})
				}
				 \hspace{-0.5cm}'  \hspace{0.2cm}
				Me^{\seq{\pmb{\gamma}}}
			\ .
			\\
			Me^\emptyset = 1\ .
		\end{array}
	\right.
$$

Here, the sum	$	\displaystyle	{	\hspace{-0.3cm}
										\sum	_{	\seq{\pmb{\gamma}}
													\in
													sh\text{\textbf{\underline{\textit{e}}}} (\seq{\pmb{\alpha}} \,  ;   \, \seq{\pmb{\beta}})
												}
												 \hspace{-0.5cm}'  \hspace{0.2cm}
												Me^{\underline{\pmb{\gamma}}}
									}
				$ is a shorthand for
$	\displaystyle	{	\sum	_{\seq{\pmb{\gamma}} \in \text{seq} (\Omega)}
								\text{mult}	\binom{\seq{\pmb{\alpha}} \,;\, \seq{\pmb{\beta}}}{\seq{\pmb{\gamma}}}
								Me^{\seq{\pmb{\gamma}}}
					}
$, where $	\text{mult}	\binom{\seq{\pmb{\alpha}} ; \seq{\pmb{\beta}}}{\seq{\pmb{\gamma}}}$ is the coefficient of the monomial $\seq{\pmb{\gamma}}$ in the product $\seq{\pmb{\alpha}} \stuffle \seq{\pmb{\beta}}$ and is equal to 
$\langle \seq{\pmb{\alpha}} \stuffle \seq{\pmb{\beta}} | \seq{\pmb{\gamma}} \rangle$~.
From now on, we also omit the prime:

$$	\displaystyle	{	Me^{\seq{\pmb{\alpha}}} Me^{\seq{\pmb{\beta}}}
						=
						\sum	_{\seq{\pmb{\gamma}} \in \text{seq} (\Omega)}
								\langle
										\seq{\pmb{\alpha}} \stuffle \seq{\pmb{\beta}} | \seq{\pmb{\gamma}}
								\rangle
								Me^{\seq{\pmb{\gamma}}}
						=
						\sum	_{\seq{\pmb{\gamma}} \in sh\text{\textbf{\underline{\textit{e}}}}(\seq{\pmb{\alpha}} , \seq{\pmb{\beta}})}
								Me^{\seq{\pmb{\gamma}}} \ .
					}
$$

As well as the symmetr\textbf{\textit {\underline {a}}}lity, the \symmetrelity imposes a strong rigidity. For example, if $(x ; y) \in \Omega^2$ and $Me^\p$ denote a \symmetrel mould, then we have necessarily:
$$	\begin{array}{lll}
			Me^{x} Me^{y}
			&=&
			Me^{x,y} + Me^{y,x} + Me^{x + y} .
			\\
			Me^{x,y} Me^{y}
			&=&
			Me^{y, x, y} + 2Me^{x, y, y} + Me^{x + y, y} + Me^{x, 2y} \ .
	\end{array}
$$

The definition of \symmetrelity may also apply to a mould which is defined only on a subset $D$ of
$\text{seq}(\Omega)$ (in which case, we require $D$ to be stable by stuffle).

\subsection{Formal moulds}
\label{Formal Moulds}

Remind that a mould over the alphabet $\Omega$ can be seen as a collection of functions $(f_0, f_1, f_2, \cdots)$, where, for all non negative integer $i$, $f_i$ is a function defined on $\Omega^i$. Let us now define a formal mould as a collection of formal series $(S_0, S_1, S_2, \cdots)$, where, for all non negative integer $i$, $S_i$ is a formal power series in $i$ indeterminates (and consequently, $S_0$ is constant)~. The set of all formal mould with values in the algebra $\mathbb{A}$ is denoted by $\mathcal{FM}^\p_{\mathbb{A}}$~. So, by definition, it is clear that:
\begin{equation}	\label{FM subset of M}
	\mathcal{FM}^\p_{\mathbb{A}}
	\subset
	\mathcal{M}^\p_{\mathbb{A}} (X_1, X_2, X_3, \cdots)
	:=
	\underset	{n \longrightarrow + \infty}{\text{lim ind}}
	\mathcal{M}^\p_{\mathbb{A}}(X_1, \cdots, X_n)
	\ .
\end{equation}

For a mould $M^\p$, defined over an infinite alphabet of indeterminates $(X_1, X_2, \cdots)$, there is absolutely no reason that $M^{X_1, X_2}$ be related to
$M^{X_2, X_1}$~; but, for a formal mould $M^\p$, there is a link since it is the substitution of a sequences of indeterminates in a formal series. Consequently,
it's impossible to have an equality in \eqref{FM subset of M}.

Nevertheless, a formal mould is a mould. This implies in particular that we can add, multiply them for instance.

\subsection{\Symmetrility}
\label{symmetrelityAppendix}

If $Me^\p$ is a \symmetrel mould over $\text{seq} (\N^*)$~, with values in a commutative algebra $\mathbb{A}$, then its generating functions, denoted by $Mig^\p$, is a formal mould defined by:
$$	\left \{
			\begin{array}{l}
					Mig^\emptyset = 1 \ .	\\
					Mig^{v_1, \cdots, v_r}
					= \displaystyle	{	\sum	_{s_1, \cdots, s_r \geq 1}
												Me^{s_1, \cdots, s_r} {v_1}^{s_1 - 1} \cdots {v_r}^{s_r - 1}
									}
						\in	\mathbb{A} [ \! [ v_1 ; \cdots ; v_r ] \! ]\ .
			\end{array}
	\right.
$$

The mould $Mig^\p$ is then automatically a \symmetril mould, that is to say that it satisfies the relation:
$$	Mig^{\seq{v}} Mig^{\seq{w}}
	=
	\displaystyle	{	\sum	_{\seq{x} \in sh\text{\textbf{\underline{\textit{i}}}} (\seq{v}  ;  \seq{w})}
								Mig ^{\seq{x}}
					} \ .
$$

The multiset $sh\text{\textbf{\underline{\textit{i}}}} (\seq{v} ; \seq{w})$ is also a quasi-shuffle product as defined in \cite{Hoffmann}, as i the stuffle. If $\seq{v}$ and $\seq{w}$ are sequences over an alphabet of indeterminates, this set is defined exactly in the same way as $sh\text{\textbf{\underline{\textit{e}}}} (\seq{v} ; \seq{w})$, but here, the contraction of the quasi-shuffle product is an (formal) contraction defined over the set of (formal) indeterminates. The evaluation of a mould $Mig^\p$ on a sequence which has such a contraction is then done by induction and given by the formula:
$$	Mig^{\seq{v} \cdot (x \circledast y) \cdot \seq{w}}
	=
	\displaystyle	{	\frac	{Mig^{\seq{v} \cdot x \cdot \seq{w}} - Mig^{\seq{v} \cdot y \cdot \seq{w}}}
								{x - y}
					}
	\ .
$$

For example, a \symmetril mould $Mig^\p$ satisfies:
$$	\begin{array}{lll}
		Mig^X Mig^Y	&=&	Mig^{X,Y} + Mig^{Y,X} + \displaystyle	{	\frac	{Mig^X - Mig^Y}	{X - Y}
																} \ ,
		\\
		Mig^{X,Y} Mig^Z	&=&	Mig^{X,Y,Z} + Mig^{X,Z,Y} + Mig^{Z,X,Y} + \displaystyle	{	\frac	{Mig^{X,Y} - Mig^{X,Z}}	{Y - Z}
																					}
		\\				&&	+
							\displaystyle	{	\frac	{Mig^{X,Y} - Mig^{Z,Y}}	{X - Z}
											} \ .
	\end{array}
$$

To evaluate $Mig^{X,Y} Mig^Y$, we have to use derivative:
$$	Mig^{X,Y} Mig^Y	=	2Mig^{X,Y,Y} + Mig^{Y,X,Y} + D_Y Mig^{X,Y} + D_X Mig^{X,Z} \ .	$$

\subsection	{Some examples of rules}

Envisaged as a simple system of notations, the mould language already leads us to concise formulas
as well as the economy of long sequences of indexes. But its real utility resides in the different
mould operations and the rules which indicate how these affect (preserve or transform) basic symmetries.

\noindent
For example:	\begin{tabular}[t]{ll}
					$1$.	&	$\left [ \text{altern\text{\textbf{\underline{\textit{a}}}}l} \,; \text{altern\text{\textbf{\underline{\textit{a}}}}l} \right ] = \text{altern\text{\textbf{\underline{\textit{a}}}}l} $~, where $[ \cdot , \cdot]$ is a Lie bracket.
					\\
					$2$.	&	symmetr\text{\textbf{\underline{\textit{e}}}}l
							$\times$
							symmetr\text{\textbf{\underline{\textit{e}}}}l
							=
							symmetr\text{\textbf{\underline{\textit{e}}}}l~.
					\\
					$3$.	&	altern\text{\textbf{\underline{\textit{a}}}}/\text{\textbf{\underline{\textit{e}}}}l conjugated by symmetr\text{\textbf{\underline{\textit{a}}}}/\text{\textbf{\underline{\textit{e}}}}l = altern\text{\textbf{\underline{\textit{a}}}}/\text{\textbf{\underline{\textit{e}}}}l~.
					\\
					$4$.	&	exponential $\left (\text{altern\text{\textbf{\underline{\textit{a}}}}/\text{\textbf{\underline{\textit{e}}}}l} \right )$ = symétr\text{\textbf{\underline{\textit{a}}}}/\text{\textbf{\underline{\textit{e}}}}l~.
					\\
				\end{tabular}

\subsection {Some notations}
\label{AppendixNotations}

We will always write in bold, italic and underlined the vowel which indicates not only a 
symmetry of the considered moulds, but also the nature of the products of sequences which
will appear. Using this, it will become simpler to distinguish 
\symmetral , \symmetrel and \symmetril moulds as well as to distinguish the set
$sh\text{\textbf{\underline{\textit{a}}}}(\seq{\pmb{\alpha}} ; \seq{\pmb{\beta}})$,
$sh\text{\textbf{\underline{\textit{e}}}}(\seq{\pmb{\alpha}} ; \seq{\pmb{\beta}})$ and
$sh\text{\textbf{\underline{\textit{i}}}}(\seq{\pmb{\alpha}} ; \seq{\pmb{\beta}})$.

The moulds that we consider will carry in their name the vowelic alteration which immediately indicates their symmetry type. For example, the mould $\mathcal{T}e^\p (z)$ is a \symmetrel mould (see p. \pageref {definition te}), while
$\mathcal{Z}ig^\p$ is \symmetril (see p. \pageref {definition zig}).
The absence of this vowel will also indicate that the mould verifies no symmetry.
\\

Finally, if $\seq{\pmb{\alpha}} = (\alpha_1 ; \cdots ; \alpha_n)$ is a sequence constructed over an alphabet $\Omega$, we respectively denote by 
$\overset {\leftarrow} {\seq{\pmb{\alpha}}}$ and $\seq{\pmb{\alpha}}^{[k]}$, the reversed sequence $\seq{\pmb{\alpha}}$ and the sequence $\seq{\pmb{\alpha}}$ repeated $k$ times:
$$	\begin{array}{lllll}
		\overset {\leftarrow} {\seq{\pmb{\alpha}}}
		=
		(\alpha_n ; \cdots ; \alpha_1)
		&\hspace{1.2cm}&,&\hspace{1.2cm}&
		\seq{\pmb{\alpha}}^{[k]} = \underbrace {\seq{\pmb{\alpha}} \cdots \seq{\pmb{\alpha}}}_{k \text{ times}} \ .
	\end{array}
$$
Remind that we can also extract a part of the sequence $\seq{\pmb{\alpha}}$~. If $i$ and $j$ are two non-negative integers such that $i \leq j \leq r$, the
sequences $\seq{\pmb{\alpha}}^{\leq i}$ , $\seq{\pmb{\alpha}}^{i < \cdot \leq j}$ and $\seq{\pmb{\alpha}}^{> i}$ are defined by:
$$	\begin{array}{lllllllll}
		\seq{\pmb{\alpha}}^{\leq i}
		=
		(\alpha_1 ; \cdots ; \alpha_i)
		&,&
		\seq{\pmb{\alpha}}^{i < \cdot \leq j}
		=
		(\alpha_{i + 1} ; \cdots ; \alpha_j)
		&,&
		\seq{\pmb{\alpha}}^{> i}
		=
		(\alpha_{i + 1} ; \cdots ; \alpha_r) \ .
	\end{array}
$$
\label{Elements de calcul moulien fin}

\newpage
\section{Tables}
\label{Tables}

\clearpage

\bibliographystyle{model1b-num-names}

\end{document}